\theoremstyle{definition}
\newtheorem{definition}{Definition}[section]
\theoremstyle{plain}
\newtheorem{theorem}[definition]{Theorem}
\newtheorem{theoremintro}{Theorem}
\newtheorem{proposition}[definition]{Proposition}
\newtheorem{lemma}[definition]{Lemma}
\newtheorem{corollary}[definition]{Corollary}
\newtheorem*{factnonnumerote}{Fact}
\newtheorem{fact}[definition]{Fact}
\theoremstyle{remark}
\newtheorem{example}[definition]{Example}
\newtheorem{remark}[definition]{Remark}
\newcommand{\ensemblenombre}[1]{\ensuremath{\mathbb{#1}}}
\newcommand{\N}{\ensemblenombre{N}}
\newcommand{\R}{\ensemblenombre{R}}
\newcommand{\C}{\ensemblenombre{C}}
\newcommand{\abs}[1]{\ensuremath{\left\lvert#1\right\rvert}}
\newcommand{\norme}[1]{\ensuremath{\left\lVert#1\right\rVert}}
\newcommand{\enstq}[2]{\ensuremath{\left\{#1\mathrel{}\middle|\mathrel{}#2\right\}}}
\newcommand{\transp}[1]{\prescript{t}{}{#1}}
\newcommand{\intervalle}[4]{\ensuremath{\mathopen{#1}#2
		\mathclose{}\mathpunct{};#3
		\mathclose{#4}}}
\newcommand{\intervalleff}[2]{\intervalle{[}{#1}{#2}{]}}
\newcommand{\intervalleof}[2]{\intervalle{]}{#1}{#2}{]}}
\newcommand{\intervallefo}[2]{\intervalle{[}{#1}{#2}{[}}
\newcommand{\intervalleoo}[2]{\intervalle{]}{#1}{#2}{[}}
\newcommand{\restreinta}{\ensuremath{\mathclose{}|\mathopen{}}}
\newcommand{\GL}[1]{\ensuremath{\mathrm{GL}_{#1}\mathopen{(}\R\mathclose{)}}}
\newcommand{\GLplus}[1]{\ensuremath{\mathrm{GL}^+_{#1}\mathopen{(}\R\mathclose{)}}}
\newcommand{\PGL}[1]{\ensuremath{\mathrm{PGL}_{#1}\mathopen{(}\R\mathclose{)}}}
\newcommand{\SL}[1]{\ensuremath{\mathrm{SL}_{#1}\mathopen{(}\R\mathclose{)}}}
\newcommand{\SLtilde}[1]{\ensuremath{\mathrm{\widetilde{SL}}_{#1}\mathopen{(}\R\mathclose{)}}}
\newcommand{\PSL}[1]{\ensuremath{\mathrm{PSL}_{#1}\mathopen{(}\R\mathclose{)}}}
\newcommand{\slR}[1]{\ensuremath{\mathfrak{sl}_{#1}}}
\newcommand{\gl}[1]{\ensuremath{\mathfrak{gl}_{#1}}}
\newcommand{\SO}[1]{\ensuremath{\mathrm{SO}\mathopen{(}#1\mathclose{)}}}
\newcommand{\so}[1]{\ensuremath{\mathfrak{so}\mathopen{(}#1\mathclose{)}}}
\newcommand{\biso}[2]{\ensuremath{\mathfrak{so}\mathopen{(}#1\mathpunct,#2\mathclose{)}}}
\newcommand{\Heis}[1]{\ensuremath{\mathrm{Heis}\mathopen{(}#1\mathclose{)}}}
\newcommand{\heis}[1]{\ensuremath{\mathfrak{heis}\mathopen{(}#1\mathclose{)}}}
\newcommand{\Aff}[1]{\ensuremath{\mathrm{Aff}\mathopen{(}\R^{#1}\mathclose{)}}}
\newcommand{\Sim}[1]{\ensuremath{\mathrm{Sim}\mathopen{(}\R^{#1}\mathclose{)}}}
\newcommand{\similitude}[1]{\ensuremath{\mathfrak{sim}\mathopen{(}\R^{#1}\mathclose{)}}}
\newcommand{\Diff}[2]{\mathopen{}\mathrm{D}_{#1}#2}
\newcommand{\Tan}[2]{\ensuremath{\mathrm{T}_{#1}#2}}
\newcommand{\Fitan}[1]{\ensuremath{\mathrm{T}#1}}
\newcommand{\RP}[1]{\ensuremath{\R\mathbf{P}^{#1}}}
\newcommand{\cc}{\mathcal{C}}
\newcommand{\torus}{\mathbf{t}}
\newcommand{\affine}{\mathbf{a}}
\newcommand{\Xa}{\mathbf{X}_a}
\newcommand{\X}{\mathbf{X}}
\newcommand{\G}{\mathbf{G}}
\newcommand{\g}{\slR{3}}
\newcommand{\h}{\mathfrak{h}}
\newcommand{\Pmin}{\mathbf{P}_{min}}
\newcommand{\pmin}{\mathfrak{p}_{min}}
\newcommand{\Ealpha}{E^\alpha}
\newcommand{\Etildealpha}{\tilde{E}^\alpha}
\newcommand{\Exalpha}{\mathcal{E}^\alpha}
\newcommand{\Ebeta}{E^\beta}
\newcommand{\Etildebeta}{\tilde{E}^\beta}
\newcommand{\Exbeta}{\mathcal{E}^\beta}
\newcommand{\Etildec}{\tilde{E}^c}
\newcommand{\Exc}{\mathcal{E}^c}
\newcommand{\Sm}{\mathcal{S}}
\newcommand{\Stilde}{\tilde{\mathcal{S}}}
\newcommand{\Sx}{\mathcal{S}_Y}
\newcommand{\Lm}{\mathcal{L}}
\newcommand{\Ltilde}{\widetilde{\mathcal{L}}}
\newcommand{\Lx}{\mathcal{L}_\X}
\newcommand{\piun}[1]{\pi_1({#1})}
\newcommand{\Calpha}{\mathcal{C}^\alpha}
\newcommand{\Cbeta}{\mathcal{C}^\beta}
\newcommand{\Salphabeta}{\mathcal{S}_{\alpha,\beta}}
\newcommand{\Sbetaalpha}{\mathcal{S}_{\beta,\alpha}}
\newcommand{\Ztilde}{\tilde{\mathcal{Z}}}
\newcommand{\PSLk}{\mathrm{PSL}_2^{(k)}\mathopen{(}\R\mathclose{)}}
\newcommand{\Ktot}{\mathcal{K}^{tot}}
\newcommand{\K}{\mathcal{K}}
\newcommand{\e}{\mathrm{e}}
\DeclareRobustCommand*{\lfaktor}[3][]
{
   \ensuremath{ \mathpalette{\mfaktor@impl@}{{#1}{#2}{#3}} }
}
\newcommand*{\mfaktor@impl@}[2]{\mfaktor@impl#1#2}
\newcommand*{\mfaktor@impl}[4]{
   \settoheight{\faktor@zaehlerhoehe}{\ensuremath{#1#2{#3}}}%
   \settoheight{\faktor@nennerhoehe}{\ensuremath{#1#2{#4}}}%
      \raisebox{-0.5\faktor@zaehlerhoehe}{\ensuremath{#1#2{#3}}}%
      \mkern-4mu\diagdown\mkern-5mu%
      \raisebox{0.5\faktor@nennerhoehe}{\ensuremath{#1#2{#4}}}%
}
\DeclareMathOperator{\Image}{Im}
\DeclareMathOperator{\Ker}{Ker}
\DeclareMathOperator{\Vect}{Vect}
\DeclareMathOperator{\rg}{rk}
\DeclareMathOperator{\Aut}{Aut}
\DeclareMathOperator{\tr}{tr}
\DeclareMathOperator{\id}{id}
\DeclareMathOperator{\Der}{Der}
\DeclareMathOperator{\Nor}{Nor}
\DeclareMathOperator{\Cent}{C}
\DeclareMathOperator{\Ad}{Ad}
\DeclareMathOperator{\ad}{ad}
\DeclareMathOperator{\Lie}{Lie}
\DeclareMathOperator{\Stab}{Stab}
\DeclareMathOperator{\ev}{ev}
\DeclareMathOperator{\Mat}{Mat}
\DeclareMathOperator{\gr}{gr}
\DeclareMathOperator{\Kill}{Kill}
\DeclareMathOperator{\Rec}{Rec}
\DeclareMathOperator{\Lin}{\mathsf{L}}
\numberwithin{equation}{section}
\numberwithin{figure}{section}
\title[Contact partially hyperbolic diffeomorphisms]{Partially hyperbolic diffeomorphisms and Lagrangian contact structures}
\author{Martin Mion-Mouton}
\date{February 24, 2020 (last revision: \today).}
\address{
Martin Mion-Mouton,
IRMA,
7 rue René Descartes 
67084 Strasbourg.
}
\email{martin.mionmouton@math.unistra.fr}
\begin{document}

\maketitle

\begin{abstract} 
 In this paper, we classify the three-dimensional partially hyperbolic diffeomorphisms whose
 stable, unstable, and central distributions $E^s$, $E^u$, and $E^c$ are smooth, such that $E^s\oplus E^u$ is a contact distribution,         
 and whose non-wandering set equals the whole manifold. 
 We prove that up to a finite quotient or a finite power, they are smoothly conjugated
 either to the time-one map of 
 an algebraic contact-Anosov flow,
 or to an affine partially hyperbolic automorphism of a nil-manifold.
 The rigid geometric structure induced by the invariant distributions 
 plays a fundamental role in the proof. 
\end{abstract}


\section{Introduction}

In a lot of natural situations, a
differentiable dynamical system on a smooth manifold preserves a geometric structure on the tangent bundle,
defined by invariant distributions.
For instance, if it preserves a Borel measure, then Oseledet's theorem
provides an almost-everywhere defined splitting of the tangent bundle, 
given by the rates of expansion or contraction of the tangent vectors by the differentials of the dynamics.
\par Although invariant geometric structures naturally arise,
they are in general highly non-regular (Oseledet's decomposition is for instance only measurable), and
this lack of regularity allows a lot of flexibility of the dynamics:
former examples can be deformed in order to produce a lot of new ones.
In contrast, the smoothness of the invariant distributions puts a strong restriction on the system, and
the known examples with smooth (\emph{i.e.} $\cc^\infty$) distributions are in general ``very symmetric'':
typically, they arise from compact quotient of Lie groups, with action by affine automorphisms.
\par It is thus natural to ask to what extent the geometric structure preserved by the dynamics
makes the situation rigid, and especially \emph{why}. \\

\par Let us give a paradigmatic example of rigidity  
with the following result of Étienne Ghys concerning three-dimensional Anosov flows
(the statement proved by Ghys in \cite{ghys} is more precise than the one given below).
\begin{theorem}[\cite{ghys}]
\label{theoremeghys}
 Let $(\varphi^t)$ be an Anosov flow of a three-dimensional closed connected manifold.
 If the stable and unstable distributions of $(\varphi^t)$ are smooth, then:
 \begin{itemize}
  \item either $(\varphi^t)$ is smoothly conjugated to the suspension flow of a hyperbolic automorphism of the two-torus,
  \item or $(\varphi^t)$ is smoothly orbit equivalent 
  to a finite covering of the geodesic flow of a compact hyperbolic surface.
 \end{itemize}
\end{theorem}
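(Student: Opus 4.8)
The plan is to use the smoothness of $E^s$ and $E^u$ to equip $M$ with a rigid geometric structure invariant under $(\varphi^t)$, to prove that this structure is flat (equivalently, locally homogeneous), and then to exploit the compactness of $M$ to realize $M$ as a compact quotient of a $3$-dimensional Lie group, with $(\varphi^t)$ a homogeneous flow; the two groups that can occur, $\mathrm{Sol}$ and $\SLtilde{2}$, yield exactly the two alternatives. \emph{Step 1 (smooth coframe and structure equations).} Let $X$ generate $(\varphi^t)$, so $E^c=\R X$ is smooth, and recall $E^s\oplus E^c$, $E^u\oplus E^c$ are the (integrable) tangent distributions of the weak-stable and weak-unstable foliations. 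After passing to a finite cover making $E^s$ and $E^u$ orientable, fix smooth vector fields $Y\in E^s$, $Z\in E^u$; since $(\varphi^t)$ preserves each line field $E^s$, $E^u$, one has $[X,Y]=bY$, $[X,Z]=cZ$ for smooth functions $b,c$ (the infinitesimal stable and unstable rates) and $[Y,Z]=\alpha X+\beta Y+\gamma Z$. Dually, for the coframe $(\lambda,\mu^s,\mu^u)$ with $\lambda(X)=1$, $\ker\lambda=E^s\oplus E^u$, $\mu^s(Y)=1$, $\ker\mu^s=E^u\oplus E^c$, $\mu^u(Z)=1$, $\ker\mu^u=E^s\oplus E^c$, one gets
\begin{align*}
d\lambda &= -\alpha\,\mu^s\wedge\mu^u,\\
d\mu^s &= -b\,\lambda\wedge\mu^s-\beta\,\mu^s\wedge\mu^u,\\
d\mu^u &= -c\,\lambda\wedge\mu^u-\gamma\,\mu^s\wedge\mu^u,
\end{align*}
so $\mathcal{L}_X\lambda=0$, and $d^2=0$ forces scalar relations, the simplest being $X\alpha=(b+c)\alpha$.

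\emph{Step 2 (dichotomy, flatness, normalization).} I would first prove the dichotomy: $\alpha$ vanishes identically or nowhere, i.e.\ $E^s\oplus E^u$ is either an integrable or a contact plane field. In the integrable case $\lambda$ is a closed nowhere-vanishing invariant $1$-form; in the contact case $\lambda$ is a smooth invariant contact form and $(E^s,E^u,E^s\oplus E^u)$ is a smooth Lagrangian contact structure, a rigid parabolic geometry modeled on the flag variety of $\R^3$, in which $T^1\mathbb{H}^2\cong\PSL{2}$ occurs as an open orbit. In each case I would then show the structure is flat: using the $d^2=0$ relations, the gauge freedom $Y\mapsto e^{-f}Y$, $Z\mapsto e^{-g}Z$, the density of closed orbits, and Livsic's theorem, one chooses $f,g$ so that all the structure functions become constant — the structure constants of $\mathfrak{sol}$ in the integrable case, and of $\slR{2}$ with $X$ proportional to a hyperbolic one-parameter generator in the contact case. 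Conceptually this says the curvature of the rigid geometric structure is killed by its own invariance under the Anosov flow together with the compactness of $M$, the mechanism highlighted in the abstract.

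\emph{Step 3 (completeness and identification).} A flat structure modeled on a Lie group $G$, on a compact manifold, is complete, so the developing map is a diffeomorphism $\widetilde M\to G$ equivariant for an embedding of $\pi_1(M)$ as a uniform lattice $\Gamma$; hence $M=\Gamma\backslash G$ and $(\varphi^t)$ is, up to the parametrization fixed by $X$, the translation flow of the relevant one-parameter subgroup. For $G=\mathrm{Sol}$ this is precisely the suspension flow of a hyperbolic automorphism of $\mathbb{T}^2$, so $(\varphi^t)$ is smoothly conjugate to it. For $G=\SLtilde{2}$, passing to a suitable finite cover/quotient to reach $\PSL{2}$ and then removing cone points, $M$ becomes a unit tangent bundle $T^1\Sigma$ of a compact hyperbolic surface and the subgroup is a constant time change of the geodesic one, so $(\varphi^t)$ is smoothly orbit equivalent to a finite covering of the geodesic flow of a compact hyperbolic surface; undoing the finite cover of Step 1 preserves both alternatives.

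\emph{Main obstacle.} The crux is Step 2. The structure equations alone only propagate information about $\alpha,b,c,\dots$ along individual orbits, so obtaining the global dichotomy, and especially solving the cohomological equations that make all structure functions constant, genuinely uses the smoothness of $E^s$ and $E^u$ — the variable expansion rates of a general Anosov flow are exactly the obstruction to such constancy — together with the recurrence of the flow and Livsic theory. Promoting the resulting conjugacy, respectively orbit equivalence, to a \emph{smooth} one rather than a merely topological one is a further delicate point powered by the same smoothness.
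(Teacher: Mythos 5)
First, a point of comparison: the paper does not prove this statement. Theorem \ref{theoremeghys} is imported wholesale from \cite{ghys}; the introduction only records the strategy (the dichotomy ``$E^s\oplus E^u$ integrable or contact''; the integrable case settled by Plante and Franks, giving suspensions; the contact case treated via the rigid geometric structure $(E^s,E^u)$, giving geodesic flows). Your outline reproduces exactly that strategy, and Steps 1 and 3 are essentially sound: the structure equations and the identity $X\alpha=(b+c)\alpha$ are correct, and once one has a global frame on the compact manifold $M$ with \emph{constant} structure constants, completeness of the frame fields plus Palais' theorem does identify $\widetilde M$ with the simply connected group and $M$ with a compact quotient. (Your blanket claim that a flat $(G,G)$-structure on a compact manifold is automatically complete is false in general --- incomplete affine structures on compact manifolds exist --- but in the form just described it is not needed.)

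The genuine gap is that Step 2 \emph{is} the theorem, and the mechanism you propose for it does not work as stated. Two problems. (a) The dichotomy: $X\alpha=(b+c)\alpha$ only shows that $\{\alpha=0\}$ is closed and flow-invariant; to conclude it is empty or all of $M$ one must further show it is saturated by strong stable and unstable leaves and invoke their density, which requires an argument you do not supply. (b) More seriously, the claim that Livsic's theorem and a gauge $Y\mapsto e^{-f}Y$, $Z\mapsto e^{-g}Z$ make \emph{all} structure functions constant is false for the given parametrization: a gauge changes $b$ into $b-Xf$, so making $b$ constant is a cohomological equation over the flow whose periodic obstructions are the stable Lyapunov exponents of the periodic orbits, and these are \emph{not} constant for a general contact-Anosov flow with smooth splitting (they become constant only after the time change that the theorem is trying to produce --- which is why the contact case yields orbit equivalence, not conjugacy). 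The only combination whose periodic data vanish for free is $b+c$, via preservation of the volume $\lambda\wedge d\lambda$. Ghys's actual route around this, and the one this paper abstracts, is not frame normalization but the rigidity of the Lagrangian contact structure: one shows the \emph{curvature} of the associated Cartan geometry transforms with nontrivial weights under the flow (compare equation \eqref{equationactionPminmodulecourbure} and Lemma \ref{lemmaconditionisotropieannulationcourbure} in this paper) and is killed by recurrence, giving a $(\G,\X)$-structure, after which the holonomy is constrained by its centralizing one-parameter subgroup. You correctly flag this step as the ``main obstacle,'' but the specific tool you propose to overcome it cannot succeed without the time change it is supposed to construct.
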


\par We recall that a smooth non-singular flow $(\varphi^t)$ of a compact manifold $M$ is \emph{Anosov} if its differentials 
preserve two distributions $E^s$ and $E^u$, respectively called the stable and unstable distribution of $(\varphi^t)$,
satisfying $\Fitan{M}=E^s\oplus \R\frac{d\varphi^t}{dt}\oplus E^u$
and such that $E^s$ is \emph{uniformly contracted} by $(\varphi^t)$,
and $E^u$ \emph{uniformly expanded} by $(\varphi^t)$.
\par Under the smoothness assumption of $E^s$ and $E^u$, Ghys notices that the plane distribution $E^s\oplus E^u$ 
can only have two extreme geometrical behaviours:
either it integrates into a foliation, or it is a \emph{contact distribution} (\emph{i.e.} it is locally the kernel of a contact one-form).
In the first case, former results of Plante and Franks
conclude the proof, and lead to the suspension examples.
The work of Ghys in \cite{ghys} is therefore almost entirely devoted to three-dimensional \emph{contact-Anosov flows},
\emph{i.e.} when $E^s$ and $E^u$ are smooth, and $E^s\oplus E^u$ is  contact.
Under these geometrical assumptions, the pair $(E^s,E^u)$ is a \emph{rigid geometric structure} preserved by the Anosov flow, 
which makes the classification possible and leads to the finite coverings of geodesic flows. 
\par In this paper, we investigate the same kind of geometrical rigidity conditions, but for the discrete-time analogs
of Anosov flows that are the \emph{partially hyperbolic diffeomorphisms}. 

\subsection{Principal results}
We refer to \cite{crovisierpotrie} for a very complete introduction to partially hyperbolic diffeomorphisms,
for which we use the following definition.
\begin{definition}\label{definitionph}
A smooth diffeomorphism $f$ of a compact manifold $M$ is \emph{partially hyperbolic} if it preserves
a splitting $\Fitan{M}=E^s \oplus E^u \oplus E^c$ of the tangent bundle into three non-zero 
continuous distributions, 
satisfying the following dynamical conditions with respect to some Riemannian metric on $M$.
\begin{itemize}
\item The \emph{stable distribution} $E^s$ is \emph{uniformly contracted} by $f$, \emph{i.e.}
there is a non-zero integer $N$ such that
for any $x\in M$ and any unit vector $v^s\in E^s(x)$, 
\[
\norme{\Diff{x}{f^N}(v^s)}< 1.
\]
\item The \emph{unstable distribution} $E^u$ is \emph{uniformly expanded} by $f$, \emph{i.e.} uniformly contracted by $f^{-1}$.
\item The splitting is dominated, \emph{i.e.} 
there is a non-zero integer $N$ such that
for any $x\in M$, and any unit vectors $v^s\in E^s(x)$, $v^c\in E^c(x)$, and $v^u\in E^u(x)$, 
\[
\norme{\Diff{x}{f^N}(v^s)}<\norme{\Diff{x}{f^N}(v^c)}<\norme{\Diff{x}{f^N}(v^u)}.
\]
$E^c$ is called the \emph{central distribution}.
\end{itemize}
\end{definition}
The three invariant distributions of a partially hyperbolic diffeomorphism
have in general no reasons to be differentiable,
but we study in this paper the particular case when they are \emph{smooth},
\emph{i.e.} $\cc^\infty$,
and when $E^s\oplus E^u$ is furthermore a \emph{contact distribution}. \\

\par The (non-zero) time maps of the contact-Anosov flows appearing in Ghys Theorem \ref{theoremeghys} give us the first examples 
satisfying these geometrical conditions.
They have the following nice algebraic description (see \cite{ghys} for more details).
Let us denote by $\tilde{A}=\{a^t\}_{t\in\R}$ the one-parameter subgroup of the universal cover
$\SLtilde{2}$ of $\SL{2}$ generated by 
$
\left(
\begin{smallmatrix}
 1 & 0 \\
 0 & -1
\end{smallmatrix}
\right)
\in\mathfrak{sl}_2$.
Then for any cocompact lattice $\Gamma_0$ of $\SLtilde{2}$,
the flow $(R_{a^t})$ of right translations by $\tilde{A}$
on the quotient $\Gamma_0\backslash\SLtilde{2}$ is a finite covering of the geodesic flow of 
a compact hyperbolic surface (up to a constant rescaling of the time by a factor $\frac{1}{2}$),
and is thus Anosov.
Moreover, if a morphism $u\colon\Gamma_0\to\tilde{A}$
is such that the graph-group $\Gamma=\enstq{(\gamma,u(\gamma))}{\gamma\in\Gamma_0}$
acts freely, properly and cocompactly on $\SLtilde{2}$ by the action $(g,a)\cdot x=gxa$,
then $(R_{a^t})$ still induces an Anosov flow of the quotient $\Gamma\backslash\SLtilde{2}$,
which is a time-change of the former one (non-trivial if $u\neq\id$).
We will call these flows the \emph{three-dimensional algebraic contact-Anosov flows}.

\par In contrast, the following algebraic examples
are the time-map of none Anosov flow.
For $(\lambda,\mu)\in{\R^*}^2$,
we consider the automorphism 
\begin{equation}\label{equationdefinitionautomorphismHeis3}
\varphi_{\lambda,\mu}\colon
\begin{pmatrix}
1 & x & z \\
0 & 1 & y \\
0 & 0 & 1
\end{pmatrix}\in\Heis{3}
\mapsto
\begin{pmatrix}
1 & \lambda x & \lambda\mu z \\
0 & 1 & \mu y \\
0 & 0 & 1
\end{pmatrix}\in\Heis{3}
\end{equation}
of the Heisenberg group.
If $\varphi=\varphi_{\lambda,\mu}$, $g\in\Heis{3}$, $\Gamma$ is a cocompact lattice of $\Heis{3}$,
and $g\varphi(\Gamma)g^{-1}=\Gamma$, then $L_g\circ\varphi(\Gamma x)=\Gamma (g\varphi(x))$
is a well-defined diffeomorphism of the \emph{nil-$\Heis{3}$-manifold} $\Gamma\backslash\Heis{3}$.
If we moreover assume that either $\abs{\lambda}<1$ and $\abs{\mu}>1$, or the opposite, then
$L_g\circ \varphi$ is a partially hyperbolic diffeomorphism, whose
invariant distributions are smooth, and such that $E^s\oplus E^u$ is contact (see Paragraph \ref{soussoussectionstructHeis3}).
Concrete examples of cocompact lattices preserved by such automorphisms indeed exist, 
and we will call $L_g\circ \varphi$ a \emph{partially hyperbolic affine automorphism}. \\ 

\par The principal result of this paper is that, 
assuming all points are non-wandering,
there are no other examples than the two families we described precedently.
\begin{theoremintro}\label{theoremeprincipalPHdiffeos}
Let $M$ be a closed, connected and orientable three-dimensional manifold, 
and $f$ be a partially hyperbolic diffeomorphism of $M$ such that
\begin{itemize}
\item the stable, unstable, and central distributions $E^s$, $E^u$ and $E^c$ of $f$ are smooth,
\item $E^s\oplus E^u$ is a contact distribution,
\item and the non-wandering set $NW(f)$ equals $M$. 
\end{itemize}
Then we have the following description.
\begin{enumerate}
\item Either some finite power of $f$ is smoothly conjugated to a non-zero time-map 
 of a three-dimensional algebraic contact-Anosov flow,
\item or $f$ lifts by a smooth covering of order at most 4
to a partially hyperbolic affine automorphism of a nil-$\Heis{3}$-manifold.
\end{enumerate}
\end{theoremintro}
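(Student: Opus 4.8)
The plan is to reformulate the dynamical hypotheses geometrically and then exploit the rigidity of the resulting structure, in the spirit of Ghys' proof of Theorem~\ref{theoremeghys} but within the parabolic-geometry framework adapted to a discrete-time dynamics. Since $\dim M = 3$, the three invariant distributions are line fields, and the contact hypothesis says exactly that $\sigma := (E^s\oplus E^u,\, E^s,\, E^u)$ is a \emph{Lagrangian contact structure}: a contact plane field together with an ordered pair of transverse Legendrian line fields (a line field tangent to a contact plane field is automatically Legendrian). Such a structure carries a canonical normal Cartan geometry $(\mathcal{P}\to M,\omega)$ of parabolic type $(\G,\Pmin)$, where $\G=\PGL{3}$ acts on the variety $\G/\Pmin$ of full flags of $\R^3$ and $\omega$ is a $\g$-valued Cartan connection whose curvature $\kappa$ (equivalently its harmonic part) is the obstruction to local homogeneity; and since $f$ preserves $E^s$, $E^u$, $E^c$ it preserves $\sigma$, hence lifts to a bundle automorphism $F$ of $\mathcal{P}$ with $F^*\omega=\omega$.

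The first and principal step is to show that $(M,\sigma)$ is \emph{locally homogeneous}, i.e.\ that $\kappa$ is constant (a fixed element of the curvature module, possibly but not necessarily zero). This should follow from a contraction argument on the $F$-invariant curvature: writing the equivariance $\kappa\circ f = F_*\circ\kappa$ and differentiating, one controls $\mathrm{d}\kappa$ along $E^s$ and $E^u$ by comparing the exponential contraction of $\mathrm{D}f^n$ on $E^s$ (resp.\ of $\mathrm{D}f^{-n}$ on $E^u$) with the action of $F_*^n$ on the curvature module; the hypothesis $\NW(f)=M$ is exactly what lets one compare $\mathrm{d}\kappa$ at a point and at its far-off iterates, and the domination condition keeps the weights from conspiring, so that $\mathrm{d}\kappa$ must vanish on $E^s$ and on $E^u$. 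The transverse (central) direction is then free of charge: because $E^s\oplus E^u$ is a \emph{contact} distribution it is bracket-generating, so a function constant along the leaves of $E^s$ and of $E^u$ is locally constant by Chow--Rashevskii; hence $\kappa$ is locally, and therefore globally, constant. I expect the delicate points of the proof to lie here and in the next step: calibrating the weights so that the contraction argument genuinely forces $\mathrm{d}\kappa|_{E^s}=\mathrm{d}\kappa|_{E^u}=0$ under the weak recurrence provided by $\NW(f)=M$, and then establishing that the resulting locally homogeneous structure is \emph{complete} --- which, on a compact manifold, is not automatic and is where the partial hyperbolicity of $f$, rather than the geometry alone, must be used.

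Once $(M,\sigma)$ is locally homogeneous and complete, I would classify. Developing gives a diffeomorphism $\mathrm{dev}\colon\widetilde M\to\Omega$ onto an open subset of a homogeneous model $X$, a holonomy $\rho\colon\piun{M}\to\Aut(X)$ acting properly cocompactly on $\Omega$, and an element $g_0\in\Aut(X)$ with $\mathrm{dev}\circ\widetilde f=g_0\cdot\mathrm{dev}$ normalizing $\rho(\piun{M})$; partial hyperbolicity forces the $g_0$-action on $\Omega$ to admit a dominated splitting, and one reads off that $E^c$ must be the non-Legendrian central direction of $\sigma$. If $\kappa\equiv 0$ then $X=\G/\Pmin$; analysing the dynamics of such a $g_0$ on the flag variety together with cocompactness, $\Omega$ must be the open Bruhat cell $\cong\Heis{3}$, $\rho(\piun{M})$ a lattice of $\Heis{3}$, and $g_0$ acts on $\Heis{3}$ affinely, i.e.\ as some $L_g\circ\varphi_{\lambda,\mu}$ --- conclusion~(2). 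If $\kappa\not\equiv0$ the model is $\SLtilde{2}$ with the left-invariant Lagrangian contact structure coming from the root-space decomposition of $\slR{2}$, $\mathrm{dev}$ is onto, $\rho(\piun{M})$ is one of the graph-lattices of the introduction, and $g_0$ is --- after absorbing a left translation --- a right translation by an element of the diagonal subgroup $\tilde A$, so that $f$ is a non-zero time-map of a three-dimensional algebraic contact-Anosov flow --- conclusion~(1). Every other three-dimensional homogeneous Lagrangian contact model is excluded, either for carrying no $\R$-split automorphism with full non-wandering set (the compact and the generic solvable cases), or because its Anosov-type automorphisms have $E^s\oplus E^u$ integrable (the suspension case, $\mathfrak{sol}$), contradicting the contact hypothesis --- the analogue here of the integrable alternative in Ghys' dichotomy.

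Finally, the local isomorphism with the algebraic model is automatically smooth (a morphism of locally homogeneous Cartan geometries is smooth), so it only remains to bookkeep the finite ambiguities. One passes to a finite power of $f$ so that $g_0$ genuinely normalizes the lattice and preserves the coorientations of $E^s$, $E^u$, $E^c$; and the covering of order at most $4$ absorbs the two-to-one discrepancies between the groups effectively acting (such as $\SLtilde{2}$ versus $\PSL{2}$) and the covers needed to coorient the line fields, whence the bound $2\cdot 2 = 4$.
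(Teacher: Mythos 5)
Your global architecture (geometrize the invariant splitting as a Lagrangian contact structure, prove local homogeneity, classify the homogeneous models, prove completeness, read off the algebraic quotient) is the right one and matches the paper's. But your first and "principal" step contains the essential gap. The curvature of the normal Cartan geometry is a $\Pmin$-equivariant map $K\colon\hat M\to W_K$ on the Cartan bundle which is \emph{invariant} under the lift $\hat f$ ($K\circ\hat f=K$), so there is no transformation law of the form "$\kappa\circ f=F_*\circ\kappa$" with a controllable weight to play against the contraction of $\Diff{}{f^n}$ on $E^s$: to descend $K$ (or $dK$) to a tensor on $M$ and run a contraction/recurrence argument you must control the holonomy of $\hat f^n$ in the $\Pmin$-fibers, and that holonomy is \emph{unbounded} — this is precisely the difficulty that distinguishes the discrete-time problem from Ghys' (a contact-Anosov flow preserves a contact form and the Kanai connection, which normalize the fibers; a partially hyperbolic diffeomorphism preserves neither, as the paper stresses in \S\ref{soussectionintrocomparaisonGhys}). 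The paper's actual route is different: $\NW(f)=M$ plus the contact accessibility gives topological transitivity by Brin's argument; a dense orbit feeds Gromov's open-dense-orbit theorem, yielding local homogeneity of the \emph{enhanced} structure only on a dense open set $\Omega$; recurrence plus the algebraicity of the $\Pmin$-action on the curvature module turns the unboundedness of the holonomy into a non-compact stabilizer, hence a non-trivial isotropy algebra; Tresse's theorem (locally homogeneous \emph{with non-zero isotropy} implies flat) then gives the $(\G,\X)$-structure. Even granting all that, $\Omega=M$ is a separate, substantial step (Section \ref{sectionHYstructsurM}), proved by showing $E^c$ would degenerate into the contact plane along $\alpha$- and $\beta$-circles approaching $\delta(\partial O)\subset\X\setminus Y$; your Chow--Rashevskii remark does not substitute for it.

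A second, conceptual error: your dichotomy "$\kappa\equiv0\Rightarrow\Heis{3}$, $\kappa\not\equiv0\Rightarrow\SLtilde{2}$" is wrong. In \emph{both} conclusions the underlying Lagrangian contact structure is flat — $M$ carries a $(\G,\X)$-structure in all cases — because the left-invariant structure on $\SL{2}$ embeds as the open orbit $Y_\torus$ of $\GL{2}$ in the flag variety with its standard (flat) structure. What separates the two cases is the invariant \emph{central} line field, i.e.\ the infinitesimal model of the enhanced structure: the Killing algebra is $\mathfrak{h}_\torus\simeq\gl{2}$ with $E^c$ generated by the center, versus $\mathfrak{h}_\affine=\pmin$ with $E^c$ the Heisenberg center direction; ruling out the other candidate subalgebras ($\R^2\rtimes\slR{2}$, $\R^2\rtimes\similitude{2}$, proper subalgebras of $\pmin$) requires both the geometric condition that the isotropy preserve $E^c$ and a dynamical non-degeneracy of the isotropy eigenvalues. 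Finally, a small but concrete slip: the covering of order at most $4$ arises only in the Heisenberg case and comes from the index of $\Gamma\cap\Heis{3}$ in $\Gamma$, bounded by the order of the finite subgroup $\{\varphi_{\pm1,\pm1}\}$ of $\mathcal{A}$ — not from a $2\times2$ count of coorientation covers and the $\SLtilde{2}$-versus-$\PSL{2}$ ambiguity.
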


Actually, our geometrical conditions are so rigid that the uniformity of the contraction and the expansion of the diffeomorphism
will be obtained as a byproduct.
\begin{definition}\label{definitionweakly}
We will say that a distribution $E$ of a compact manifold $M$ is \emph{weakly contracted} by a diffeomorphism $f$, if for some 
Riemannian metric on $M$, we have for any $x\in M$: 
\[
\underset{n\to+\infty}{\lim}\norme{\Diff{x}{f^n}\restreinta_E}=0 \text{~or~} \underset{n\to-\infty}{\lim}\norme{\Diff{x}{f^n}\restreinta_E}=0.
\]
\end{definition}
We emphasize that the ``direction'' of weak contraction can \emph{a priori} change from point to point, and that
this notion 
is unchanged when replacing $f$ by $f^{-1}$.
\begin{theoremintro}\label{theoremeoptimal}
Let $M$ be a closed, connected and orientable three-dimensional manifold, endowed with a smooth splitting
$\Fitan{M}=E^\alpha\oplus E^\beta\oplus E^c$ such that $E^\alpha \oplus E^\beta$ is a contact distribution.
Let $f$ be a smooth diffeomorphism of $M$ that preserves this splitting, and such that
\begin{itemize}
 \item each of the distributions $E^\alpha$ and $E^\beta$ is weakly contracted by $f$,
 \item and $f$ has a dense orbit.
\end{itemize}
Then the conclusions of Theorem \ref{theoremeprincipalPHdiffeos} hold.
In particular, $f$ is a partially hyperbolic diffeomorphism.
\end{theoremintro}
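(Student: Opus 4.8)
### Proof Proposal for Theorem B

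The plan is to reduce Theorem~\ref{theoremeoptimal} to Theorem~\ref{theoremeprincipalPHdiffeos} by upgrading the weak hypotheses (weak contraction of $E^\alpha$ and $E^\beta$, existence of a dense orbit) into the strong ones (uniform hyperbolicity, $NW(f)=M$). The first step is to exploit the rigid geometric structure: the contact distribution $E^\alpha\oplus E^\beta$ together with the ordered pair of Legendrian line fields $(E^\alpha,E^\beta)$ is a \emph{Lagrangian contact structure} on $M$, and $f$ is an automorphism of it. Such a structure is rigid in the sense of Gromov; its local automorphisms form a finite-dimensional Lie (pseudo-)group, modelled on $\PGL{3}$ acting on the flag variety. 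The key structural input is that the central distribution $E^c$ is automatically the kernel of the Reeb-type data: locally $E^\alpha=\ker(\omega^\beta)\cap\ker(\omega^c)$ etc., and $f^*\omega^\alpha=a\,\omega^\alpha$, $f^*\omega^\beta=b\,\omega^\beta$ for smooth positive functions $a,b$ (after possibly passing to a power to fix orientations, which is harmless as the conclusions are insensitive to finite powers). Then $f^*(\omega^\alpha\wedge\omega^\beta)$ controls the contact volume, so $ab$ is cohomologous to a constant; combined with weak contraction this already forces $a$ and $b$ to have controlled behavior along orbits.

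The heart of the argument is to show that weak contraction plus a dense orbit forces genuine uniform partial hyperbolicity. I would first show that the ``direction'' of weak contraction of $E^\alpha$ (whether $\|\mathrm{D}f^n|_{E^\alpha}\|\to 0$ as $n\to+\infty$ or as $n\to-\infty$) is \emph{locally constant}, hence constant on $M$ by connectedness — say $E^\alpha$ is weakly contracted in forward time and $E^\beta$ in backward time (the case where both contract in the same time direction must be excluded using the contact condition, since $f^n$ would then shrink the area form $\omega^\alpha\wedge\omega^\beta$ in that direction, contradicting that it equals the contact volume up to a function with a recurrence point on the dense orbit's closure $=M$). Next, along the dense orbit the derivative cocycle $\log a$ on $E^\alpha$ has negative Birkhoff-type averages; a subadditive/recurrence argument using that $f$ permutes the uniform structure of the rigid geometry — i.e. the Gromov–Zimmer machinery for rigid structures with dense orbit — upgrades pointwise decay to \emph{uniform} exponential decay. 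The cleanest route is: the weakly contracted Legendrian foliation $\mathcal{W}^\alpha$ tangent to $E^\alpha$ is by lines; on each leaf $f$ acts as a contraction towards a point in forward time (pointwise), and the set of leaves where the contraction is ``uniform at a definite rate'' is $f$-invariant, open (by rigidity/continuity of the structure) and non-empty (it meets the dense orbit), hence all of $M$. This gives uniform contraction of $E^\alpha$ under some $f^N$, uniform expansion of $E^\beta$, and then domination of $E^c$ follows because $E^c$ is squeezed between them by the contact structure (the derivative on $E^c$ is comparable to $(ab)^{1/2}$, which is bounded). Finally $NW(f)=M$ is immediate once $f$ has a dense orbit and is now known to be partially hyperbolic — indeed every point is then non-wandering (the closure of the dense orbit is $M$, and for an invertible map a dense orbit already yields $NW(f)=M$).

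The main obstacle I anticipate is the upgrade from \emph{pointwise/weak} contraction to \emph{uniform} contraction: weak contraction a priori allows the contraction rate to degenerate (sub-exponentially, or with no uniform lower bound on $N$) near bad points, and ruling this out genuinely uses the rigidity of the Lagrangian contact structure rather than soft dynamics. The argument must show that the germ of $f$ at any point, being an automorphism of a rigid geometric structure that also has a dense orbit, is ``isogeneous'' to its germ along the dense orbit — this is exactly where a Gromov open-dense-orbit theorem (or its contact-geometric incarnation, as developed later in the paper for Lagrangian contact structures) is invoked to transport uniform estimates from one orbit to the whole manifold. A secondary, more technical point is the bookkeeping of finite powers and finite covers when normalizing orientations of $E^\alpha$, $E^\beta$, $E^c$ and of the contact form; but since both conclusions of Theorem~\ref{theoremeprincipalPHdiffeos} are explicitly stable under passing to finite powers and finite covers, this costs nothing. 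Once uniform partial hyperbolicity and $NW(f)=M$ are established, Theorem~\ref{theoremeprincipalPHdiffeos} applies verbatim and yields the two families, completing the proof.
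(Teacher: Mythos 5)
Your proposal has a fundamental structural problem: it tries to prove Theorem~\ref{theoremeoptimal} by \emph{reducing it to} Theorem~\ref{theoremeprincipalPHdiffeos}, but in the paper the logical dependence goes the other way. Theorem~\ref{theoremeprincipalPHdiffeos} is never established independently; it is deduced \emph{from} Theorem~\ref{theoremeoptimal} at the very end (Paragraph~\ref{soussectionpreuvetheoremeA}, via Brin's transitivity argument), and the entire body of the paper is the direct proof of Theorem~\ref{theoremeoptimal}: quasi-homogeneity of $(E^\alpha,E^\beta,E^c)$ on an open dense set via Gromov's open-dense orbit theorem and a recurrence/holonomy argument, flatness of the Lagrangian contact structure (Tresse), classification of the infinitesimal model into $\mathfrak{h}_\torus$ or $\mathfrak{h}_\affine$, the equality $\Omega=M$, completeness of the resulting $(H,Y)$-structure, and finally the identification of $M$ with $\Gamma\backslash\SLtilde{2}$ or $\Gamma\backslash\Heis{3}$. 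None of this classification work appears in your proposal, so even if your intermediate steps were correct you would have proved nothing without an independent proof of Theorem~\ref{theoremeprincipalPHdiffeos} --- which is precisely the content you are omitting. Note also that the paper obtains the uniformity of contraction/expansion only as a \emph{byproduct} of identifying $f$ with an explicit algebraic model, not as a preliminary step.

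Beyond the circularity, the steps you do sketch contain real gaps. The claim that $f^*\omega^\alpha=a\,\omega^\alpha$, $f^*\omega^\beta=b\,\omega^\beta$ with $ab$ ``cohomologous to a constant'' presupposes an invariant contact volume; the paper stresses that, unlike contact-Anosov \emph{flows}, a discrete-time automorphism of a contact distribution preserves no contact one-form and is a priori not conservative --- this is exactly the difficulty that forces the Cartan-geometric approach. Your key upgrade from weak (pointwise, direction-varying) contraction to uniform contraction rests on the assertion that the set of leaves where contraction is ``uniform at a definite rate'' is open ``by rigidity/continuity of the structure''; no argument is given, and this is the genuinely hard point: weak contraction allows the rate to degenerate arbitrarily, and openness of such a set does not follow from continuity of the distributions. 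The appeal to ``Gromov--Zimmer machinery'' is too vague to fill this: what the paper actually extracts from Gromov's theory is local homogeneity of the \emph{enhanced} structure on a dense open set together with non-compactness of the isotropy at a recurrent point (Proposition~\ref{corollairepremiereetapepreuve}), which is a statement about Killing fields, not a uniformization of contraction rates. Finally, your claim that the case where both $E^\alpha$ and $E^\beta$ are contracted in the same time direction ``must be excluded'' by a volume argument is not how the paper proceeds (cf.\ Corollary~\ref{corollairesansdomination}, which explicitly allows both directions to be contracted a priori); that case is eliminated only at the end, by the volume argument on the explicit nil-manifold quotient.
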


Theorem \ref{theoremeprincipalPHdiffeos} 
will directly follow from
Theorem \ref{theoremeoptimal} by an argument of Brin,
as explained in Paragraph \ref{soussectionpreuvetheoremeA} at the end of this paper.
We also give in this paragraph the precise satement of Theorem \ref{theoremeprincipalPHdiffeos}, 
that does not use any domination hypothesis on $E^c$ (see Corollary \ref{corollairesansdomination}).
The rest of the paper is devoted to the proof of Theorem \ref{theoremeoptimal}. \\

\par The classification question for partially hyperbolic diffeomorphisms in dimension three has led to a lot of works
in the recent years, 
and significant progress has been made concerning the general case,
as can be seen for instance in the survey \cite{hammerlindlpotrie}.
Recently, different additional rigidity conditions have also been studied.
\par Carrasco, Pujals and Rodriguez-Hertz obtain in \cite{carrascopujalsrodriguezhertz} a classification result
under the smoothness assumption of invariant distributions.
On the contrary of Theorem \ref{theoremeprincipalPHdiffeos}, no additional geometrical condition is assumed, but the authors assume 
that the differential of the partially hyperbolic diffeomorphism 
is constant when read in the global frame given by three smooth vector fields generating these distributions. 
The geometric structure $(E^s,E^u,E^c)$ defined by such a partially hyperbolic diffeomorphism is in general not rigid,
and their result is obtained through dynamical arguments.
\par Beside the smoothness assumption on invariant distributions, 
Bonatti and Zhang obtain in \cite{bonattizhang} different rigidity results 
in the continuous category, under specific dynamical assumptions. 

\subsection{A rigid geometric structure preserved by partially hyperbolic diffeomorphisms}
\label{soussectionintrocomparaisonGhys}

\par Roughly speaking, a \emph{rigid geometric structure} is a structure with ``few automorphisms''.
More precisely, they are those smooth geometric structures whose Lie algebra of local Killing fields 
(\emph{i.e.} local vector fields whose flow preserves the structure) is everywhere finite-dimensional. 
\par As d'Ambra and Gromov pointed out in \cite{dambragromov}, 
it is natural to believe that 
rigid geometric structures preserved by rich dynamical systems have to be particularly peculiar:
\emph{``one does not expect rigid geometry to be accompanied by rich dynamics''} (\cite[\S 0.3 p.21]{dambragromov}).
It seems thus reasonable to look for classification results in these situations.
The general idea is that rich dynamical properties will imply strong restrictions
on the rigid geometric structure, inducing in return a rigidity of  
the dynamical system itself. 

\par Several rigid geometric structures can be preserved by a contact-Anosov flow $(\varphi^t)$.
First of all, $(\varphi^t)$ always preserves a contact one-form $\alpha$
defined by $\alpha(\frac{d\varphi^t}{dt})=1$ and $\alpha\restreinta_{E^s\oplus E^u}=0$.
If the dimension is $2n+1$,
the induced volume form $\alpha\wedge (d\alpha)^n$ is then preserved by $(\varphi^t)$,
\emph{i.e.} contact-Anosov flows are always \emph{conservative}.
For contact-Anosov flows of any odd dimension, $(\varphi^t)$ moreover preserves a natural linear connection on the tangent bundle,
initially defined by Kanai in \cite{kanai}.
An invariant connection of this kind allowed for example Benoist, Foulon and Labourie to obtain a classification result for 
contact-Anosov flows of any odd dimension in \cite{bfl}. 
\par While these invariant rigid geometric structures require the 
existence of a continuous one-parameter flow,
we study in this paper rigid geometric structures preserved by \emph{discrete-time} dynamics. \\

\par The transition from a flow to a diffeomorphism completely changes the situation.
From a dynamical point of view, 
partially hyperbolic diffeomorphisms of ``contact'' type do not anymore preserve a contact one-form, and
are thus (\emph{a priori}) not conservative (which explains the extra hypothesis on non-wandering points).
From a geometrical point of view, the difficulties that appear
are analog to the ones of a conformal geometry in contrast with a metric geometry,
for example the invariant Kanai connection does not anymore exist.
This situation requires to look for a new rigid geometric structure. 
\par A contact plane distribution is far from being rigid:
according to Darboux's theorem, they are all locally isomorphic.
A single smooth one-dimensional distribution in a contact plane distribution is still not
sufficient to make it rigid.
But if the stable and unstable distributions of the partially hyperbolic diffeomorphism are smooth and of contact sum,
then the \emph{pair} $(E^s,E^u)$ is a rigid geometric structure,
called a \emph{Lagrangian contact structure}. 

\par For this structure, the invariant Kanai connection will be replaced by another type of connection called a 
\emph{Cartan connection}, that defines a \emph{Cartan geometry}
(actually, this Cartan geometry partially appears in \cite{ghys},
but under the disguised form of ``the geometry of second-order ordinary differential equations'').
The strength of Cartan geometries is to link the Lagrangian contact structures with 
the \emph{homogeneous model space} $\X=\PGL{3}/\Pmin$ of complete flags of $\R^3$ 
(where $\Pmin$ is the subgroup of upper-triangular matrices). 
In particular, the
\emph{flat} Lagrangian contact structures, \emph{i.e.} the ones whose \emph{curvature} identically vanishes, 
are locally isomorphic to $\X$ (see Paragraphs \ref{soussoussoussectioncourbureCartangeometrie} and \ref{soussoussectionflatlagcontact}).
The geometry of $\X$ will thus play a prominent role in this paper.
\par In \cite{barbot}, Barbot also studies the geometry of $\X$
and the dynamics of $\PGL{3}$, but with a different approach. 
His purpose is among others to construct Anosov representations in $\PGL{3}$, and compact quotients of open subsets of $\X$.


\subsection{Organization of the paper}
This paper is organised in the following way.
Section \ref{sectionstructlagcontact}
introduces several notions and results about three-dimensional Lagrangian contact structures,
that will be used in the whole paper.
At the end of the paper in Paragraph \ref{soussectionpreuvetheoremeA}, we prove 
Theorem \ref{theoremeprincipalPHdiffeos} from Theorem \ref{theoremeoptimal},
and the rest of the paper is devoted to the proof of Theorem \ref{theoremeoptimal}.
In Section \ref{sectionSlochomsurOmega}, 
we begin this proof by showing that the triplet
$\Sm=(E^\alpha,E^\beta,E^c)$ is \emph{quasi-homogeneous}, \emph{i.e.}
locally homogeneous in restriction to a dense open subset $\Omega$ of $M$, 
and that its isotropy on $\Omega$ is non-trivial.
This implies that the Lagrangian contact structure $(E^\alpha,E^\beta)$ 
is \emph{flat}, \emph{i.e.} that $M$ has a $(\PGL{3},\X)$-structure.
In Section \ref{sectionlocalmodeleSm}, we refine this description,
proving that $\Sm\restreinta_\Omega$ is locally isomorphic
to one of two possible homogeneous models $(Y_\torus,\Sm_\torus)$ or $(Y_\affine,\Sm_\affine)$.
This relies on a technical classification of the underlying infinitesimal model, 
done in Section \ref{sectionclassificationmodeleinfinitesimal}.
A critical step is to show in Section \ref{sectionHYstructsurM}
that the open dense subset $\Omega$ is actually equal to $M$, implying
that $M$ has a $(H,Y)$-structure, with two possible models
$(H_\torus,Y_\torus)$ or $(H_\affine,Y_\affine)$.
We prove in Section \ref{sectionstructKleinienne}
that this $(H,Y)$-structure is complete, 
implying that $(M,\Sm)$ is a compact quotient $\Gamma\backslash Y$ of one of these two models, with $\Gamma$
a discrete subgroup of $H=\Aut(Y)$.
This description 
allows us to conclude the proof of Theorem \ref{theoremeoptimal} in Paragraph \ref{soussectionfinpreuveTheoremeB}.

\subsection*{Conventions and notations}
From now on, every differential geometric object will be supposed to be smooth (\emph{i.e.} $\cc^\infty$) if nothing is precised,
and the manifolds will be supposed to be boundaryless. 
\par The flow of a vector field $X$ is denoted by $(\varphi^t_X)$. 
The Lie algebra of a Lie group $G$ is denoted by $\mathfrak{g}$, 
and for any $v\in\mathfrak{g}$, we denote by $\tilde{v}$ the left-invariant vector field of $G$ generated by $v$.
If $\Theta\colon G\times M\to M$ is a smooth group action (on the left or the right) of $G$ on a manifold $M$, then the 
orbital map of the action at $x\in M$ is denoted by $\theta_x=\Theta(\cdot,x)$,
and we denote by $L_g=\Theta(g,\cdot)$ the translation by $g\in G$ if the action is on the left
(respectively by $R_g$ if the action is on the right). 
For any $v\in\mathfrak{g}$ we denote by $v^\dag$ the \emph{fundamental vector field
of the action generated by $v$}, defined for $x\in M$ by $v^\dag(x)=\Diff{e}{\theta_x}(v)$.

\subsection*{Acknowledgments}
I would like to thank Charles Frances for proposing this subject to me, and for the precious
advices that he offers me.

\section{Three-dimensional Lagrangian contact structures}\label{sectionstructlagcontact}

The rigid geometric structures that will be studied in the rest of this paper are the following.

\begin{definition}\label{definitionstructlagcontact}
A \emph{Lagrangian contact structure $\mathcal{L}$} on a three-dimensional manifold $M$ is a pair 
$\mathcal{L}=(E^\alpha,E^\beta)$ of transverse one-dimensional smooth distributions, such that $E^\alpha\oplus E^\beta$
is a contact distribution.
An \emph{enhanced Lagrangian contact structure $\mathcal{S}$} on $M$ is a triplet $\mathcal{S}=(E^\alpha,E^\beta,E^c)$
of one-dimensional smooth distributions such that $\Fitan{M}=E^\alpha\oplus E^\beta \oplus E^c$, and $E^\alpha\oplus E^\beta$
is a contact distribution. \\
A (local) isomorphism between two Lagrangian contact structures 
is a (local) diffeomorphism that individually preserves the distributions $\alpha$ and $\beta$, and
the (local) isomorphisms of enhanced Lagrangian contact structures preserve in addition the central distribution $E^c$.
\end{definition}

We first define what will be for us the most important
example of three-dimensional Lagrangian contact structure.

\subsection{Homogeneous model space}
\label{soussoussectionespacehomogenemodele}

We will call \emph{projective line} the projection in $\RP{2}$ of a plane of $\R^3$, and
we denote by $\RP{2}_*$ the set of projective lines of $\RP{2}$ (called the \emph{dual projective plane}). 
For any subset $Q$ of $\R^{n+1}$
we denote by $[Q]$ the projection in $\RP{n}$ of the linear subspace of $\R^{n+1}$ generated by $Q$.
\par A \emph{pointed projective line} is
a pair $(m,D)$ with $D\in\RP{2}_*$ and $m\in D$,
and we denote by 
\[
 \X=\enstq{(m,D)}{D\in\RP{2}_*,m\in D}\subset\RP{2}\times\RP{2}_*
\]
the \emph{space of pointed projective lines}.
In other words, $\X$ is the space of complete flags of $\R^3$.
We will denote in the whole paper  by
\[
\G=\PGL{3}
\]
the group of projective transformations of $\RP{2}$.
As the projective action of $\G$ on $\RP{2}$ and $\RP{2}_*$ preserves the incidence relation $m\in D$, 
it induces a natural diagonal action of $\G$ on 
$\X\subset\RP{2}\times\RP{2}_*$.
The action of $\G$ on $\X$ is \emph{transitive}, and
the stabilizer in $\G$ of the base-point $o=([e_1],[e_1,e_2])$ of $\X$ is the subgroup 
\[
\Stab_\G(o)=
\Pmin=
\left\{
\begin{bmatrix}
 * & * & * \\
 0 & * & * \\
 0 & 0 & *
\end{bmatrix}
\right\}
<\G
\]
of upper-triangular matrices. 
From now on, we will identify $\X$ and $\G/\Pmin$ by the orbital map
$\bar{\theta}_o\colon\G/\Pmin\to\X$ at $o$.
The homogeneous space $\X$ is a $\RP{1}$-bundle over $\RP{2}$ and $\RP{2}_*$ through the coordinate projections
\begin{equation}\label{equationpialphaetpibeta}
\pi_\alpha\colon (m,D)\in\X\mapsto m\in\RP{2} \text{~and~} \pi_\beta\colon (m,D)\in\X\mapsto D\in\RP{2}_*.
\end{equation}
For $x=(m,D)\in\X$, we will denote by $\Calpha(x)=\Calpha(m)$ 
(respectively $\Cbeta(x)=\Cbeta(D)$) the fiber of $x$ with respect to $\pi_\alpha$ (resp. $\pi_\beta$),
and we will call it the \emph{$\alpha$-circle} (resp. the \emph{$\beta$-circle}) of $x$.
We denote by
\[
\Exalpha=\Ker(\Diff{}{\pi_\alpha}) \text{~and~} \Exbeta=\Ker(\Diff{}{\pi_\beta}),
\]
the one-dimensional vertical distributions of these bundles,
tangent respectively to the foliations by $\alpha$ and $\beta$-circles.
The sum $\Exalpha\oplus\Exbeta$ is contact
and we will call $\Lx=(\Exalpha,\Exbeta)$ 
the \emph{standard Lagrangian contact structure} of $\X$.
\begin{lemma}
The group $\G$ is the group of automorphisms of the standard Lagrangian contact structure $\Lx$.
In particular, the structure $(\X,\Lx)$ is homogeneous.
\end{lemma}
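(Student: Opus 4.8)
The plan is to prove two inclusions: every element of $\G=\PGL{3}$ is an automorphism of $\Lx=(\Exalpha,\Exbeta)$, and conversely every automorphism of $\Lx$ lies in $\G$. For the first inclusion, recall that the diagonal action of $\G$ on $\X\subset\RP{2}\times\RP{2}_*$ is induced by the projective actions on $\RP{2}$ and $\RP{2}_*$, which preserve the incidence relation. The coordinate projections $\pi_\alpha$ and $\pi_\beta$ are therefore $\G$-equivariant, so for any $g\in\G$ the differential $\Diff{}{g}$ maps $\Exalpha=\Ker(\Diff{}{\pi_\alpha})$ to itself and likewise $\Exbeta$; thus $g$ is an automorphism of $\Lx$. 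This also re-proves homogeneity, since $\G$ already acts transitively on $\X$.

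For the converse, let $\Phi$ be an automorphism of $\Lx$, i.e.\ a diffeomorphism of $\X$ preserving each of $\Exalpha$ and $\Exbeta$. Since $\Exalpha$ is tangent to the foliation by $\alpha$-circles and $\Phi$ preserves this foliation, $\Phi$ descends through $\pi_\alpha$ to a diffeomorphism $\underline{\Phi}_\alpha$ of $\RP{2}$; similarly it descends through $\pi_\beta$ to a diffeomorphism $\underline{\Phi}_\beta$ of $\RP{2}_*$. The key point is to identify $\underline{\Phi}_\alpha$ as a projective transformation. The idea is that the family of $\beta$-circles $\Cbeta(D)$ projects under $\pi_\alpha$ onto the family of projective lines of $\RP{2}$: indeed $\pi_\alpha(\Cbeta(D))=\enstq{m\in\RP{2}}{m\in D}=D$ as a subset of $\RP{2}$. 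Since $\Phi$ permutes the $\beta$-circles, $\underline{\Phi}_\alpha$ permutes the projective lines of $\RP{2}$, i.e.\ it is a diffeomorphism of $\RP{2}$ carrying lines to lines. By the fundamental theorem of projective geometry (in its differentiable incarnation — a line-preserving diffeomorphism of $\RP{2}$ is projective), $\underline{\Phi}_\alpha\in\G$ acting on $\RP{2}$. One then checks that the automorphism $g^{-1}\Phi$ of $\Lx$, where $g\in\G$ induces $\underline{\Phi}_\alpha$, covers the identity of $\RP{2}$; it is therefore a bundle automorphism of $\pi_\alpha\colon\X\to\RP{2}$ preserving the vertical distribution $\Exbeta$ as well, and using that it also descends through $\pi_\beta$ (again to a projective map of $\RP{2}_*$, by the same argument) one forces $g^{-1}\Phi$ to fix every flag, hence $\Phi=g\in\G$.

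The main obstacle is making the fundamental-theorem-of-projective-geometry step clean and honest: one must rule out collineations that are not projectivities. Over $\R$ the field automorphism is trivial, so a collineation of $\RP{2}$ is automatically in $\PGL{3}$, but one still needs enough incidences to apply the classical statement — here the smoothness of $\Phi$ (hence of $\underline{\Phi}_\alpha$) is more than enough, and in fact one only needs that $\underline{\Phi}_\alpha$ sends projective lines to projective lines. An alternative, perhaps cleaner, route avoiding any appeal to the projective-geometry theorem: work infinitesimally and show that the Lie algebra of local Killing fields of $\Lx$ is exactly $\g=\slR{3}$ (so the structure is rigid), then combine this with transitivity and the identification of the isotropy to get $\Aut(\X,\Lx)^\circ=\G$, and finally handle the component group by a connectedness/incidence argument. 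I would use whichever of these the paper's later machinery makes most economical; the line-preserving-diffeomorphism argument is the most self-contained and is what I would write up first.
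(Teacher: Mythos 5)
Your proof is correct and follows essentially the same route as the paper's: both directions are established by noting that $\G$ permutes the $\alpha$- and $\beta$-circles, and conversely that an automorphism descends through $\pi_\alpha$ to a line-preserving diffeomorphism of $\RP{2}$, hence a projectivity by the classical theorem (the paper cites Samuel for exactly this). Your final step checking that $g^{-1}\Phi$ fixes every flag is a detail the paper elides, but it is the right way to close the argument.
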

\begin{proof}
First of all, the action of $\G$ preserves the foliations of $\X$ by $\alpha$ and $\beta$-circles, \emph{i.e.} preserves the 
structure $\Lx=(\Exalpha,\Exbeta)$. 
Conversely, if $f$ is a diffeomorphism of $\X$ that preserves $\Lx$, the fact that $f$ preserves the foliation
by $\alpha$-circles simply means that it induces a diffeomorphism $\bar{f}$ of $\RP{2}$ for which $f$ is a lift through 
the projection $\pi_\alpha$.
As $f$ moreover preserves the foliation by $\beta$-circles, $\bar{f}$ maps any projective line to a projective line. 
This implies that $\bar{f}$ is a projective transformation
according to a classical result of projective geometry
(proved for example in \cite[Theorem 7 p.32]{samuel}), \emph{i.e.} that $f$ is induced by the action of an element of $\G$. 
\end{proof}

\subsection{Lagrangian contact structures as Cartan geometries}\label{soussectiongeomcartanassociee}

We now introduce the Cartan geometries modelled on the homogeneous space $\G/\Pmin$,
and make the link with Lagrangian contact structures.
This notion will be our principal technical tool to deal with Lagrangian contact structures.
We refer the reader to \cite{sharpe} or \cite{capslovak} for further details about Cartan geometries in a more general context.

\subsubsection{Cartan geometries modelled on $\G/\Pmin$}

\begin{definition}\label{definitionSL3PminCartangeometrie}
A \emph{Cartan geometry $\mathcal{C}=(\hat{M},\omega)$ modelled on $\G/\Pmin$} 
on a three-dimensional manifold $M$
is the data 
of a $\Pmin$-principal bundle over $M$ denoted by $\pi\colon\hat{M}\to M$ and called the \emph{Cartan bundle},
together with a $\g$-valued one-form $\omega\colon\Fitan{\hat{M}}\to\g$ on $\hat{M}$ called the
\emph{Cartan connection},
that satisfies the three following properties:
\begin{enumerate}
  \item $\omega$ defines a \emph{parallelism of $\hat{M}$}, \emph{i.e.} 
  for any $\hat{x}\in\hat{M}$, $\omega_{\hat{x}}$ is a linear isomorphism from $\Tan{\hat{x}}{\hat{M}}$ to $\g$,
  \item $\omega$ \emph{reproduces the fundamental vector fields of the right action of $\Pmin$},
  \emph{i.e.} for any $v\in\g$ and $\hat{x}\in\hat{M}$ we have: 
  $v^\dag(\hat{x})=\frac{d}{dt}\restreinta_{t=0}\hat{x}\cdot \e^{tv}=\omega_{\hat{x}}^{-1}(v)$,
  \item and $\omega$ is \emph{$\Pmin$-equivariant}, \emph{i.e.} for any $p\in\Pmin$ and $\hat{x}\in\hat{M}$
  we have: $R_p^*\omega=\Ad(p)^{-1}\circ\omega$ (where $\Ad(p)$ stands for the adjoint action of $p$).
 \end{enumerate}
A \emph{(local) automorphism} $f$ of the Cartan geometry $\mathcal{C}$ 
between two open sets $U$ and $V$ of $M$ is a (local) diffeomorphism from $U$ to $V$ that lifts
to a $\Pmin$-equivariant (local) diffeomorphism $\hat{f}$ between $\pi^{-1}(U)$ and $\pi^{-1}(V)$,
such that $\hat{f}$ preserves the Cartan connexion $\omega$ (\emph{i.e.} $\hat{f}^*\omega=\omega$). 
\end{definition}

\begin{example}\label{exampleflatCartangeometrymodel}
The homogeneous model space $\X$ is endowed with the \emph{Cartan geometry of the model}
$\mathcal{C}_\X=(\G,\omega_\G)$, given by 
the canonical $\Pmin$-bundle $\pi_\G\colon\G\to\G/\Pmin=\X$ over $\X$,
together with the Maurer-Cartan one-form $\omega_\G\colon\Fitan{\G}\to\g$ defined by 
$\omega_\G(\tilde{v})\equiv v$ on the left-invariant vector fields of $\G$).
\end{example}

We consider for the rest of the subsection a Cartan geometry $(M,\mathcal{C})=(M,\hat{M},\omega)$ modelled on $\G/\Pmin$.

\subsubsection{Curvature of a Cartan geometry}\label{soussoussoussectioncourbureCartangeometrie}

The following definition replaces the curvature of a Riemannian metric in the 
case of Cartan geometries.
\begin{definition}\label{definitioncourburegeometriesdeCartan}
The \emph{curvature form} of $\mathcal{C}$ is the $\g$-valued
two-form $\Omega$ of $\hat{M}$ defined by the following relation for two vector fields $X$ and $Y$ on $\hat{M}$:
\begin{equation}\label{equationdefinitionformecourbure}
\Omega(X,Y)=d\omega(X,Y)+[\omega(X),\omega(Y)].
\end{equation}
Thanks to the connection $\omega$, the curvature form $\Omega$ is equivalent to a 
\emph{curvature map} $K\colon\hat{M}\to\Lin(\Lambda^2\g,\g)$ on $\hat{M}$ 
(that we will often simply call the \emph{curvature} of $\mathcal{C}$), 
having values in the vector space of $\g$-valued alternated bilinear maps on $\g$, and
defined by the following relation for $\hat{x}\in\hat{M}$ and $v,w\in\g$:
\begin{equation}\label{equationdefinitionfonctioncourbure}
K_{\hat{x}}(v,w)=\Omega(\omega_{\hat{x}}^{-1}(v),\omega_{\hat{x}}^{-1}(w)).
\end{equation}
We will say that the Cartan geometry $\mathcal{C}$ (or the Cartan connection $\omega$) is \emph{torsion-free} if $K_{\hat{x}}(v,w)\in\pmin$
for any $\hat{x}\in\hat{M}$ and $v,w\in\g$.
\end{definition}

 If $v$ or $w$ is tangent to the fiber of the principal bundle $\hat{M}$,
then the curvature form satisfies $\Omega(v,w)=0$ (this is proved in \cite[Chapter 5 Corollary 3.10]{sharpe}).
As $\omega$ maps the tangent space of the fibers to $\pmin$
(because the fundamental vector fields are $\omega$-invariant),
this implies that the curvature $K(v,w)$ vanishes whenever $v$ or $w$ is in $\pmin$.
As a consequence at any point $\hat{x}\in\hat{M}$, $K_{\hat{x}}$ induces a $\g$-valued alternated bilinear map on $\g/\pmin$,
and we will identify in the sequel $K$ with the induced map 
\begin{equation}\label{equationcourbureinduitegsurpmin}
K\colon\hat{M}\to\Lin(\Lambda^2(\g/\pmin),\g).
\end{equation}
The adjoint action of $\Pmin$ induces a linear left action on $\Lin(\Lambda^2(\g/\pmin),\g)$
defined 
for $p\in\Pmin$ and $K\in\Lin(\Lambda^2(\g/\pmin),\g)$ by
\begin{equation}\label{equationdefinitionleftactionPminW}
p\cdot K \colon u\wedge v \mapsto \Ad(p)\cdot (K(\overline{\Ad}(p)^{-1}\cdot u,\overline{\Ad}(p)^{-1}\cdot v)).
\end{equation}
Using the linear right action of $\Pmin$ on $\Lin(\Lambda^2(\g/\pmin),\g)$ defined by $K\cdot p\coloneqq p^{-1}\cdot K$,
$K$ is $\Pmin$-equivariant
(this is proved in \cite[Chapter 5 Lemma 3.23]{sharpe}), and $K$ is moreover
preserved by any local automorphism $f$ of the Cartan 
geometry (i.e 
$K\circ\hat{f}=K$ for any automorphism).

\subsubsection{Lagrangian contact structure induced by a Cartan geometry}\label{soussoussectionlagcontactduneCartan}

At any point $x\in M$ and for any $\hat{x}\in\pi^{-1}(x)$, we denote
by $i_{\hat{x}}\colon\Tan{x}{M}\to\g/\pmin$ the unique isomorphism satisfying
\begin{equation}\label{equationdefinitionig}
i_{\hat{x}}\circ\Diff{\hat{x}}{\pi}=\overline{\omega}_{\hat{x}},
\end{equation}
where $\overline{\omega}$ denotes the projection of $\omega$ on $\g/\pmin$.
As the adjoint action of $\Pmin$ preserves $\pmin$, it induces a representation 
$\overline{\Ad}\colon\Pmin\to\mathrm{GL}(\g/\pmin)$ on the quotient, and
the equivariance of $\omega$ implies the following relation for any $p\in\Pmin$:
\begin{equation}\label{equationrelationig}
 i_{\hat{x}\cdot p}=\overline{\Ad}(p)^{-1}\circ i_{\hat{x}}.
\end{equation}
This relation shows that any $\overline{\Ad}(\Pmin)$-invariant object on $\g/\pmin$ gives rise, through the isomorphisms $i_{\hat{x}}$,
to a well-defined object on the tangent bundle of $M$.
Let us apply this idea to define a Lagrangian contact structure on $M$ associated to the Cartan geometry $\mathcal{C}$.
We introduce 
\begin{equation}\label{equationbaseg-} 
e_{\alpha}=\left(\begin{smallmatrix}
        0 & 0 & 0 \\
        0 & 0 & 0 \\
        0 & 1 & 0
       \end{smallmatrix}\right),  
 e_{\beta}=\left(\begin{smallmatrix}
        0 & 0 & 0 \\
        1 & 0 & 0 \\
        0 & 0 & 0
       \end{smallmatrix}\right), 
        e_{0}=\left(\begin{smallmatrix}
        0 & 0 & 0 \\
        0 & 0 & 0 \\
        1 & 0 & 0
       \end{smallmatrix}\right),
\end{equation}
defining a basis $(\bar{e}_\alpha,\bar{e}_\beta,\bar{e}_0)$ of $\g/\pmin$, in which the matrix of the adjoint action of 
\[
p=
\begin{pmatrix}
a & x & z \\
0 & a^{-1}b^{-1} & y \\
0 & 0 & b
\end{pmatrix}\in \Pmin
\]
is equal to
\begin{equation}\label{actionadjointePmin}
\Mat_{(\bar{e}_\alpha,\bar{e}_\beta,\bar{e}_0)}(\overline{\Ad}(p))=
\begin{pmatrix}
a^{-2}b^{-1} & 0       & a^{-1}y \\
0                   & ab^2 & -b^2x \\
0                   & 0      & a^{-1}b
\end{pmatrix}.
\end{equation}
In particular, the adjoint action of $\Pmin$ individually preserves the lines $\R\bar{e}_\alpha$ and
$\R\bar{e}_\beta$ of $\g/\pmin$. 
Together with the relation \eqref{equationrelationig}, this shows that for $x\in M$, 
the lines $i_{\hat{x}}^{-1}(\R\bar{e}_\alpha)$ and $i_{\hat{x}}^{-1}(\R\bar{e}_\beta)$ of $\Tan{x}{M}$ do not depend on the lift 
$\hat{x}$ of $x$.
The Cartan geometry $\mathcal{C}$ induces thus two one-dimensional distributions 
$E^\alpha_\mathcal{C}(x)=i_{\hat{x}}^{-1}(\R\bar{e}_\alpha)$ and
$E^\beta_\mathcal{C}(x)=i_{\hat{x}}^{-1}(\R\bar{e}_\beta)$ on $M$, and
the curvature of $\mathcal{C}$ will say when do those distributions define a Lagrangian contact structure.

\begin{lemma}\label{lemmalagcontactinduite}
Any torsion-free Cartan geometry $(M,\mathcal{C})$ modelled on $\G/\Pmin$ induces
a Lagrangian contact structure $(E^\alpha_\mathcal{C},E^\beta_\mathcal{C})$ on the three-dimensional base manifold $M$.
\end{lemma}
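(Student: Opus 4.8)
The plan is to unwind the definition of a Lagrangian contact structure (Definition \ref{definitionstructlagcontact}) and check its three requirements for the pair $(E^\alpha_\mathcal{C},E^\beta_\mathcal{C})$ constructed just above: that $E^\alpha_\mathcal{C}$ and $E^\beta_\mathcal{C}$ are smooth one-dimensional distributions, that they are transverse, and that their sum $E^\alpha_\mathcal{C}\oplus E^\beta_\mathcal{C}$ is a contact distribution. The first two points are essentially formal. Smoothness follows because $\omega$ is a smooth parallelism, so the isomorphisms $i_{\hat{x}}$ of \eqref{equationdefinitionig} depend smoothly on $\hat{x}$, and choosing a local smooth section $\sigma\colon U\to\hat{M}$ of the Cartan bundle gives smooth local frames $x\mapsto i_{\sigma(x)}^{-1}(\bar{e}_\alpha)$ and $x\mapsto i_{\sigma(x)}^{-1}(\bar{e}_\beta)$. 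Transversality is immediate from the fact that $\bar{e}_\alpha$ and $\bar{e}_\beta$ span distinct lines in $\g/\pmin$ and each $i_{\hat{x}}$ is a linear isomorphism.

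The real content is the contact condition, and the plan is to reduce it to a curvature computation. First I would fix a local section $\sigma$ and pull back $\omega$ to $\theta=\sigma^*\omega$, a $\g$-valued one-form on $U$; write $\theta=\sum_\xi \theta^\xi\, e_\xi$ in the basis $(e_\alpha,e_\beta,e_0,\dots)$ completing \eqref{equationbaseg-} to a basis of $\g$. Then $E^\alpha_\mathcal{C}\oplus E^\beta_\mathcal{C}$ is exactly the kernel of the one-form $\theta^0$ dual to $e_0$ (modulo $\pmin$-components), since $\bar{e}_\alpha,\bar{e}_\beta$ together with the kernels of the $\pmin$-coordinates cut out that plane. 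So I must show $\theta^0\wedge d\theta^0$ is nowhere zero. For this I use the structure equation: projecting \eqref{equationdefinitionformecourbure} modulo $\pmin$ gives $d\bar\omega(X,Y)+\overline{[\omega(X),\omega(Y)]}=\bar\Omega(X,Y)$, and the torsion-free hypothesis says precisely that $\Omega$ has values in $\pmin$, hence $\bar\Omega=0$ and $d\bar\omega=-\overline{[\omega,\omega]}$ on the image of any section. Taking the $e_0$-component and using the bracket relations in $\g$ — one checks $[e_\alpha,e_\beta]=-e_0$ up to $\pmin$, which is the Heisenberg relation hiding in $\g/\pmin$ — one gets $d\theta^0 \equiv c\,\theta^\alpha\wedge\theta^\beta \pmod{\theta^0}$ for a nonzero constant $c$. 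Since $\theta^\alpha,\theta^\beta,\theta^0$ are pointwise linearly independent (as $\theta=\sigma^*\omega$ is part of a coframe), $\theta^0\wedge d\theta^0 = c\,\theta^0\wedge\theta^\alpha\wedge\theta^\beta\neq 0$, which is the contact condition.

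I expect the bracket bookkeeping — verifying that the $\g/\pmin$-component of $[\,\cdot\,,\,\cdot\,]$ on $\g$ is exactly the nondegenerate Heisenberg bracket on $\Vect(\bar e_\alpha,\bar e_\beta)\oplus\R\bar e_0$, and in particular that the coefficient $c$ above does not vanish — to be the one genuinely substantive point, although it is a short explicit matrix computation with \eqref{equationbaseg-}. The torsion-freeness enters only to kill the curvature contribution $\bar\Omega$; without it the plane $E^\alpha\oplus E^\beta$ need not even be well-defined by $i_{\hat x}$ in a curvature-independent way, but more to the point its contact character could be destroyed by a nonvanishing $\g/\pmin$-valued curvature term. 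I would therefore present the argument as: (i) smoothness and transversality from the parallelism, (ii) identification of $E^\alpha_\mathcal{C}\oplus E^\beta_\mathcal{C}$ as $\ker\theta^0$ for a local pullback coframe, (iii) structure equation plus torsion-freeness plus the explicit bracket $[\bar e_\alpha,\bar e_\beta]\neq 0$ in $\g/\pmin$ to conclude $\theta^0\wedge d\theta^0\neq 0$.
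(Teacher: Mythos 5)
Your proof is correct and follows essentially the same route as the paper's: both reduce the contact condition to the structure equation, torsion-freeness killing the curvature term, and the bracket $[e_\alpha,e_\beta]=e_0\notin\pmin$ (the Heisenberg relation in $\g/\pmin$). The only difference is packaging --- the paper checks $[X_\alpha,X_\beta]\notin\Vect(X_\alpha,X_\beta)$ for the pushed-down $\omega$-constant vector fields, while you dually exhibit the local contact form $\theta^0$; these are equivalent characterizations of a contact plane field on a three-manifold.
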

\begin{proof}[Sketch of proof]
For $x\in M$, considering a local section of the Cartan bundle over $x$, we can push down by $\pi$ the 
$\omega$-constant vector fields $\tilde{e}_\alpha$
and $\tilde{e}_\beta$ of $\hat{M}$ (characterized by $\omega(\tilde{e}_\varepsilon)\equiv e_\varepsilon$)
to local vector fields $X_\alpha$ and $X_\beta$ of $M$ defined on a neighbourhood of $x$,
that respectively generate the distributions $E^\alpha_\mathcal{C}$ and $E^\beta_\mathcal{C}$.
If $K$ has values in $\pmin$, the identity $\omega([\tilde{e}_\alpha,\tilde{e}_\beta])=[e_\alpha,e_\beta]-K(e_\alpha,e_\beta)$ 
(deduced from Cartan's formula
for the differential of a one-form) implies easily that $[X_\alpha,X_\beta]\notin\Vect(X_\alpha,X_\beta)$ 
in the neighbourhood of $x$, finishing the proof.
\end{proof}

\begin{remark}\label{remarkdistributioncartangeometriemodele}
In the case of the Cartan geometry of the model, it is easy to check that 
$(E^\alpha_{\mathcal{C}_\X},E^\beta_{\mathcal{C}_\X})$ is the standard Lagrangian contact structure $\Lx$ of $\X$.
\end{remark}

\subsection{Normal Cartan geometry of a Lagrangian contact structure}
\label{soussoussectionSL3PminCartangeometries}

Any three-dimensional Lagrangian contact structure is actually
induced by a torsion-free Cartan geometry modelled on $\G/\Pmin$.
This equivalence between three-dimensional Lagrangian contact structures 
and Cartan geometries modelled on $\G/\Pmin$ was discovered by Élie Cartan, who developped this notion and after whom
these geometries are named.

\subsubsection{Equivalence problem for Lagrangian contact structures}

A given three-dimensional Lagrangian contact structure is induced by several Cartan connections, and
to obtain an equivalence between both formulations, 
we have to choose a particular one.
This choice will be done through a \emph{normalisation condition} on the curvature.
Using
the basis $(\bar{e}_\alpha\wedge\bar{e}_0,\bar{e}_\beta\wedge\bar{e}_0,\bar{e}_\alpha\wedge\bar{e}_\beta)$ of 
$\Lambda^2(\g/\pmin)$, we consider
the following four-dimensional subspace of $\Lin(\Lambda^2(\g/\pmin),\g)$:
\begin{equation}\label{equationmodulecourburesnormales}
W_K=\left\{ K\colon \bar{e}_\alpha\wedge\bar{e}_0 \mapsto 
                      \left(\begin{smallmatrix} 0 & 0 & K^\alpha \\
 												  0 & 0 & K_\alpha \\
 												  0 & 0 & 0
 			        \end{smallmatrix}\right),
 			        \bar{e}_\beta\wedge\bar{e}_0 \mapsto 
                      \left(\begin{smallmatrix} 0 & K_\beta & K^\beta \\
 												  0 & 0 & 0 \\
 												  0 & 0 & 0
 			        \end{smallmatrix}\right),
 			        \bar{e}_\alpha\wedge\bar{e}_\beta\mapsto 0	\right\}.											
\end{equation}
The linear action of $\Pmin$ preserves $W_K$, that
will be called the \emph{space of normal curvatures}.
Theorem \ref{theoremeprobequivalencestructlagcontact} below is proved in
\cite[Theorem 3 p.14]{doubrovkomrakov}, where the normalisation condition is explicitely calculated
through Cartan's \emph{method of equivalence}
(see also \cite[Theorem 3.1.14 p.271 and Paragraph 4.2.3]{capslovak} that makes the link with general
parabolic Cartan geometries).
\begin{theorem}[E. Cartan, \cite{doubrovkomrakov}, \cite{capslovak}]\label{theoremeprobequivalencestructlagcontact}
For any Lagrangian contact structure $\mathcal{L}$ on a three-dimensional manifold $M$,
there exists a torsion-free Cartan geometry modelled on $\G/\Pmin$
inducing $\mathcal{L}$ on $M$, and whose curvature map has values in the space $W_K$ of normal curvatures.
Such a Cartan geometry 
is unique (up to action of principal bundle automorphisms covering the identity on $M$), and
will be called 
\emph{the normal Cartan geometry of $\mathcal{L}$}. 
\end{theorem}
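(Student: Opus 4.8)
Existence and essential uniqueness of a normal torsion-free Cartan geometry modelled on $\G/\Pmin$ inducing a given three-dimensional Lagrangian contact structure $\mathcal{L}$.

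The plan is to run Cartan's method of equivalence, building the Cartan bundle and connection by successive reduction of the structure group, and then to settle uniqueness by a Lie-algebra cohomology argument in the style of parabolic Cartan geometry. First I would fix local generators $X_\alpha$, $X_\beta$ of $E^\alpha$, $E^\beta$, set $X_0 = [X_\alpha, X_\beta]$ (which is transverse to $E^\alpha \oplus E^\beta$ precisely because the sum is contact), and thus obtain a local coframe $(\eta^\alpha, \eta^\beta, \eta^0)$ dual to $(X_\alpha, X_\beta, X_0)$. The ambiguity in this choice is exactly the action of a subgroup $G_0 < \GL{3}$ preserving the flag $E^\alpha \subset E^\alpha \oplus E^\beta$ together with the bracket relation $[E^\alpha, E^\beta] \equiv E^0$; one checks that $G_0$ is canonically the reductive Levi factor of $\Pmin$, i.e. the diagonal torus (up to the relevant normalizations), so the adapted coframes form a $G_0$-structure $\hat{M}_0 \to M$. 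This is the \emph{first-order} part and is essentially bookkeeping: the tautological $\R^3$-valued form on $\hat{M}_0$ plus a choice of $\mathfrak{g}_0$-valued connection form gives a $\mathfrak{g}_0 \oplus \R^3$-valued coframe, and one then prolongs.

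Next I would carry out the prolongation to a full $\Pmin$-bundle $\hat{M} \to M$ and a $\g$-valued form $\omega = \omega_{-} \oplus \omega_0 \oplus \omega_{+}$, where the grading $\g = \mathfrak{g}_{-} \oplus \mathfrak{g}_0 \oplus \mathfrak{g}_{+}$ is the one induced by $\Pmin$ (contact grading of $\slR{3}$: $\mathfrak{g}_{-} = \R\bar{e}_\alpha \oplus \R\bar{e}_\beta \oplus \R\bar{e}_0$ as in \eqref{equationbaseg-}, with $\mathfrak{g}_0$ the torus and $\mathfrak{g}_{+}$ the nilradical). The structure equations force $d\omega + \tfrac12[\omega,\omega] = \Omega$ with $\Omega$ the curvature two-form of Definition \ref{definitioncourburegeometriesdeCartan}; I would impose the torsion-freeness condition $\Omega$-values in $\pmin$ (this kills the $\mathfrak{g}_{-}$-component of the curvature and is forced by the bracket normalization $[X_\alpha,X_\beta]=X_0$), and then impose the normalization that the $\mathfrak{g}_0 \oplus \mathfrak{g}_{+}$-component of $K$ lies in the complement $W_K$ of \eqref{equationmodulecourburesnormales}. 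The point is that $W_K$ is a $\Pmin$-invariant complement to the image of the relevant Lie-algebra coboundary operator $\partial^*$ (equivalently $\partial$) acting on $\Lin(\Lambda^2(\g/\pmin),\g)$; producing such a complement explicitly — as is done in \cite{doubrovkomrakov} via Cartan's method — is exactly what pins down the connection uniquely at each prolongation step. Concretely one verifies that normalizing the curvature into $W_K$ determines $\omega_0$, then $\omega_{+}$, uniquely as functions on the prolonged bundle.

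The uniqueness statement then follows formally: if $\omega$ and $\omega'$ are two torsion-free normal Cartan connections on (isomorphic) bundles inducing the same $\mathcal{L}$, their difference $\phi = \omega' - \hat\Phi^*\omega$ (after matching bundles by a principal automorphism over $\id_M$) is a $\mathfrak{p}_{+}$-valued horizontal, $\Pmin$-equivariant one-form; the obstruction to killing $\phi$ by a further automorphism lives in the cohomology $H^1$ of $\mathfrak{p}_{+}$ with the appropriate coefficients, which vanishes in this $\vert 2 \vert$-graded parabolic situation — this is the abstract mechanism behind the explicit cancellations in \cite{doubrovkomrakov} and the general statement \cite[Theorem 3.1.14 p.271]{capslovak}. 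I expect the main obstacle to be purely computational stamina: verifying that the chosen normalization subspace $W_K$ is genuinely $\Pmin$-invariant and genuinely transverse to the coboundaries at \emph{both} the degree-one ($\omega_0$) and degree-two ($\omega_{+}$) prolongation steps, i.e. that the normalization is simultaneously achievable and rigidifying. Since Theorem \ref{theoremeprobequivalencestructlagcontact} is cited verbatim from \cite{doubrovkomrakov} (with the parabolic-geometry cross-reference to \cite{capslovak}), in the paper itself I would not reproduce this computation but simply invoke those references, which is what the statement does.
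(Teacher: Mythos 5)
The paper gives no proof of this statement: it is imported verbatim from \cite{doubrovkomrakov} and \cite{capslovak}, which is exactly how you conclude, and your sketch of the intermediate machinery (adapted coframes forming a bundle over the Levi torus of $\Pmin$, prolongation to a $\Pmin$-bundle, normalization of the curvature into the $\Pmin$-invariant subspace $W_K$ complementary to the coboundaries, and uniqueness from the vanishing of the relevant first Lie-algebra cohomology in positive homogeneity) is a faithful outline of what those references actually do. Your treatment therefore matches the paper's; the only quibbles are cosmetic (the normalization space is $\Ker(\partial^*)$, a complement to the image of $\partial$ rather than of $\partial^*$, and the uniqueness obstruction lives in $H^1(\g/\pmin,\g)$ in positive homogeneity), neither of which affects the argument.
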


Furthermore, if $(M_1,\mathcal{L}_1)$ and $(M_2,\mathcal{L}_2)$ are two three-dimensional 
Lagrangian contact structures, 
and $\mathcal{C}_1$, $\mathcal{C}_2$ are the associated normal Cartan geometries,
then the (local) isomorphisms between
$\mathcal{L}_1$ and $\mathcal{L}_2$ and the (local) isomorphism
between $\mathcal{C}_1$ and $\mathcal{C}_2$ are the same.
This a direct consequence of the unicity of the normal Cartan geometry.
The curvature map $K\colon\hat{M}\to W_K$ of the normal Cartan geometry of a 
three-dimensional Lagrangian contact structure $\mathcal{L}$
will simply be called \emph{the curvature of $\mathcal{L}$}.

\subsubsection{Flat Lagrangian contact structures}\label{soussoussectionflatlagcontact}

The homogeneous model space $(\X,\Lx)$ verifies the following analog of Liouville's theorem.
\begin{theorem}\label{theoremLiouvilleXG}
For any connected open subsets $U$ and $V$ of the homogeneous model space $\X$, 
and any diffeomorphism $f$ from $U$ to $V$ that preserves its standard Lagrangian contact structure $\Lx$,
there exists $g\in\G$ such that $f$ is the restriction to $U$ of the translation by $g$.
\end{theorem}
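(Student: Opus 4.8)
The plan is to deduce this from the uniqueness part of the equivalence theorem (Theorem \ref{theoremeprobequivalencestructlagcontact}) together with the standard fact that a local automorphism of a flat Cartan geometry on a connected manifold extends, after developing, to a translation of the model. First I would observe that $(\X,\Lx)$ is precisely the structure induced by the Cartan geometry of the model $\cc_\X = (\G,\omega_\G)$ (Remark \ref{remarkdistributioncartangeometriemodele}), and that $\cc_\X$ is \emph{flat}, i.e. its curvature map vanishes identically: this is immediate from the Maurer-Cartan equation $d\omega_\G + \tfrac12[\omega_\G,\omega_\G]=0$, which says exactly that $\Omega\equiv 0$ in \eqref{equationdefinitionformecourbure}. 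In particular $\cc_\X$ is torsion-free with curvature map valued in $W_K$, so by the uniqueness clause of Theorem \ref{theoremeprobequivalencestructlagcontact}, $\cc_\X$ \emph{is} the normal Cartan geometry of $\Lx$ (on all of $\X$, hence on any open subset by restriction).

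Next, let $f\colon U\to V$ be a diffeomorphism between connected open subsets of $\X$ preserving $\Lx$. By the consequence of uniqueness recorded after Theorem \ref{theoremeprobequivalencestructlagcontact}, $f$ is then a local isomorphism of the associated normal Cartan geometries, i.e. $f$ lifts to a $\Pmin$-equivariant diffeomorphism $\hat f\colon \pi_\G^{-1}(U)\to\pi_\G^{-1}(V)$ with $\hat f^*\omega_\G=\omega_\G$. Here $\pi_\G^{-1}(U)$ and $\pi_\G^{-1}(V)$ are open subsets of $\G$, and $\hat f$ is a diffeomorphism between them pulling back the Maurer-Cartan form to itself. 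Now I would invoke the classical rigidity of the Maurer-Cartan form: a local diffeomorphism of a Lie group $\G$ preserving $\omega_\G$ and defined on a connected open set is the restriction of a left translation $L_g$ for a unique $g\in\G$. The one-line reason is that $\hat f^*\omega_\G=\omega_\G$ forces $d(L_{g}^{-1}\circ\hat f)^*\omega_\G=\omega_\G$ with $L_g^{-1}\hat f$ fixing a point, and a parallelism-preserving map fixing a point and with invertible differential is locally the identity (it is the time-one flow comparison along $\omega_\G$-constant fields); connectedness propagates this, giving $\hat f = L_g$. Here $g$ is determined by the value of $\hat f$ at any single point of $\pi_\G^{-1}(U)$. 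Since $\hat f$ is $\Pmin$-equivariant and covers $f$, the descended map is $f = L_g$ acting on $\X=\G/\Pmin$, i.e. $f$ is the restriction to $U$ of the translation by $g\in\G$, as desired.

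The only genuine content beyond bookkeeping is the Maurer-Cartan rigidity step, and I expect it to be the main (minor) obstacle: one must either cite it from \cite{sharpe} (it is the statement that the model Cartan geometry has automorphism group exactly $\G$ acting by left translations, cf. the development/holonomy formalism for flat Cartan geometries) or spell out the connectedness argument that a $\omega_\G$-preserving local diffeomorphism agreeing with a left translation to first order at one point agrees with it everywhere on a connected domain. Everything else — flatness of $\cc_\X$, identification of $\cc_\X$ with the normal Cartan geometry of $\Lx$ via uniqueness, and the passage between isomorphisms of structures and isomorphisms of normal Cartan geometries — is furnished directly by the results already stated in this section. I would therefore keep the write-up short: one sentence for flatness, one for uniqueness, and a citation (to \cite{sharpe}) plus a remark for the Maurer-Cartan step, then conclude.
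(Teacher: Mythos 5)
Your proposal is correct and follows essentially the same route as the paper: flatness of $\mathcal{C}_\X$ via the Maurer--Cartan structural equation, identification with the normal Cartan geometry of $\Lx$ by the uniqueness clause of Theorem \ref{theoremeprobequivalencestructlagcontact}, lifting $f$ to a $\Pmin$-equivariant automorphism of $\omega_\G$, and concluding by the rigidity of the Maurer--Cartan form (which the paper cites as \cite[Chapter 5 Theorem 5.2]{sharpe}). The only difference is that you sketch the parallelism/connectedness argument for that last step, whereas the paper simply cites it.
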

\begin{proof}
The Maurer-Cartan form $\omega_\G$ satisfies for any tangent vectors $v$ and $w$ the structural equation
 $d\omega_\G(v,w)+[\omega_\G(v),\omega_\G(w)]=0$ (see \cite[§3.3 p.108]{sharpe}), implying that the curvature  
 of the Cartan connection $\omega_\G$ is zero.
 Therefore, the curvature satisfies the normalisation condition of Theorem \ref{theoremeprobequivalencestructlagcontact},
 and $\mathcal{C}_\X$ is a normal Cartan geometry on $\X$ modelled on $\G/\Pmin$ and associated to $\Lx$
 (see Remark \ref{remarkdistributioncartangeometriemodele}).
According to Theorem \ref{theoremeprobequivalencestructlagcontact}, 
any local isomorphism of $\Lx$ between two connected open subset $U$ and $V$ of $\X$ lifts therefore
to a local isomorphism of the Cartan geometry $\mathcal{C}_\X$ between $\pi_\G^{-1}(U)$
and $\pi_\G^{-1}(V)$, and such an automorphism is the left translation by an element of $\G$
according to \cite[Chapter 5 Theorem 5.2]{sharpe}.
\end{proof}

A three-dimensional Lagrangian contact structure $(M,\Lm)$ is \emph{flat} 
if the curvature of the normal Cartan geometry of $\Lm$ vanishes identically.
According to the proof of Theorem \ref{theoremLiouvilleXG}, the model space is flat,
and since this property is local, any Lagrangian contact structure locally isomorphic to $(\X,\Lx)$ is flat.
\par The power of Cartan geometries lies in the converse of this statement:
any flat three-dimensional Lagrangian contact structure $\Lm$ is locally isomorphic to the homogeneous model space
(see \cite[Theorem 5.1 and Theorem 5.2 p. 212]{sharpe}).
There exists in this case an atlas of charts from $M$ to $\X$ 
consisting of local isomorphisms of Lagrangian contact structures from $\Lm$ to $\Lx$,
and whose transition maps are restrictions of left translations 
by elements of $\G$ (according to Theorem \ref{theoremLiouvilleXG}).
A maximal atlas satisfying these conditions
is called a \emph{$(\G,\X)$-structure} on $M$.
Any $(\G,\X)$-structure conversely induces on $M$ a Lagrangian contact structure $\Lm$ 
locally isomorphic to $\Lx$, whose charts are local isomorphisms from $\Lm$ to $\Lx$.

\begin{theorem}\label{theoremecourburenulleimpliqueGXstructure}
Any flat three-dimensional Lagrangian contact structure $(M,\Lm)$ is induced by a $(\G,\X)$-structure on $M$.
\end{theorem}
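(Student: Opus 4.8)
The plan is to deduce this from Theorem \ref{theoremeprobequivalencestructlagcontact} together with the development theorem for flat Cartan geometries, exactly as the discussion preceding the statement already suggests. Let $(M,\Lm)$ be a flat three-dimensional Lagrangian contact structure, and let $\mathcal{C}=(\hat{M},\omega)$ be its normal Cartan geometry, provided by Theorem \ref{theoremeprobequivalencestructlagcontact}. Flatness means that the curvature map $K\colon\hat{M}\to W_K$ vanishes identically, so by the defining relation \eqref{equationdefinitionformecourbure} the Cartan connection satisfies the structural equation $d\omega(X,Y)+[\omega(X),\omega(Y)]=0$, i.e. $\omega$ is a \emph{flat} Cartan connection modelled on $\G/\Pmin$. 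The point is that a flat Cartan geometry is, locally, isomorphic to the homogeneous model $\mathcal{C}_\X=(\G,\omega_\G)$ of Example \ref{exampleflatCartangeometrymodel}: this is the content of \cite[Chapter 5, Theorem 5.1 and Theorem 5.2 p.212]{sharpe}. Concretely, around each point of $\hat{M}$ there is a local diffeomorphism onto an open subset of $\G$ pulling back $\omega_\G$ to $\omega$, and these descend to local diffeomorphisms $M\to\X$.

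First I would fix a point $x_0\in M$, choose $\hat{x}_0\in\pi^{-1}(x_0)$, and apply the cited development theorem to produce a $\Pmin$-equivariant local diffeomorphism $\hat\psi$ from a neighbourhood of $\hat{x}_0$ onto a neighbourhood of some point of $\G$, with $\hat\psi^*\omega_\G=\omega$. Equivariance guarantees that $\hat\psi$ descends to a local diffeomorphism $\psi\colon U\to\X$ with $U$ a neighbourhood of $x_0$. By definition $\hat\psi$ is then a local isomorphism of Cartan geometries between $\mathcal{C}$ and $\mathcal{C}_\X$; by the remark following Theorem \ref{theoremeprobequivalencestructlagcontact} (the local isomorphisms of the normal Cartan geometries coincide with those of the underlying Lagrangian contact structures) together with Remark \ref{remarkdistributioncartangeometriemodele} (which identifies $(E^\alpha_{\mathcal{C}_\X},E^\beta_{\mathcal{C}_\X})$ with $\Lx$), the map $\psi$ is a local isomorphism of Lagrangian contact structures from $\Lm\restreinta_U$ to $\Lx$. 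Doing this at every point of $M$ yields an atlas of $M$ by such charts.

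It remains to check that the transition maps of this atlas are restrictions of left translations by elements of $\G$. If $\psi_1\colon U_1\to\X$ and $\psi_2\colon U_2\to\X$ are two charts of the atlas, then on any connected component of $\psi_1(U_1\cap U_2)$ the composition $\psi_2\circ\psi_1^{-1}$ is a local isomorphism of $\Lx$ between two connected open subsets of $\X$, hence by Theorem \ref{theoremLiouvilleXG} (the Liouville-type rigidity of the model) it is the restriction of the left translation by some $g\in\G$. Thus the atlas satisfies the compatibility conditions defining a $(\G,\X)$-structure, and enlarging it to a maximal such atlas gives a $(\G,\X)$-structure on $M$. Finally, the Lagrangian contact structure induced by this $(\G,\X)$-structure agrees with $\Lm$, since each chart $\psi_i$ pulls $\Lx$ back to $\Lm\restreinta_{U_i}$; this proves that $(M,\Lm)$ is induced by a $(\G,\X)$-structure.

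The only genuinely delicate point is the invocation of the development/parallelism-transport theorem for flat Cartan geometries and the verification of its equivariance, so that the local diffeomorphisms of $\hat{M}$ it produces actually descend to $M$; everything else is bookkeeping with the equivalence of Theorem \ref{theoremeprobequivalencestructlagcontact} and the rigidity Theorem \ref{theoremLiouvilleXG}. Since \cite{sharpe} is quoted precisely for this, and the relevant equivariance is part of its statement, the proof is short. As this theorem merely records the equivalence ``flat Lagrangian contact structure $\Leftrightarrow$ $(\G,\X)$-structure'' already spelled out in the paragraph above, it can reasonably be stated without a separate detailed proof, referring to Theorem \ref{theoremeprobequivalencestructlagcontact} and \cite[Chapter 5]{sharpe}.
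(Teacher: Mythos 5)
Your proposal is correct and follows essentially the same route as the paper: the theorem is recorded there as a direct consequence of the preceding discussion, which likewise combines the normal Cartan geometry of Theorem \ref{theoremeprobequivalencestructlagcontact}, the local isomorphism of flat Cartan geometries with the model from \cite[Chapter 5, Theorems 5.1 and 5.2]{sharpe}, and the Liouville-type rigidity of Theorem \ref{theoremLiouvilleXG} for the transition maps. The extra care you take in descending the equivariant developing maps from $\hat{M}$ to $M$ is consistent with, and slightly more explicit than, the paper's treatment.
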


Denoting by $\pi_M\colon\tilde{M}\to M$ the universal cover of $M$, we recall that any $(\G,\X)$-structure on $M$ is described
by a local diffeomorphism $\delta\colon\tilde{M}\to \X$ called the \emph{developping map},
that is equivariant for a morphism $\rho\colon\piun{M}\to\G$ called the \emph{holonomy morphism} 
(see for example \cite[§3.4 p.139-141]{thurston}).
Moreover for any $g\in\G$, the pair $(g\circ\delta,g\rho g^{-1})$ of developping map and holonomy morphism describes
the same $(\G,\X)$-structure.
The Lagrangian contact structure $\Lm$ induced by a $(\G,\X)$-structure is characterized by: 
$\delta^*\Lx=\pi_M^*\Lm$.

\subsubsection{Harmonic curvature}\label{soussoussectioncourbureharmonique}

For $K\in W_K$ an element of the space of normal curvatures 
defined by
\[
K\colon \bar{e}_\alpha\wedge\bar{e}_0 \mapsto 
                      \left(\begin{smallmatrix} 0 & 0 & K^\alpha \\
 												  0 & 0 & K_\alpha \\
 												  0 & 0 & 0
 			        \end{smallmatrix}\right),
 			        \bar{e}_\beta\wedge\bar{e}_0 \mapsto 
                      \left(\begin{smallmatrix} 0 & K_\beta & K^\beta \\
 												  0 & 0 & 0 \\
 												  0 & 0 & 0
 			        \end{smallmatrix}\right),
 			        \bar{e}_\alpha\wedge\bar{e}_\beta\mapsto 0,
\]
and 
\[
p=
\begin{pmatrix}
a & x & z \\
0 & a^{-1}b^{-1} & y \\
0 & 0 & b
\end{pmatrix}\in\Pmin,
\]
the adjoint action \eqref{actionadjointePmin} of $\Pmin$ 
given in Paragraph \ref{soussoussectionSL3PminCartangeometries} enables to compute
the components $\cdot_\alpha$ and $\cdot_\beta$ of $p\cdot K\in W_K$:
\begin{equation}\label{equationactionPminmodulecourbure}
(p\cdot K)_\alpha=a^{-1}b^{-5}K_\alpha, (p\cdot K)_\beta=a^5b K_\beta.
\end{equation}
These expressions show in particular 
that the two-dimensional subspace $W_H=\{K\in W_K \mid K_\alpha=K_\beta=0\}$ of $W_K$ is preserved by the linear
action of $\Pmin$.

\begin{proposition}\label{theoremcourbureharmoniquelagcontact}
If the curvature map of a three-dimensional Lagrangian contact structure $\mathcal{L}$ has values in the subspace $W_H$
(\emph{i.e.} if $K_\alpha$ and $K_\beta$ identically vanish), 
then $\mathcal{L}$ is flat.
\end{proposition}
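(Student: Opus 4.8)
The plan is to deduce $K\equiv 0$ from the hypothesis $K_\alpha\equiv K_\beta\equiv 0$ by means of the second Bianchi identity of the normal Cartan geometry $\mathcal{C}=(\hat{M},\omega)$ of $\mathcal{L}$ (Theorem \ref{theoremeprobequivalencestructlagcontact}), which in this low-dimensional setting is nothing but the Jacobi identity for the $\omega$-constant vector fields on $\hat{M}$. For $v\in\g$, write $\tilde v$ for the vector field on $\hat{M}$ determined by $\omega(\tilde v)\equiv v$ (as for $\tilde e_\alpha,\tilde e_\beta$ in the sketch of Lemma \ref{lemmalagcontactinduite}), and recall from that sketch the structure equation $\omega([\tilde v,\tilde w])=[v,w]-K(v,w)$, together with the fact noted in Paragraph \ref{soussoussoussectioncourbureCartangeometrie} that $K(v,w)=0$ whenever $v$ or $w$ lies in $\pmin$. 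The first step is to record the brackets in $\g$ that intervene: writing $E_{ij}\in\g$ for the elementary matrix with a single $1$ in position $(i,j)$, the basis \eqref{equationbaseg-} reads $e_\alpha=E_{32}$, $e_\beta=E_{21}$, $e_0=E_{31}$, so that $[e_\alpha,e_\beta]=e_0$, $[e_\alpha,e_0]=[e_\beta,e_0]=0$, $[E_{13},e_\alpha]=E_{12}$, $[E_{13},e_\beta]=-E_{23}$, while $E_{12},E_{13},E_{23}$ lie in $\pmin$. Finally, by the description \eqref{equationmodulecourburesnormales} of the space of normal curvatures, the hypothesis $K_\alpha\equiv K_\beta\equiv 0$ leaves only the components $K^\alpha$ and $K^\beta$, with $K(e_\alpha,e_0)=K^\alpha E_{13}$, $K(e_\beta,e_0)=K^\beta E_{13}$ and $K(e_\alpha,e_\beta)=0$.

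With these preparations the computation is short. Since $K(e_\alpha,e_\beta)=0$ one has $[\tilde e_\alpha,\tilde e_\beta]=\tilde e_0$, hence the first term $[[\tilde e_\alpha,\tilde e_\beta],\tilde e_0]=[\tilde e_0,\tilde e_0]$ of the Jacobi identity vanishes. For the two remaining terms, the structure equation gives $[\tilde e_\beta,\tilde e_0]=-K^\beta\,\tilde E_{13}$ and $[\tilde e_0,\tilde e_\alpha]=K^\alpha\,\tilde E_{13}$, where $K^\alpha,K^\beta$ now denote functions on $\hat{M}$; applying the Leibniz rule $[fX,Y]=f[X,Y]-(Yf)\,X$ together with the bracket relations above (using $E_{13}\in\pmin$, whence $K(E_{13},e_\alpha)=K(E_{13},e_\beta)=0$) one finds
\begin{align*}
[[\tilde e_\beta,\tilde e_0],\tilde e_\alpha]&=-K^\beta\,\tilde E_{12}+\bigl(\tilde e_\alpha(K^\beta)\bigr)\tilde E_{13},\\
[[\tilde e_0,\tilde e_\alpha],\tilde e_\beta]&=-K^\alpha\,\tilde E_{23}-\bigl(\tilde e_\beta(K^\alpha)\bigr)\tilde E_{13}.
\end{align*}
Summing the three terms, the Jacobi identity yields, at every point of $\hat{M}$,
\[
-K^\beta\,\tilde E_{12}-K^\alpha\,\tilde E_{23}+\bigl(\tilde e_\alpha(K^\beta)-\tilde e_\beta(K^\alpha)\bigr)\tilde E_{13}=0.
\]
Since $E_{12},E_{23},E_{13}$ are linearly independent in $\g$ and $\omega_{\hat x}\colon\Tan{\hat x}{\hat M}\to\g$ is a linear isomorphism, the vector fields $\tilde E_{12},\tilde E_{23},\tilde E_{13}$ are pointwise linearly independent, so every coefficient vanishes; in particular $K^\alpha\equiv K^\beta\equiv 0$. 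Together with the hypothesis $K_\alpha\equiv K_\beta\equiv 0$, this means that the curvature of the normal Cartan geometry of $\mathcal{L}$ is identically zero, i.e. $\mathcal{L}$ is flat.

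I expect no serious obstacle in this argument; the only point requiring care is the bookkeeping of the ``derivative terms'' $\tilde e_\alpha(K^\beta)$ and $\tilde e_\beta(K^\alpha)$ produced by the Leibniz rule, and it is a convenient feature of this homogeneous model that they all fall in the single frame direction $\tilde E_{13}$, so that they do not interfere with reading off $K^\alpha$ and $K^\beta$ from the two other, independent directions. One could alternatively invoke the general principle that a normal parabolic geometry whose harmonic curvature vanishes is flat (see \cite{capslovak}), but the direct verification above is self-contained and uses only what has already been introduced.
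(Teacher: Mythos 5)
Your proof is correct, and it is essentially the paper's argument in a different guise: the Jacobi identity for the $\omega$-constant frame fields, combined with the structure equation $\omega([\tilde v,\tilde w])=[v,w]-K(v,w)$, is exactly the second Bianchi identity $d\Omega=[\Omega,\omega]$ that the paper invokes, and both extractions reduce to the same bracket computations $[e^0,e_\alpha]=e^\beta$, $[e^0,e_\beta]=-e^\alpha$ forcing $K^\alpha=K^\beta=0$. Your frame-field bookkeeping (including the derivative terms landing harmlessly in the $\tilde E_{13}$ direction) checks out, so no changes are needed.
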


The following remark will be useful in the proof of this result: 
$\slR{3}$ is a \emph{two-graded} Lie algebra, the graduation being defined by the following subspaces $({\g})_i$ for $i=-2,\cdots,2$:
\begin{equation}\label{equationgraduationsl3}
\slR{3}=
\begin{pmatrix}
({\g})_0 & ({\g})_1 & ({\g})_2 \\
({\g})_{-1} & ({\g})_0 & ({\g})_1 \\
({\g})_{-2} & ({\g})_{-1} & ({\g})_0
\end{pmatrix}.
\end{equation}
The graduation property of $\slR{3}$ simply means that for any $i$ and $j$ we have $[({\g})_i,({\g})_j]\subset ({\g})_{i+j}$,
(where $({\g})_i=\{0\}$ for any $\abs{i}>2$).
This graduation of $\slR{3}$ gives rise to a filtration defined by ${\g}^i=\oplus_{j\geq i} ({\g})_j$, with respect to which
$\slR{3}$ is a filtered Lie algebra, \emph{i.e.} $[{\g}^i,{\g}^j]\subset {\g}^{i+j}$ (with ${\g}^i={\g}$ for $i\leq -2$ and ${\g}^i=\{0\}$ for $i>2$).

\begin{proof}[Proof of Proposition \ref{theoremcourbureharmoniquelagcontact}]
Let $(M,\hat{M},\omega)$ be a normal Cartan geometry modelled on $\G/\Pmin$.
We introduce the following basis of $\slR{3}$:

$e_{0}=\left(\begin{smallmatrix}
        0 & 0 & 0 \\
        0 & 0 & 0 \\
        1 & 0 & 0
       \end{smallmatrix}\right),
e_{\alpha}=\left(\begin{smallmatrix}
        0 & 0 & 0 \\
        0 & 0 & 0 \\
        0 & 1 & 0
       \end{smallmatrix}\right),  
       e_{\beta}=\left(\begin{smallmatrix}
        0 & 0 & 0 \\
        1 & 0 & 0 \\
        0 & 0 & 0
       \end{smallmatrix}\right),
       e_1=\left(\begin{smallmatrix}
        1 & 0 & 0 \\
        0 & -1 & 0 \\
        0 & 0 & 0
       \end{smallmatrix}\right),
e_2=\left(\begin{smallmatrix}
        0 & 0 & 0 \\
        0 & 1 & 0 \\
        0 & 0 & -1
       \end{smallmatrix}\right),
e^{\alpha}=\left(\begin{smallmatrix}
        0 & 0 & 0 \\
        0 & 0 & 1 \\
        0 & 0 & 0
       \end{smallmatrix}\right),
       e^{\beta}=\left(\begin{smallmatrix}
        0 & 1 & 0 \\
        0 & 0 & 0 \\
        0 & 0 & 0
       \end{smallmatrix}\right),
       e^0=\left(\begin{smallmatrix}
        0 & 0 & 1 \\
        0 & 0 & 0 \\
        0 & 0 & 0
       \end{smallmatrix}\right)
       $,
that we denote $\mathcal{B}$.
We denote the coordinate of the Cartan connection $\omega$ with respect to an element $e$ of the basis $\mathcal{B}$
as a real-valued one-form $\omega_e$ on $\hat{M}$, such that $\omega=\sum_{e\in\mathcal{B}}\omega_e e$.
In the same way, the curvature form $\Omega$ of $\omega$ will be denoted as $\Omega=\sum_{e\in\mathcal{B}}\Omega_e e$,
where the $\Omega_e$'s are real-valued two-forms on $\hat{M}$.
According to the form \eqref{equationmodulecourburesnormales} of the curvature map stated in 
Theorem \ref{theoremeprobequivalencestructlagcontact}, if $K_\alpha=K_\beta=0$ identically, then
the only non-zero two-form $\Omega_e$ is 
$\Omega^0=K^\alpha \omega_\alpha\wedge\omega_0+K^\beta \omega_\beta\wedge\omega_0$.
The Bianchi identity proved in \cite[Chapter 5 Lemma 3.30]{sharpe} gives
$d\Omega=[\Omega,\omega]$,
where $[\Omega,\omega]=L\circ (\Omega\wedge\omega)$ with $L\colon v\otimes w\in\g\otimes\g \mapsto [v,w]\in\g$
(see \cite[Chapter 1.5 p.61]{sharpe} for this definition).
The graduation property of $\slR{3}$ exposed in the beginning of the paragraph implies $[e^0,\g^+]=\{0\}$,
and we have the following Lie brackets relations between the elements of $\mathcal{B}$:
$[e^0,e_0]=e_1+e_2$, $[e^0,e_\alpha]=e^\beta$, $[e^0,e_\beta]=-e^\alpha$, $[e^0,e_1]=[e^0,e_2]=-e^0$.
We finally obtain the following equalities by projecting the Bianchi identity to $\R e^\beta$ and $\R e^\alpha$:
\[
0=-K^\alpha \omega_\alpha\wedge\omega_0\wedge\omega_\beta, 0=K^\beta \omega_\beta\wedge\omega_0\wedge\omega_\alpha.
\]
As $(\omega_\alpha,\omega_0,\omega_\beta)$ is at each point $\hat{x}\in\hat{M}$ a basis of the dual space
$(\omega^{-1}_{\hat{x}}(({\g})_{-2}\oplus({\g})_{-1})))^*$,
the three-form $\omega_\alpha\wedge\omega_0\wedge\omega_\beta$ does not vanish,
and the above equalities imply therefore $K^\alpha=K^\beta=0$ identically, \emph{i.e.} $K=0$ as announced.
\end{proof}

\begin{remark}
 The components $K_\alpha$ and $K_\beta$ of the curvature actually encode the \emph{harmonic curvature}
 of a normal Cartan geometry modelled on $\G/\Pmin$, that is known to be the only obstruction to the flatness for parabolic Cartan geometries.
 With this point of view, the above Proposition \ref{theoremcourbureharmoniquelagcontact} is the manifestation
 in the specific case of Lagrangian contact structures of a general phenomena arising for any parabolic geometry
 (see for example \cite[Theorem 3.1.12]{capslovak}).
\end{remark}

\subsubsection{Normal generalized Cartan geometry of an enhanced Lagrangian contact structure}
\label{soussoussectiongeomcartangeneralisee}

Let $\mathcal{S}=(E^\alpha,E^\beta,E^c)$ be an enhanced Lagrangian contact structure
on a three-dimensional manifold $M$, and $\mathcal{C}=(\hat{M},\omega)$ be
the normal Cartan geometry 
of the underlying Lagrangian contact structure $(E^\alpha,E^\beta)$.
Using the isomorphisms $i_{\hat{x}}$ defined in \eqref{equationdefinitionig},
the transverse distribution $E^c$ is encoded by the map
\[
\varphi\colon \hat{x}\in\hat{M} \mapsto i_{\hat{x}}(E^c_{\pi(\hat{x})})\in\mathbb{V},
\]
having values in the open subset 
\[
\mathbb{V}=\enstq{L\in\mathbf{P}(\g/\pmin)}{L\not\subset\Vect(\bar{e}_\alpha,\bar{e}_\beta)}
\]
of $\mathbf{P}(\g/\pmin)$.
Endowing $\mathbb{V}$ with the right $\Pmin$-action
defined by $L\cdot p=\overline{\Ad}(p)^{-1}(L)$, $\varphi$ is $\Pmin$-equivariant.
Conversely, 
any $\Pmin$-equivariant application $\varphi\colon\hat{M}\to \mathbb{V}$
defines a 
transverse distribution $E^c_{\pi(\hat{x})}=i_{\hat{x}}^{-1}(\varphi(\hat{x}))$
compatible with the Lagrangian contact structure $(\Ealpha,\Ebeta)$.

\begin{definition}\label{definitiongeomCartangeneralisee}
$(\mathcal{C},\varphi)=(\hat{M},\omega,\varphi)$ will be called 
\emph{the normal generalized Cartan geometry} 
of the enhanced Lagrangian contact structure $\mathcal{S}$. 
\end{definition}

\subsection{Killing fields of Lagrangian contact structures}

\subsubsection{Some classical properties of Killing fields}\label{soussoussectionproprietesstructlagcontactgeneralisees}

A \emph{(local) Killing field} of a Lagrangian contact structure $(M,\Lm)$ is a (local) vector field $X$ of $M$ whose flow 
preserves $\Lm$. The Killing fields of an enhanced Lagrangian contact structure $\Sm$ are defined in the same way.
We will denote by $\mathfrak{Kill}(U,\Lm)$ 
the subalgebra of Killing fields of $\Lm$ defined on an open subset $U\subset M$,
and by $\mathfrak{kill}^{loc}_\Lm(x)$ 
the Lie algebra of germs of Killing fields of $\Lm$ defined on a neighbourhood of $x$.
\par The following statement summarizes important properties of Killing fields, coming from their description 
through Cartan geometries and well-known in this context.
The results are stated for Lagrangian contact structures, but are true as well for enhanced Lagrangian contact structures.

\begin{lemma}\label{lemmaautlocauxactionlibresurMhat}
Let $M$ be a three-dimensional connected manifold endowed with a Lagrangian contact structure $\Lm$, and
$\mathcal{C}=(\hat{M},\omega)$ be a normal Cartan geometry on $M$ associated to $\Lm$.
\begin{enumerate}
\item If $\hat{f}$ is a $\Pmin$-equivariant diffeomorphism of $\hat{M}$ that covers $\id_M$ and preserves $\omega$,
then $\hat{f}=\id_{\hat{M}}$.
If $\hat{X}$ is a $\Pmin$-invariant vector field on $\hat{M}$ whose flow preserves $\omega$ and whose projection on $M$ vanishes,
then $\hat{X}=0$.
As a consequence,
the lift of a local automorphism $f$ (respectively Killing field $X$) of $\Lm$ to a $\Pmin$-equivariant diffeomorphism $\hat{f}$ of $\hat{M}$
that preserves $\omega$ (resp. to a $\Pmin$-invariant vector field $\hat{X}$ on $\hat{M}$ whose flow preserves $\omega$),
is \emph{unique}.
\item If the lift $\hat{X}$ of a Killing field $X$ of $\Lm$ 
vanishes at some point $\hat{x}$, then $X=0$.
In other words, the linear map
$X\in\mathfrak{Kill}(M,\Lm)\mapsto\omega_{\hat{x}}(\hat{X}_{\hat{x}})\in\g$
is injective.
\item The Lie algebra morphism $X\in\mathfrak{Kill}(M,\Lm)\mapsto [X]_x\in\mathfrak{kill}^{loc}_\Lm(x)$
sending a Killing field of $\Lm$ to its germ at $x$ is injective.
\end{enumerate}
\end{lemma}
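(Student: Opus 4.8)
The three assertions are all classical consequences of the rigidity of Cartan connections, so the plan is to reduce everything to the first statement and then to the uniqueness theorem for automorphisms of the model recalled in the excerpt (the one cited from \cite[Chapter 5]{sharpe}). The key algebraic input is that a Cartan connection $\omega$ is a \emph{parallelism}: at each $\hat{x}\in\hat{M}$, $\omega_{\hat{x}}\colon\Tan{\hat{x}}{\hat{M}}\to\g$ is a linear isomorphism. A diffeomorphism preserving $\omega$ is therefore determined, infinitesimally, by its value at one point, and a vector field whose flow preserves $\omega$ is an $\omega$-constant (i.e.\ $\omega$-parallel in a weak sense) object rigidly controlled by its value at one point.

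\emph{Step 1 (the rigidity statement, item (1)).} Suppose $\hat{f}$ is $\Pmin$-equivariant, covers $\id_M$, and satisfies $\hat{f}^*\omega=\omega$. Since $\hat{f}$ covers the identity, for each $\hat{x}$ the point $\hat{f}(\hat{x})$ lies in the same fiber as $\hat{x}$, so $\hat{f}(\hat{x})=\hat{x}\cdot p(\hat{x})$ for a smooth map $p\colon\hat{M}\to\Pmin$; $\Pmin$-equivariance forces $p$ to be constant on each fiber, hence $p$ descends to $M$. Pulling back $\omega$ along $R_{p}$ and using $\Pmin$-equivariance of $\omega$ (property (3) of Definition \ref{definitionSL3PminCartangeometrie}), the equation $\hat f^*\omega=\omega$ becomes $\Ad(p(\hat{x}))^{-1}\circ\omega_{\hat{x}}=\omega_{\hat{x}}$ up to the derivative of $p$ along horizontal directions; evaluating on fundamental vertical vectors first gives $\Ad(p)^{-1}=\id$ on a complement and, combined with property (2), forces $p$ to be locally constant, hence constant ($M$ connected), and then $\Ad(p)=\id$ plus the vanishing of $dp$ gives $p=e$, i.e.\ $\hat{f}=\id_{\hat M}$. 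The analogous infinitesimal statement for a $\Pmin$-invariant vector field $\hat X$ whose flow preserves $\omega$ and whose projection to $M$ vanishes follows by differentiating this argument applied to the flow $(\varphi^t_{\hat X})$: each $\varphi^t_{\hat X}$ is a $\Pmin$-equivariant $\omega$-preserving diffeomorphism covering $\id_M$, hence equals $\id_{\hat M}$, so $\hat X=0$. The ``as a consequence'' clause is then immediate: if two $\Pmin$-equivariant $\omega$-preserving lifts $\hat f_1,\hat f_2$ of the same $f$ existed, $\hat f_2^{-1}\circ\hat f_1$ would cover $\id_M$, contradicting what we just proved; same for vector fields by taking differences.

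\emph{Step 2 (items (2) and (3)).} For item (2), let $X$ be a Killing field of $\Lm$ with lift $\hat X$ vanishing at $\hat x$. Because $\omega$ is a parallelism and the flow of $\hat X$ preserves $\omega$, the function $\hat y\mapsto\omega_{\hat y}(\hat X_{\hat y})\in\g$ is constant along the flow lines of any $\omega$-constant field; more precisely one shows the flow $(\varphi^t_{\hat X})$ is, in $\omega$-coordinates, the unique solution of a first-order ODE with initial condition $\omega_{\hat x}(\hat X_{\hat x})=0$, so $\varphi^t_{\hat X}$ fixes $\hat x$ and has trivial derivative there in the parallelism, forcing $\varphi^t_{\hat X}=\id$ near $\hat x$; since $\hat M$ is connected (as $M$ is and the fiber $\Pmin$ is connected) this propagates and $\hat X\equiv 0$, hence $X=0$. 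Equivalently this is the injectivity of $X\mapsto\omega_{\hat x}(\hat X_{\hat x})$. Item (3) is then formal: if the germ $[X]_x$ vanishes, then $X\equiv 0$ on a neighbourhood of $x$, so its lift $\hat X$ vanishes on $\pi^{-1}$ of that neighbourhood, in particular at some $\hat x$, and item (2) gives $X=0$; linearity and the bracket-compatibility of $X\mapsto[X]_x$ are clear.

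\emph{Main obstacle.} The only genuinely delicate point is Step 1, where one must extract from $\hat f^*\omega=\omega$ that the fiber-translation $p\colon M\to\Pmin$ is trivial: one has to handle the inhomogeneous term coming from the derivative of $p$ carefully, rather than just the pointwise algebraic identity $\Ad(p)^{-1}\circ\omega=\omega$. In practice this is exactly the content of \cite[Chapter 5, Corollary 3.10 and Theorem 5.2]{sharpe}, and the cleanest route is to cite those results (or the uniqueness part of Theorem \ref{theoremeprobequivalencestructlagcontact}, applied to the identity automorphism) rather than redo the computation; everything else is a routine unwinding of the parallelism property. All three items are standard facts about Cartan geometries, so the write-up should be short, mostly pointers to the rigidity of parallelisms and to \cite{sharpe}.
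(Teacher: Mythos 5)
Your proposal follows essentially the same route as the paper: assertion (1) is delegated to the standard rigidity results for Cartan connections (you cite \cite[Ch.~5]{sharpe}, the paper cites \cite[Proposition 1.5.3]{capslovak}), and assertion (2) is the classical ``a parallelism-preserving map fixing a point is trivial on the connected component'' argument. For assertion (3) you take a genuinely different and arguably cleaner path: instead of invoking the unique-continuation result of \cite[Lemma 7.1]{bader} as the paper does, you deduce it from (1) and (2) by noting that a globally defined Killing field with vanishing germ has a lift vanishing over a whole open set, hence at a point. Since the $X$ in (3) is global, this shortcut is legitimate and avoids the harder unique-continuation statement for merely local Killing fields.

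Two details in your write-up are wrong, though neither is fatal. First, in Step 1 the map $p$ defined by $\hat f(\hat x)=\hat x\cdot p(\hat x)$ is \emph{not} constant on the fibers: $\Pmin$-equivariance of $\hat f$ gives $p(\hat x\cdot q)=q^{-1}p(\hat x)q$, so $p$ is conjugation-equivariant rather than basic. The identity $\hat f^*\omega=\omega$ must then be unwound with this equivariance in mind (and one uses that $\Ad$ is faithful on $\Pmin<\PGL{3}$ to get $p=e$); you sensibly fall back on citing the literature, which is what the paper does anyway. Second, $\Pmin$ is \emph{not} connected (as a subgroup of $\PGL{3}$ it has four components), so $\hat M$ need not be connected and your propagation ``by connectedness of $\hat M$'' in Step 2 is not justified as stated; the correct fix is to propagate the vanishing of $\hat X$ to the other components of $\hat M$ using its $\Pmin$-invariance, which is exactly why the statement of (1) assumes it.
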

\begin{proof}[Sketch of proof]
1. The first assertion is a direct consequence of 
\cite[Proposition 1.5.3]{capslovak} for Cartan geometries modelled on $\G/\Pmin$, and implies the second one. \\
2. Let us assume that a local automorphism $\hat{f}$ of $\mathcal{C}$ fixes a point $\hat{x}\in\hat{M}$.
Then as $\hat{f}$ preserves the parallelism defined by $\omega$, a classical argument implies that $\hat{f}$ is trivial 
on the connected component of $\hat{x}$. This remark easily implies the assertion about Killing fields. \\
3. According to \cite[Lemma 7.1]{bader}, a local automorphism that is 
trivial in the neighbourhood of $x$ is trivial on the connected component of its domain of definition that contains $x$.
This result easily implies the statement concerning Killing fields.
\end{proof}

\begin{remark}\label{remarkouvertminimalexistencedeschampsdeKillinglocaux}
The second statement of the previous lemma shows in particular that for any connected open neighbourhood $U$ of $x\in M$,
the dimension of $\mathfrak{Kill}(U,\Lm)$ is bounded from above by $\dim\g=8$.
Therefore, if we consider a decreasing sequence of connected open neighbourhoods $U_i$ of $x$
such that $\cap_i U_i=\{x\}$, then $\dim \mathfrak{Kill}(U_i,\Lm)$
is constant for $i$ large enough.
This proves the existence of a connected
open neighbourhood $U$ of $x$ such that
\[
X\in\mathfrak{Kill}(U,\Lm)\mapsto [X]_x\in\mathfrak{kill}^{loc}_\Lm(x)
\]
is a Lie algebra isomorphism.
\end{remark}

The following Lemma is the translation of Theorem \ref{theoremLiouvilleXG} for Killing fields of $(\X,\Lx)$.
\begin{lemma}\label{lemmadescriptionkillinggeomCartanplatemodel}
\begin{enumerate}
\item At any point $x\in\X$, the Lie algebra of local Killing fields of $\Lx$ at $x$ is identified with $\g$ through 
the fundamental vector fields of the action.
In other words, the application
$v\in\g\mapsto [v^\dag]_x\in \mathfrak{kill}^{loc}_{\Lx}(x)$ 
sending $v\in\g$ to the germ of $v^\dag$ at $x$,
is an anti-isomorphism of Lie algebras.
\item Any local Killing field of $(\X,\Lx)$ defined on a connected neighbourhood of a point $x\in \X$ 
is the restriction of a global Killing field defined on $\X$.
In other words,
$X\in\mathfrak{Kill}(\X,\Lx)\mapsto [X]_x\in\mathfrak{kill}^{loc}_{\Lx}(x)$
is a Lie algebra isomorphism.
\end{enumerate}
\end{lemma}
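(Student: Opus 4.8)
The plan is to deduce both assertions from Theorem \ref{theoremLiouvilleXG} together with Example \ref{exampleflatCartangeometrymodel}, Remark \ref{remarkdistributioncartangeometriemodele}, and the general facts on Killing fields recalled in Lemma \ref{lemmaautlocauxactionlibresurMhat}. Recall from the proof of Theorem \ref{theoremLiouvilleXG} that the Cartan geometry of the model $\mathcal{C}_\X=(\G,\omega_\G)$ is the normal Cartan geometry associated to $\Lx$, so that (local) Killing fields of $\Lx$ correspond bijectively to (local) $\Pmin$-invariant vector fields of $\G$ whose flow preserves the Maurer--Cartan form $\omega_\G$, the correspondence being given by the unique lift of Lemma \ref{lemmaautlocauxactionlibresurMhat}(1).

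For the first assertion, I would first check that for $v\in\g$ the fundamental vector field $v^\dag$ of the left action of $\G$ on itself --- which is the \emph{right}-invariant vector field generated by $v$ --- is a local Killing field of $\Lx$. Indeed $v^\dag$ commutes with all left-invariant vector fields $\tilde{w}$, hence its flow (left translation by $\e^{tv}$) preserves the parallelism $\omega_\G$ and is $\Pmin$-equivariant for the right action; projecting to $\X=\G/\Pmin$, the flow of $v^\dag$ preserves $\Lx$ by Remark \ref{remarkdistributioncartangeometriemodele}. So $v\in\g\mapsto[v^\dag]_x$ is well-defined into $\mathfrak{kill}^{loc}_{\Lx}(x)$, and it is a Lie algebra anti-morphism because $[v^\dag,w^\dag]=-[v,w]^\dag$ for fundamental vector fields of a left action. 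Injectivity is immediate from Lemma \ref{lemmaautlocauxactionlibresurMhat}(2) (or directly: $v^\dag(e)=v$, so $v^\dag=0$ forces $v=0$, and the germ at any point determines the field since left translations act transitively). Surjectivity is the main point: given a local Killing field $X$ of $\Lx$ near $x$, lift it to a local $\Pmin$-invariant $\omega_\G$-preserving vector field $\hat X$ on a $\Pmin$-saturated neighbourhood of some $\hat x\in\pi_\G^{-1}(x)$ in $\G$; set $v=\omega_{\G,\hat x}(\hat X_{\hat x})\in\g$; then by uniqueness in Lemma \ref{lemmaautlocauxactionlibresurMhat}(1) it suffices to check that $v^\dag$ and $\hat X$ have the same value at $\hat x$, which holds since $\omega_{\G,\hat x}(v^\dag(\hat x))=\omega_{\G,\hat x}((\text{right-invariant field }v)(\hat x))$ equals $\Ad(\hat x)^{-1}v$ in one trivialisation and can be matched by adjusting the representative $\hat x$; more cleanly, one argues that the flow of $\hat X$ preserves $\omega_\G$, hence by the argument in \cite[Chapter 5 Theorem 5.2]{sharpe} (used already in the proof of Theorem \ref{theoremLiouvilleXG}) it is locally a left translation, so $\hat X$ is locally a fundamental vector field, i.e. $X=v^\dag$ near $x$ for some $v\in\g$.

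For the second assertion, note that a global fundamental vector field $v^\dag$ on $\X$ is a global Killing field of $\Lx$, so $X\in\mathfrak{Kill}(\X,\Lx)\mapsto[X]_x$ lands in $\mathfrak{kill}^{loc}_{\Lx}(x)$ and is injective by Lemma \ref{lemmaautlocauxactionlibresurMhat}(3). Surjectivity follows by combining the first assertion with the observation that every local Killing field near $x$ is, by part 1, the restriction of some $v^\dag$, which is globally defined on all of $\X$. The only delicate point, and the one I expect to require the most care, is the normalisation/matching step in the surjectivity argument of part 1 --- identifying the appropriate element $v\in\g$ from the lift $\hat X$ and invoking the rigidity of $\omega_\G$-preserving local diffeomorphisms of $\G$ --- but this is exactly the content already exploited in the proof of Theorem \ref{theoremLiouvilleXG}, so it goes through verbatim at the infinitesimal level.
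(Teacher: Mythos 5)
Your argument is correct in substance, but it reaches the surjectivity in part 1 by a genuinely different route from the paper. The paper's proof is shorter: it establishes injectivity of $v\mapsto[v^\dag]_x$ by the analyticity of the $\G$-action on $\X$ (two projective transformations agreeing on an open set are equal), and then concludes surjectivity purely by a dimension count, since $\dim\mathfrak{kill}^{loc}_{\Lx}(x)\le\dim\g=8$ by Lemma \ref{lemmaautlocauxactionlibresurMhat} (together with Remark \ref{remarkouvertminimalexistencedeschampsdeKillinglocaux}); nothing needs to be integrated. You instead integrate the local Killing field and invoke Liouville's Theorem \ref{theoremLiouvilleXG} to see that each time-$t$ map of its flow is the restriction of a left translation $g(t)$, whence the field is fundamental. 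That works, but it silently uses that $t\mapsto g(t)$ is a smooth one-parameter subgroup of $\G$ (uniqueness of $g(t)$ comes from analyticity, the group law from the flow property, and smoothness needs a word); the paper's dimension count avoids this entirely. Two smaller remarks: your first attempt at surjectivity (matching $\hat X$ and $\widehat{v^\dag}$ at a single $\hat x$) is actually viable and clean --- since $\omega_{\hat x}(\widehat{v^\dag}_{\hat x})=\Ad(\hat x^{-1})v$, one simply takes $v=\Ad(\hat x)\bigl(\omega_{\hat x}(\hat X_{\hat x})\bigr)$ and applies Lemma \ref{lemmaautlocauxactionlibresurMhat}(2) to the difference; the fix is to adjust $v$, not the representative $\hat x$ as you wrote. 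And your parenthetical ``direct'' injectivity argument must be read upstairs on $\G$ (on $\X$ a fundamental field can vanish at a point without $v=0$, the isotropy being $\pmin$); the version via the lift to the Cartan bundle is the one that actually closes. Part 2 is handled identically in both proofs.
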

\begin{proof}
1. If $v^\dag$ is trivial in the neighbourhood of $x$, then for any $t\in\R$, $\e^{tv}$ acts trivially on an open neighbourhood of $x$.
But the action of $\G$ on $\X$
is \emph{analytic}:
if $g$ and $h$ in $\G$ have the same action on some non-empty open subset of $\X$, then $g=h$ 
(because the
linear subspace generated by the pre-image in $\R^3$ of a non-empty open subset of $\RP{2}$ is equal to $\R^3$).
Therefore, $\e^{tv}=\id$ for any $t\in\R$ and $v=0$.
The application $v\mapsto[v^\dag]_x$ is thus injective, 
and as $\dim\mathfrak{kill}^{loc}_{\Lx}(x)\leq \dim\g$
according to the third assertion of Lemma \ref{lemmaautlocauxactionlibresurMhat}, it is an isomorphism.
Finally, $v\mapsto v^\dag$ is known to be an anti-morphism of Lie algebras. \\
2. Any local Killing field at $x$ is the restriction of $v^\dag$ for some $v\in\g$ according to the first assertion,
and extends therefore to a Killing field defined on $\X$.
\end{proof}

\subsubsection{Total curvature map of an enhanced Lagrangian contact structure}\label{soussoussectiontotalcurvature}

Let $(\mathcal{C},\varphi)=(\hat{M},\omega,\varphi)$ be
the normal Cartan geometry 
of a three-dimensional enhanced Lagrangian contact structure $(M,\Sm)$.
With $K\colon\hat{M}\to W_K$ the curvature map of $\mathcal{C}$, we define
 the curvature map 
\[
\K\coloneqq (K,\varphi) \colon\hat{M}\to W_{\K}\coloneqq W_K\times \mathbb{V},
\]
of the enhanced Lagrangian contact structure $(M,\Sm)$,
which is $\Pmin$-equivariant for the right diagonal action of $\Pmin$ on $W_\K$.

\par If $W$ is any manifold endowed with a right action of $\Pmin$, 
we define $B(W)\coloneqq\{(w,l) \mid w\in W,l\in\Lin(\g,\Tan{w}{W})\}$
(this is a vector bundle over $W$), that we endow with the right $\Pmin$-action 
$(w,l)\cdot p=(w\cdot p,\Diff{w}{R_p}\circ l\circ \Ad(p))$.
For any smooth $\Pmin$-equivariant map $\psi\colon\hat{M}\to W$,
we define a $\Pmin$-equivariant map $D^1\psi\colon\hat{M}\to B(W)$ encoding the differential of $\psi$ as follows:
$D^1\psi(\hat{x})=(\psi(\hat{x}),\Diff{\hat{x}}{\psi}\circ\omega_{\hat{x}}^{-1})$.
We also define inductively $B^{k+1}(W)=B(B^k(W))$ and $D^{k+1}\psi=D(D^k\psi)\colon\hat{M}\to B^{k+1}(W)$ 
for any $k\in\N$ (with $B^0(W)=W$ and $D^0\phi=\phi$). 

\par Denoting $m=\dim\g=8$,
we define
$W_{\Ktot}\coloneqq B^m(W_{\K})$, and the \emph{total curvature}
\[
\Ktot\coloneqq D^m\K\colon\hat{M}\to W_{\Ktot}
\]
of the enhanced Lagrangian contact structure $\Sm$.
The total curvature $\Ktot$ is $\Pmin$-equivariant and preserved by local automorphisms of $\Sm$
(\emph{i.e.} for any such local automorphism $f$ we have $\Ktot\circ\hat{f}=\Ktot$).
We also define for $k\in\N^*$
the space of Killing generators of order $k$ by
$\Kill^k(\hat{x})=\omega_{\hat{x}}(\Ker(\Diff{\hat{x}}{D^{k-1}\K}))\subset\g$, and the space of
Killing generators of total order
by $\Kill^{tot}(\hat{x})=\Kill^{m+1}(\hat{x})=\omega_{\hat{x}}(\Ker(\Diff{\hat{x}}{\Ktot}))\subset\g$. 

\subsubsection{Gromov's theory}\label{soussoussectiontheoriedeGromov}

The \emph{integrability locus} of $\hat{M}$ is defined as the set $\hat{M}^{int}$ of those points $\hat{x}\in\hat{M}$ such that 
for any $v\in\Kill^{tot}(\hat{x})$, there exists a local Killing field $X$ of $\Sm$ defined around $\pi(\hat{x})$
and such that $\omega_{\hat{x}}(\hat{X}_{\hat{x}})=v$.
It is easy to check that $\hat{M}^{int}$ is a $\Pmin$-equivariant set, and we define the 
\emph{integrability locus} of $M$ as $M^{int}=\pi(\hat{M}^{int})$.

\begin{theorem}[Integrability theorem]\label{theoremintegrabilite}
Let $(M,\Sm)$ be a three-dimensional  enhanced Lagrangian contact structure of total curvature $\Ktot$,
and $\hat{M}$ be its normal Cartan bundle.
Then the integrability locus $\hat{M}^{int}$ of $\hat{M}$ is equal to the set of points $\hat{x}\in\hat{M}$ 
where the rank of $\Diff{\hat{x}}{\Ktot}$ is locally constant.
In particular, $\hat{M}^{int}$ is open and dense, and so is the integrability locus $M^{int}$ of $M$.
\end{theorem}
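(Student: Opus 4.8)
The plan is to prove the equality $\hat M^{int}=U$, where $U$ denotes the set of points of $\hat M$ at which $\hat x\mapsto\rg\Diff{\hat{x}}{\Ktot}$ is locally constant; this is the version for Cartan geometries of Gromov's integrability theorem for rigid geometric structures (see \cite{dambragromov}), and I will only describe the architecture of the argument. First, $U$ is open by definition and dense: the function $\hat x\mapsto\rg\Diff{\hat{x}}{\Ktot}$ is integer-valued, bounded by $\dim\hat M=8$, and lower semi-continuous (a non-vanishing minor persists), hence on any non-empty open set it is constant on the non-empty open subset where it attains its maximum. Since $\pi\colon\hat M\to M$ is open and onto, once $\hat M^{int}=U$ is known, the openness and density of $M^{int}=\pi(\hat M^{int})$ follow at once.

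The inclusion $\hat M^{int}\subseteq U$ is the easy half. Fix $\hat x_0\in\hat M^{int}$, set $d=\dim\Kill^{tot}(\hat x_0)=8-\rg\Diff{\hat{x}_0}{\Ktot}$, and pick a basis $v_1,\dots,v_d$ of $\Kill^{tot}(\hat x_0)$. By definition of $\hat M^{int}$ there are local Killing fields $X_1,\dots,X_d$ of $\Sm$ around $\pi(\hat x_0)$ whose lifts $\hat X_i$ to $\hat M$ are $\Pmin$-invariant, preserve $\omega$, and satisfy $\omega_{\hat x_0}(\hat X_i(\hat x_0))=v_i$. The flow of each $\hat X_i$ consists of local automorphisms of $\Sm$, which preserve $\Ktot$; differentiating at time $0$ gives $\Diff{\hat{y}}{\Ktot}(\hat X_i(\hat y))=0$ on the whole domain, so $\hat X_i(\hat y)\in\Ker\Diff{\hat{y}}{\Ktot}$ there. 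The vectors $\hat X_i(\hat x_0)$ are linearly independent because $\omega_{\hat x_0}$ is an isomorphism carrying them to the independent vectors $v_i$, hence the $\hat X_i$ stay independent on a neighbourhood of $\hat x_0$, whence $\dim\Ker\Diff{\hat{y}}{\Ktot}\geq d$ there. Combined with the reverse inequality coming from the lower semi-continuity of the rank, this forces $\rg\Diff{}{\Ktot}\equiv 8-d$ near $\hat x_0$, i.e.\ $\hat x_0\in U$.

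The inclusion $U\subseteq\hat M^{int}$ is the substantive direction and is precisely Gromov's integrability theorem in the Cartan-geometric formalism. Fix $\hat x_0\in U$; on a neighbourhood of $\hat x_0$ the kernel distribution $\mathcal E$, with $\mathcal E_{\hat y}=\Ker\Diff{\hat{y}}{\Ktot}=\omega_{\hat y}^{-1}(\Kill^{tot}(\hat y))$, is smooth and of constant rank. The argument then has two steps. First, one shows $\mathcal E$ is involutive: here it is essential that $\Ktot=D^m\K$ records $m=\dim\g$ prolongations, so that the decreasing chain $\Kill^{1}(\hat x)\supseteq\Kill^{2}(\hat x)\supseteq\cdots$ of subspaces of the $8$-dimensional space $\g$ has already stabilised at $\Kill^{tot}$ (by the elementary prolongation lemma that an equality $\Kill^{j}=\Kill^{j+1}$ persists, together with a dimension count); this stability, fed into the structure equation $\omega([X^{v},X^{w}])=[v,w]-K(v,w)$ for $\omega$-constant vector fields $X^{v}$ (those with $\omega(X^{v})\equiv v$), yields $[\mathcal E,\mathcal E]\subseteq\mathcal E$. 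Second, for $v\in\Kill^{tot}(\hat x_0)$ one integrates $X^{v}$: the $\g$-valued one-form $\mathcal L_{X^{v}}\omega$ vanishes at $\hat x_0$ by the defining condition of a Killing generator of total order, and along the leaf of $\mathcal E$ through $\hat x_0$ it satisfies, again by the structure equations, a linear differential system with zero initial condition, hence vanishes on that leaf; a $\Pmin$-equivariance argument then promotes $X^{v}$ to the lift $\hat X$ of a genuine local Killing field $X$ of $\Sm$ with $\omega_{\hat x_0}(\hat X_{\hat x_0})=v$. Thus $\hat x_0\in\hat M^{int}$, and $\hat M^{int}=U$ as wanted.

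The heart of the matter, and the only genuinely delicate point, is the second step above: turning a \emph{formal} Killing generator, i.e.\ an arbitrary vector of $\Ker\Diff{\hat{x}_0}{\Ktot}$, into an \emph{actual} local Killing field of $\Sm$. This is exactly where local constancy of the rank is indispensable, since it is what makes $\mathcal E$ an integrable distribution along whose leaves the vanishing of $\mathcal L_{X^{v}}\omega$ can be propagated. Everything else — the semi-continuity bookkeeping, the density of $U$, and the passage from $\hat M$ to $M$ — is routine.
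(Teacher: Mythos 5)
The paper does not actually prove this theorem: it invokes \cite[Theorem 4.19]{pecastaing} together with Frances's reformulation \cite[Theorem 2.2]{frances}, remarking only that the argument carries over to \emph{generalized} Cartan geometries via \cite[Lemma 4.20 and Lemma 4.9]{pecastaing}. Your proposal instead tries to sketch the proof. The easy half is fine: the openness and density of the set $U$ of points where $\rg(\Diff{}{\Ktot})$ is locally constant (lower semi-continuity of the rank), the inclusion $\hat{M}^{int}\subseteq U$ by integrating a basis of $\Kill^{tot}(\hat{x}_0)$ to actual Killing fields whose lifts lie in $\Ker(\Diff{}{\Ktot})$, and the passage to $M^{int}$ are all correct and essentially complete.

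The substantive inclusion $U\subseteq\hat{M}^{int}$ is not proved, and the one concrete assertion you make there is false. For the $\omega$-constant field $X^{v}$, Cartan's formula and the definition of the curvature give $(\mathcal{L}_{X^{v}}\omega)(X^{w})=d\omega(X^{v},X^{w})=K(v,w)-[v,w]$, which does not vanish at $\hat{x}_0$; already on the flat model $(\G,\omega_\G)$ the $\omega$-constant fields are the left-invariant fields, and $\mathcal{L}_{\tilde{v}}\omega_\G=-[v,\cdot]\neq 0$ for every $v\neq 0$ since $\g$ is centerless. So the sought Killing field is not $X^{v}$: one must instead solve the first-order system $X^{w}\cdot h=[h,w]-K(h,w)$ for a $\Pmin$-equivariant $\g$-valued function $h$ with $h(\hat{x}_0)=v$, and the entire difficulty --- the content of the Melnick--Pecastaing Frobenius theorem --- is verifying the integrability conditions of this system. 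That is exactly where one needs the propagation of Killing generators of total order to nearby points (Pecastaing's Lemma 4.20), which holds precisely where the rank of $\Diff{}{\Ktot}$ is locally constant, together with the stabilization of the chain $\Kill^{1}\supseteq\Kill^{2}\supseteq\cdots$ after $m=\dim\g$ prolongations. Your sketch names these ingredients but does not carry out the step; it also does not address the fact that the leaf of $\Ker(\Diff{}{\Ktot})$ need not project onto a neighbourhood of $\pi(\hat{x}_0)$ in $M$, nor that the resulting field must preserve the datum $\varphi$ encoding $E^c$ and not merely $\omega$. The honest options are to cite \cite{pecastaing} and \cite{frances} as the paper does, or to reproduce their proof in full; as written, the core of the theorem remains unestablished.
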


Gromov investigates in \cite{gromov} the integration of ``jets'' of Killing fields for very general 
rigid geometric structures, and proves results related to the above Theorem.
In the case of three-dimensional enhanced Lagrangian contact structures, 
the equivalence with normal generalized Cartan geometries allows to avoid 
the notion of jets of Killing fields, replaced by the one of Killing generators of total order.
In this setting, Theorem \ref{theoremintegrabilite} is a consequence of \cite[Theorem 4.19]{pecastaing}.
We use here a modification of the statement of Pecastaing proved by Frances in \cite[Theorem 2.2]{frances}.
The proof of the statement of Frances
for \emph{generalized} Cartan geometries is straightforward by following the lines of the proof he does for Cartan geometries,
and using \cite[Lemma 4.20 and Lemma 4.9]{pecastaing}.
%

\section{Quasi homogeneity and flatness of the structure}\label{sectionSlochomsurOmega}

From now on and until Paragraph \ref{soussectionpreuvetheoremeA}, 
we are under the hypotheses of Theorem \ref{theoremeoptimal}
and we adopt its notations. 
$M$ is thus a three-dimensional compact connected and orientable manifold,
$\mathcal{S}=(E^\alpha,E^\beta,E^c)$ is an enhanced Lagrangian contact structure on $M$,
and we denote by $\Lm=(\Ealpha,\Ebeta)$ its underlying Lagrangian contact structure.
Finally, $f$ is an orientation-preserving automorphism of $(M,\Sm)$ such that:
\begin{itemize}
 \item each of the distributions $E^\alpha$ and $E^\beta$ is weakly contracted by $f$ (see Definition \ref{definitionweakly}),
 \item and $f$ has a dense orbit.
\end{itemize}
In particular, the non-wandering set $NW(f)=NW(f^{-1})$ equals $M$.
We recall that in this case, the set $\Rec(f)$ (respectively $\Rec(f^{-1})$)
of recurrent points of $f$ (respectively $f^{-1}$) is a dense $G_\delta$-subset of $M$.
Therefore, $\Rec(f)\cap\Rec(f^{-1})$ is dense in $M$ as well.

\subsection{Quasi homogeneity of the enhanced Lagrangian contact structure}

At a point $x\in M$, we introduce the subalgebra
\begin{equation}\label{equationalgebreisotropie}
\mathfrak{is}^{loc}_\Sm(x)=\enstq{X\in\mathfrak{kill}^{loc}_\Sm(x)}{X(x)=0}
\end{equation}
of local Killing fields vanishing at $x$, that we call the \emph{isotropy subalgebra} of $\Sm$.

\begin{definition}\label{definitionautlocorbitetlocalementhom}
The $\Kill^{loc}$-orbit (for $\Sm$, respectively $\Lm$)
of a point $x\in M$ is the set of points that can be reached from $x$
by flowing along finitely many local Killing fields of $\Sm$ (respectively $\Lm$).
An enhanced Lagrangian contact structure $(M,\Sm)$ (resp. a Lagrangian contact structure $(M,\Lm)$) is 
\emph{locally homogeneous} if any connected component of $M$ is a $\Kill^{loc}$-orbit.
\end{definition}

The first claim of the following Proposition is a consequence of Gromov's ``open-dense orbit theorem'', 
and the second one is a reformulation
in the context of enhanced Lagrangian contact structures 
of a work done by Frances in \cite[Proposition 5.1]{frances} for pseudo-Riemannian structures.
\begin{proposition}\label{corollairepremiereetapepreuve}
There exists an open and dense subset $\Omega$ of $M$,
such that the enhanced Lagrangian contact structure $\Sm$ is locally homogeneous
in restriction to $\Omega$.
Moreover for any $x\in\Omega$, the isotropy subalgebra $\mathfrak{is}^{loc}_\Sm(x)$ is non-trivial.
\end{proposition}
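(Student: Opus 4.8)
The plan is to deduce both assertions from the machinery of the total curvature map $\Ktot$ and the Integrability Theorem \ref{theoremintegrabilite}. First I would invoke Theorem \ref{theoremintegrabilite} to fix the open dense set $\Omega_0 = M^{int}$, on which $\Diff{\hat{x}}{\Ktot}$ has locally constant rank and every Killing generator of total order is integrable to a genuine local Killing field of $\Sm$. The key point to establish is that the orbits of the pseudo-group of local Killing fields are open in $\Omega_0$ (or on a suitable dense open subset). This is exactly the content of Gromov's open-dense orbit theorem: the local Killing algebra $\mathfrak{kill}^{loc}_\Sm(x)$ acts on a neighbourhood of $x$, and where the dimension of the span of the evaluations $\{X(x) \mid X \in \mathfrak{kill}^{loc}_\Sm(x)\}$ is locally maximal, the corresponding $\Kill^{loc}$-orbit is open. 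Restricting to the (open, dense) locus $\Omega$ where this rank is locally constant \emph{and} where $\hat{x} \in \hat{M}^{int}$ gives a set on which each point has an open $\Kill^{loc}$-orbit; since the orbits then partition $\Omega$ into open sets and $M$ (hence each component of $\Omega$) is connected-ish, one gets that $\Sm$ is locally homogeneous in restriction to $\Omega$ after possibly passing to the union of the maximal-dimensional orbits, which is open and dense. This is essentially a citation to \cite{gromov} combined with Theorem \ref{theoremintegrabilite}.

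For the second assertion — non-triviality of $\mathfrak{is}^{loc}_\Sm(x)$ on $\Omega$ — I would run the argument that Frances gives in \cite[Proposition 5.1]{frances}, transposed to the enhanced Lagrangian contact setting. The idea is a dimension count. Suppose for contradiction that $\mathfrak{is}^{loc}_\Sm(x) = 0$ at some $x \in \Omega$; by local homogeneity the evaluation map $X \mapsto X(x)$ from $\mathfrak{kill}^{loc}_\Sm(x)$ to $\Tan{x}{M}$ is then an isomorphism, so $\dim \mathfrak{kill}^{loc}_\Sm(x) = 3$ and $\Sm$ is locally \emph{freely} homogeneous near $x$: $M$ carries locally a three-dimensional Lie group acting simply transitively and preserving $\Sm$. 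This is the situation to exclude, and here one uses the recurrence of $f$: since $f$ has a dense orbit and $\Omega$ is open dense, $f^{-1}$-recurrent and $f$-recurrent points are dense, and $f$ conjugates local Killing structures, so the isotropy dimension is an $f$-invariant, upper-semicontinuous integer-valued function; combined with the weak contraction of $E^\alpha$ and $E^\beta$ along a recurrence sequence $f^{n_k} \to \id$ (in the appropriate sense on the Cartan bundle), one produces a non-trivial element of the isotropy — morally, the ``derivative'' of the recurrence sequence, which must be a non-identity element of $\Pmin$ fixing a point of $\hat{M}$ and preserving $\omega$, hence generating an isotropy Killing field. More precisely, one lifts $f$ to $\hat{f}$ on $\hat{M}$, picks $\hat{x}$ over an $f$-recurrent point in $\Omega$, extracts $\hat{f}^{n_k}(\hat{x}\cdot p_k) \to \hat{x}$ with $p_k \in \Pmin$, and shows the limiting ``holonomy'' $p_\infty = \lim p_k$ is non-trivial using that $E^\alpha, E^\beta$ are weakly contracted (so the differential of $f^{n_k}$ is degenerating, forcing $p_k$ to leave every compact subset unless the rank of $\Ktot$ drops — contradiction on $\Omega$); the element $p_\infty$, or a one-parameter subgroup it lies on after relabelling, then yields via the integrability locus a nonzero local Killing field vanishing at $x$.

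The main obstacle I expect is the second part: carefully extracting a non-trivial isotropy element from the recurrence of $f$. One has to control the lift of the recurrence sequence to the Cartan bundle, show the correcting elements $p_k \in \Pmin$ can be taken with a non-trivial limit (this is where weak contraction of $E^\alpha$ and $E^\beta$ is essential — it forces the sequence $\Diff{x}{f^{n_k}}$ to degenerate, so the $p_k$ cannot converge to the identity), and then convert that limit into an honest local Killing field using that the base point lies in the integrability locus $M^{int}$ where Killing generators integrate. The first part, by contrast, is a fairly direct application of Gromov's theorem together with the already-stated Integrability Theorem \ref{theoremintegrabilite}, and I would present it as such, citing \cite{gromov} and referring back to \cite[Proposition 5.1]{frances} for the second part with the indication of how the pseudo-Riemannian argument transfers verbatim to the rigid geometric structure $\Sm$.
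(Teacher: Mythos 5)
Your overall architecture coincides with the paper's: the first assertion is obtained by citing Gromov's open--dense orbit theorem (the paper invokes \cite[Corollary 3.3.A]{gromov}, with \cite[Theorem 4.13]{pecastaing} for the Cartan-geometric version, and does not even need the integrability locus for this part), and the second assertion is obtained by transposing \cite[Proposition 5.1]{frances}, picking a point $x\in\Omega\cap M^{int}\cap\Rec(f)\cap\Rec(f^{-1})$ and exploiting the recurrence of $f^{n_k}$ together with the weak contraction of $E^\alpha$ on the Cartan bundle. So the sources and the strategy are the right ones.

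However, your sketch of the key step of the second part is wrong as written, and would fail if executed literally. You propose to show that ``the limiting holonomy $p_\infty=\lim p_k$ is non-trivial'' and to extract the isotropy element from $p_\infty$. But the whole point of the argument is that the sequence $p_k\in\Pmin$ has \emph{no} limit: the weak contraction of $E^\alpha$ forces $\hat f^{n_k}(\hat x)$ to leave every compact subset of $\hat M$ (otherwise $f^{n'_k}$ would converge in the $\mathcal{C}^\infty$ topology, since $\hat f$ preserves the parallelism $\omega$, contradicting $\norme{\Diff{x}{f^{n_k}}\restreinta_{E^\alpha}}\to 0$ --- note this has nothing to do with the rank of $\Ktot$ dropping), hence $p_k$ leaves every compact subset of $\Pmin$. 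The non-trivial isotropy element is then produced not from a limit of the $p_k$ but from the \emph{non-compactness of the stabilizer} $\Stab_{\Pmin}(\Ktot(\hat x))$: since $p_k\cdot\Ktot(\hat x)\to\Ktot(\hat x)$ and the $\Pmin$-action on $W_{\Ktot}$ is algebraic, the orbit of $\Ktot(\hat x)$ is locally closed, so one may write $p_k=\varepsilon_k s_k$ with $\varepsilon_k\to e$ and $s_k\in\Stab_{\Pmin}(\Ktot(\hat x))$; as $s_k$ escapes every compact set and the stabilizer is algebraic (hence has finitely many components), its identity component is non-compact and its Lie algebra contains a non-zero $v\in\pmin$, which is a Killing generator of total order at $\hat x$ and integrates to a non-zero local Killing field vanishing at $x$ because $\hat x\in\hat M^{int}$. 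Your proposal needs this stabilizer/algebraicity step spelled out in place of the non-existent limit $p_\infty$; once that is corrected, the rest (propagating non-triviality of the isotropy to all of $\Omega$ by local homogeneity) is exactly as in the paper.
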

\begin{proof}
Since $\Sm$ has an automorphism $f$ with a dense orbit, 
Gromov's dense orbit theorem directly implies the first claim (see \cite[Corollary 3.3.A]{gromov},
and \cite[Theorem 4.13]{pecastaing} for a proof in the case of generalized Cartan geometries).
Since the integrability locus $M^{int}$ is open and dense (see Theorem \ref{theoremintegrabilite}),
and $\Rec(f)\cap\Rec(f^{-1})$ is dense in $M$,
there finally exists a point $x\in\Omega\cap M^{int}\cap \Rec(f)\cap \Rec(f^{-1})$. We show now that 
$\mathfrak{is}^{loc}_\Sm(x)$ is non-zero. 
\par Let us denote by $(\hat{M},\omega,\varphi)$ the normal 
generalized Cartan geometry of $\Sm$ (see Definition \ref{definitiongeomCartangeneralisee}),
and choose a lift $\hat{x}\in\pi^{-1}(x)$ in the Cartan bundle.
Possibly replacing $f$ by $f^{-1}$, we have $\underset{n\to+\infty}{\lim}\norme{\Diff{x}{f^n}\restreinta_{E^\alpha}}=0$, and
by hypothesis on $x$, there exists a strictly increasing sequence $n_k$ of integers such that $f^{n_k}(x)$ converges to $x$,
implying the existence of a sequence $p_k\in\Pmin$ such that $\hat{f}^{n_k}(\hat{x})\cdot p_k^{-1}$ converges to $\hat{x}$.
We claim that the sequence $\hat{f}^{n_k}(\hat{x})$ has to leave every compact subset of $\hat{M}$,
implying that $p_k$ also leaves every compact subset of $\Pmin$.
In fact, if not, some subsequence $(\hat{f}^{n'_k}(\hat{x}))$ converges in $\hat{M}$, 
implying that $(\hat{f}^{n'_k})$ converges to some diffeomorphism of $\hat{M}$ for the $\cc^\infty$-topology, 
because $\hat{f}$ preserves the parallelism defined by $\omega$
(see \cite[Theorem \MakeUppercase{\romannumeral1}.3.2]{kobayashi}). 
Therefore, $(f^{n'_k})$ also converges for the $\cc^\infty$-topology to some diffeomorphism of $M$, 
contradicting 
$\underset{k\to+\infty}{\lim}\norme{\Diff{x}{f^{n'_k}}\restreinta_{E^\alpha}}=0$. 
\par The sequel of the proof of \cite[Proposition 5.1]{frances} will enable us to conclude, using the total curvature 
$\Ktot\colon\hat{M}\to W_{\Ktot}$
of the generalized Cartan geometry associated to $\Sm$ (see Paragraph \ref{soussoussectiontheoriedeGromov}).
By $\Pmin$-equivariance of the total curvature and its invariance by automorphisms,
$p_k\cdot \Ktot(\hat{x})=\Ktot(\hat{f}^{n_k}(\hat{x})\cdot p_k^{-1})$
converges to $\Ktot(\hat{x})$.
The manifold $W_{\Ktot}$ has a canonical structure of algebraic variety for which the action of $\Pmin$ is algebraic
(because its action on the space $W_K$ of normal curvatures and on the algebraic variety 
$\mathbb{V}\subset\mathbf{P}(\g/\pmin)$ are algebraic, see \cite[Remark 4.16]{pecastaing} for more details).
Therefore, the orbits of the action of $\Pmin$ on $W_{\Ktot}$ are locally closed, and are thus imbedded submanifolds. 
In particular, there exists a sequence $\varepsilon_k\in \Pmin$ converging to the identity and such that
$p_k\cdot\Ktot(\hat{x})=\varepsilon_k\cdot\Ktot(\hat{x})$,
\emph{i.e.} such that $\varepsilon_k^{-1}p_k\in\Stab_{\Pmin}(\Ktot(\hat{x}))$.
As $\varepsilon_k^{-1}p_k$ leaves every compact subset of $\Pmin$,
$\Stab_{\Pmin}(\Ktot(\hat{x}))<\Pmin$ is non-compact.
But $\Stab_{\Pmin}(\Ktot(\hat{x}))$ is an algebraic subgroup of $\Pmin$
and has thus a finite number of connected components, finally implying that
its identity component is also non-compact.
\par There exists thus a non-zero vector $v\in\pmin$ in the Lie algebra of $\Stab_{\Pmin}(\Ktot(\hat{x}))$. 
For any $t\in\R$ we have by hypothesis
$\Ktot(\hat{x}\cdot\exp(tv))=\Ktot(\hat{x})\cdot \exp(tv)=\Ktot(\hat{x})$, and
deriving this equality at $t=0$ we obtain 
$\Diff{\hat{x}}{\Ktot}(\omega_{\hat{x}}^{-1}(v))=0$, \emph{i.e.} $v\in\omega_{\hat{x}}(\Ker(\Diff{\hat{x}}{\Ktot}))=\Kill^{tot}(\hat{x})$.
As $\hat{x}$ is in the integrability locus $\hat{M}^{int}$ of $\hat{M}$, 
there exists a local Killing field $X\in\mathfrak{kill}_\Sm^{loc}(x)$
such that $\omega_{\hat{x}}(\hat{X}_{\hat{x}})=v\neq 0$, implying in particular that $X\neq 0$ and
$X(x)=0$, \emph{i.e.} that $X\in\mathfrak{is}_\Sm^{loc}(x)\setminus\{0\}$.
\par The isotropy subalgebra at any point $y\in\Omega$ being linearly isomorphic to the one at $x$
because $\Omega$ is an $\Aut^{loc}$-orbit,
$\mathfrak{is}^{loc}_\Sm(y)$ is finally non-zero at any point $y\in\Omega$, which finishes the proof of the corollary.
\end{proof}

%

\subsection{Flatness of the Lagrangian contact structure}\label{section3dhomlagcontact}

In particular, the underlying Lagrangian contact structure $\Lm=(\Ealpha,\Ebeta)$ is also locally homogeneous 
with non-zero isotropy in restriction to the open and dense subset $\Omega$. 
The following result due to Tresse in \cite{tresse} (see also \cite[\S 4.5.2]{gapphenomenon})
implies that $\Lm\restreinta_\Omega$ is flat.

\begin{theorem}[Tresse \cite{tresse}]\label{theorem3dlochomlagcontact}
 Any three-dimensional locally homogeneous connected Lagrangian contact structure with non-zero isotropy is flat.
\end{theorem}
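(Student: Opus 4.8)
The plan is to work with the normal Cartan geometry $\mathcal{C}=(\hat M,\omega)$ associated to $\Lm$ and exploit the combination of local homogeneity and non-zero isotropy to force the harmonic part of the curvature to vanish, after which Proposition \ref{theoremcourbureharmoniquelagcontact} finishes the argument. First I would fix a point $x$ in the locally homogeneous region and choose a lift $\hat x\in\pi^{-1}(x)$. Since the structure is locally homogeneous, the Killing fields act transitively on a neighbourhood of $x$, so the orbit of $\hat x$ under (the lifts of) local Killing fields is open in $\hat M$; in particular the curvature map $K\colon\hat M\to W_K$ has locally constant rank near $\hat x$, and its image near $\hat x$ is a single $\Pmin$-orbit inside the $\Kill^{loc}$-orbit closure. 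The non-zero isotropy hypothesis supplies, via Lemma \ref{lemmaautlocauxactionlibresurMhat}, a non-zero vector $v\in\pmin$ lying in $\Kill^{tot}(\hat x)$, hence in particular annihilating $K$ at $\hat x$: $\Diff{\hat x}{K}(\omega_{\hat x}^{-1}(v))=0$. Combined with $\Pmin$-equivariance of $K$, this says $K(\hat x)$ is fixed by the one-parameter subgroup $\exp(tv)$, i.e. the stabiliser $\Stab_{\Pmin}(K(\hat x))$ has non-trivial Lie algebra.

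Next I would run a case analysis on which non-zero $v\in\pmin$ can stabilise a point of $W_K$ under the action \eqref{equationdefinitionleftactionPminW}, using the explicit weights \eqref{equationactionPminmodulecourbure}: the $\cdot_\alpha$ component has weight $(a,b)\mapsto a^{-1}b^{-5}$ and the $\cdot_\beta$ component weight $a^5b$ (and one needs the analogous but unwritten weights for $K^\alpha,K^\beta$). Decomposing $\pmin=\mathfrak a\oplus\mathfrak n$ into its Cartan and nilpotent parts, the key point is that no non-zero element of $\mathfrak a$ fixes a curvature with $(K_\alpha,K_\beta)\neq 0$, because the two characters $a^{-1}b^{-5}$ and $a^5 b$ are linearly independent as characters of the two-dimensional torus, so their common kernel in $\mathfrak a$ is zero; hence a torus direction in the isotropy already forces $K_\alpha=K_\beta=0$. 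The genuinely delicate case is when the isotropy Lie algebra meets $\mathfrak n$: a nilpotent Killing generator can in principle fix a non-flat curvature, and one must show that transitivity of the Killing algebra (local homogeneity) rules this out. The mechanism I would use: if the isotropy at $x$ is unipotent, then as we move along the open Killing-orbit the conjugates of this unipotent isotropy are carried around, and the $\Pmin$-equivariance of $\Ktot$ together with properness/algebraicity considerations (as in the proof of Proposition \ref{corollairepremiereetapepreuve}) forces the curvature to be constant along the orbit and its $\Pmin$-orbit to be that of a point with an actual torus in its stabiliser — one can e.g. argue that a locally homogeneous structure whose curvature orbit is a single unipotent orbit would have a Killing algebra too small to be transitive, a dimension count on $\hat M$ via the integrability locus.

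The main obstacle I expect is precisely this unipotent case: ruling out a non-flat locally homogeneous Lagrangian contact structure whose only isotropy is unipotent. I would handle it by the dimension bookkeeping of Gromov theory already set up in the excerpt. At a point $\hat x$ in the integrability locus with open $\Kill^{loc}$-orbit, $\dim\Kill^{tot}(\hat x)=\dim\mathfrak{kill}^{loc}_\Sm(x)$ and this algebra acts with open orbit on the eight-dimensional $\hat M$, so $\dim\mathfrak{kill}^{loc}\ge 8$, i.e. $\mathfrak{kill}^{loc}\cong\g$ and the structure is infinitesimally that of the flat model — whence flat. (Alternatively, one shows directly that $\dim\mathfrak{is}^{loc}_\Sm(x)\ge 1$ plus transitivity forces $\dim\mathfrak{kill}^{loc}_\Sm(x)\ge\dim M+1=4$, then that $4$-dimensional isotropy-containing subalgebras of $\g$ stabilising a normal curvature must stabilise a flat one, by the weight computation above applied now to a two-dimensional or larger isotropy.) In all cases one concludes $K_\alpha\equiv K_\beta\equiv 0$ on the open orbit, hence on $\Omega$, and then Proposition \ref{theoremcourbureharmoniquelagcontact} gives that $\Lm\restreinta_\Omega$ is flat, which is the assertion of Theorem \ref{theorem3dlochomlagcontact}. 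Since the paper attributes the statement to Tresse, I would also remark that this is a classical normal-form computation in the geometry of second-order ODEs and cite \cite{tresse}, \cite{gapphenomenon} for the details of the weight analysis rather than reproduce them.
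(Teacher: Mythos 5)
Your overall skeleton --- pass to the normal Cartan geometry, use the non-zero isotropy to produce a non-zero $v\in\pmin$ stabilizing the curvature, and read off $K_\alpha=K_\beta=0$ from the weights \eqref{equationactionPminmodulecourbure} before invoking Proposition \ref{theoremcourbureharmoniquelagcontact} --- is exactly how the paper begins. But there is a genuine gap at the central step. You claim that a non-zero semisimple direction in the isotropy already forces $K_\alpha=K_\beta=0$ because the two characters $a^{-1}b^{-5}$ and $a^5b$ have trivial common kernel in $\mathfrak a$. That argument only applies if \emph{both} components are non-zero: if, say, $K_\beta=0$ but $K_\alpha\neq 0$, the stabilizer of $K(\hat x)$ in $\mathfrak a$ contains the whole line $\{b=-5a\}$ (the kernel of the single character acting on the $\alpha$-component). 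So a one-dimensional isotropy whose holonomy has semisimple part on one of the two resonant lines $b=-5a$ or $a=-5b$ is \emph{not} excluded by the weight computation, and this --- not the unipotent case --- is where the real difficulty lies. The paper spends the bulk of its proof there: after normalizing $\mathfrak{i}=\R V$ with $\phi(V)$ resonant, it uses the eigenvalues $A=-a-2b$, $B=2a+b$ of $\overline{\ad}(V)$ on $\mathfrak{h}/\mathfrak{i}$, the Jacobi identity and $\dim\mathfrak{h}=4$ to deduce $[X,[X,Y]]=[Y,[X,Y]]=0$, so that $\Vect(X,Y,[X,Y])\simeq\heis{3}$ acts locally simply transitively (integrated via Palais' theorem), and then proves separately (Lemma \ref{lemmastructHeis3invarplates}) that every left-invariant Lagrangian contact structure on $\Heis{3}$ is flat by exhibiting it inside $(\X,\Lx)$. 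None of this is replaced by your sketch.

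Two further points. First, your treatment of the unipotent case is only a heuristic; the paper's actual argument there is a short bracket computation: given $X\in\mathfrak{i}$ with $\omega_{\hat x}(\hat X_{\hat x})\in\heis{3}$, local homogeneity provides $Y\in\mathfrak{h}$ with prescribed negative part, and the Killing identity \eqref{equationchampsdeKilling} shows $[X,Y]\in\mathfrak{i}$ with a non-resonant diagonal part, reducing to the easy case. Second, the dimension count you propose as a fallback is incorrect: local homogeneity means the Killing algebra is transitive on the three-dimensional base, not on the eight-dimensional bundle $\hat M$, so one only gets $\dim\mathfrak{kill}^{loc}\geq 4$, and the non-flat configuration you must exclude lives precisely at $\dim\mathfrak{h}=4$.
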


By density of $\Omega$ and continuity of the curvature, the Lagrangian contact structure $(M,\Lm)$ is therefore flat, and
according to Paragraph \ref{soussoussectionflatlagcontact}, we obtain the following.

\begin{corollary}\label{corollaryLGXstructure}
The Lagrangian contact structure $\Lm$ is
described by a $(\G,\X)$-structure on $M$.
\end{corollary}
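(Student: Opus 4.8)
The statement is essentially a corollary of the results just established, so the plan is short: propagate flatness from the dense open set $\Omega$ to all of $M$ by continuity, then invoke the fundamental theorem linking flat Lagrangian contact structures with $(\G,\X)$-structures. First I would recall from Proposition \ref{corollairepremiereetapepreuve} that, in restriction to $\Omega$, the structure $\Lm=(\Ealpha,\Ebeta)$ is locally homogeneous with non-trivial isotropy at every point. Applying Tresse's Theorem \ref{theorem3dlochomlagcontact} on each connected component of $\Omega$ then yields that $\Lm\restreinta_\Omega$ is flat, \emph{i.e.} the curvature map $K$ of the normal Cartan geometry of $\Lm$ vanishes identically on $\pi^{-1}(\Omega)\subset\hat{M}$.

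Next, since $K$ is smooth and hence continuous, and $\pi^{-1}(\Omega)$ is dense in $\hat{M}$ (because $\Omega$ is dense in $M$ and $\pi$ is a submersion), the vanishing of $K$ on $\pi^{-1}(\Omega)$ forces $K\equiv 0$ on all of $\hat{M}$. Thus $(M,\Lm)$ is a flat three-dimensional Lagrangian contact structure. Finally, Theorem \ref{theoremecourburenulleimpliqueGXstructure} applies directly and shows that $\Lm$ is induced by a $(\G,\X)$-structure on $M$, as claimed.

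There is no genuinely hard step in this corollary: all the substantial work is contained in Proposition \ref{corollairepremiereetapepreuve} (Gromov's open-dense orbit theorem together with the integrability machinery of Paragraph \ref{soussoussectiontheoriedeGromov}) and in Tresse's classification Theorem \ref{theorem3dlochomlagcontact}. The only point deserving a word of care is the density argument propagating the vanishing of the curvature off $\Omega$, which is immediate from smoothness of $K$.
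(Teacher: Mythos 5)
Your proof is correct and follows exactly the same route as the paper: local homogeneity with non-trivial isotropy of $\Sm$ (hence of $\Lm$) on $\Omega$ from Proposition \ref{corollairepremiereetapepreuve}, Tresse's Theorem \ref{theorem3dlochomlagcontact} to get flatness on $\Omega$, density of $\Omega$ together with continuity of the curvature to extend flatness to all of $M$, and finally Theorem \ref{theoremecourburenulleimpliqueGXstructure}. No discrepancies to report.
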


The rest of this paragraph is devoted to give a self-contained proof of Tresse's Theorem \ref{theorem3dlochomlagcontact}. 
We consider a locally homogeneous Lagrangian contact structure $\Lm$ with non-zero isotropy defined on a three-dimensional
connected manifold $M$.
We denote by $(M,\mathcal{C})=(M,\hat{M},\omega)$ the normal Cartan geometry 
of $\mathcal{L}$, and by $K\colon\hat{M}\to W_K$ its curvature map.
Choosing $x\in M$ and $\hat{x}\in\hat{M}$, it suffices to prove that $K(\hat{x})=0$ by local homogeneity of $\mathcal{C}$. 
We will denote by
\[
\mathfrak{h}=\mathfrak{kill}^{loc}_\mathcal{L}(x) \text{~and~} \mathfrak{i}=\mathfrak{is}^{loc}_\mathcal{L}(x)
\]
the algebra of local Killing fields of $\Lm$ at $x$ and its isotropy subalgebra.
As $\Lm$ is locally homogeneous, $\ev_x(\mathfrak{h})\coloneqq\enstq{X(x)}{X\in\mathfrak{h}}=\Tan{x}{M}$,
and in particular $\dim\mathfrak{h}-\dim\mathfrak{i}=3$.
The following result gives us a sufficient condition for the vanishing of the curvature.
\begin{lemma}\label{lemmaconditionisotropieannulationcourbure}
Let $f$ be a local automorphism of a locally homogeneous three-dimensional Lagrangian contact structure $(M,\Lm)$
fixing a point $x\in M$, let $\hat{x}\in\pi^{-1}(x)$ be a lift of $x$ in the normal Cartan bundle of $\Lm$,
and let $p\in\Pmin$ be the 
\emph{holonomy of $\hat{f}$ at $\hat{x}$}, characterized by
$
\hat{f}(\hat{x})=\hat{x}\cdot p^{-1}.
$
If $p=\exp(v)$ with
\begin{equation}\label{equationformeholonomiesannulantK}
v=
\begin{pmatrix}
a & * & * \\
0 & -a-b & * \\
0 & 0 & b
\end{pmatrix}\in\pmin
\text{such that~}
b\neq -5a \text{~and~} a\neq -5b,
\end{equation}
then $\Lm$ is flat.
\end{lemma}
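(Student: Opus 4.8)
The plan is to exploit the two equivariance properties of the curvature: $K$ is $\Pmin$-equivariant (formula \eqref{equationdefinitionleftactionPminW}) and invariant under local automorphisms ($K\circ\hat f=K$). Evaluating the automorphism relation at $\hat x$ and using the holonomy $\hat f(\hat x)=\hat x\cdot p^{-1}$ gives
\[
K(\hat x)=K(\hat f(\hat x))=K(\hat x\cdot p^{-1})=p\cdot K(\hat x),
\]
so $K(\hat x)$ is a fixed point of the linear action of $p=\exp(v)$ on the space $W_K$ of normal curvatures. Differentiating along the one-parameter subgroup $\exp(tv)$, this says that $K(\hat x)$ lies in the kernel of the infinitesimal action of $v$ on $W_K$, i.e. $v$ annihilates $K(\hat x)$ for the induced $\pmin$-action. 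The whole point will then be to read off, from the weights of this $\pmin$-action on $W_K$, that the only way a diagonal element $v=\mathrm{diag}(a,-a-b,b)$ can fix a nonzero curvature is if one of the resonance relations $b=-5a$ or $a=-5b$ holds.

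Concretely, the first step is to compute the weights of the maximal torus of $\Pmin$ on $W_K$. The space $W_K$ is four-dimensional with coordinates $K^\alpha,K_\alpha,K^\beta,K_\beta$ (see \eqref{equationmodulecourburesnormales}), and the action of a diagonal $p=\mathrm{diag}(a,a^{-1}b^{-1},b)$ is already partially recorded: by \eqref{equationactionPminmodulecourbure} we have $(p\cdot K)_\alpha=a^{-1}b^{-5}K_\alpha$ and $(p\cdot K)_\beta=a^5b K_\beta$. I would carry out the analogous computation for the two remaining components $K^\alpha$ and $K^\beta$ using the adjoint matrix \eqref{actionadjointePmin} and the definition \eqref{equationdefinitionleftactionPminW}; these are routine matrix multiplications. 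Writing $p=\exp(v)$ with $v=\mathrm{diag}(a,-a-b,b)$ (so that in the notation of \eqref{equationactionPminmodulecourbure} the torus parameter is $(\mathrm{e}^a,\mathrm{e}^b)$), the four eigenvalues of the infinitesimal action of $v$ on $W_K$ become explicit linear forms in $(a,b)$: for the components already given, $K_\alpha$ has weight $-a-5b$ and $K_\beta$ has weight $5a+b$; the other two, $K^\alpha$ and $K^\beta$, will produce two further linear forms which I expect (from the structure of the two-graded Lie algebra $\slR{3}$ and the fact that $K^\alpha,K^\beta$ correspond to the harmonic curvature $W_H$) to be $-5a-b$ after subtracting... — in any case the precise forms come straight out of the computation.

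The main obstacle — really the only non-formal point — is organizing this weight computation cleanly and then checking that \emph{all four} weight forms are nonzero under the hypothesis $b\neq -5a$ and $a\neq -5b$ (together with the standing relation that $v\in\pmin$ is traceless, so the diagonal part is genuinely parametrized by $(a,b)$). If the $v$-action on $W_K$ is semisimple with these four nonzero weights, then its kernel is $0$, forcing $K(\hat x)=0$, and flatness follows from local homogeneity (the curvature vanishes at one point of each $\Kill^{loc}$-orbit, hence everywhere by equivariance and connectedness) — or, if it turns out that only the harmonic components $K^\alpha,K^\beta$ survive, one invokes Proposition \ref{theoremcourbureharmoniquelagcontact} to conclude flatness. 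A subtlety to handle carefully: $p$ need not be diagonal, only diagonalizable-with-$v$-of-the-displayed-form; but since $\exp$ intertwines the adjoint and the $W_K$-actions, and the off-diagonal part of $v$ contributes nilpotently, it suffices to work with the semisimple (diagonal) part of $v$ — the nilpotent part can only further shrink the fixed-point space, never enlarge it. That reduction is the one place where I would be careful to write the argument out rather than wave at it.
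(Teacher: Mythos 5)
Your starting identity $p\cdot K(\hat x)=K(\hat f(\hat x))=K(\hat x)$ is exactly the paper's, and your fallback branch is exactly the paper's proof; but your primary plan would not survive the computation you defer. The hypothesis $b\neq-5a$, $a\neq-5b$ is calibrated to precisely \emph{two} of the four components: by \eqref{equationactionPminmodulecourbure}, for $p=\exp(v)$ with diagonal part $\mathrm{diag}(\mathrm{e}^a,\mathrm{e}^{-a-b},\mathrm{e}^b)$ one gets $(p\cdot K)_\alpha=\mathrm{e}^{-(a+5b)}K_\alpha$ and $(p\cdot K)_\beta=\mathrm{e}^{5a+b}K_\beta$, so the fixed-point equation forces $K_\alpha(\hat x)=K_\beta(\hat x)=0$ and nothing more. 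The weights on the remaining components $K^\alpha,K^\beta$ come out to linear forms of the type $a-4b$ and $4a-b$, which are \emph{not} excluded by the hypothesis (take $a=4b\neq0$: then $a+5b=9b\neq0$ and $5a+b=21b\neq0$, yet the $K^\alpha$-weight vanishes), so the fixed-point space of $\exp(v)$ in $W_K$ need not be zero and you cannot conclude $K(\hat x)=0$ directly. You also have the harmonic labels backwards: it is $K_\alpha,K_\beta$ that encode the harmonic curvature and are killed by the weight argument, while $K^\alpha,K^\beta$ are the survivors that must be handled by the Bianchi identity via Proposition \ref{theoremcourbureharmoniquelagcontact}.

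Two smaller points. First, before invoking Proposition \ref{theoremcourbureharmoniquelagcontact} you need $K_\alpha=K_\beta=0$ on a neighbourhood, not just at $\hat x$; this is where local homogeneity together with the $\Pmin$-invariance of $W_H$ enters (the paper propagates the pointwise vanishing to a neighbourhood first, then applies the proposition, then uses local homogeneity once more to get flatness everywhere). Second, your worry about the non-diagonal part of $v$ is moot here: \eqref{equationactionPminmodulecourbure} is stated for an arbitrary upper-triangular $p\in\Pmin$ and already shows that the $K_\alpha$- and $K_\beta$-components transform by the diagonal characters alone, so no reduction to the semisimple part is needed for the two components that actually matter.
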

\begin{proof}
Since the curvature $K$ is preserved by $\hat{f}$
and $\Pmin$-equivariant (see paragraph \ref{soussoussectionSL3PminCartangeometries}), 
we obtain $p\cdot K(\hat{x})=K(\hat{x}\cdot p^{-1})=K(\hat{f}(\hat{x}))=K(\hat{x})$,
where the holonomy $p$ is of the form
\[
p=\begin{pmatrix}
\lambda & * & * & \\
0 & \lambda^{-1}\mu^{-1} & * \\
0 & 0 & \mu
\end{pmatrix}
\]
with 
$\mu\neq\lambda^{-5}$ and $\lambda\neq\mu^{-5}$ by hypothesis.
According to the expression of the components $(p.K)_\alpha$ 
and $(p.K)_\beta$ of the curvature given in \eqref{equationactionPminmodulecourbure}, we have
$\lambda\mu^5K(\hat{x})_\alpha=K(\hat{x})_\alpha$ and $\lambda^5\mu K(\hat{x})_\beta=K(\hat{x})_\beta$,
implying $K(\hat{x})_\alpha=K(\hat{x})_\beta=0$. 
The  structure being locally homogeneous and the subspace $W_H=\enstq{K\in W_K}{K_\alpha=K_\beta=0}$ being $\Pmin$-invariant,
$K$ has values in $W_H$ on a neighbourhood of $\hat{x}$,
and therefore $K=0$ on this neighbourhood according to Proposition \ref{theoremcourbureharmoniquelagcontact}.
By local homogeneity, $\Lm$ is flat.
\end{proof}

We introduce the Cartan subalgebra $\mathfrak{a}\simeq\R^2$ of diagonal matrices of $\pmin$,
and the projection  $p\colon\pmin\to\mathfrak{a}$ on $\mathfrak{a}$ parallel to $\heis{3}$,
which is a Lie algebra morphism.
The following linear map will play an important role in the proof:
\begin{equation*}\label{equationdefinitionphipreuvetresse}
\phi\colon X\in\mathfrak{i}\mapsto p(\omega_{\hat{x}}(\hat{X}_{\hat{x}}))\in\mathfrak{a}.
\end{equation*}
\begin{fact}\label{factconditionsuffisanteannulationcourbure}
If there exists $X\in\mathfrak{i}$ such that $\phi(X)$ satisfies the hypotheses \eqref{equationformeholonomiesannulantK}
of Lemma \ref{lemmaconditionisotropieannulationcourbure}, then $\Lm$ is flat.
\end{fact}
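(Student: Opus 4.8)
The plan is to reduce the claim directly to the criterion already isolated in Lemma~\ref{lemmaconditionisotropieannulationcourbure}, using the element $X$ provided by the hypothesis. Suppose $X\in\mathfrak{i}$ is such that $\phi(X)=p(\omega_{\hat{x}}(\hat{X}_{\hat{x}}))$ satisfies the conditions \eqref{equationformeholonomiesannulantK}. Since $X$ is a local Killing field of $\Lm$ vanishing at $x$, its flow $(\varphi^t_X)$ consists of local automorphisms of $\Lm$ fixing $x$ for every $t$ near $0$. I would then apply Lemma~\ref{lemmaconditionisotropieannulationcourbure} with $f=\varphi^t_X$ for a suitable value of $t$: the holonomy of $\widehat{\varphi^t_X}$ at $\hat{x}$ is some $p_t\in\Pmin$ with $\widehat{\varphi^t_X}(\hat{x})=\hat{x}\cdot p_t^{-1}$, and the point is to identify $p_t$.

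The key step is to check that $p_t=\exp(-t\,\omega_{\hat{x}}(\hat{X}_{\hat{x}}))$, or at least that $p_t$ lies in the one-parameter subgroup $\exp(t w)$ with $w=\omega_{\hat{x}}(\hat{X}_{\hat{x}})\in\pmin$ (the sign being irrelevant for the conclusion). This follows from the description of Killing fields through the Cartan geometry recalled in Section~\ref{soussectiongeomcartanassociee}: the lift $\hat{X}$ is the unique $\Pmin$-invariant, $\omega$-preserving vector field on $\hat{M}$ projecting to $X$, and since $X(x)=0$ the vector $\hat{X}_{\hat{x}}$ is tangent to the fiber $\pi^{-1}(x)$, hence equals $w^\dag(\hat{x})$ for $w=\omega_{\hat{x}}(\hat{X}_{\hat{x}})$ by property~(2) of Definition~\ref{definitionSL3PminCartangeometrie}. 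Because $\hat{X}$ is tangent to the fibers at every point of $\pi^{-1}(x)$ (using $\Pmin$-invariance of $\hat X$ together with $\overline{\Ad}$-equivariance of $\hat X_{\hat x}=w^\dag(\hat x)$ along the fiber) and its flow preserves $\omega$, one checks that the flow of $\hat X$ restricted to $\pi^{-1}(x)$ is exactly the right action of $\exp(tw)$, so $\widehat{\varphi^t_X}(\hat{x})=\hat{x}\cdot\exp(tw)$, giving $p_t=\exp(-tw)$.

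Granting this, write $w=\big(\begin{smallmatrix} a & * & * \\ 0 & -a-b & * \\ 0 & 0 & b\end{smallmatrix}\big)$; then $-tw$ has the same shape with diagonal entries $(-ta,ta+tb,-tb)$, and the hypothesis that $\phi(X)=p(w)$ has diagonal part satisfying $b\neq-5a$ and $a\neq-5b$ is scale-invariant, so $-tw$ still satisfies \eqref{equationformeholonomiesannulantK} for every $t\neq 0$ (the conditions are on ratios of the diagonal entries). Choosing any small $t\neq 0$, Lemma~\ref{lemmaconditionisotropieannulationcourbure} applied to $f=\varphi^t_X$ with holonomy $p_t=\exp(-tw)$ then yields that $\Lm$ is flat, which is exactly the statement of the Fact.

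The main obstacle is the bookkeeping in the middle step: making precise that the holonomy of the time-$t$ flow of a Killing field vanishing at $x$ is the one-parameter subgroup generated by $\omega_{\hat{x}}(\hat{X}_{\hat{x}})$. This is standard in the Cartan-geometric formalism — it is essentially the statement that the isotropy representation of an isotropy Killing field is its $\omega$-coordinate at the chosen lift — but it requires care with which side the $\Pmin$-action acts on and with the sign convention $\hat f(\hat x)=\hat x\cdot p^{-1}$ used in Lemma~\ref{lemmaconditionisotropieannulationcourbure}. Everything else is immediate: the scale-invariance of \eqref{equationformeholonomiesannulantK} and the appeal to the already-proved lemma.
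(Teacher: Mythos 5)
Your proof is correct and follows essentially the same route as the paper's: both identify the holonomy of the flow of $X$ at $\hat{x}$ as $\exp(\pm t\,\omega_{\hat{x}}(\hat{X}_{\hat{x}}))$ (the paper's relation \eqref{equationrelationholonomieduflotdevdag}) and then invoke Lemma \ref{lemmaconditionisotropieannulationcourbure}, using that the conditions \eqref{equationformeholonomiesannulantK} depend only on the diagonal part of $\omega_{\hat{x}}(\hat{X}_{\hat{x}})$, which is exactly $\phi(X)$. The only cosmetic difference is that you apply the lemma to $\varphi^t_X$ for a small $t\neq 0$ and observe the scale-invariance of the conditions, whereas the paper applies it to the time-one map directly.
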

\begin{proof}
We have the following relation for any $t\in\R$
\begin{equation}\label{equationrelationholonomieduflotdevdag}
\varphi^t_{\hat{X}}(\hat{x})=\hat{x}\cdot \exp(t\omega_{\hat{x}}(\hat{X}_{\hat{x}})).
\end{equation}
Denoting by $p(t)$ the element of $\Pmin$ such that $\varphi^t_{\hat{X}}(\hat{x})=\hat{x}\cdot p(t)$, 
$\{p(t)\}_{t\in\R}$ is a one-parameter subgroup.
There exists thus $w\in\pmin$ such that $p(t)=\exp(tw)$, and deriving the relation $\varphi^t_{\hat{X}}(\hat{x})=\hat{x}\cdot \exp(tw)$
at $t=0$ we obtain $\omega_{\hat{x}}(\hat{X}_{\hat{x}})=w$ (because the Cartan connection $\omega$ reproduces the 
fundamental vector fields of the action of $\Pmin$).
There exists thus an automorphism $\varphi$ of $(M,\Lm)$ fixing $x$ and such that 
$\hat{\varphi}(\hat{x})=\hat{x}\cdot \exp(\omega_{\hat{x}}(\hat{X}_{\hat{x}}))^{-1}$.
As $\phi(X)=p(\omega_{\hat{x}}(\hat{X}_{\hat{x}}))$ satisfies
the conditions \eqref{equationformeholonomiesannulantK}, $\omega_{\hat{x}}(\hat{X}_{\hat{x}})$ also does, and  
Lemma \ref{lemmaconditionisotropieannulationcourbure} implies that $\Lm$ is flat.
\end{proof}

\begin{factnonnumerote}
If $\Ker(\phi)\neq\{0\}$ then $\Lm$ is flat.
\end{factnonnumerote}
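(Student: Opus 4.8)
The plan is to show that a nonzero Killing field $X$ in the kernel of $\phi$ forces a holonomy of the type handled by Lemma \ref{lemmaconditionisotropieannulationcourbure}, so that Fact \ref{factconditionsuffisanteannulationcourbure} applies. So suppose $X\in\mathfrak{i}\setminus\{0\}$ with $\phi(X)=0$, i.e. $v\coloneqq\omega_{\hat{x}}(\hat{X}_{\hat{x}})\in\pmin$ has vanishing diagonal part; by the nilpotency of $\heis{3}$ the element $v$ is a nilpotent matrix of $\pmin$, and $X(x)=0$ means $v$ lies in $\pmin$ rather than needing a correction. First I would use the local automorphism $\varphi=\varphi^1_{X}$ of $(M,\Lm)$ fixing $x$ (as in the proof of Fact \ref{factconditionsuffisanteannulationcourbure}), whose holonomy at $\hat{x}$ is $\exp(v)^{-1}$. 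Since $v$ is nilpotent, $\exp(v)$ is a unipotent element of $\Pmin$, so that the diagonal conditions $b\neq-5a$, $a\neq-5b$ of \eqref{equationformeholonomiesannulantK} are trivially satisfied (both diagonal entries $a,b$ are zero). Lemma \ref{lemmaconditionisotropieannulationcourbure} then gives flatness directly — provided the holonomy is genuinely of the exponential form $\exp(v)$, which it is by the one-parameter subgroup argument already spelled out in Fact \ref{factconditionsuffisanteannulationcourbure}.

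The subtlety I anticipate is that Lemma \ref{lemmaconditionisotropieannulationcourbure} as stated requires the holonomy $p$ to actually equal $\exp(v)$ with $v$ of the displayed shape; when $v$ is nilpotent this is immediate, but one should be slightly careful that $X\ne 0$ translates into $v\ne 0$, so that the resulting automorphism $\varphi$ is nontrivial and the equivariance $p\cdot K(\hat{x})=K(\hat{x})$ is being used with a nonzero $p$. This follows from the injectivity statement in Lemma \ref{lemmaautlocauxactionlibresurMhat}: a Killing field whose lift vanishes at a point is zero, so $X\ne 0$ gives $\hat{X}_{\hat{x}}\ne 0$, hence $v=\omega_{\hat{x}}(\hat{X}_{\hat{x}})\ne0$. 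Thus the chain is: nonzero $X\in\Ker\phi$ $\leadsto$ nonzero nilpotent $v\in\pmin$ $\leadsto$ unipotent holonomy of an automorphism fixing $x$ $\leadsto$ the hypotheses of Lemma \ref{lemmaconditionisotropieannulationcourbure} hold vacuously on the diagonal $\leadsto$ $\Lm$ is flat.

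The main obstacle, if any, is purely bookkeeping: one must make sure that $\phi(X)=0$ really forces \emph{all} of $v$'s diagonal to vanish — this is exactly the definition of $\phi$ as $p\circ\omega_{\hat{x}}$ composed with projection onto the diagonal Cartan $\mathfrak{a}$ — and then observe that the constraint \eqref{equationformeholonomiesannulantK} is about the diagonal entries $a,b$ only, so with $a=b=0$ the inequalities $b\ne-5a$ and $a\ne-5b$ read $0\ne 0$ which is false, hence there is nothing to check and Lemma \ref{lemmaconditionisotropieannulationcourbure} applies unconditionally. I expect the proof to be just a few lines, essentially invoking Fact \ref{factconditionsuffisanteannulationcourbure} with the observation that a nilpotent diagonal part is automatically ``generic'' for the purposes of \eqref{equationformeholonomiesannulantK}.
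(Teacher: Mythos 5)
There is a genuine and fatal gap at the center of your argument. You correctly observe that for $X\in\Ker(\phi)\setminus\{0\}$ the element $v=\omega_{\hat{x}}(\hat{X}_{\hat{x}})$ lies in $\heis{3}$, so its diagonal part is $a=b=0$. But you then claim that the conditions $b\neq-5a$ and $a\neq-5b$ of \eqref{equationformeholonomiesannulantK} are ``trivially satisfied''; in fact, as you yourself write, with $a=b=0$ they read $0\neq 0$, which is \emph{false}. A false hypothesis does not mean ``there is nothing to check'': it means Lemma \ref{lemmaconditionisotropieannulationcourbure} simply does not apply to $X$. Nor is this a repairable technicality. The entire mechanism of that lemma is the equivariance relation $p\cdot K(\hat x)=K(\hat x)$ combined with the scaling law \eqref{equationactionPminmodulecourbure}, which yields $\lambda\mu^{5}K(\hat x)_\alpha=K(\hat x)_\alpha$ with $\lambda=\e^{a}$ and $\mu=\e^{b}$. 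For a unipotent holonomy one has $\lambda=\mu=1$, this relation becomes a tautology, and no component of the curvature is forced to vanish. A nonzero element of $\Ker(\phi)$ therefore gives, by itself, no constraint whatsoever on $K$.

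The missing idea is that one must manufacture a \emph{different} element of the isotropy whose diagonal part is usable. This is what the paper does: since $v\in\heis{3}$ is nonzero, one chooses a lower-triangular $w\in\g$ so that $[v,w]$ is a diagonal matrix of the form $[1,-1,0]$, $[0,1,-1]$ or $[1,0,-1]$ (treating separately the case where $v$ is a multiple of $e^0$); local homogeneity provides a Killing field $Y\in\mathfrak{h}$ with $\omega_{\hat x}(\hat Y_{\hat x})=w+w_0$, $w_0\in\pmin$, and the bracket relation \eqref{equationchampsdeKilling}, together with the filtration of $\g$ and the vanishing of $K$ on $\pmin$, shows that $[X,Y]$ again lies in $\mathfrak{i}$ with $\phi([X,Y])$ equal to one of those diagonal matrices. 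These \emph{do} satisfy \eqref{equationformeholonomiesannulantK}, and Fact \ref{factconditionsuffisanteannulationcourbure} applied to $[X,Y]$ --- not to $X$ --- gives flatness. Without this bracketing step your chain of implications breaks at its first link.
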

\begin{proof}
There exists then $X\in\mathfrak{i}$ such that $v\coloneqq\omega_{\hat{x}}(\hat{X}_{\hat{x}})\in\heis{3}=(\g)^1$, \emph{i.e.}  
\[
v=
\left(\begin{smallmatrix}
0 & a & c \\
0 & 0 & b \\
0 & 0 & 0
\end{smallmatrix}\right)\neq 0.
\]
\par We first assume that $(a,b)\neq(0,0)$. 
For an element of the form
$w=\left(\begin{smallmatrix}
0 & 0 & 0 &\\
a' & 0 & 0 \\
0 & b' & 0
\end{smallmatrix}\right)
$ in $\g$, we have 
$[v,w]=
\left(
\begin{smallmatrix}
aa' & * & 0 \\
0 & bb'-aa' & * \\
0 & 0 & -bb'
\end{smallmatrix}
\right)
$, and as $a\neq0$ or $b\neq0$, there exists such an element $w\in\g$ satisfying
$[v,w]=
\left(
\begin{smallmatrix}
1 & * & 0 \\
0 & -1 & * \\
0  & 0 & 0 
\end{smallmatrix}
\right)$ 
or 
$[v,w]=
\left(
\begin{smallmatrix}
0 & * & 0 \\
0 & 1 & * \\
0  & 0 & -1 
\end{smallmatrix}
\right)$.
As $\Lm$ is locally homogeneous, there exists a Killing field $Y\in\mathfrak{h}$ such that $Y_o=\Diff{\hat{x}}{\pi}(\omega_{\hat{x}}^{-1}(w))$,
implying $\omega_{\hat{x}}(\hat{Y}_{\hat{x}})=w+w_0$ with $w_0\in\pmin=(\g)^0$.
We now use the relation 
\begin{equation}\label{equationchampsdeKilling}
\omega([\hat{X},\hat{Y}])=-[\omega(\hat{X}),\omega(\hat{Y})]+K(\omega(\hat{X}),\omega(\hat{Y})).
\end{equation}
verified for any Killing fields of the Cartan geometry $\mathcal{C}$, that will be proved at the end of this demonstration.
We obtain $\omega_{\hat{x}}([\hat{X},\hat{Y}]_{\hat{x}})=-[v,w]+[v,w_0]+K(v,w+w_0)\in\pmin$, 
where $[v,w_0]\in(\g)^1$ according to the filtration property of 
$\g$, and $K(v,w+w_0)=0$ because $v\in\pmin$ (see Paragraph \ref{soussoussectionSL3PminCartangeometries}).
In particular $[X,Y]\in\mathfrak{i}$, and
$\phi([X,Y])$ is equal to one of the diagonal matrices $[1,-1,0]$ or $[0,1,-1]$, that both satisfy
the condition \eqref{equationformeholonomiesannulantK}.
Therefore $\Lm$ is flat according to Fact 
\ref{factconditionsuffisanteannulationcourbure}. 
\par If $a=b=0$, we can find an element $w\in\g$ such that 
$
[v,w]=
\left(
\begin{smallmatrix}
1 & 0 & 0 \\
0 & 0 & 0 \\
0  & 0 & -1 
\end{smallmatrix}
\right)
$, and by the same argument as above we find $Y\in\mathfrak{h}$ such that $[X,Y]\in\mathfrak{i}$ and 
$\phi([X,Y])
=
\left(
\begin{smallmatrix}
1 & 0 & 0 \\
0 & 0 & 0 \\
0  & 0 & -1 
\end{smallmatrix}
\right)$. As this element of $\mathfrak{a}$ satisfies the conditions \eqref{equationformeholonomiesannulantK}, 
$\Lm$ is flat by Fact \ref{factconditionsuffisanteannulationcourbure}. 
\par We now prove the relation \eqref{equationchampsdeKilling} for two Killing fields $X$ and $Y$ of the Cartan geometry $\mathcal{C}$.
Since the flow of $X$ preserves $\omega$, the Lie derivative $L_X\omega$ vanishes identically, and
applying Cartan's formula 
$L_X=d\circ\iota_X+\iota_X\circ d$ to $Y$, we obtain 
$Y\cdot\omega(X)+d\omega(X,Y)=0$.
Cartan's formula 
$d\omega(X,Y)=X\cdot\omega(Y)-Y\cdot\omega(X)-\omega([X,Y])$ implies
$X\cdot\omega(Y)=\omega([X,Y])$, and
as $L_Y\omega=0$ as well, we also have $-Y\cdot\omega(X)=\omega([X,Y])$.
Equation \eqref{equationchampsdeKilling} then follows from
the definition of the curvature.
\end{proof}
\begin{factnonnumerote}
If $\phi(\mathfrak{i})=\mathfrak{a}$ then $\Lm$ is flat.
\end{factnonnumerote}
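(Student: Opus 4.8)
The plan is simply to feed a well-chosen element of $\mathfrak{i}$ into Fact~\ref{factconditionsuffisanteannulationcourbure}. That Fact already reduces the flatness of $\Lm$ to the existence of a single $X\in\mathfrak{i}$ whose image $\phi(X)\in\mathfrak{a}$ satisfies the genericity condition~\eqref{equationformeholonomiesannulantK} of Lemma~\ref{lemmaconditionisotropieannulationcourbure}. So under the hypothesis $\phi(\mathfrak{i})=\mathfrak{a}$, all that remains is to check that $\mathfrak{a}$ is not exhausted by the ``bad'' elements.

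Concretely, I would first describe the bad locus inside $\mathfrak{a}$: writing $v=\mathrm{diag}(a,-a-b,b)$, the conditions of~\eqref{equationformeholonomiesannulantK} fail exactly when $b=-5a$ or $a=-5b$. In the coordinates $(a,b)$ identifying $\mathfrak{a}$ with $\R^2$, this is the union of the two distinct lines $\{b=-5a\}$ and $\{a=-5b\}$ through the origin, hence a proper subset of $\mathfrak{a}$. Since $\phi(\mathfrak{i})=\mathfrak{a}$ by hypothesis, the image of $\phi$ cannot be contained in this union of two lines, so there exists $X\in\mathfrak{i}$ with $\phi(X)$ satisfying~\eqref{equationformeholonomiesannulantK} (for instance any $X$ with $\phi(X)=\mathrm{diag}(1,1,-2)$, which indeed lies in $\mathfrak{a}$ and satisfies $b\neq-5a$ and $a\neq-5b$). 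Fact~\ref{factconditionsuffisanteannulationcourbure} then gives that $\Lm$ is flat, completing this case.

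There is no real obstacle here: once Lemma~\ref{lemmaconditionisotropieannulationcourbure} and Fact~\ref{factconditionsuffisanteannulationcourbure} are in place, a ``full-rank'' isotropy automatically produces an automorphism fixing $x$ with non-exceptional diagonal holonomy. The work that remains after this case is the genuinely constrained situation where $\phi(\mathfrak{i})$ has dimension at most one and, worse still, is contained in one of the exceptional lines $\{b=-5a\}$ or $\{a=-5b\}$; there neither the present argument nor the one handling $\Ker(\phi)\neq\{0\}$ applies directly, and one must instead exploit the bracket relation~\eqref{equationchampsdeKilling} inside $\mathfrak{h}$ to build, out of $\mathfrak{i}$ together with the rest of $\mathfrak{h}$, a new Killing field in $\mathfrak{i}$ whose holonomy does satisfy~\eqref{equationformeholonomiesannulantK}.
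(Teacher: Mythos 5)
Your proposal is correct and follows essentially the same route as the paper: the paper likewise just exhibits one element of $\mathfrak{a}$ (namely $\mathrm{diag}(1,0,-1)$) satisfying condition \eqref{equationformeholonomiesannulantK} and invokes Fact \ref{factconditionsuffisanteannulationcourbure}. Your explicit choice $\mathrm{diag}(1,1,-2)$ and the observation that the exceptional locus is a union of two lines are both valid.
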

\begin{proof}
There exists in this case a Killing field $X\in\mathfrak{i}$ such that 
$\phi(X)=
\left(
\begin{smallmatrix}
1 &  0 & 0 \\
0 & 0 & 0 \\
0 & 0 & -1 
\end{smallmatrix}
\right)
$, which satisfies the hypotheses \eqref{equationformeholonomiesannulantK}, implying that $\Lm$ is flat according 
to Fact \ref{factconditionsuffisanteannulationcourbure}.
\end{proof}

It remains to handle the case when $\phi$ is injective, and $\phi(\mathfrak{i})$ is one-dimensional. 
There exists then $V\in\mathfrak{i}$ such that $\mathfrak{i}=\R V$, 
and we can moreover assume without lost of generality that $v\coloneqq \omega_{\hat{x}}(\hat{V}_{\hat{x}})\in\pmin$
does not verify the condition \eqref{equationformeholonomiesannulantK} (if it does, then $\Lm$ is flat
according to Fact \ref{factconditionsuffisanteannulationcourbure}).
In other words, denoting the components of $v$ in $\mathfrak{a}$ by
\begin{equation*}
\phi(V)=p(v)=
\begin{pmatrix}
a & 0 & 0 \\
0 & -a-b & 0 \\
0 & 0 & b
\end{pmatrix}
\in\mathfrak{a},
\end{equation*}
with $(a,b)\in\R^2$, we assume that
\begin{equation}\label{equationhypotheseab}
\text{either~} a=-5b\neq0, \text{or~} b=-5a\neq0.
\end{equation}
\par Since $v\in\pmin$, the curvature part of the relation \eqref{equationchampsdeKilling} vanishes,
and for any $X\in\mathfrak{h}$ we have:
\begin{equation}\label{equationconjugaisonadV}
\omega_{\hat{x}}(\widehat{[V,X]}_{\hat{x}})=-[v,\omega_{\hat{x}}(\hat{X}_{\hat{x}})].
\end{equation}
The linear map 
\[
\varphi\colon X\in\mathfrak{h}\mapsto \omega_{\hat{x}}(\hat{X}_{\hat{x}})\in\g
\]
is injective according to Lemma 
\ref{lemmaautlocauxactionlibresurMhat}, and as $\ev_x(\mathfrak{h})=\Tan{x}{M}$ by local homogeneity, 
$\varphi$ induces an isomorphism $\bar{\varphi}$ between $\mathfrak{h}/\mathfrak{i}$ and $\g/\pmin$.
Using the notations \eqref{equationbaseg-} in Paragraph \ref{soussoussectionlagcontactduneCartan} for the basis 
$(\bar{e}_\alpha,\bar{e}_\beta,\bar{e}_0)$ of $\g/\pmin$, there exists 
$X$, $Y$, and $Z$ in $\mathfrak{h}$
such that $\varphi(X)\in e_\alpha+\pmin$, $\varphi(Y)\in e_\beta+\pmin$, and $\varphi(Z)\in e_0+\pmin$.
According to \eqref{equationconjugaisonadV}, $\bar{\varphi}$ intertwines the adjoint action of $V$
on $\mathfrak{h}/\mathfrak{i}$ and the adjoint action of $-v$ on $\g/\pmin$, implying
\begin{equation}\label{equationadpmin}
\Mat_{(\bar{X},\bar{Y},\bar{Z})}(\overline{\ad}(V))=
\Mat_{(\bar{e}_\alpha,\bar{e}_\beta,\bar{e}_0)}(\overline{\ad}(-v))=
\begin{pmatrix}
-a-2b & 0 & * \\
0 & 2a+b & * \\
0 & 0 & a-b
\end{pmatrix}.
\end{equation}
\par We will denote by $A=-a-2b$ and $B=2a+b$ the eigenvalues of $\overline{\ad}(V)$ with respect to $\bar{X}$ and $\bar{Y}$.
Our hypotheses \eqref{equationhypotheseab} on $a$ and $b$ imply $A\neq0$ and $B\neq0$,
allowing us to
choose $X$ and $Y$ in $\mathfrak{h}$ satisfying 
\[
[V,X]=AX \text{~and~} [V,Y]=BY.
\]
In fact, if $X\in\mathfrak{h}$ satisfies $\bar{\varphi}(X+\mathfrak{i})=\bar{e}_\alpha$, 
there exists 
$\lambda\in\R$ such that $[V,X]=aX+\lambda V$ according to \eqref{equationadpmin}, and
$X'=X+\frac{\lambda}{A}V$ satisfies then $[V,X']=AX'$. We deal with the case of $Y$ by the same computations.
\par The Jacobi identity yields $[V,[X,Y]]=(A+B)[X,Y]$, implying in particular that $[X,Y]\notin\Vect(X,Y,V)$ since  
$A+B$ is distinct from $A$, $B$ and $0$.
A second application of the same identity gives 
$[V,[X,[X,Y]]]=(2A+B)[X,[X,Y]]$ and $[V,[Y,[X,Y]]]=(A+2B)[Y,[X,Y]]$.
Furthermore, if $[X,[X,Y]]\neq0$, then $2A+B$ is an eigenvalue of $\ad(V)\in\Lin(\mathfrak{h})$,
and is thus equal to one of the eigenvalues $A$, $B$, or $A+B$ (since $\dim\mathfrak{h}=4$ and $2A+B\neq0$).
But the equalities $2A+B=A+B$ or $2A+B=B$ would contradict $A\neq0$, and the equality
$2A+B=A$ would likewise contradict our hypotheses on $a$ and $b$. 
Consequently, $[X,[X,Y]]=0$, and for the same reasons $[Y,[X,Y]]=0$.
\par As a consequence, $\mathcal{E}\coloneqq\Vect(X,Y,[X,Y])$ is a subalgebra of $\h$ isomorphic to $\heis{3}$.
There is a connected open neighbourhood $U$ of $x$ such that the injective linear map
$X\in\mathfrak{Kill}(U,\mathcal{L})\mapsto [X]_x\in\mathfrak{kill}^{loc}_\mathcal{L}(x)$ is an isomorphism
(see Remark \ref{remarkouvertminimalexistencedeschampsdeKillinglocaux}),
and there is thus an injective Lie algebra morphism $\iota\colon\heis{3}\to\mathfrak{Kill}(U,\mathcal{L})$
 of image $\mathcal{E}$.
According to the work of Palais in \cite{palais}, chapter \MakeUppercase{\romannumeral 2} Theorem \MakeUppercase{\romannumeral 11} 
and its corollary, there exists a (unique) local action of $\Heis{3}$ on $U$ that
integrates this infinitesimal action, 
\emph{i.e.} such that $X^\dag=\iota(X)\restreinta_U$ for any $X\in\heis{3}$.
In particular, the local action of $\Heis{3}$ on $U$ preserves $\Lm$, and
as $\iota(\heis{3})\cap\mathfrak{i}=\{0\}$, 
the orbital map at $x$ is a $\Heis{3}$-equivariant embedding.
The Lagrangian contact structure $\Lm$ is thus
locally isomorphic to a left-invariant Lagrangian contact structure on $\Heis{3}$.
The following lemma implies then that $\Lm$ is flat, finishing the proof of Theorem \ref{theorem3dlochomlagcontact}.

\begin{lemma}\label{lemmastructHeis3invarplates}
Any left-invariant Lagrangian contact structure on $\Heis{3}$ is flat.
\end{lemma}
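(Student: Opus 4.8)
The idea is to reduce the statement to a finite computation, exploiting the fact that $\Heis{3}$ is three-dimensional, so a left-invariant Lagrangian contact structure is completely determined by a choice of two transverse lines $\ell_\alpha,\ell_\beta$ in $\heis{3}$ whose sum is not a subalgebra (this is the contact condition, since the sum of two lines integrates iff it is a subalgebra). First I would fix the standard basis $(\bar e_\alpha,\bar e_\beta,\bar e_0)$ of $\heis{3}$ corresponding to the matrices in \eqref{equationbaseg-} up to relabelling, with the single nontrivial bracket $[\bar e_\alpha,\bar e_\beta]=\bar e_0$ (or a constant multiple), and normalize: up to a left-invariant automorphism of $\Heis{3}$, one may assume $\ell_\alpha=\R\bar e_\alpha$, $\ell_\beta=\R\bar e_\beta$, so that $E^\alpha\oplus E^\beta$ is the canonical contact distribution. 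Thus up to isomorphism there is essentially only one left-invariant Lagrangian contact structure on $\Heis{3}$, and it suffices to show \emph{that} one is flat.

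The cleanest route is then to produce enough Killing fields to force flatness via Proposition \ref{theoremcourbureharmoniquelagcontact} and Lemma \ref{lemmaconditionisotropieannulationcourbure}. The left translations of $\Heis{3}$ already give a $3$-dimensional transitive algebra of Killing fields of this structure; what I need is a Killing field vanishing at a point with suitable holonomy. For this I would check that the one-parameter group of automorphisms $\varphi_{t}$ of $\Heis{3}$ scaling $\bar e_\alpha$ by $e^{t}$ and $\bar e_\beta$ by $e^{st}$ (which scales $\bar e_0$ by $e^{(1+s)t}$) preserves the contact distribution and the pair $(E^\alpha,E^\beta)$ for \emph{every} $s$; each such $\varphi_t$ fixes the identity $e\in\Heis{3}$ and hence gives a local automorphism of $\Lm$ fixing $e$. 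Its linearization at $e$ is diagonal, so its holonomy $p\in\Pmin$ at a lift $\hat e$ is $\exp(v)$ with $v$ diagonal of the form $\mathrm{diag}(a,-a-b,b)$; reading off $(a,b)$ from the weights, a generic choice of the free parameter $s$ makes $b\neq -5a$ and $a\neq -5b$, so Lemma \ref{lemmaconditionisotropieannulationcourbure} applies and $\Lm$ is flat. (Equivalently one can phrase this through Fact \ref{factconditionsuffisanteannulationcourbure}: the isotropy algebra at $e$ contains the generator of $(\varphi_t)$, whose image under $\phi$ is this diagonal matrix.)

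The main obstacle is bookkeeping: one must verify carefully that the chosen one-parameter family of automorphisms of $\Heis{3}$ genuinely preserves \emph{the specific} left-invariant distributions $E^\alpha$ and $E^\beta$ (not merely some contact distribution), and that the induced holonomy in $\Pmin$ is correctly identified with a diagonal $v$ whose entries dodge the two bad linear relations $b=-5a$, $a=-5b$ — which is exactly where the freedom in the scaling parameter $s$ is used. A subtlety worth flagging is whether the map $\varphi_t$ is an automorphism of the \emph{group} $\Heis{3}$ (it is, since scaling $x\mapsto \lambda x$, $y\mapsto\mu y$, $z\mapsto\lambda\mu z$ respects the Heisenberg multiplication), which guarantees it descends to a well-defined automorphism of the Lagrangian contact structure fixing $e$; once this is in hand the conclusion is immediate from the earlier results. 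An alternative, if one prefers to avoid even this much computation, is to exhibit the normalized structure explicitly as (an open subset of) the homogeneous model $\X$: the unipotent subgroup of $\G$ of matrices with $1$'s on the diagonal is isomorphic to $\Heis{3}$ and acts simply transitively on an open dense $\Pmin$-orbit in $\X=\G/\Pmin$, identifying it with $\Heis{3}$ so that $\Exalpha,\Exbeta$ restrict to left-invariant distributions — hence that left-invariant structure is, by construction, locally isomorphic to $\Lx$ and therefore flat by Paragraph \ref{soussoussectionflatlagcontact}, and uniqueness up to automorphism finishes the proof.
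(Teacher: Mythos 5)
Your argument is correct. Its two halves relate to the paper's proof as follows. The normalization step --- any left-invariant Lagrangian contact structure $(\R\tilde{v},\R\tilde{w})$ on $\Heis{3}$ is carried to the standard one by a group automorphism, obtained by integrating the Lie algebra automorphism of $\heis{3}$ sending $(X,Y)$ to $(v,w)$, whose invertibility is exactly the contact condition $[v,w]=(ab'-ba')Z\neq0$ --- is precisely the second half of the paper's proof. For the flatness of the normalized structure, your closing ``alternative'' is the paper's actual argument: the standard structure is the restriction of $\Lx$ to the open orbit of the unipotent copy of $\Heis{3}$ in $\G$ (Paragraph \ref{soussoussectionYaffine}), hence flat. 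Your primary route is genuinely different in this second half: you use the dilations $\varphi_{\lambda,\mu}$, which are group automorphisms preserving the normalized structure and fixing $e$, to produce nontrivial isotropy, and then invoke Lemma \ref{lemmaconditionisotropieannulationcourbure} via Fact \ref{factconditionsuffisanteannulationcourbure}. This is legitimate because the left-invariant structure is homogeneous, and the bookkeeping you flag does close up: the diagonal part of the holonomy generator is recovered from the eigenvalues of the linearization at $e$ through an invertible linear map (read off from \eqref{actionadjointePmin}), and already the isotropic dilation $\varphi_{\lambda,\lambda}$ yields a diagonal part proportional to $\mathrm{diag}(1,0,-1)$, which satisfies \eqref{equationformeholonomiesannulantK}. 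Note also that there is no circularity, even though Lemma \ref{lemmastructHeis3invarplates} is invoked in the proof of Theorem \ref{theorem3dlochomlagcontact} precisely in the case where the ambient isotropy lies on one of the bad lines $a=-5b$ or $b=-5a$: you apply the criterion to the full automorphism group of the left-invariant structure on $\Heis{3}$ itself, which is strictly larger than the Killing algebra inherited from $(M,\Lm)$. As for what each approach buys: the paper's is computation-free once $Y_\affine$ has been identified with $\Heis{3}$, whereas yours stays entirely inside the curvature formalism and needs no embedding into the model, at the cost of the holonomy computation.
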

\begin{proof}
The left-invariant Lagrangian contact structure $\mathcal{M}_0=(\R\tilde{X},\R\tilde{Y})$ of $\Heis{3}$ generated by
$X=\left(\begin{smallmatrix}
0 & 1 & 0 \\
0 & 0 & 0 \\
0 & 0 & 0
\end{smallmatrix}\right)$ and
$Y=\left(\begin{smallmatrix}
0 & 0 & 0 \\
0 & 0 & 1 \\
0 & 0 & 0
\end{smallmatrix}\right)$ is flat.
In fact, we will see in Paragraph \ref{soussoussectionYaffine}
that $(\Heis{3},\mathcal{M}_0)$ is isomorphic to an open subset of the homogeneous model space $(\X,\Lx)$.
Considering a left-invariant Lagrangian contact structure $\mathcal{M}$ on $\Heis{3}$, 
it suffices thus to find an isomorphism
of Lagrangian contact structures from $\mathcal{M}_0$ to $\mathcal{M}$ to prove our claim.
\par There exists $v,w\in\heis{3}$ such that $\mathcal{M}=(\R\tilde{v},\R\tilde{w})$,
and as $\R\tilde{v}\oplus\R\tilde{w}$ is a contact distribution, $[v,w]\notin\Vect(v,w)$.
Denoting $Z=\left(\begin{smallmatrix}
0 & 0 & 1 \\
0 & 0 & 0 \\
0 & 0 & 0
\end{smallmatrix}\right)$, $v=aX+bY+cZ$, and $w=a'X+b'Y+c'Z$, we have
$[v,w]=(ab'-ba')Z$, which implies $ab'-ba'\neq 0$.
The Lie algebra automorphism $\varphi$ of $\heis{3}$ whose matrix in the basis $(X,Y,Z)$ is
$
\left(\begin{smallmatrix}
a & a' & 0 \\
b & b' & 0 \\
c & c' & ab'-ba'
\end{smallmatrix}\right)$ sends $(X,Y)$ to $(v,w)$, 
and as $\Heis{3}$ is simply-connected,
there exists a Lie group automorphism $\phi$ of $\Heis{3}$ whose differential at identity is $\varphi$.
Since $\phi$ is an automorphism, $\Diff{e}{\phi}(X,Y)=(v,w)$ implies $\phi^*\tilde{v}=\tilde{X}$ and $\phi^*\tilde{w}=\tilde{Y}$,
\emph{i.e.} $\phi$ is an isomorphism of Lagrangian contact structures from $\mathcal{M}_0$ to $\mathcal{M}$.
\end{proof}

\section{Local model of the enhanced Lagrangian contact structure}\label{sectionlocalmodeleSm}

In the previous section, we proved that the Lagrangian contact structure $\Lm$ is locally isomorphic to the homogeneous
model space $(\X,\Lx)$, and thus described by a $(\G,\X)$-structure on $M$.
The classical strategy is then to reduce the possibilities for the images of the developping map $\delta\colon\tilde{M}\to M$
and of the holonomy morphism $\rho\colon\piun{M}\to \G$ of this structure.
\par In the case studied by Ghys in \cite{ghys} of an Anosov flow preserving the structure,
the holonomy group $\rho(\piun{M})\subset\G$ is centralized by a one-parameter subgroup of $\G$, which
reduce dramatically the possibilities for $\rho(\piun{M})$.
But in the case of a discrete-time dynamics, we do not have any relevant algebraic restriction
of this kind on $\rho(\piun{M})$.
\par For this reason, we have to look not only at the local homogeneity of $\Lm$ on $\Omega$, 
but at the local homogeneity of the whole \emph{enhanced} Lagrangian contact structure 
$\Sm=(E^\alpha,E^\beta,E^c)$ on this open dense subset.
In this section, we will show that in restriction to $\Omega$, 
$\Sm$ is locally isomorphic to an \emph{infinitesimal homogeneous model},
that preserves a distribution transverse to the contact plane.

\subsection{Two algebraic models}\label{soussectionexemplealgebriquesgeometrie}

We begin by describing this models 
in an algebraic way.

\subsubsection{Left-invariant structure on $\SL{2}$}\label{sousoussectionstructSL2}

We will use the following basis for the Lie algebra $\slR{2}$ of $\SL{2}$:
\begin{equation}\label{equationbasecanoniquesl2}
E=
\left(\begin{smallmatrix}
         0 & 1 \\
         0 & 0 
        \end{smallmatrix}\right),
         F=\left(\begin{smallmatrix}
         0 & 0 \\
         1 & 0 
        \end{smallmatrix}\right), \text{~and~}
         H=\left(\begin{smallmatrix}
         1 & 0 \\
         0 & -1 
        \end{smallmatrix}\right).
\end{equation}
The Lie bracket relation $[E,F]=H$ between these three vectors shows that they define
a left-invariant enhanced Lagrangian contact structure
$\mathcal{S}_{\SL{2}}=(\R\tilde{E},\R\tilde{F},\R\tilde{H})$ on $\SL{2}$.
Moreover, the right action of the one-parameter subgroup $A$
generated by $H$ preserves $\mathcal{S}_{\SL{2}}$.
We endow the universal cover $\SLtilde{2}$ of $\SL{2}$
with the pullback of $\Sm_{\SL{2}}$, 
so that 
the right action of the one-parameter subgroup $\tilde{A}$ of $\SLtilde{2}$ generated by $H$ preserves $\Sm_{\SLtilde{2}}$. 
\par Let $\Gamma_0$ be a cocompact lattice of $\SLtilde{2}$, and $u\colon\Gamma_0\to\tilde{A}$ be a morphism
whose graph-group $\Gamma=\enstq{(\gamma,u(\gamma))}{\gamma\in\Gamma_0}$
acts freely, properly, and cocompactly on $\SLtilde{2}$, \emph{via} the action $(g,a)\cdot x=gxa$
(these morphisms are called \emph{admissible} by Salein and studied in detail 
in his thesis \cite{salein}).
Then the standard structure of $\SLtilde{2}$ is preserved by $\Gamma$, 
and $\Gamma\backslash\SLtilde{2}$
is endowed with the induced enhanced Lagrangian contact structure $\Sm$, whose distributions are exactly
the invariant distributions of the algebraic contact-Anosov flow
$(R_{a^t})$ on $\Gamma\backslash\SLtilde{2}$.

\subsubsection{Left-invariant structure on $\Heis{3}$}\label{soussoussectionstructHeis3}

We will use the following basis for the Lie algebra $\heis{3}$ of $\Heis{3}$:
\[
X=\left(\begin{smallmatrix}
0 & 1 & 0 \\
0 & 0 & 0 \\
0 & 0 & 0
\end{smallmatrix}\right),
Y=\left(\begin{smallmatrix}
0 & 0 & 0 \\
0 & 0 & 1 \\
0 & 0 & 0
\end{smallmatrix}\right),
Z=\left(\begin{smallmatrix}
0 & 0 & 1 \\
0 & 0 & 0 \\
0 & 0 & 0
\end{smallmatrix}\right).
\]
According to the Lie bracket relation $[X,Y]=Z$, $\mathcal{S}_{\Heis{3}}=(\R\tilde{X}, \R\tilde{Y}, \R\tilde{Z})$
is a left-invariant enhanced Lagrangian contact structure on $\Heis{3}$.
The subgroup 
\[
\mathcal{A}=\enstq{\varphi_{\lambda,\mu}}{(\lambda,\mu)\in{\R^*}^2} 
\]
of automorphisms introduced in the introduction (see \eqref{equationdefinitionautomorphismHeis3})
is exactly the subgroup of $\Aut(\Heis{3})$ preserving $\mathcal{S}_{\Heis{3}}$. 
\par Any cocompact lattice $\Gamma$ of $\Heis{3}$ preserves $\mathcal{S}_{\Heis{3}}$, 
and the quotient $\Gamma\backslash\Heis{3}$ will always be endowed with the induced enhanced Lagrangian contact structure $\Sm$.
The invariant distributions of  
a partially hyperbolic affine automorphism $L_g\circ \varphi$ of $\Gamma\backslash\Heis{3}$, 
with $g\in\Heis{3}$ and $\varphi\in\mathcal{A}$, 
are precisely given by $\Sm$.

\subsection{Two homogeneous open subsets of the model space}\label{soussectionouvertshomogenes}


The left-invariant structures of $\SL{2}$ and $\Heis{3}$ can be geometrically imbedded in $\X$ as 
homogeneous open subsets, that will be the
local models of the enhanced Lagrangian contact structure $\Sm$ in restriction to $\Omega$.

\subsubsection{Some specific surfaces of $\X$, and one affine chart}\label{soussoussectionsurfacescarteaffine}

For $D$ a projective line of $\RP{2}$, we define the \emph{$\beta-\alpha$ surface} 
\[
\Sbetaalpha(D)=\pi_\alpha^{-1}(D)=\cup_{y\in\Cbeta(D)}\Calpha(y),
\]
and for $m\in\RP{2}$, the analog \emph{$\alpha-\beta$ surface} 
\[
\Salphabeta(m)=\pi_\beta^{-1}(\enstq{L\in\RP{2}_*}{m\in L})=\cup_{y\in \Calpha(m)} \Cbeta(y).
\] 
The open subset
\[
\Omega_a\coloneqq \X\setminus\Sbetaalpha([e_1,e_2])
\]
of $\X$, composed by pointed projective lines $(m,D)$ for which $m\notin[e_1,e_2]$,
will be identified with the set $\X_a$ of pointed affine lines 
of $\R^2$ as follows:
\begin{equation}\label{equationidentifiactionYaXa}
\phi_a\colon(m,D)\in \Omega_a \mapsto (m\cap P, D\cap P)\in\Xa,
\end{equation} 
where $\Vect(e_1,e_2)+(0,0,1)$ is identified with $\R^2$ by translation.
The diffeomorphism $\phi_a$ is moreover equivariant for the canonical identification
\begin{equation}\label{equationphiaequivariant}
\begin{bmatrix}
A & X \\
0 & 1
\end{bmatrix}\in \Stab_\G(\Omega_a)
\mapsto
A+X\in\Aff{2}
\end{equation}
of $\Stab_\G(\Omega_a)$ with the group of affine transformations of $\R^2$.

\subsubsection{The open subset $Y_\torus$}\label{soussoussectionYtorus}

We will embed $\SL{2}$ in $\G$ as follows:
\[
\iota\colon
g\in\SL{2}\mapsto
\begin{bmatrix}
g & 0 \\
0 & 1
\end{bmatrix}\in\G.
\]
The resulting copy $S_0$ of $\SL{2}$ acts simply transitively at 
$o_\torus=([1,0,1],[(1,0,1),e_2])=\phi_a^{-1}(e_1+\R e_2)\in\Omega_a$, and
its orbit $Y_\torus=S_0\cdot o_\torus$ 
can be described as
\[
Y_\torus=\Omega_a\setminus S_{\alpha,\beta}[e_3]=
\phi_a^{-1}\left(
\enstq{m+L}{m\in\R^2\setminus\{(0,0)\},L\in\RP{1}\setminus\{\R m\}}
\right).
\]
The left-invariant structure of $\SL{2}$ induces on $Y_\torus$ 
a $S_0$-invariant enhanced Lagrangian contact structure
\begin{equation}\label{equationdefinitionSt}
\mathcal{S}_\torus=(\theta_{o_\torus}\circ\iota)_*\mathcal{S}_{\SL{2}},
\end{equation}
which is \emph{compatible with $\Lx$}
in the sense that its $\alpha$ and $\beta$-distributions coincide with the ones of $\Lx$, and
whose central distribution is entirely described by its value at $o_\torus$:
\begin{equation}\label{equationdefinitionEcStorus}
\Exc_\torus(o_\torus)=\R H_0^\dag(o_\torus),
\text{where~}
H_0=
\left(
\begin{smallmatrix}
1 & 0 & 0 \\
0 & -1 & 0 \\
0 & 0 & 0 
\end{smallmatrix}
\right).
\end{equation}
\par We denote by $A^{\pm}$ the subgroup of $\SL{2}$ composed by diagonal matrices.
The right action of $A^{\pm}$ preserves $\mathcal{S}_{\SL{2}}$, and the 
direct product $\SL{2}\times A^{\pm}$ acts on $\SL{2}$ by $(g,a)\cdot h=gha$.
The isomorphism from $\SL{2}$ to $(Y_\torus,\Sm_\torus)$
given by the orbital map at $o_\torus$
is equivariant for the identification
\begin{equation}\label{equationrelationequivarianceAutYt}
\left(g,
\left(
\begin{smallmatrix}
\lambda & 0 \\
0 & {\lambda^{-1}}
\end{smallmatrix}
\right)
\right)\in\SL{2}\times A^{\pm}\mapsto 
\lambda g
\in\GL{2}.
\end{equation}
In particular,
\[
H_\torus\coloneqq
\begin{bmatrix}
\GL{2} & 0 \\
0 & 1 
\end{bmatrix}
\]
is contained in the automorphism group of $(Y_\torus,\Sm_\torus)$.

\subsubsection{The open subset $Y_\affine$}\label{soussoussectionYaffine}

The action of $\Heis{3}$ at $o_\affine=([e_3],[e_3,e_2])=\phi_a^{-1}((0,0)+\R e_2)\in\Omega_a$
is simply transitive, 
and its orbit $Y_\affine=\Heis{3}\cdot o_\affine$ can be described as
\[
Y_\affine=\Omega_a\setminus S_{\alpha,\beta}[e_1]=
\phi_a^{-1}\left(
\enstq{m+L}{m\in\R^2,L\in\RP{1}\setminus\{\R e_1\}}
\right).
\] 
We endow $Y_\affine$ with the $\Heis{3}$-invariant enhanced Lagrangian contact 
structure
\begin{equation}\label{equationdefinitionSa}
\mathcal{S}_\affine=(\theta_{o_\affine}\restreinta_{\Heis{3}})_*\mathcal{S}_{\Heis{3}}
\end{equation}
which is compatible with 
$\Lx$, and whose central distribution is entirely determined by
\begin{equation}\label{equationdefinitionEcSaffine}
\Exc_\affine(o_\affine)=\R Z^\dag(o_\affine).
\end{equation}
\par Let us recall that $\mathcal{A}$ is the subgroup of automorphisms of $\Heis{3}$ that moreover preserve $\Sm_{\Heis{3}}$
(see Paragraph \ref{soussoussectionstructHeis3}).
The group of affine automorphisms $L_g\circ\varphi$ of $\Heis{3}$,
where $g\in\Heis{3}$ and $\varphi\in\mathcal{A}$, will be seen as a semi-direct 
subgroup $\Heis{3}\rtimes\mathcal{A}$.
With this notation, the isomorphism
from $(Y_\affine,\Sm_\affine)$ to $\Heis{3}$ given by the orbital map at $o_\affine$
is equivariant for the identification
\begin{equation}\label{equationequivarianceaffine}
\left[\begin{smallmatrix}
\lambda & x & z \\
0 & \lambda^{-1}\mu^{-1} & y \\
0 & 0 & \mu
\end{smallmatrix}\right]
\in\Pmin
\mapsto 
\left(
\left(\begin{smallmatrix}
1 & \lambda\mu x & \mu^{-1}z \\
0 & 1 & \mu^{-1}y \\
0 & 0 & 1
\end{smallmatrix}\right),
\varphi_{\lambda^2\mu,\lambda^{-1}\mu^{-2}}
\right)\in\Heis{3}\rtimes\mathcal{A},
\end{equation}
and in particular,
$H_\affine\coloneqq\Pmin$ is contained in the automorphism group of $(Y_\affine,\Sm_\affine)$. 

\subsection{From the infinitesimal model to the local model}

We take back the notations of Theorem \ref{theoremeoptimal}.
We recall that $\pi_M\colon\tilde{M}\to M$ denotes the universal cover of $M$
and that $\Omega$ is a dense and open subset of $M$ where $\Sm$ is 
locally homogeneous (see Proposition \ref{corollairepremiereetapepreuve}).
We will denote 
$\tilde{\mathcal{S}}=\pi_M^*\mathcal{S}=(\Etildealpha,\Etildebeta,\Etildec)$,
$\tilde{\Omega}=\pi_M^{-1}(\Omega)$,
and $\delta\colon\tilde{M}\to \X$ a developping map of the $(\G,\X)$-structure of $M$ describing the Lagrangian contact structure 
$\Lm$ (see Corollary \ref{corollaryLGXstructure} and Paragraph \ref{soussoussectionflatlagcontact}).
We finally choose for this whole section a connected component $O$ of $\tilde{\Omega}$,
\emph{i.e.} an open $\Kill^{loc}$-orbit of $\Stilde$.
\par Our goal in this section is to describe the local model of $\Stilde$ in restriction to $O$. 

\subsubsection{Infinitesimal model}

At any point of $\X$, we will identify the Lie algebra of local Killing fields of $\Lx$ with $\g$ through the 
fundamental vector fields of the action of $\G$ (see Lemma \ref{lemmadescriptionkillinggeomCartanplatemodel}).
Since the developping map $\delta$ is a local isomorphism from $\Ltilde$ to $\Lx$, it induces at each point $x\in\tilde{M}$
an isomorphism 
\begin{equation}
\delta^*\colon v\in\g=\mathfrak{kill}^{loc}_{\Lx}(\delta(x)) \mapsto \delta^*v\in\mathfrak{kill}^{loc}_{\Ltilde}(x),
\end{equation}
of Lie algebras,
whose inverse will be denoted by $\delta_*\colon\mathfrak{kill}^{loc}_{\Ltilde}(x)\to \g$.
For $X\in\mathfrak{kill}^{loc}_{\Ltilde}(x)$ and $t\in\R$ for which $\varphi^t_X(x)$ exists,
denoting $v=\delta_*[X]_x\in\g$, we have 
\begin{equation}\label{equationrelationequivariancechampsKilling}
\delta(\varphi^t_X(x))=\e^{tv}\cdot\delta(x).
\end{equation}

\begin{lemma}\label{lemmaalgebrelocaleKillingdeO}
There exists a subalgebra $\mathfrak{h}$ of $\g$
such that
\[
\mathfrak{Kill}(O,\Stilde\restreinta_O)=(\delta^*\mathfrak{h})\restreinta_O=\enstq{(\delta^*v)\restreinta_O}{v\in\mathfrak{h}}.
\]
Moreover, any local Killing field of $\Stilde$ on $O$ extends to the whole $\Kill^{loc}$-orbit $O$.
\end{lemma}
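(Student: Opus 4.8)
The plan is to exhibit the subalgebra $\mathfrak{h}$ concretely using the developping map, and then to prove the two asserted properties (equality with $\mathfrak{Kill}(O,\Stilde\restreinta_O)$ and extension to all of $O$) by combining the local homogeneity of $\Stilde$ on $\Omega$ with the rigidity of $(\G,\X)$-structures. First I would fix a base-point $x_0 \in O$ and define
\[
\mathfrak{h} = \delta_*\bigl(\mathfrak{Kill}(O,\Stilde\restreinta_O)\bigr) \subset \g,
\]
which makes sense because every local Killing field of $\Ltilde$ on $O$ — in particular every Killing field of $\Stilde$ — is of the form $\delta^*v$ for some $v\in\g$ by Lemma \ref{lemmadescriptionkillinggeomCartanplatemodel} and the fact that $\delta$ is a local isomorphism of Lagrangian contact structures; one must check that the germ at $x_0$ determines $v$ uniquely and that $v$ does not depend on the choice of point in $O$ (this uses that $O$ is connected and that a Killing field vanishing on an open set vanishes identically, by Lemma \ref{lemmaautlocauxactionlibresurMhat}(3), together with the analyticity of the $\G$-action used in the proof of Lemma \ref{lemmadescriptionkillinggeomCartanplatemodel}). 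That $\mathfrak{h}$ is a subalgebra follows since $\delta_*$ is a Lie algebra anti-isomorphism (or isomorphism, depending on conventions) at each point and $\mathfrak{Kill}(O,\Stilde\restreinta_O)$ is a Lie algebra.

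Having defined $\mathfrak{h}$ this way, the inclusion $\mathfrak{Kill}(O,\Stilde\restreinta_O)\subset(\delta^*\mathfrak{h})\restreinta_O$ is immediate. The reverse inclusion is the substantive point: given $v\in\mathfrak{h}$, one knows $\delta^*v$ agrees with some genuine Killing field of $\Stilde\restreinta_O$ near $x_0$, and one must propagate this agreement along $O$. Here I would use that $\delta^*v$ is automatically a Killing field of $\Ltilde$ on all of $\tilde M$ (being the $\delta$-pullback of a global Killing field $v^\dag$ of $\Lx$ — this uses Lemma \ref{lemmadescriptionkillinggeomCartanplatemodel}(2)), so the only thing to verify is that its flow preserves the \emph{central} distribution $\Etildec$ throughout $O$. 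Since $\Stilde$ is locally homogeneous on $O$, around any point $y\in O$ there are local Killing fields of $\Stilde$ spanning the tangent space; one writes $\delta^*v$ locally as a combination of these (modulo the kernel of evaluation) and uses that the set of points where a Killing field of $\Ltilde$ is also a Killing field of $\Stilde$ — i.e. where its flow preserves $\Etildec$ — is closed, and is open precisely by the local homogeneity argument (a local Killing field of $\Lm$ whose value at $y$ and whose first-order jet are those of a local Killing field of $\Sm$ must coincide with it near $y$, by the injectivity statements of Lemma \ref{lemmaautlocauxactionlibresurMhat}). Connectedness of $O$ then finishes this inclusion, and simultaneously yields the last sentence of the lemma: a local Killing field of $\Stilde$ on a piece of $O$ is the restriction of $\delta^*v$ for the corresponding $v\in\mathfrak{h}$, hence extends to all of $O$.

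The main obstacle is the propagation step — showing that ``being a Killing field of the enhanced structure'' is an open and closed condition along $O$, i.e. that a field which is Killing for $\Ltilde$ globally and Killing for $\Stilde$ near one point is Killing for $\Stilde$ on the whole $\Kill^{loc}$-orbit. The clean way is to work upstairs in the normal generalized Cartan bundle $(\hat M,\omega,\varphi)$ of $\Stilde$: the lift $\widehat{\delta^*v}$ of $\delta^*v$ is a $\Pmin$-invariant $\omega$-Killing field on $\hat M$ (it is Killing for the underlying Cartan geometry of $\Ltilde$), and its flow preserves $\Etildec$ exactly when it preserves the map $\varphi\colon\hat M\to\mathbb V$, i.e. when the Lie derivative $\mathcal L_{\widehat{\delta^*v}}\varphi$ vanishes; but this Lie derivative is itself $\omega$-parallel along the flow when restricted to an orbit of local homogeneity, so it vanishes on all of $O$ as soon as it vanishes somewhere. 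I would also remark that the existence of such $v$ for every Killing field on $O$ already follows from Proposition \ref{corollairepremiereetapepreuve} (local homogeneity gives enough Killing fields) so that $\mathfrak h$ has dimension at least $3$; this is not needed for the statement but reassures that $\mathfrak h$ is nontrivial.
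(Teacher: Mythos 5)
Your overall strategy---define $\mathfrak{h}$ through $\delta_*$, reduce the displayed equality to a propagation statement along the connected orbit $O$, and run an open--closed argument---has the same shape as the paper's proof, which works with the pointwise algebras $\mathfrak{h}(x)=\delta_*\mathfrak{kill}^{loc}_{\Stilde}(x)$ and shows they are locally constant on $O$. The gap is in the closed half of your open--closed argument, i.e.\ in why the set of points near which $\delta^*v$ preserves $\Etildec$ is closed in $O$. The paper's mechanism is specific: Remark \ref{remarkouvertminimalexistencedeschampsdeKillinglocaux} extends every germ of a $\Stilde$-Killing field at $x$ to a fixed connected neighbourhood $U$, giving $\mathfrak{h}(x)\subset\mathfrak{h}(y)$ for $y\in U$, and this inclusion is an equality because $x$ and $y$ lie in the same $\Kill^{loc}$-orbit and the two algebras therefore have the same dimension. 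Neither ingredient appears in your argument, and the two substitutes you offer fail. First, a Killing field of $\Lm$ is \emph{not} determined by its value and first-order jet at a point: the linear isotropy representation \eqref{actionadjointePmin} is independent of the entry $z$, so a Killing field whose lift evaluates to $e^0$ at $\hat{x}$ vanishes at $x$ with trivial $1$-jet without being zero; coincidence of two Killing fields requires agreement of their lifts at a point of $\hat{M}$ (Lemma \ref{lemmaautlocauxactionlibresurMhat}), which is second-order data. Likewise, matching $\delta^*v$ at $y$ against values of local $\Stilde$-Killing fields ``modulo the kernel of evaluation'' only identifies $v$ with an element of $\delta_*\mathfrak{kill}^{loc}_{\Stilde}(y)$ modulo the isotropy of $\delta(y)$, which is exactly not enough to conclude $v\in\delta_*\mathfrak{kill}^{loc}_{\Stilde}(y)$.

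The ``clean way'' upstairs is also unjustified: for a local Killing field $Z=\delta^*w$ of $\Stilde$ one has $\mathcal{L}_{\hat{Z}}\bigl(\mathcal{L}_{\hat{X}}\varphi\bigr)=\mathcal{L}_{[\hat{Z},\hat{X}]}\varphi$, and $[\hat{Z},\hat{X}]$ corresponds to $\pm[w,v]\in\g$, which has no reason to preserve $\varphi$; so $\mathcal{L}_{\hat{X}}\varphi$ is not invariant along the $\Kill^{loc}$-orbit and its vanishing near $x_0$ does not propagate to $O$. The repair is exactly the paper's two ingredients: near a boundary point $y$ of your set, take $U$ as in Remark \ref{remarkouvertminimalexistencedeschampsdeKillinglocaux}; for $z\in U$ in the same orbit, $\dim\mathfrak{kill}^{loc}_{\Stilde}(z)=\dim\mathfrak{kill}^{loc}_{\Stilde}(y)=\dim\mathfrak{Kill}(U,\Stilde\restreinta_U)$ forces the restriction map to $\mathfrak{kill}^{loc}_{\Stilde}(z)$ to be onto, so the germ $[\delta^*v]_z$ extends to a $\Stilde$-Killing field on $U$ which equals $\delta^*v$ by unique continuation of $\Ltilde$-Killing fields. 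This also removes the circularity in your last step, where ``the corresponding $v\in\mathfrak{h}$'' presupposes that the local Killing field already extends to $O$.
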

\begin{proof}
It suffices to show that the subalgebra $\mathfrak{h}(x)=\delta_*\mathfrak{kill}^{loc}_{\Stilde}(x)$
is locally constant on $O$.
This will in fact imply by connexity of $O$ that $\mathfrak{h}(x)$ is constant equal to some Lie subalgebra $\mathfrak{h}$ on $O$, and
then $(\delta^*\mathfrak{h})\restreinta_O\subset\mathfrak{Kill}(O,\Stilde\restreinta_O)$.
But for $x\in O$, $\dim\mathfrak{h}=\dim\mathfrak{kill}^{loc}_{\Stilde}(x)\geq\mathfrak{Kill}(O,\Stilde\restreinta_O)$
(see Lemma  \ref{lemmaautlocauxactionlibresurMhat}), and this inclusion is thus an equality.
\par For any $x\in O$ 
there exists an open connected neighbourhood $U$ of $x$ such that any local Killing field 
of $\Stilde$ at $x$ extends to a Killing field defined on $U$ 
(see Remark \ref{remarkouvertminimalexistencedeschampsdeKillinglocaux}),
and for any $y\in U$ we have thus $\mathfrak{h}(x)\subset \mathfrak{h}(y)$.
But $\mathfrak{h}(x)$ and $\mathfrak{h}(y)$ have the same dimension
since $x$ and $y$ are in the same Kill\textsuperscript{loc}-orbit of $\Stilde$, and this inclusion is thus an equality.
This shows that
$\mathfrak{h}(x)$ is locally constant and finishes the proof.
\end{proof}

We denote from now on by $H$ the connected Lie subgroup of $\G$ of subalgebra $\mathfrak{h}$.
It is not necessarily closed in $\G$, but the action of $H$ on $\X$ is smooth for the structure of immersed submanifold of $H$.
\begin{lemma}\label{lemmadeltaOcontenudansY}
All the points of $\delta(O)$ are in the same orbit $Y$ under the action of $H$.
In particular, $Y$ is open.
\end{lemma}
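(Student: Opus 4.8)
The plan is to prove first that \emph{every} $H$-orbit in $\X$ which meets $\delta(O)$ is in fact \emph{open} in $\X$, and then to deduce the statement from the connectedness of $O$ by a partition argument.

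First I would fix $x\in O$, set $y=\delta(x)$, and identify the tangent space of the $H$-orbit of $y$. Since $O$ is an open $\Kill^{loc}$-orbit of $\Stilde$, the structure $\Stilde$ is locally homogeneous on $O$, so $\ev_x(\mathfrak{kill}^{loc}_{\Stilde}(x))=\Tan{x}{\tilde M}$; by the ``moreover'' part of Lemma~\ref{lemmaalgebrelocaleKillingdeO} each such germ extends to a Killing field on $O$, hence $\Tan{x}{\tilde M}$ is spanned by the values at $x$ of the fields $(\delta^*v)\restreinta_O$ with $v\in\mathfrak{h}$. Differentiating the equivariance relation~\eqref{equationrelationequivariancechampsKilling} at $t=0$ gives $\Diff{x}{\delta}\bigl((\delta^*v)(x)\bigr)=v^\dag(y)$, the value at $y$ of the fundamental vector field of the $\G$-action --- hence of the $H$-action. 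As $\delta$ is a local diffeomorphism, $\Diff{x}{\delta}$ is an isomorphism, so
\[
\Tan{y}{\X}=\Diff{x}{\delta}(\Tan{x}{\tilde M})=\Vect\{v^\dag(y):v\in\mathfrak{h}\}=\Tan{y}{H\cdot y}.
\]
Thus $H\cdot y$ is a $3$-dimensional immersed submanifold of the $3$-manifold $\X$: the injective immersion $H/\Stab_H(y)\to\X$ is then a local diffeomorphism, hence an open homeomorphism onto its image, so $H\cdot y$ is open in $\X$ and carries the subspace topology. (In passing this shows $\dim\mathfrak{h}\geq 3$.)

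Then I would let $\{Y_j\}_{j\in J}$ be the family of $H$-orbits in $\X$ meeting $\delta(O)$ --- each open by the previous step --- and set $O_j=\delta^{-1}(Y_j)\cap O$. These sets are open in $O$ (as $\delta$ is continuous and each $Y_j$ is open), pairwise disjoint (the $Y_j$ are), and cover $O$ (for $x\in O$ the $H$-orbit of $\delta(x)$ meets $\delta(O)$, so equals some $Y_j$). Since $O$ is connected and non-empty, exactly one $O_j$ is non-empty, i.e.\ $\delta(O)\subseteq Y_j=:Y$, which is open; this is the claim.

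The one point requiring care --- and the reason for the partition argument rather than a direct open–closed argument on a single candidate orbit --- is that $H$ need not be closed in $\G$, so its orbits need not be closed in $\X$; one therefore cannot expect $\{x\in O:\delta(x)\in Y\}$ to be automatically closed in $O$. Everything else is the routine dictionary, furnished by the developing map and relation~\eqref{equationrelationequivariancechampsKilling}, between local Killing fields of $\Ltilde$ near $x$ and fundamental vector fields of the $\G$-action near $\delta(x)$.
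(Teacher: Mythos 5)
Your proof is correct, but it takes a different route from the paper's. The paper argues directly from the definition of the $\Kill^{loc}$-orbit: two points of $O$ are joined by a finite chain $x_{i+1}=\varphi^{1}_{X_i}(x_i)$ of time-one flows of Killing fields, each $X_i$ is $\delta^*v_i$ for some $v_i\in\mathfrak{h}$ by Lemma \ref{lemmaalgebrelocaleKillingdeO}, and the equivariance relation \eqref{equationrelationequivariancechampsKilling} transports the chain to $\X$, exhibiting the explicit element $h=\e^{v_{n-1}}\cdots\e^{v_1}\in H$ with $\delta(y)=h\cdot\delta(x)$; openness of $Y$ then falls out because $Y$ contains the open set $\delta(O)$. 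You instead establish openness of the relevant $H$-orbits \emph{first}, via the tangent-space computation $\Tan{y}{\X}=\{v^\dag(y):v\in\mathfrak{h}\}$ coming from local homogeneity of $\Stilde$ on $O$, and then conclude by partitioning the connected set $O$ into the open, pairwise disjoint preimages of the orbits. Both arguments rest on the same dictionary (Lemma \ref{lemmaalgebrelocaleKillingdeO} and the relation $\Diff{x}{\delta}((\delta^*v)(x))=v^\dag(\delta(x))$), but yours trades the explicit chain of flows for a topological connectedness argument, and needs the extra (standard, and used elsewhere in the paper) fact that an open $\Kill^{loc}$-orbit has surjective evaluation map $\ev_x(\mathfrak{kill}^{loc}_{\Stilde}(x))=\Tan{x}{\tilde M}$. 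Your closing remark about $H$ possibly being non-closed is well taken: it is exactly why a naive open--closed argument on a single orbit would be delicate, and both the paper's chain argument and your partition argument sidestep it.
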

\begin{proof}
We consider $x$ and $y$ in $O$, and we want to find
$h\in H$ such that $\delta(y)=h\cdot \delta(x)$.
By hypothesis, as $x$ and $y$ are in the same Kill\textsuperscript{loc}-orbit of $\Stilde$, there exists a finite number of points 
$x_1=x,\dots,x_n=y$ such that for any $i\leq n-1$ there exists a local Killing field $X_i$ of $\Stilde$ satisfying
$x_{i+1}=\varphi_{X_i}^1(x_i)$.
According to Lemma \ref{lemmaalgebrelocaleKillingdeO}, there exists for each $i$ an element $v_i\in\mathfrak{h}$ such that 
$X_i=\delta^*v_i$, and we have $\delta(x_{i+1})=\e^{v_i}\delta(x_i)$ according to the equation 
\eqref{equationrelationequivariancechampsKilling},
implying $\delta(y)=\e^{v_{n-1}}\dots\e^{v_1}x_0\in H\cdot \delta(x)$.
\end{proof}

We choose from now on a point $x\in O$, we denote $x_0=\delta(x)\in Y$,
and we consider the \emph{isotropy subalgebra} 
\begin{equation}\label{equationdefinitioni}
\mathfrak{i}=\mathfrak{stab}_{\mathfrak{h}}(x_0)\coloneqq\enstq{v\in\mathfrak{h}}{v(x_0)=0} 
\end{equation}
of $\mathfrak{h}$ at $x_0$,
characterized by $\delta^*\mathfrak{i}=\mathfrak{is}^{loc}_{\Stilde}(x)$.
Since the orbit $Y$ of $x_0$ under $H$ 
is open, $\dim\mathfrak{h}-\dim\mathfrak{i}=3$,
and $\mathfrak{i}$ is non-trivial according to  
Proposition \ref{corollairepremiereetapepreuve}.
We also denote $\Exc(x_0)=\Diff{x}{\delta}(\Etildec(x))$,
and $\mathfrak{h}/\mathfrak{i}=D^\alpha\oplus D^\beta\oplus D^c$
the splitting sent to $\Tan{x_0}{Y}=(\Exalpha\oplus\Exbeta\oplus\Exc)(x_0)$
by the isomorphism $\overline{\Diff{e}{\theta_{x_0}}}$ induced by the orbital map at $x_0$.

\begin{lemma}\label{lemmastructH0invarianteYdeltaisolocal}
\begin{enumerate}
\item The adjoint representation $\overline{\ad}\colon\mathfrak{i}\to\Lin(\mathfrak{h}/\mathfrak{i})$ preserves the line $D^c$
in $\mathfrak{h}/\mathfrak{i}$, \emph{i.e.} for any $v\in\mathfrak{i}$ we have $\overline{\ad}(v)(D^c)\subset D^c$.
\item There exists in the neighbourhood of $x_0$ 
an unique $H$-invariant germ of a smooth one-dimensional distribution $\Exc$ that extends $\Exc(x_0)$ on a neighbourhood of $x_0$,
and this distribution is everywhere transverse to $\Exalpha\oplus\Exbeta$.
\item The developping map $\delta$ is an isomorphism between the enhanced Lagrangian contact structures
$\Stilde$ and $\Sx\coloneqq(\Exalpha,\Exbeta,\Exc)$, from a neighbourhood of $x$ to a neighbourhood of $x_0$.
\item $\mathfrak{h}=\mathfrak{kill}^{loc}_{\Sx}(x_0)$ and $\mathfrak{i}=\mathfrak{is}^{loc}_{\Sx}(x_0)$.
\item If $I=\Stab_H(x_0)$ is a connected subgroup of $H$, then there exists an unique $H$-invariant
smooth one-dimensional distribution $\Exc$ that extends $\Exc(x_0)$ on the whole open orbit $Y$, and
$\Exc$ is transverse to $\Exalpha\oplus\Exbeta$.
Furthermore, $\delta\restreinta_O$ is a local isomorphism from $(O,\Stilde\restreinta_O)$ to $(Y,\Sx)$.
\end{enumerate}
\end{lemma}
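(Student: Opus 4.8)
The plan is to run everything through two identifications that are already available. First, near $x$ the developping map $\delta$ conjugates each local Killing field $\delta^{*}v$ of $\Stilde$ (for $v\in\mathfrak{h}$, by Lemma~\ref{lemmaalgebrelocaleKillingdeO}) to the fundamental field $v^{\dag}$ of the $\G$-action on $\X$, whose flow is the left translation $L_{\exp(tv)}$; in particular $\delta$ is already a local isomorphism from $\Ltilde$ to $\Lx$. Second, since $L_{g}\circ\theta_{x_{0}}=\theta_{x_{0}}\circ\Int(g)$ for $g\in\Stab_{H}(x_{0})$, the linear isotropy action $\Diff{x_{0}}{L_{g}}$ on $\Tan{x_{0}}{Y}$ corresponds, via $\overline{\Diff{e}{\theta_{x_{0}}}}$, to $\overline{\Ad}(g)$ on $\mathfrak{h}/\mathfrak{i}$, with infinitesimal version $\overline{\ad}\colon\mathfrak{i}\to\Lin(\mathfrak{h}/\mathfrak{i})$, and $D^{c}$ is exactly the line of $\mathfrak{h}/\mathfrak{i}$ corresponding to $\Exc(x_{0})=\Diff{x}{\delta}(\Etildec(x))$.

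For (1): if $X\in\mathfrak{is}^{loc}_{\Stilde}(x)=\delta^{*}\mathfrak{i}$, say $X=\delta^{*}v$ with $v\in\mathfrak{i}$, then $X(x)=0$ and its flow preserves $\Etildec$, so $\Diff{x}{\varphi^{t}_{X}}$ preserves $\Etildec(x)$; transporting by $\delta$, $\Diff{x_{0}}{L_{\exp(tv)}}$ preserves $\Exc(x_{0})$, and differentiating at $t=0$ gives $\overline{\ad}(v)(D^{c})\subset D^{c}$. For (2): by (1) the line $D^{c}$ is invariant under $\overline{\Ad}$ of the identity component $I^{0}$ of $\Stab_{H}(x_{0})$ (whose Lie algebra is $\mathfrak{i}$); transporting $\Exc(x_{0})$ along $H$ by $y\mapsto\Diff{x_{0}}{L_{\sigma(y)}}(\Exc(x_{0}))$ for a smooth local section $\sigma$ of $\theta_{x_{0}}$ with $\sigma(x_{0})=e$ then defines a germ of distribution independent of $\sigma$ (the ambiguity is $I^{0}$-valued near $e$), hence smooth and $H$-invariant; it is transverse to $\Exalpha\oplus\Exbeta$ at $x_{0}$ since $\mathfrak{h}/\mathfrak{i}=D^{\alpha}\oplus D^{\beta}\oplus D^{c}$ and $\Exalpha,\Exbeta$ are $H$-invariant (being preserved by $\G\supset H$), hence transverse on a neighbourhood; uniqueness holds because an $H$-invariant germ is determined by its value at $x_{0}$.

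For (3) and (4): for $v\in\mathfrak{h}$ the field $\delta^{*}v$ preserves $\Etildec$ on $O$ and $\delta$ conjugates its flow to $L_{\exp(tv)}$, so $L_{\exp(tv)}$ preserves $\delta_{*}\Etildec$ near $x_{0}$; as $H$ is connected this forces $\delta_{*}\Etildec$ to be an $H$-invariant germ, hence $\delta_{*}\Etildec=\Exc$ near $x_{0}$ by the uniqueness in (2), which is (3). Then $\delta$ induces an isomorphism between $\mathfrak{kill}^{loc}_{\Stilde}(x)$ and $\mathfrak{kill}^{loc}_{\Sx}(x_{0})$, and since $\delta_{*}\mathfrak{kill}^{loc}_{\Stilde}(x)=\mathfrak{h}$ by the definition of $\mathfrak{h}$ in the proof of Lemma~\ref{lemmaalgebrelocaleKillingdeO}, we get $\mathfrak{h}=\mathfrak{kill}^{loc}_{\Sx}(x_{0})$ and $\mathfrak{i}=\{v\in\mathfrak{h}\mid v^{\dag}(x_{0})=0\}=\mathfrak{is}^{loc}_{\Sx}(x_{0})$, which is (4).

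For (5): if $I=\Stab_{H}(x_{0})$ is connected, then $\overline{\Ad}(I)$ preserves $D^{c}$ (exponentiate (1)), so $\Exc(h\cdot x_{0})\coloneqq\Diff{x_{0}}{L_{h}}(\Exc(x_{0}))$ is well defined on $Y=H/I$, smooth and $H$-invariant, unique with value $\Exc(x_{0})$ at $x_{0}$, and transverse to $\Exalpha\oplus\Exbeta$ everywhere by $H$-invariance; for the last assertion, write any $y\in O$ as the image of $x$ under a composition $\psi$ of time-one flows of fields $\delta^{*}v_{i}$ ($v_{i}\in\mathfrak{h}$), use \eqref{equationrelationequivariancechampsKilling} to get $\delta\circ\psi=g\circ\delta$ near $x$ for some $g\in H$, and combine $\psi_{*}\Etildec=\Etildec$, item (3), and the $g$-invariance of $\Exc$ on $Y$ to obtain $\delta_{*}\Etildec=\Exc$ near $\delta(y)=g\cdot x_{0}$. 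The one delicate point of the whole argument is precisely this passage from a germ (item (2)) to a genuine distribution on all of $Y$ (item (5)): the sole obstruction is a possible disconnectedness of $\Stab_{H}(x_{0})$, whose non-identity components need not preserve the line $D^{c}$ even though $\mathfrak{i}$ does; everything else is bookkeeping around the conjugation $\delta^{*}v\leftrightarrow v^{\dag}$ together with the uniqueness of $H$-invariant distributions.
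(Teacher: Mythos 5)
Your proof is correct and follows essentially the same route as the paper's: reduce everything to the $\overline{\Ad}$-invariance of the line $D^c$ under the identity component of the isotropy, construct the invariant germ (resp. the global distribution when $\Stab_H(x_0)$ is connected) through the $H$-equivariant orbital map, and propagate the identity $\delta_*\Etildec=\Exc$ along flows of Killing fields. The only cosmetic difference is in the last assertion of (5), where you push the identity directly along a finite chain of time-one Killing flows joining $x$ to an arbitrary point of the $\Kill^{loc}$-orbit $O$, whereas the paper runs an open--closed connectedness argument on $O$; both rest on the same flow-equivariance computation.
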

\begin{proof}
1. For $v\in\mathfrak{i}$, denoting $X=\delta^*v\in\mathfrak{is}^{loc}_{\Stilde}(x)$, 
equation \eqref{equationrelationequivariancechampsKilling}
implies $\Exc(x_0)=\Diff{x_0}{\e^{tv}}(\Exc(x_0))$
for any $t\in\R$, and thus $D^c=\overline{\Ad}(\e^{tv})\cdot D^c=\exp(t\overline{\ad}(v))\cdot D^c$.
Derivating this last equality at $t=0$, we obtain 
$\overline{\ad}(v)\cdot D^c \subset D^c$. \\
2. The group $I=\Stab_{H}(x_0)$ and its identity component $I^0$
are closed in $H$ for its topology of immersed submanifold, and
the orbital map at $x_0$ induces a local diffeomorphism $\bar{\theta}_{x_0}\colon H/I^0\to Y$,
equivariant for the action of $H$.
We saw previously that $\overline{\Ad}(\exp(\mathfrak{i}))$
preserves $D^c$, implying that the subgroup $\enstq{i\in I^0}{\overline{\Ad}(i)\cdot D^c=D^c}$  
is equal to $I^0$ by connexity, \emph{i.e.} that $I^0$ preserves $D^c$.
Therefore, $H/I^0$ supports an unique $H$-invariant smooth one-dimensional distribution extending
$D^c$, that can be pushed by $\bar{\theta}_{x_0}\colon H/I^0\to Y$, 
to a $H$-invariant distribution extending $\Exc(x_0)$ on a neighbourhood of $x_0$.
Conversely, the pullback of any $H$-invariant distribution extending $\Exc(x_0)$ on a neighbourhood of $x_0$ 
is $H$-invariant on $H/I^0$, which proves the unicity of the germ of $\Exc$.
As it is preserved by $H$, it must remain transverse to $\Exalpha\oplus\Exbeta$.  \\
3. For $y$ sufficiently close to $x$, there exists $X\in\mathfrak{Kill}(O,\Stilde\restreinta_O)$ such that $y=\varphi^1_X(x)$.
Denoting $y_0=\delta(y)$ and $v\in\mathfrak{h}$ such that $\delta^*v=X$,
we have
$\Diff{y_0}{\e^{-v}}\circ\Diff{y}{\delta}(\Etildec(y))=\Diff{x}{\delta}\circ\Diff{y}{\varphi^{-1}_X}(\Etildec(y))
=\Exc(x_0)$, implying $\Diff{y}{\delta}(\Etildec(y))=\Exc(y_0)$ by $H$-invariance of $\Exc$. \\
4. This is a direct consequence of $\delta^*\mathfrak{h}=\mathfrak{kill}^{loc}_{\Stilde}(x)$,
$\delta^*\mathfrak{i}=\mathfrak{is}^{loc}_{\Stilde}(x)$, and of the fact that $\delta$ is a local isomorphism from $\Stilde$ 
to $\Sx$ at $x$. \\
5. Concerning the first assertion, the orbital map at $x_0$ induces a $H$-equivariant diffeomorphism from 
$H/I$ to $Y$, and we saw in the proof of the second assertion
that $H/I^0=H/I$ supports an unique $H$-invariant distribution extending $D^c$ on $H/I^0$,
which stays transverse to the contact plane.
\par The set $\mathcal{E}$ of points $y\in O$ such that 
$\delta$ is a local isomorphism in the neighbourhood of $y$ is open and non-empty,
and we only have to prove that $\mathcal{E}$ is closed to conclude by connexity of $O$.
Let $z \in O$ be an adherent point of $\mathcal{E}$, and let us denote $z_0=\delta(z)$.
There exists a point $y\in\mathcal{E}$ sufficiently close to $z$ such that, for some Killing field $X$ of $\Stilde$,
$z=\varphi^1_X(y)$. 
Denoting $v\in\mathfrak{h}$ such that $X=\delta^*v$, we have 
$\Diff{z_0}{\e^{-v}}\circ\Diff{z}{\delta}(\Etildec(z))=\Diff{y}{\delta}\circ\Diff{z}{\varphi^{-1}_X}(\Etildec(z))=\Exc(y_0)$,
implying $\Diff{z}{\delta}(\Etildec(z))=\Exc(z_0)$ by $H$-invariance of $\Exc$.
By local homogeneity of $\Sm\restreinta_O$, we can reach all the points of some neighbourhood $U$ of $z$ in $O$
by a Killing field, and the same computation as before shows that $\delta\restreinta_U$ is a local isomorphism, \emph{i.e.} that
$z\in\mathcal{E}$.
\end{proof}

\subsubsection{Local model of an open $\Kill^{loc}$-orbit}

We will call
\begin{equation}\label{equationdefinitionflipdiffeo}
\kappa\colon (m,D)\in\X \mapsto (D^\bot,m^\bot)\in\X
\end{equation}
the \emph{flip diffeomorphism} of the homogeneous model space.
This involution switches the distributions
$\Exalpha$ and $\Exbeta$ of the standard Lagrangian contact structure, 
and is moreover equivariant for the Lie group morphism $\Theta\colon g\mapsto\transp{g}^{-1}$ of $\G$.
\par Consequently, interverting the distributions $\Ealpha$ and $\Ebeta$ of the Lagrangian contact structure of $M$
is equivalent to composing the developping map $\delta$
with $\kappa$. 
At the level of the subalgebra $\mathfrak{h}$ introduced in the previous paragraph, it is equivalent to apply the Lie algebra morphism 
$\Diff{e}{\Theta}=\theta\colon A\mapsto -\transp{A}$.
\par Denoting
\begin{subnumcases}{\label{equationdefinitionhtorushaffine} \eqref{equationdefinitionhtorushaffine}}
\mathfrak{h}_\torus=
\enstq{\begin{pmatrix}
A & 0 \\
0 & -\tr(A)
\end{pmatrix}
}{A\in\mathfrak{gl}_2},
& \nonumber \\
\mathfrak{h}_\affine=\pmin,
& \nonumber
\end{subnumcases}
we will prove in the next section that:
\begin{proposition}\label{propositionclassificationalgebriquemodelelocalS}
Up to conjugacy in $\G$ or image by $\theta=-\transp{\cdot}$,
$\mathfrak{h}$ is equal to $\mathfrak{h}_\torus$ or $\mathfrak{h}_\affine$.
\end{proposition}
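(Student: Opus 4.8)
The plan is to reduce the statement to a purely Lie-algebraic classification of the subalgebra $\mathfrak{h}\subseteq\g=\slR{3}$, using only that $\mathfrak{h}$ acts locally transitively near $x_0\in\X$ with nonzero isotropy $\mathfrak{i}$ whose isotropy representation preserves the three lines $D^\alpha$, $D^\beta$, $D^c$ of $\Tan{x_0}{\X}$. First I would conjugate $\mathfrak{h}$ in $\G$ so that $x_0=o=([e_1],[e_1,e_2])$; then $\Stab_\G(o)=\Pmin$, $\mathfrak{i}=\mathfrak{h}\cap\pmin$, and $\mathfrak{h}+\pmin=\g$ because the orbit $Y$ is open. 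The matrix \eqref{actionadjointePmin} shows that $\overline{\Ad}(\Pmin)$ preserves $\R\bar e_\alpha$ and $\R\bar e_\beta$ individually, while the unipotent radical of $\Pmin$ acts transitively on the lines of $\g/\pmin$ transverse to $\Vect(\bar e_\alpha,\bar e_\beta)$; so a further $\Pmin$-conjugation (still fixing $o$) lets me assume $D^c=\R\bar e_0$. Lemma \ref{lemmastructH0invarianteYdeltaisolocal}(1) then gives that $\overline{\ad}(\mathfrak{i})$ also stabilises $\R\bar e_0$, hence all three lines; using the bracket relations recalled in the proof of Proposition \ref{theoremcourbureharmoniquelagcontact} one checks that the elements of $\pmin$ with this property form exactly the $3$-dimensional subalgebra $\mathfrak{q}=\mathfrak{a}\oplus\R e^0$ (the diagonal Cartan together with the corner $e^0$). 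Thus $\{0\}\neq\mathfrak{i}\subseteq\mathfrak{q}$, $\dim\mathfrak{h}=\dim\mathfrak{i}+3\in\{4,5,6\}$, and $\mathfrak{h}$ contains lifts $X_\alpha$, $X_\beta$, $X_0$ of $\bar e_\alpha$, $\bar e_\beta$, $\bar e_0$ modulo $\pmin$.

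Next I would rule out $\dim\mathfrak{h}=6$: if $\mathfrak{i}=\mathfrak{q}$ then $[e^0,X_\alpha]\in\mathfrak{h}\cap\pmin=\mathfrak{q}$, but $[e^0,e_\alpha]=e^\beta$ and $[e^0,\pmin]\subseteq\R e^0$ give $[e^0,X_\alpha]\in e^\beta+\R e^0$, which is not contained in $\mathfrak{q}$ — a contradiction. So $\dim\mathfrak{i}\le2$, and I split into two cases. If $\mathfrak{h}$ is not solvable, its Levi factor is a nonzero semisimple subalgebra of $\slR{3}$, necessarily conjugate (as $\dim\mathfrak{h}\le6$) to the standard block $\slR{2}$, and the subalgebras of $\slR{3}$ containing it are $\slR{2}$, $\mathfrak{gl}_2=\mathfrak{h}_\torus$, $\slR{2}\ltimes\R^2$, $\slR{2}\ltimes(\R^2)^*$, and the two maximal parabolics; the parabolics are $6$-dimensional hence excluded, and for $\slR{2}\ltimes\R^2$ or its transpose the isotropy at a point of the open orbit is a Borel subalgebra of $\slR{2}$, which stabilises only two lines of the tangent space (the fibre line and the unique invariant line of the standard module) and so cannot stabilise the required three — therefore $\mathfrak{h}=\mathfrak{gl}_2=\mathfrak{h}_\torus$ up to conjugacy. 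If instead $\mathfrak{h}$ is solvable, I would organise the remaining possibilities by the position of $\mathfrak{i}$ inside $\mathfrak{q}$: when $\dim\mathfrak{i}=2$ one is forced to $\mathfrak{i}=\mathfrak{a}$, whence $\mathfrak{h}\supseteq\mathfrak{a}$ is $\ad(\mathfrak{a})$-invariant, $5$-dimensional and surjects onto $\g/\pmin$, so it is a Borel subalgebra of $\slR{3}$, i.e.\ $\pmin=\mathfrak{h}_\affine$ up to conjugacy and $\theta$; the dimension-$4$ case, and the degenerate dimension-$5$ subcases where $\mathfrak{i}\cap\mathfrak{a}$ is singular or too small, are eliminated — in the spirit of the proof of Theorem \ref{theorem3dlochomlagcontact} — by exhibiting brackets $[X_\varepsilon,V]$ with $V\in\mathfrak{i}$ that are new local Killing fields pushing $\mathfrak{i}$ outside $\mathfrak{q}$, or whose flow would destroy the transversality of $D^c$.

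The main obstacle is this last, exhaustive solvable case analysis — showing that no $4$-dimensional solvable subalgebra, and no $5$-dimensional solvable subalgebra other than a Borel, can act locally transitively at $o$ with isotropy inside $\mathfrak{q}$ stabilising a line transverse to the contact plane. Tracking the $\pmin$-components of the lifts $X_\alpha$, $X_\beta$, $X_0$ through the bracket relations of $\slR{3}$ is where the computation is heaviest, and it is precisely the transversality of $D^c$, rather than the mere preservation of $\Ealpha\oplus\Ebeta$, that eliminates the spurious solvable models and leaves only $\mathfrak{h}_\torus$ and $\mathfrak{h}_\affine$.
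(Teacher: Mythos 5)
Your skeleton — normalising $x_0=o$ and $D^c=\R\bar e_0$, deducing $\mathfrak{i}\subset\mathfrak{a}\oplus\R e^0$, excluding $e^0$ by a bracket with a lift of $\bar e_\alpha$, bounding $\dim\mathfrak{h}$, and splitting along the Levi decomposition — is essentially the paper's (Lemmas \ref{lemmareductiondimensionh} and \ref{lemmaclassificationsousalgebresdesl3}), and your elimination of $\slR{2}\ltimes\R^2$ via the isotropy having no invariant transverse line matches Lemma \ref{lemmah1respectepashypothesesgeometriques}. Two steps are asserted without justification: that the Levi factor must be the standard block $\slR{2}$ (you must still exclude $\so{3}$ and the irreducible $\biso{1}{2}$, which the paper does via the triviality of their centralizers and the absence of non-trivial two-dimensional representations of $\so{3}$ and $\PSL{2}$ — ``$\dim\mathfrak{h}\le 6$'' is not a reason), and that $\dim\mathfrak{i}=2$ forces $\mathfrak{i}=\mathfrak{a}$ (a two-plane of $\mathfrak{a}\oplus\R e^0$ meeting $\R e^0$ trivially need not be $\mathfrak{a}$).

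The genuine gap, however, is in the solvable case, and it is not a computational obstacle that more bracket-chasing will overcome: the purely geometric statement you are trying to prove is false. Consider $\mathfrak{h}=\heis{3}\oplus\R\,\mathrm{diag}(1,1,-2)\subset\pmin$, a four-dimensional solvable subalgebra acting with open orbit $Y_\affine$ and one-dimensional isotropy $\mathfrak{i}_Y=\R\,\mathrm{diag}(1,1,-2)$ at $o_\affine$. By \eqref{equationmatriceactionadjointeiaffine} this isotropy acts on $(\bar{X},\bar{Y},\bar{Z})$ with eigenvalues $[0,3,3]$, so it preserves \emph{every} transverse line $\R(\bar Z+t\bar Y)$; each $t$ yields a genuine locally homogeneous enhanced Lagrangian contact structure with non-zero isotropy preserving the three required lines, $D^c$ transverse. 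The same holds for $\mathfrak{h}_2=\R^2\rtimes\similitude{2}$, which your solvable dichotomy (Lie's theorem gives only an invariant \emph{complex} line) also omits. These models satisfy every hypothesis you allow yourself, so neither ``pushing $\mathfrak{i}$ outside $\mathfrak{q}$'' nor ``destroying the transversality of $D^c$'' can eliminate them. The paper kills them with a dynamical input, Lemma \ref{lemmaactionisotropideiPH}: using the recurrence of $f$, the weak contraction of $E^\alpha$ and $E^\beta$, and the algebraicity of the stabilizer of the total curvature, it shows the isotropy must act with non-zero eigenvalues on both $D^\alpha$ and $D^\beta$, whereas the isotropies above have eigenvalue $0$ in one of those directions (together with the maximality Lemma \ref{lemmehestmaximale} to pin down $\mathfrak{i}$). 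Your closing claim — that transversality of $D^c$, rather than the dynamics, eliminates the spurious solvable models — is exactly backwards; without reintroducing the diffeomorphism $f$ the case analysis cannot close.
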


To deduce a local information about $\Stilde\restreinta_O$ from this infinitesimal classification, 
it only remains to look at the action of the connected Lie subgroups 
$H^0_\torus\coloneqq\GLplus{2}$ and $H^0_\affine=\Pmin^+$ of $\G$, of respective Lie algebras 
$\mathfrak{h}_\torus$ and $\mathfrak{h}_\affine$.
\begin{proposition}\label{propositioninformationsYtYa}
\begin{enumerate}
\item $Y_\torus$ (respectively $Y_\affine$) is the only open orbit of $H^0_\torus$
(resp. of $H^0_\affine$) on $\X$.
\item $\Sm_\torus$ (respectively $\Sm_\affine$) is the only $H^0_\torus$-invariant (resp. $H^0_\affine$-invariant)
enhanced Lagrangian contact structure of $Y_\torus$ (resp. $Y_\affine$) that is compatible with $\Lx$.
\end{enumerate}
\end{proposition}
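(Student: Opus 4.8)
The strategy is to establish (1) first and then deduce (2) by combining the transitivity obtained in (1) with a weight‑line computation inside a simply transitive subgroup.

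\emph{Proof of (1).} From the descriptions in Paragraphs~\ref{soussoussectionYtorus} and~\ref{soussoussectionYaffine} together with that of $\Omega_a$, one has $Y_\torus=\X\setminus\big(\Sbetaalpha([e_1,e_2])\cup\Salphabeta([e_3])\big)$ and $Y_\affine=\X\setminus\big(\Sbetaalpha([e_1,e_2])\cup\Salphabeta([e_1])\big)$. Each of the surfaces $\Sbetaalpha(D)=\pi_\alpha^{-1}(D)$ and $\Salphabeta(m)=\pi_\beta^{-1}(\{L\mid m\in L\})$ is the preimage of a projective line under one of the bundle projections $\pi_\alpha,\pi_\beta$ of~\eqref{equationpialphaetpibeta}, hence a closed two‑dimensional submanifold of the three‑dimensional manifold $\X$; a finite union of such submanifolds is closed and has empty interior. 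Therefore $Y_\torus$ and $Y_\affine$ are open and dense in $\X$. Moreover $Y_\torus$ is exactly the $H^0_\torus$‑orbit of $o_\torus$: this orbit contains $S_0\cdot o_\torus=Y_\torus$ since $S_0\subset H^0_\torus$, and $H^0_\torus$ preserves $Y_\torus$ because it fixes the point $[e_3]$ and the projective line $[e_1,e_2]$, hence preserves the two surfaces removed. Likewise $Y_\affine$ is the $H^0_\affine$‑orbit of $o_\affine$, because $\Heis{3}\subset H^0_\affine=\Pmin^+$, $\Heis{3}\cdot o_\affine=Y_\affine$, and $\Pmin^+$ fixes $[e_1]$ and $[e_1,e_2]$. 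Finally any open orbit of $H^0_\torus$ (resp.\ $H^0_\affine$) meets the dense open set $Y_\torus$ (resp.\ $Y_\affine$), so, distinct orbits being disjoint, it coincides with it. This proves (1).

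\emph{Reduction for (2).} A compatible enhanced Lagrangian contact structure on $Y_\torus$ has the same $\alpha$‑ and $\beta$‑distributions as $\Lx$, so it is of the form $(\Exalpha,\Exbeta,\Exc)$ for a single smooth line field $\Exc$ transverse to $\Exalpha\oplus\Exbeta$, and it is $H^0_\torus$‑invariant exactly when $\Exc$ is. By~(1), $H^0_\torus$ acts transitively on $Y_\torus$, so $\Exc$ is determined by its value at $o_\torus$. I would carry out the computation through the subgroup $S_0\cong\SL{2}$, which acts simply transitively on $Y_\torus$: by~\eqref{equationdefinitionSt} the resulting isomorphism $\theta_{o_\torus}\circ\iota\colon\SL{2}\xrightarrow{\ \sim\ }(Y_\torus,\Sm_\torus)$ carries $\Exalpha,\Exbeta,\Exc_\torus$ to the left‑invariant distributions $\R\tilde{E},\R\tilde{F},\R\tilde{H}$. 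Hence an $S_0$‑invariant candidate $\Exc$ corresponds to a line $\ell\subset\slR{2}$ with $\ell\not\subset\Vect(E,F)$, and it remains to impose invariance under the rest of $H^0_\torus$. Setting $d_0=\left(\begin{smallmatrix}1&0&0\\0&1&0\\0&0&-2\end{smallmatrix}\right)$, one has $\mathfrak{h}_\torus=\Lie(S_0)\oplus\R d_0$ with $d_0$ central in $\mathfrak{h}_\torus$, so $H^0_\torus=S_0\cdot\exp(\R d_0)$ and $\exp(\R d_0)$ commutes with $S_0$; a direct computation of $\exp(t d_0)\cdot o_\torus$ shows that, in the above identification, $\exp(\R d_0)$ acts on $\SL{2}$ by right translations along the diagonal one‑parameter subgroup $\{\exp(sH)\}$. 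In the affine case the analogous isomorphism $(Y_\affine,\Sm_\affine)\cong\Heis{3}$ of~\eqref{equationdefinitionSa} carries $\Exalpha,\Exbeta,\Exc_\affine$ to $\R\tilde{X},\R\tilde{Y},\R\tilde{Z}$, and the equivariance~\eqref{equationequivarianceaffine} identifies $H^0_\affine=\Pmin^+$ with $\Heis{3}\rtimes\mathcal{A}^+$ acting on $Y_\affine\cong\Heis{3}$ in such a way that $\mathcal{A}^+=\enstq{\varphi_{\lambda,\mu}}{\lambda,\mu>0}$ acts by the Heisenberg automorphisms $\varphi_{\lambda,\mu}$.

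\emph{Conclusion of (2).} In the torus case, invariance of $\Exc\leftrightarrow\ell$ under right translation by $\exp(sH)$ for all $s$ forces $\Ad(\exp(sH))\ell=\ell$, i.e.\ $\ell$ is an eigenline of $\ad(H)$ on $\slR{2}$; the eigenlines are $\R E,\R F,\R H$, and transversality to $\Vect(E,F)$ leaves only $\ell=\R H$, whence $\Exc=\R\tilde{H}=\Exc_\torus$ and the structure is $\Sm_\torus$. In the affine case, invariance under all $\mathrm{d}\varphi_{\lambda,\mu}$ ($\lambda,\mu>0$) forces $\ell$ to be a common eigenline; since $\mathrm{d}\varphi_{\lambda,\mu}$ acts diagonally on $\heis{3}$ with eigenvalues $\lambda,\mu,\lambda\mu$ on $X,Y,Z$ — pairwise distinct for generic $(\lambda,\mu)$ — one gets $\ell\in\{\R X,\R Y,\R Z\}$, and transversality to $\Vect(X,Y)$ gives $\ell=\R Z$, so the structure is $\Sm_\affine$. (Both $\Sm_\torus$ and $\Sm_\affine$ are $H^0$‑invariant and compatible with $\Lx$ by their construction, so the uniqueness is genuine.) The only delicate point is the last step of the reduction — checking that, under the identification of $Y_\torus$ (resp.\ $Y_\affine$) with the simply transitive group, the remaining part of $H^0$ acts by right translation along $\exp(\R H)$ (resp.\ by the automorphisms $\varphi_{\lambda,\mu}$); for the torus this rests on $d_0$ being central in $\mathfrak{h}_\torus$, which reduces its action to the single orbit computation $\exp(t d_0)\cdot o_\torus$, and for the affine model it is precisely the content of~\eqref{equationequivarianceaffine}. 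Everything else is the elementary weight‑line bookkeeping above.
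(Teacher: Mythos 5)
Your argument is correct. For assertion (1) it is essentially the paper's: both proofs observe that $H^0_\torus$ (resp.\ $H^0_\affine$) preserves the two surfaces removed from $\X$ and that their complement is a single orbit of the group; whether one then says that any open orbit is \emph{contained} in that complement (the paper) or that it \emph{meets} that dense open orbit (you) is immaterial. For assertion (2) you take a genuinely different, and somewhat more elementary, route. The paper works entirely at the base point: it computes the isotropy algebra $\mathfrak{i}_\torus$ (resp.\ $\mathfrak{i}_\affine$) and its adjoint action on $\mathfrak{h}/\mathfrak{i}$ in the basis $(\bar{E},\bar{F},\bar{H})$ (resp.\ $(\bar{X},\bar{Y},\bar{Z})$), and checks that the only isotropy-invariant line transverse to the contact plane is the central one. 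You instead use the simply transitive subgroup ($S_0\simeq\SL{2}$, resp.\ $\Heis{3}$) to identify the invariant candidates with lines in $\slR{2}$ (resp.\ $\heis{3}$), and then impose invariance under the complementary part of $H^0$, which acts by right translations along $\exp(\R H)$ (resp.\ by the automorphisms $\varphi_{\lambda,\mu}$); since $\ad(H)$ has the distinct eigenvalues $2,-2,0$ on $E,F,H$ (resp.\ $\Diff{e}{\varphi_{\lambda,\mu}}$ has eigenvalues $\lambda,\mu,\lambda\mu$ on $X,Y,Z$), the only admissible transverse line is $\R H$ (resp.\ $\R Z$). The two computations encode the same information --- your $\Ad(\exp(sH))$-invariance condition is the isotropy representation read through the group trivialization of $\Fitan{Y}$ --- but your version avoids the quotient $\mathfrak{h}/\mathfrak{i}$ and lets the equivariances \eqref{equationrelationequivarianceAutYt} and \eqref{equationequivarianceaffine} do the work, whereas the paper's version is the one that transfers directly to the non-simply-transitive models handled in Lemmas \ref{lemmah1respectepashypothesesgeometriques} and \ref{lemmah2etsousalgebreproprepminrespectentpasproprdynamiques}. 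The only detail worth making explicit in the affine case is that the diagonal part of $\Pmin^+$ really does realize some $\varphi_{a,b}$ with $a$, $b$, $ab$ pairwise distinct (by \eqref{equationequivarianceaffine} the choice $\lambda=\e$, $\mu=1$ yields $\varphi_{\e^2,\e^{-1}}$), but this is immediate.
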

\begin{proof}
1. Both of these groups are contained in 
$\Stab_{\G}[e_1,e_2]=
\{
\left[\begin{smallmatrix}
A & X \\
0 & 1
\end{smallmatrix}\right]
\mid A\in\GL{2},X\in\R^2\}
$, that preserves the surface $S_{\beta,\alpha}[e_1,e_2]$,
and whose only open orbit is thus $\Omega_a=\X\setminus S_{\beta,\alpha}[e_1,e_2]$.
Any open orbit  of one these groups is therefore contained in $\Omega_a$.
Since $H^0_\torus$ preserves the surface $S_{\alpha,\beta}[e_3]$, any open orbit of $H^0_\torus$
is contained in $Y_\torus=\X\setminus(S_{\beta,\alpha}[e_1,e_2]\cup S_{\alpha,\beta}[e_3])=H^0_\torus\cdot o_\torus$. 
In the same way, since $H^0_\affine$ preserves $S_{\alpha,\beta}[e_1]$, any open orbit of $H^0_\affine$
is contained in $Y_\affine=\X\setminus(S_{\beta,\alpha}[e_1,e_2]\cup S_{\alpha,\beta}[e_1])=H^0_\affine\cdot o_\affine$. \\
2. We start with $Y_\torus$, and we denote 
\[
\mathfrak{i}_\torus=\Lie(\Stab_{H^0_\torus}(o_\torus))=\enstq{\left(\begin{smallmatrix}
a & 0 & 0 \\
0 & -2a & 0 \\
0 & 0 & a
\end{smallmatrix}\right)}{a\in\R},
\]
and
\[
E=
\left(\begin{smallmatrix}
         0 & 1 & 0 \\
         0 & 0 & 0 \\
         0 & 0 & 0
        \end{smallmatrix}\right),
         F=\left(\begin{smallmatrix}
         0 & 0 & 0 \\
         1 & 0 & 0 \\
         0 & 0 & 0
        \end{smallmatrix}\right),
         H=\left(\begin{smallmatrix}
         1 & 0 & 0 \\
         0 & -1 & 0 \\
         0 & 0 & 0
        \end{smallmatrix}\right).
\]
The standard Lagrangian contact structure of $\X$ 
satisfies $\R E^\dag(o_\torus)=\Exalpha(o_\torus)$ and $\R F^\dag(o_\torus)=\Exbeta(o_\torus)$, and
for $a\in\R$, the adjoint action of the diagonal element $[a,-2a,a]$ of $\mathfrak{i}_\torus$  
has the following diagonal matrix
in the basis 
$(\bar{E},\bar{F},\bar{H})$ of $\mathfrak{h}/\mathfrak{i}$:
\[ 
 \Mat_{(\bar{E},\bar{F},\bar{H})}(\overline{\ad}([a,-2a,a]))=[3a,-3a,0].
\]
Any line $D^c$ of $\mathfrak{h}_\torus/\mathfrak{i}_\torus$ that is transverse to $\Vect(\bar{E},\bar{F})$ has projective coordinates
$[x,y,1]$ in the basis $(\bar{E},\bar{F},\bar{H})$ for some $(x,y)\in\R^2$, and $\overline{\ad}([a,-2a,a])(D^c)$ is therefore
generated by the vector of coordinates $(3ax,-3ay,0)$.
The only transverse line stabilized by $\overline{\ad}(\mathfrak{i}_\torus)$ is therefore $\R\bar{H}$,
and $\Exc_\torus$ is the only $H^0_\torus$-invariant distribution of $Y_\torus$ transverse to $\Lx$.
\par Let us denote
\[
\mathfrak{i}_\affine=\Lie(\Stab_{H^0_\affine}(o_\affine))=
\enstq{\left(\begin{smallmatrix}
a & 0 & 0 \\
0 & -a-b & 0 \\
0 & 0 & b
\end{smallmatrix}\right)}{(a,b)\in\R^2}.
\]
and
\begin{equation}\label{equationnotationhaffine}
X=
\left(\begin{smallmatrix}
0 & 1 & 0 \\
0 & 0 & 0 \\
0 & 0 & 0 
\end{smallmatrix}\right),
Y=
\left(\begin{smallmatrix}
0 & 0 & 0 \\
0 & 0 & 1 \\
0 & 0 & 0 
\end{smallmatrix}\right),
Z=
\left(\begin{smallmatrix}
0 & 0 & 1 \\
0 & 0 & 0 \\
0 & 0 & 0 
\end{smallmatrix}\right).
\end{equation}
We have $\R X^\dag(o_\affine)=\Exalpha(o_\affine)$, $\R Y^\dag(o_\affine)=\Exbeta(o_\affine)$,
and for $(a,b)\in\R^2$, the adjoint action of the diagonal element $[a,-a-b,b]$ of $\mathfrak{i}_\affine$ 
has the following diagonal matrix
in the basis $(\bar{X},\bar{Y},\bar{Z})$ of $\pmin/\mathfrak{i}_\affine$ 
\begin{equation}\label{equationmatriceactionadjointeiaffine}
\Mat_{(\bar{X},\bar{Y},\bar{Z})}(\overline{\ad}([a,-a-b,b]))=
[2a+b,-a-2b,a-b].
\end{equation}
Any line $D^c$ of $\pmin/\mathfrak{i}_\affine$ that is transverse to $\Vect(\bar{X},\bar{Y})$ has 
projective coordinates of the form $[x,y,1]$ in the basis $(\bar{X},\bar{Y},\bar{Z})$ for some $(x,y)\in\R^2$,
and $\overline{\ad}([a,-a-b,b])(D^c)$ is therefore generated by the vector of coordinates $((2a+b)x,(-a-2b)y,a-b)$.
The only transverse line stabilized by $\overline{\ad}(\mathfrak{i}_\affine)$ is therefore
$\R\bar{Z}$, and  $\Exc_\affine$ is
the only $H^0_\affine$-invariant distribution of $Y_\affine$ transverse to $\Lx$.
\end{proof}

We can finally describe the local geometry of $O$, which is a connected component of $\tilde{\Omega}=\pi_M^{-1}(\Omega)$.
\begin{corollary}\label{propositionmodelelocaldeO}
Up to inversion of the distributions $\Ealpha$ and $\Ebeta$,
the restriction $\delta\restreinta_O$ of the developping map to $O$
is a local isomorphism from $(O,\Stilde\restreinta_O)$ to  $(Y_\torus,\mathcal{S}_\torus)$, 
or to $(Y_\affine,\mathcal{S}_\affine)$.
\end{corollary}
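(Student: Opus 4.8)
The plan is to feed the infinitesimal classification of Proposition~\ref{propositionclassificationalgebriquemodelelocalS} into part~(5) of Lemma~\ref{lemmastructH0invarianteYdeltaisolocal}, and then to identify the resulting transverse distribution through the uniqueness statement of Proposition~\ref{propositioninformationsYtYa}. First I would normalize the developping map. Recall that $\delta$ is only determined up to post-composition by an element of $\G$, that $g\circ\delta$ develops the same $(\G,\X)$-structure, and that a direct computation from \eqref{equationrelationequivariancechampsKilling} gives $(g\circ\delta)_*=\Ad(g)\circ\delta_*$, so the subalgebra $\mathfrak{h}$ attached to $g\circ\delta$ by Lemma~\ref{lemmaalgebrelocaleKillingdeO} is $\Ad(g)\mathfrak{h}$; likewise, composing $\delta$ with the flip $\kappa$ interverts $\Ealpha$ and $\Ebeta$ and replaces $\mathfrak{h}$ by its image under $\theta\colon A\mapsto-\transp{A}$ (cf.\ the discussion preceding \eqref{equationdefinitionhtorushaffine}). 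By Proposition~\ref{propositionclassificationalgebriquemodelelocalS}, after such a replacement --- which is exactly the ``up to inversion of $\Ealpha$ and $\Ebeta$'' of the statement --- we may assume $\mathfrak{h}=\mathfrak{h}_\torus$, or $\mathfrak{h}=\mathfrak{h}_\affine$; correspondingly $H$, the connected subgroup of $\G$ with Lie algebra $\mathfrak{h}$, is $H^0_\torus=\GLplus{2}$, resp.\ $H^0_\affine=\Pmin^+$.

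Next, Lemma~\ref{lemmadeltaOcontenudansY} shows that $\delta(O)$ lies in a single $H$-orbit $Y$, which is open in $\X$; by Proposition~\ref{propositioninformationsYtYa}(1) this forces $Y=Y_\torus$, resp.\ $Y=Y_\affine$, and in particular $x_0=\delta(x)\in Y_\torus$, resp.\ $Y_\affine$. To invoke Lemma~\ref{lemmastructH0invarianteYdeltaisolocal}(5) it remains to check that the isotropy $I=\Stab_H(x_0)$ is connected; since all stabilizers are conjugate it suffices to verify this at $o_\torus$, resp.\ $o_\affine$. Reading the action of $\GLplus{2}$ in the affine chart \eqref{equationphiaequivariant}, where it acts by linear maps of $\R^2$, the stabilizer of the pointed affine line $o_\torus$ is $\{\,\mathrm{diag}(1,d)\mid d>0\,\}\cong\R_{>0}$; and $\Stab_{\Pmin^+}(o_\affine)$ is the positive diagonal torus $\{\,\mathrm{diag}(a,b,1)\mid a,b>0\,\}\cong(\R_{>0})^2$; both are connected. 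Hence Lemma~\ref{lemmastructH0invarianteYdeltaisolocal}(5) yields that $\delta\restreinta_O$ is a local isomorphism from $(O,\Stilde\restreinta_O)$ onto $(Y,\Sx)$, where $\Sx=(\Exalpha,\Exbeta,\Exc)$ and $\Exc$ is the unique $H$-invariant one-dimensional distribution on $Y$ extending $\Exc(x_0)=\Diff{x}{\delta}(\Etildec(x))$, transverse to $\Exalpha\oplus\Exbeta$.

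It remains to identify $\Sx$ with the model structure. Since $H_\torus$ (resp.\ $H_\affine=\Pmin$) is contained in the automorphism group of $(Y_\torus,\mathcal{S}_\torus)$ (resp.\ of $(Y_\affine,\mathcal{S}_\affine)$), the model $\mathcal{S}_\torus$ (resp.\ $\mathcal{S}_\affine$) is an $H^0_\torus$-invariant (resp.\ $H^0_\affine$-invariant) enhanced Lagrangian contact structure on $Y_\torus$ (resp.\ $Y_\affine$) compatible with $\Lx$, and so is $\Sx$ by its construction. By the uniqueness in Proposition~\ref{propositioninformationsYtYa}(2) we get $\Sx=\mathcal{S}_\torus$, resp.\ $\Sx=\mathcal{S}_\affine$, which proves the corollary. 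Beyond Proposition~\ref{propositionclassificationalgebriquemodelelocalS}, which is taken as given here, the only delicate point is the bookkeeping of how conjugation and the flip act on $\mathfrak{h}$, together with the elementary check that $I$ is connected --- this connectedness being precisely what upgrades the germ-level statements of Lemma~\ref{lemmastructH0invarianteYdeltaisolocal}(2)--(3) to the global-on-$Y$ statement~(5).
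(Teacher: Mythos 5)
Your proof is correct and follows essentially the same route as the paper's: normalize $\mathfrak{h}$ to $\mathfrak{h}_\torus$ or $\mathfrak{h}_\affine$ via Proposition \ref{propositionclassificationalgebriquemodelelocalS} (conjugation corresponding to replacing $\delta$ by $g\circ\delta$, the flip to interverting $\Ealpha$ and $\Ebeta$), identify the open orbit $Y$ by Proposition \ref{propositioninformationsYtYa}(1), apply Lemma \ref{lemmastructH0invarianteYdeltaisolocal}(5) using the connectedness of the isotropy, and conclude by the uniqueness statement of Proposition \ref{propositioninformationsYtYa}(2). The only difference is that you verify explicitly that $\Stab_{H^0_\torus}(o_\torus)$ and $\Stab_{H^0_\affine}(o_\affine)$ are connected, a fact the paper asserts without computation.
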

\begin{proof}
The inversion of the distributions $\Ealpha$ and $\Ebeta$ is equivalent to apply $\theta$ to $\mathfrak{h}$,
and the conjugation of $\mathfrak{h}$ by $g\in\G$ is equivalent to replace the developping map $\delta$
by $g\circ\delta$ (that describes the same $(\G,\X)$-structure on $M$).
According to Proposition \ref{propositionclassificationalgebriquemodelelocalS}, we can thus assume that
$\mathfrak{h}$ is equal to $\mathfrak{h}_\torus$ or $\mathfrak{h}_\affine$,
and the open orbit $Y$ is therefore equal to $Y_\torus$
(respectively $Y_\affine$) according to Proposition \ref{propositioninformationsYtYa}.
Since the isotropy subgroups $\Stab_{H^0_\torus}(o_\torus)$ and $\Stab_{H^0_\affine}(o_\affine)$ are connected,
there exists a $H^0_\torus$-invariant (resp. $H^0_\affine$-invariant)
enhanced Lagrangian contact structure $\Sx$ on $Y$ that is compatible with $\Lx$ and such that 
$\delta\restreinta_O$ is a local isomorphism from $(O,\Stilde\restreinta_O)$ to $(Y,\Sx)$
(see Lemma \ref{lemmastructH0invarianteYdeltaisolocal}).
According to Proposition \ref{propositioninformationsYtYa}, $\Sx$ is equal to $\Sm_\torus$ (resp. $\Sm_\affine$).
\end{proof}

\section{Classification of the infinitesimal model}\label{sectionclassificationmodeleinfinitesimal}

The goal of this section is to prove Proposition \ref{propositionclassificationalgebriquemodelelocalS}.
Let us recall that the Lie subalgebras $\mathfrak{i}\subset\mathfrak{h}$ of $\g$ are characterized by
$(\delta^*\mathfrak{h})\restreinta_O=\mathfrak{Kill}(O,\Stilde\restreinta_O)$ 
and $[\delta^*\mathfrak{i}]_x=\mathfrak{is}^{loc}_{\Stilde}(x)$
(see Lemma \ref{lemmaalgebrelocaleKillingdeO} and \eqref{equationdefinitioni}).

\subsection{Algebraic reduction}

We first prove some purely algebraic restrictions on $\mathfrak{h}$.
\begin{lemma}\label{lemmareductiondimensionh}
The dimension of $\mathfrak{h}$ is either 4 or 5.
\end{lemma}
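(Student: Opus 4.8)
We have a Lie subalgebra $\mathfrak{h}\subset\g=\slR{3}$ acting locally transitively near $x_0$ on the $3$-dimensional orbit $Y$, with isotropy subalgebra $\mathfrak{i}=\mathfrak{stab}_{\mathfrak{h}}(x_0)$ of dimension $\dim\mathfrak{h}-3$. We know from Proposition~\ref{corollairepremiereetapepreuve} that $\mathfrak{i}\neq\{0\}$, hence $\dim\mathfrak{h}\geq 4$. The strategy is to bound $\dim\mathfrak{h}$ from above by ruling out $\dim\mathfrak{h}\in\{6,7,8\}$.

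**Ruling out $\dim\mathfrak{h}=8$ and $7$.** If $\dim\mathfrak{h}=8$ then $\mathfrak{h}=\g$; this is fine in the sense that $\X$ is the model, but we should check whether the structure $\Stilde\restreinta_O$ actually extends the full $(\X,\Lx)$ — and indeed the isotropy subalgebra $\mathfrak{i}=\pmin$ does not preserve any line of $\g/\pmin$ transverse to $\Vect(\bar e_\alpha,\bar e_\beta)$ (by the explicit adjoint action \eqref{actionadjointePmin}: the transverse direction $\R\bar e_0$ is not preserved since the off-diagonal entries $a^{-1}y$, $-b^2x$ are generically non-zero). But the isotropy must preserve the central distribution $E^c$ by the first assertion of Lemma~\ref{lemmastructH0invarianteYdeltaisolocal}, a contradiction. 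For $\dim\mathfrak{h}=7$: $\slR{3}$ has no subalgebra of codimension $1$ (a codimension-one ideal would force the quotient to be $1$-dimensional abelian, impossible since $\slR{3}$ is simple; and a non-ideal codimension-one subalgebra would give a non-trivial $3$-dimensional-or-less action of $\slR{3}$ on $\g/\mathfrak{h}$, which since $\dim(\g/\mathfrak{h})=1$ is again impossible as $\slR{3}$ is perfect). So $\dim\mathfrak{h}\neq 7$.

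**Ruling out $\dim\mathfrak{h}=6$.** This is the main obstacle. Here $\dim\mathfrak{i}=3$. One approach: classify (up to conjugacy) the $6$-dimensional subalgebras of $\slR{3}$. A $6$-dimensional subalgebra has a $2$-dimensional quotient $\g/\mathfrak{h}$ on which $\slR{3}$ cannot act non-trivially (the only $\leq 2$-dimensional $\slR{3}$-modules are trivial, and a trivial action means $\mathfrak{h}$ is an ideal — impossible by simplicity). Hence the action of $\mathfrak{h}$ on $\g/\mathfrak{h}$ by the adjoint representation is... wait, $\mathfrak{h}$ does act on $\g/\mathfrak{h}$; rather it is $\g$ that cannot, which shows $\mathfrak{h}$ is not an ideal but gives no immediate contradiction. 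Instead: a $6$-dimensional subalgebra is the stabilizer of a point in a $2$-dimensional-or-less homogeneous space of $\PGL 3$; the only such are $\RP 2$, $\RP 2_*$, and $\X$ (dimension $2,2,3$) — dimension counts force $\mathfrak{h}=\Lie(\Stab_\G([e_1]))$ or its dual, i.e. a parabolic $\mathfrak{q}$ of type $\left(\begin{smallmatrix}*&*&*\\ *&*&*\\ 0&0&*\end{smallmatrix}\right)$. But then $\mathfrak{h}\supset\pmin$ and the orbit $Y$ of $\mathfrak{h}$ through $x_0$ would be $3$-dimensional only if the isotropy is a parabolic of $\mathfrak{q}$ of dimension $3$; one checks $\mathfrak{q}$ acts on $\X=\G/\Pmin$ with open orbit $\Omega_a$, and the isotropy of a point of $\Omega_a$ is $3$-dimensional. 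So $\mathfrak{h}=\mathfrak{q}$ is a candidate and we must eliminate it. The point: the isotropy $\mathfrak{i}$ (conjugate to the stabilizer in $\mathfrak{q}$ of a point of $\Omega_a$) is then isomorphic to $\aff(\R)\oplus\R$ or similar, and by the argument of Lemma~\ref{lemmastructH0invarianteYdeltaisolocal}(1), $\overline{\ad}(\mathfrak{i})$ must preserve a line $D^c\subset\g/\mathfrak{i}$ transverse to $D^\alpha\oplus D^\beta$. Computing $\overline{\ad}(\mathfrak{i})$ explicitly on $\mathfrak{q}/\mathfrak{i}\cong\R^3$ in suitable coordinates, the nilpotent part of $\mathfrak{i}$ acts with a non-trivial unipotent block mixing the transverse direction with $D^\alpha$ (or $D^\beta$), so no transverse invariant line exists — contradiction. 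Thus $\dim\mathfrak{h}=6$ is impossible, and $\dim\mathfrak{h}\in\{4,5\}$.

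**Remark on organization.** In practice I would phrase the elimination of $\dim\mathfrak{h}\geq 6$ uniformly: whenever $\dim\mathfrak{h}\geq 6$, the isotropy $\mathfrak{i}$ is at least $3$-dimensional, contains a non-trivial nilpotent element $n$, and the constraint that $\overline{\ad}(\mathfrak{i})$ stabilize the transverse line $D^c$ (Lemma~\ref{lemmastructH0invarianteYdeltaisolocal}(1)) together with the explicit form of the $\overline{\Ad}(\Pmin)$-action \eqref{actionadjointePmin} forces $n$ to act trivially on $\g/\pmin\supset$ the relevant quotient, pinning down $n$ and ultimately contradicting $\dim\mathfrak{i}\geq 3$. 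The main labor is the case analysis of which nilpotent $\mathfrak{i}$'s can occur, which is where I expect the real work to lie.
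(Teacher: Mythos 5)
Your conclusion and your lower bound ($\mathfrak{i}\neq\{0\}$ gives $\dim\h\geq 4$) match the paper's, but for the upper bound you take a genuinely different route: you rule out $\dim\h\in\{6,7,8\}$ case by case, which forces you to classify the large subalgebras of $\slR{3}$ (no codimension-one subalgebra; the six-dimensional ones are the two maximal parabolics) and then to check by hand that the isotropy of a maximal parabolic at a point of its open orbit preserves no line transverse to the contact plane. The paper never classifies anything: after normalizing $x_0=o$ and $D^c=\R\bar{e}_0$, the invariance of $D^c$ under $\overline{\ad}(\mathfrak{i})$ (Lemma \ref{lemmastructH0invarianteYdeltaisolocal}, which you also invoke) forces $\mathfrak{i}=\h\cap\pmin$ into the explicit three-dimensional subalgebra $\mathfrak{o}$ of \eqref{equationformedeicontenustabE0}, and a one-line bracket computation --- using $\h+\pmin=\g$ to write $e_\beta=v+w$ and observing that $[e_\beta,e^0]=e^\alpha\notin\mathfrak{o}$ would then lie in $\mathfrak{i}$ --- excludes $e^0$ from $\mathfrak{i}$, giving $\dim\mathfrak{i}\leq 2$ in one stroke. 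Your closing remark gropes toward exactly this (the invariance of $D^c$ does pin the nilpotent part of $\mathfrak{i}$ into $\R e^0$), but be aware that this alone only yields $\dim\mathfrak{i}\leq 3$, i.e.\ $\dim\h\leq 6$; the extra bracket argument is precisely what kills the six-dimensional case without any appeal to the parabolic classification.

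Two of your steps need repair or completion. For $\dim\h=7$ the justification is wrong as written: when $\h$ is not an ideal, $\g/\h$ is not an $\slR{3}$-module, so ``a non-trivial action of $\slR{3}$ on $\g/\h$'' is not meaningful; the correct argument is that a codimension-one subalgebra would make $\slR{3}$ act locally transitively and (by simplicity) faithfully on a one-dimensional manifold, which Lie's theorem bounds by dimension three --- or simply that every maximal subalgebra of $\slR{3}$ has dimension at most six. For $\dim\h=6$ you assert both the identification of six-dimensional subalgebras with the maximal parabolics and the decisive computation (``one checks \ldots no transverse invariant line exists'') without carrying either out. The computation is in fact true: for the stabilizer of $[e_1,e_2]$ acting at $o_\affine$, the isotropy contains $e_\beta$, and in the basis $(\bar{X},\bar{Y},\bar{Z})$ of Proposition \ref{propositioninformationsYtYa} one finds $\overline{\ad}(e_\beta)\bar{X}=\overline{\ad}(e_\beta)\bar{Y}=0$ and $\overline{\ad}(e_\beta)\bar{Z}=\bar{Y}$, so every transverse line $\R(x\bar{X}+y\bar{Y}+\bar{Z})$ is mapped onto $\R\bar{Y}$ and none is preserved. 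But as written this is the load-bearing step of your proof and it is left undone, so the argument is only complete in outline.
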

\begin{proof}
Possibly translating the developping map by an element of $\G$, we can assume that $x_0=o=([e_1],[e_1,e_2])\in\X$, and
since the adjoint action of $\Pmin$ on the lines of $\g/\pmin$ transverse to $\Vect(\bar{e}_\alpha,\bar{e}_\beta)$
is transitive (see Paragraph \ref{soussectiongeomcartanassociee}), we can moreover assume that
$D^c=\overline{\Diff{e}{\theta_o}}(\R \bar{e}_0)$
with
$
e_0=
\left(
\begin{smallmatrix}
0 & 0 & 0 \\
0 & 0 & 0 \\
1 & 0 & 0
\end{smallmatrix}
\right)
$.
As a consequence, $\mathfrak{i}=\mathfrak{h}\cap\pmin$ is contained in
\begin{equation}\label{equationformedeicontenustabE0}
 \mathfrak{o}=
 \enstq{v\in\pmin}{\overline{\ad}(v)(\R \bar{e}_0)\subset \R \bar{e}_0}=
 \left\{
 \left(\begin{smallmatrix}
  a & 0 & z \\
  0 & -a-b & 0 \\
  0 & 0 & b
 \end{smallmatrix}\right)
 \mid (a,b,z)\in\R^3
 \right\}.
\end{equation}
Denoting 
$
e^0=
\left(
\begin{smallmatrix}
 0 & 0 & 1 \\
 0 & 0 & 0 \\
 0 & 0 & 0
\end{smallmatrix}
\right)\in\mathfrak{o}$, 
we now prove that $\mathfrak{i}\cap\R e^0=\{0\}$, implying $\dim\mathfrak{i}\leq 2$
and finishing the proof of the Lemma, since $\mathfrak{i}$ is non-zero and $\dim\h-\dim\mathfrak{i}=3$. 
\par Let us assume by contradiction that $e^0\in\mathfrak{i}$.
As $\mathfrak{h}+\pmin=\g$ (because the orbit of $o$ under $H$ is open), 
there exists $v\in\mathfrak{h}$ and $w\in\pmin$ such that 
$
e_\beta=
\left(
\begin{smallmatrix}
 0 & 0 & 0 \\
 1 & 0 & 0 \\
 0 & 0 & 0
\end{smallmatrix}
\right)
=v+w
$.
But $[v,e^0] \in\mathfrak{h}$,
and $[w,e^0]\in\R e^0 \subset\mathfrak{i}$ since $w\in\pmin$,
finally implying that
$
\left(
\begin{smallmatrix}
 0 & 0 & 0 \\
 0 & 0 & 1 \\
 0 & 0 & 0
\end{smallmatrix}
\right)
=[e_\beta,e^0]
=[v,e^0]+[w,e^0]\in\mathfrak{h}\cap\pmin=\mathfrak{i}\subset\mathfrak{o}$,
which contradicts the description of $\mathfrak{o}$ in
\eqref{equationformedeicontenustabE0}.
\end{proof}

Let
\begin{equation}\label{equationdecompositionLevih}
 \h=\mathfrak{s}\ltimes_\phi\mathfrak{r},
\end{equation}
be the Levi decomposition of $\h$,
where $\mathfrak{s}$ is a semi-simple subalgebra of $\h$ (or is trivial if $\mathfrak{h}$ is solvable), 
$\mathfrak{r}$ is the solvable radical of $\h$ (it is an ideal of $\mathfrak{h}$),
and $\phi$ is the restriction of the adjoint representation $\ad\colon\mathfrak{h}\to\Der\mathfrak{h}$
($\phi\colon\mathfrak{s}\to \Der \mathfrak{r}$ 
describes the bracket in $\h$ by 
$[v,w]=\phi(v)(w)$ for $v\in\mathfrak{s}$ and $w\in\mathfrak{r}$).
\par A proper semi-simple subalgebra of $\slR{3}$ of dimension less than 5 is three-dimensional, and is thus isomorphic to 
$\slR{2}$ or to $\so{3}$.
Moreover, up to conjugacy in $\SL{3}$, the only embedding of $\so{3}$ in $\slR{3}$ is the inclusion,
and the only embeddings of $\slR{2}$ in $\slR{3}$ are
\begin{equation}\label{equationplongementsdesl2}
\mathfrak{s}_0\coloneqq\enstq{
 \begin{pmatrix}
  A & 0 \\
  0 & 0
 \end{pmatrix}
 }{A\in\slR{2}}
 \text{~and~}
 \biso{1}{2}.
\end{equation}
If $\mathfrak{h}$ is not solvable, $\mathfrak{s}$ is thus equal to
$\mathfrak{s}_0$, $\biso{1}{2}$ or $\so{3}$ up to conjugacy in $\SL{3}$.
The centralizers of these 
subalgebras in $\slR{3}$ are
\begin{equation}\label{equationcalculcentralisateurs}
\begin{cases}
\Cent_{\slR{3}}(\biso{1}{2})=\Cent_{\slR{3}}(\so{3})=\{0\}, \\
\Cent_{\slR{3}}(\mathfrak{s}_0)=\enstq{
\left(
\begin{smallmatrix}
x & 0 & 0 \\
0 & x & 0 \\
0 & 0 & -2x
\end{smallmatrix}
\right)
}{x\in\R}.
\end{cases}
\end{equation}

\begin{lemma}\label{lemmaclassificationsousalgebresdesl3}
Up to conjugacy in $\SL{3}$ or image by
$\theta=-\transp{\cdot}$, we have the following results.
 \begin{enumerate}
  \item If $\mathfrak{h}$ is not solvable, then
\begin{enumerate}
\item $\mathfrak{s}$ is equal to $\mathfrak{s}_0$,
\item and $\mathfrak{h}$ is equal to $\mathfrak{h}_\torus$ or to 
\begin{equation}\label{equationh1}
\mathfrak{h}_1=
\R^2\rtimes\slR{2}=
\enstq{
\left(
\begin{smallmatrix}
A & X \\
0 & 0
\end{smallmatrix}
\right)
}{A\in\slR{2},X\in\R^2}.
\end{equation}
\end{enumerate}  
\item If $\mathfrak{h}$ is solvable,
then either $\mathfrak{h}$ is contained in $\mathfrak{h}_\affine=\pmin$, or equal to 
\begin{equation}\label{equationh1}
\mathfrak{h}_2=
\R^2\rtimes\similitude{2}=
\enstq{
\left(
\begin{smallmatrix}
A & X \\
0 & -\tr A
\end{smallmatrix}
\right)
}{A\in\similitude{2},X\in\R^2},
\end{equation}
where 
$\similitude{2}=
\enstq{
\left(
\begin{smallmatrix}
a & b \\
-b & a
\end{smallmatrix}
\right)
}{(a,b)\in\R^2}
$.
 \end{enumerate}
\end{lemma}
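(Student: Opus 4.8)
The plan is to combine the Levi decomposition \eqref{equationdecompositionLevih} of $\mathfrak{h}$ with the dimension bound $\dim\mathfrak{h}\in\{4,5\}$ from Lemma \ref{lemmareductiondimensionh}, treating separately the cases where $\mathfrak{h}$ is not solvable and where it is. Everything below is understood up to conjugacy in $\SL{3}$ and up to the involution $\theta=-\transp{\cdot}$; recall that $\theta$ turns the linear $\SL{3}$-action on $\R^3$ into its contragredient, hence exchanges $\mathfrak{h}$-invariant subspaces with their annihilators, so that the normalisation of the base-point used in Lemma \ref{lemmareductiondimensionh} may be dropped.

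\emph{Non-solvable case.} Then $\mathfrak{s}\neq 0$ is semisimple, so $\dim\mathfrak{s}\geq 3$; since $\dim\mathfrak{h}\leq 5$ this forces $\mathfrak{s}$ to be three-dimensional simple, hence one of $\mathfrak{s}_0$, $\biso{1}{2}$, $\so{3}$ by \eqref{equationplongementsdesl2}, with $\dim\mathfrak{r}\in\{1,2\}$. The engine of the argument is the rigidity of $\mathfrak{r}$ as an $\mathfrak{s}$-module via $\phi=\ad$. If $\dim\mathfrak{r}=1$ the action is trivial, so $\mathfrak{r}\subseteq\Cent_{\slR{3}}(\mathfrak{s})$; by \eqref{equationcalculcentralisateurs} this excludes $\biso{1}{2}$ and $\so{3}$, forces $\mathfrak{s}=\mathfrak{s}_0$ and $\mathfrak{r}=\Cent_{\slR{3}}(\mathfrak{s}_0)$, and a direct identification gives $\mathfrak{h}=\mathfrak{s}_0\oplus\Cent_{\slR{3}}(\mathfrak{s}_0)=\mathfrak{h}_\torus$. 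If $\dim\mathfrak{r}=2$, then $\mathfrak{r}$ is abelian (a non-abelian two-dimensional algebra has solvable derivation algebra, so $\phi(\mathfrak{s})=0$, again putting $\mathfrak{r}$ in a centraliser that is too small) and $\phi$ is non-trivial (same centraliser obstruction); decomposing $\slR{3}$ as an $\mathfrak{s}$-module then rules out $\so{3}$ (every two-dimensional real representation of $\so{3}$ is trivial) and $\biso{1}{2}$ (as a $\biso{1}{2}$-module, $\slR{3}$ is the sum of a five- and a three-dimensional irreducible, with no two-dimensional summand), so $\mathfrak{s}=\mathfrak{s}_0$ and $\mathfrak{r}$ is a copy of the standard module. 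Writing $\slR{3}=\mathfrak{s}_0\oplus C\oplus R\oplus\R$ as an $\mathfrak{s}_0$-module, with $C$ and $R$ the ``third column'' and ``third row'' copies of $\R^2$, the submodule $\mathfrak{r}$ lies in the isotypic piece $C\oplus R$; a short bracket computation shows that every two-dimensional submodule of $C\oplus R$ other than $C$ and $R$ is non-abelian, whence $\mathfrak{h}=\mathfrak{s}_0\oplus C=\mathfrak{h}_1$ or $\mathfrak{h}=\mathfrak{s}_0\oplus R=\theta(\mathfrak{h}_1)$.

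\emph{Solvable case.} If $\mathfrak{h}$ is conjugate into $\pmin$ we are in the first alternative, so assume not: then the linear action of $\mathfrak{h}$ on $\R^3$ has no complete invariant flag. Complexifying and invoking Lie's theorem gives an invariant line in $\C^3$, which is either defined over $\R$, or together with its conjugate spans a real invariant plane; after applying $\theta$ if needed, I may assume $\mathfrak{h}$ preserves a plane $P\subset\R^3$, and then it acts \emph{irreducibly} on $P$ (any invariant line in $P$ would complete a flag) while acting by scalars on $\R^3/P$. Thus $\mathfrak{h}$ lies in the six-dimensional stabiliser $\gl{2}\ltimes\R^2$ of $P$, and its image $\bar{\mathfrak{h}}$ in $\gl{2}$ is a solvable subalgebra acting irreducibly on $\R^2$, hence conjugate into $\similitude{2}$ and, by irreducibility, not contained in the scalars. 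The intersection of $\mathfrak{h}$ with the translation ideal $\R^2$ is a $\bar{\mathfrak{h}}$-submodule, hence $0$ or all of $\R^2$; the first case is excluded since it would give $\dim\mathfrak{h}\leq\dim\bar{\mathfrak{h}}\leq 2<4$, so $\R^2\subseteq\mathfrak{h}$ and $\dim\mathfrak{h}=\dim\bar{\mathfrak{h}}+2$, forcing $\dim\bar{\mathfrak{h}}=2$, $\bar{\mathfrak{h}}=\similitude{2}$ and $\dim\mathfrak{h}=4$. Comparing dimensions, $\mathfrak{h}$ is exactly the full preimage of $\similitude{2}$ in $\gl{2}\ltimes\R^2$, that is, $\mathfrak{h}=\mathfrak{h}_2$.

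The step I expect to be the main obstacle is the solvable, non-triangulable subcase: the real-irreducibility of the plane action is invisible over $\C$, so one must go through complexification and the Galois descent of invariant subspaces, and one must check that the reduction ``$\mathfrak{h}$ preserves an irreducible plane, up to $\theta$'' is genuinely exhaustive — in particular that preserving both a line and a plane with no common flag would force $\dim\mathfrak{h}\leq 2$, and hence cannot occur. Once that reduction is in place, the rest of the solvable case is pure dimension count, and the non-solvable case reduces to a finite check once the $\mathfrak{s}$-module structure of $\slR{3}$ is written out for each of the three candidate Levi factors.
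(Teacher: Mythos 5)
Your proof is correct, and it follows the same skeleton as the paper's: Levi decomposition of $\mathfrak{h}$, the dimension bound $\dim\mathfrak{h}\in\{4,5\}$ of Lemma \ref{lemmareductiondimensionh}, elimination of the unwanted Levi factors, and Lie's theorem in the solvable case (where your treatment --- real invariant line versus irreducibly-acted-upon invariant plane, then a dimension count pinning down $\mathfrak{h}_2$ --- matches the paper's, and is in fact slightly more explicit about why one gets equality with $\mathfrak{h}_2$ rather than mere containment). You genuinely diverge in two sub-cases of the non-solvable branch. First, to rule out $\mathfrak{s}$ conjugate to $\biso{1}{2}$ with $\mathfrak{r}\cong\R^2$, the paper passes to the group level and uses that every two-dimensional representation of $\mathrm{SO}^0(1,2)\cong\PSL{2}$ is trivial, whereas you stay infinitesimal and observe that $\slR{3}$ splits under $\ad\restreinta_{\biso{1}{2}}$ into irreducibles of dimensions $3$ and $5$, so it contains no two-dimensional submodule that could serve as $\mathfrak{r}$. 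Second, to identify $\mathfrak{r}$ when $\mathfrak{s}=\mathfrak{s}_0$ and $\dim\mathfrak{r}=2$, the paper runs an explicit computation with the evaluation map $\ev_{e_3}\restreinta_{\mathfrak{r}}$, split into three cases according to $\dim\mathfrak{r}(e_3)$; you instead place $\mathfrak{r}$ in the standard-isotypic component $C\oplus R$ and use abelianity to discard the diagonal copies. That last step does rest on an assertion you only gesture at: you should record the one-line bracket computation showing that for a diagonal submodule $\enstq{c+\mu T(c)}{c\in C}$ with $\mu\neq0$, the commutator of two elements equals $\mu$ times a nonzero $\Cent_{\slR{3}}(\mathfrak{s}_0)$-valued multiple of the symplectic form on $C$ (e.g.\ the two basic column vectors already give a nonzero bracket), hence the diagonal copies are not abelian. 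With that computation written out, both routes are sound; yours trades the paper's matrix manipulation for a little representation theory of $\slR{2}$, which makes the case analysis shorter and arguably more transparent.
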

\begin{proof}
1.a) Let us assume by contradiction that $\mathfrak{s}$ is conjugated to $\biso{1}{2}$ or 
$\so{3}$, implying that $\Cent_{\slR{3}}\mathfrak{s}=\{0\}$ according to \eqref{equationcalculcentralisateurs}. 
Since $\mathfrak{s}$ is simple, if the Lie algebra morphism $\phi$ is not injective then it is trivial, implying
$\mathfrak{r}\subset \Cent_{\slR{3}}\mathfrak{s}=\{0\}$ and thus $\dim\mathfrak{h}=\dim\mathfrak{s}=3$
which contradicts Lemma \ref{lemmareductiondimensionh}.
Our hypothesis on $\mathfrak{s}$ implies therefore that $\phi$ is injective, and in particular
that $\dim\Der\mathfrak{r}\geq\dim\mathfrak{s}=3$.
\par Since $\dim\mathfrak{s}=3$, the solvable radical $\mathfrak{r}$ is of dimension
1 or 2 according to Lemma \ref{lemmareductiondimensionh}, 
and is thus isomorphic to $\R$, $\mathfrak{aff}(\R)$, or $\R^2$.
But if $\mathfrak{r}$ is isomorphic to $\R$ or $\mathfrak{aff}(\R)$,
then $\Der \mathfrak{r}$ is of dimension 1 or 2 which contradicts 
the injectivity of $\phi$, and
$\mathfrak{r}$ is thus isomorphic to $\R^2$.
Since $\so{3}$ has no non-zero two-dimensional representation, this implies that
$\mathfrak{s}$ is conjugated to $\biso{1}{2}$.
The connected Lie subgroup $H$ of $\SL{3}$ of Lie algebra $\mathfrak{h}$ contains then
$\mathrm{SO}^0(1,2)$,
and its adjoint action induces thus by restriction a two-dimensional representation $\phi$
of $\mathrm{SO}^0(1,2)$ on $\mathfrak{r}$ (because $\mathfrak{r}$ is an ideal of $\mathfrak{h}$).
Since $\mathrm{SO}^0(1,2)$ is isomorphic to $\PSL{2}$,
$\phi$ is trivial, implying that
$\phi$ is trivial as well, which
contradicts the injectivity of $\phi$.
Finally, $\mathfrak{s}$ is conjugated to $\mathfrak{s}_0$. \\
1.b) Let us assume by contradiction that $\mathfrak{r}$ is isomorphic to $\mathfrak{aff}(\R)$.
Then $\Der\mathfrak{r}$ is two-dimensional and $\phi$ is thus non-injective, \emph{i.e.} trivial by simplicity of $\mathfrak{s}_0$. 
But $\mathfrak{r}$ is then contained in the centralizer of $\mathfrak{s}_0$ 
which is one-dimensional according to \eqref{equationcalculcentralisateurs},
contradicting the original hypothesis.
Therefore, $\mathfrak{r}$ is isomorphic to $\R^2$ or $\R$.
\par We first assume that $\mathfrak{r}$ is isomorphic to $\R^2$, implying that
$\phi$ is injective (otherwise
$\mathfrak{r}\subset\Cent_{\slR{3}}\mathfrak{s}_0$ which is one-dimensional).
We use the linear mapping 
$\ev_{e_3}\restreinta_{\mathfrak{r}}\colon M\in\mathfrak{r}\mapsto M(e_3)\in \R^3$
and discuss according to the dimension of its image $\mathfrak{r}(e_3)$.
Let us emphasize that $\mathfrak{r}$ is normalized by the
connected Lie subgroup $S_0$ of $\SL{3}$ of Lie algebra $\mathfrak{s}_0$,
and that $\mathfrak{r}(e_3)$ is thus preserved by $S_0$. 
If $\mathfrak{r}(e_3)$ is a plane then $\mathfrak{r}(e_3)=\Vect(e_1,e_2)$ since it is preserved by $S_0$,
and $\ev_{e_3}\restreinta_{\mathfrak{r}}$ is moreover injective. 
There exists $v\in\mathfrak{r}$ such that $\ev_{e_3}(v)=e_1$, and with
$A=
\left(\begin{smallmatrix}
1 & 0 \\
0 & -1
\end{smallmatrix}\right)\in\slR{2}$ and
$
u=
\left(\begin{smallmatrix}
A & 0 \\
0 & 0
\end{smallmatrix}\right)\in\mathfrak{s}_0
$
we have $\ev_{e_3}([u,v])=e_1=\ev_{e_3}(v)$.
This implies $[u,v]=v$
by injectivity of $\ev_{e_3}\restreinta_{\mathfrak{r}}$,
and finally
$
v=
\left(\begin{smallmatrix}
0 & 0 & 1 \\
0 & 0 & 0 \\
0 & x & 0
\end{smallmatrix}\right)
$ for some $x\in\R$.
The same reasoning with $w\in\mathfrak{r}$ such that $\ev_{e_3}(w)=e_2$ and
$A=
\left(\begin{smallmatrix}
-1 & 0 \\
0 & 1
\end{smallmatrix}\right)\in\slR{2}$,
implies that 
$
w=
\left(\begin{smallmatrix}
0 & 0 & 0 \\
0 & 0 & 1 \\
y & 0 & 0
\end{smallmatrix}\right)
$ for some $y\in\R$.
Since $\mathfrak{r}$ is abelian we have $[v,w]=0$, which
implies $x=y=0$ and proves that
$
\mathfrak{r}=
\left(
\begin{smallmatrix}
0 & \R^2 \\
0 & 0
\end{smallmatrix}
\right)
$, \emph{i.e.} that $\mathfrak{h}=\R^2\rtimes\slR{2}$.
If $\mathfrak{r}(e_3)=\{0\}$, then $p\restreinta_{\mathfrak{r}}$ is injective,
implying $p(\mathfrak{r})=\R^2$. Therefore $\dim(\theta(\mathfrak{r}))(e_3)=2$ which brings us back to the first case,
and $\theta(\mathfrak{h})=\R^2\rtimes\slR{2}$.
Finally, $\dim\mathfrak{r}(e_3)=1$ is impossible.
Otherwise, $\mathfrak{r}'=\ker\ev_{e_3}\cap\mathfrak{r}$ is one-dimensional, and since
$p\colon 
\left(\begin{smallmatrix}
B & 0 \\
X & 0
\end{smallmatrix}\right)
\in\mathfrak{r}'
\mapsto X\in\R^2
$ is injective (because $\mathfrak{r}\cap\mathfrak{s}_0=\{0\}$), $p(\mathfrak{r}')$ is a line of $\R^2$.
But for $w\in\mathfrak{r}'$ and
$
v=\left(\begin{smallmatrix}
A & 0 \\
0 & 0
\end{smallmatrix}\right)\in\mathfrak{s}_0
$ we have $p([v,w])=-p(w)A$, \emph{i.e.} $p(\mathfrak{r}')$ is preserved by $\slR{2}$ and cannot be a line.
\par We now assume that $\mathfrak{r}$ is isomorphic to $\R$.
Then $\phi$ is non-injective and thus trivial, 
implying $\mathfrak{r}\subset\Cent_{\slR{3}}\mathfrak{s}_0$.
This inclusion is an equality by equality of dimensions, proving $\mathfrak{h}=\mathfrak{h}_\torus$. \\
2. As $\mathfrak{h}$ is solvable, it preserves a complex line in $\C^3$ according to Levi's theorem.
More precisely, either $\mathfrak{h}$ preserves a real line, or it preserves a plane on which it acts by similarities.
The second case implies
$\mathfrak{h}\subset\R^2\rtimes\similitude{2}=\mathfrak{h}_2$ up to conjugacy in $\SL{3}$.
In the first case we can assume that $\mathfrak{h}$ preserves $\R e_1$, and
if the representation 
$
\left(\begin{smallmatrix}
* & * \\
0 & A
\end{smallmatrix}\right)\in\mathfrak{h}\mapsto A\in\gl{2}
$
also preserves a real line, then $\mathfrak{h}\subset\pmin=\mathfrak{h}_\affine$ up to conjugacy.
If not, then $\theta(\mathfrak{h})\subset
\enstq{
\left(
\begin{smallmatrix}
-\tr A & 0 \\
X & A
\end{smallmatrix}
\right)
}{A\in\similitude{2},X\in\R^2}$, according to the same remark than before.
This last subalgebra being conjugated to 
$\R^2\rtimes\similitude{2}=\mathfrak{h}_2$, this concludes the proof of the lemma.
\end{proof}

\subsection{Two further properties of the infinitesimal model}

We now prove two further properties of the infinitesimal model $(\mathfrak{h},\mathfrak{i})$,
in order to eliminate the ``exotic'' cases $\mathfrak{h}_1$ and $\mathfrak{h}_2$ that appeared in 
the algebraic clasification of Lemma \ref{lemmaclassificationsousalgebresdesl3}.
\begin{lemma}\label{lemmehestmaximale}
Let $\mathfrak{l}$ be a subalgebra of $\g$ containing $\mathfrak{h}$,
$\mathfrak{j}=\mathfrak{stab}_{\mathfrak{l}}(x_0)$ be the isotropy at $x_0$, and
$D^c$ be the line of $\mathfrak{l}/\mathfrak{j}$ sent to $\Exc(x_0)$ by 
the orbital map at $x_0$. 
If $\overline{\ad}(\mathfrak{j})(D^c)\subset D^c$, 
then $\mathfrak{l}=\mathfrak{h}$.
\end{lemma}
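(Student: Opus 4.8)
The plan is to turn the infinitesimal hypothesis $\overline{\ad}(\mathfrak{j})(D^c)\subseteq D^c$ into the statement that the connected Lie subgroup $L\subseteq\G$ with Lie algebra $\mathfrak{l}$ preserves the distribution $\Exc$ near $x_0$, and then to compare this with the identification $\mathfrak{kill}^{loc}_{\Sx}(x_0)=\mathfrak{h}$ furnished by Lemma \ref{lemmastructH0invarianteYdeltaisolocal}(4), which will force $\mathfrak{l}\subseteq\mathfrak{h}$ and hence equality.

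First I would set up the bookkeeping. Since $\mathfrak{i}=\mathfrak{h}\cap\mathfrak{stab}_{\g}(x_0)\subseteq\mathfrak{l}\cap\mathfrak{stab}_{\g}(x_0)=\mathfrak{j}$, and the tangent space at $x_0$ to the $H$-orbit is already all of $\Tan{x_0}{\X}$, the orbit of $x_0$ under $L$ is open as well; thus the orbital map induces a linear isomorphism $\mathfrak{l}/\mathfrak{j}\xrightarrow{\sim}\Tan{x_0}{\X}$ and a local diffeomorphism $\bar\theta_{x_0}\colon L/I^0\to\X$ onto a neighbourhood of $x_0$ that is $L$-equivariant, where $I^0\subseteq L$ denotes the connected subgroup with Lie algebra $\mathfrak{j}$ (the identity component of $\Stab_L(x_0)$), and where $H\subseteq L$. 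Next I would exponentiate the hypothesis: for $v\in\mathfrak{j}$ the line $D^c$ of $\mathfrak{l}/\mathfrak{j}$ is $\overline{\ad}(v)$-stable, hence $\overline{\Ad}(\exp(tv))=\exp(t\,\overline{\ad}(v))$ fixes $D^c$ for every $t$, and since $I^0$ is generated by $\exp(\mathfrak{j})$ the whole group $I^0$ fixes $D^c$. The line $D^c$ therefore spreads to a well-defined $L$-invariant one-dimensional distribution on $L/I^0$, which $\bar\theta_{x_0}$ pushes forward to an $L$-invariant distribution defined near $x_0$ and extending $\Exc(x_0)$. Being in particular $H$-invariant, this extension coincides near $x_0$ with $\Exc$ by the uniqueness of the $H$-invariant germ of such an extension (Lemma \ref{lemmastructH0invarianteYdeltaisolocal}(2)); in other words, $\Exc$ is $L$-invariant near $x_0$.

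To finish, I would observe that for any $v\in\mathfrak{l}$ the fundamental vector field $v^\dag$ of the $\G$-action on $\X$ has local flow given by translations by $\exp(tv)\in L$, which preserve $\Exalpha$ and $\Exbeta$ (these come from $\Lx$, preserved by $\G$) and, by the previous paragraph, also preserve $\Exc$ near $x_0$. Hence $[v^\dag]_{x_0}\in\mathfrak{kill}^{loc}_{\Sx}(x_0)$, and by Lemma \ref{lemmastructH0invarianteYdeltaisolocal}(4) together with the injectivity of $w\mapsto[w^\dag]_{x_0}$ on $\g$ (Lemma \ref{lemmadescriptionkillinggeomCartanplatemodel}(1)) this forces $v\in\mathfrak{h}$; therefore $\mathfrak{l}\subseteq\mathfrak{h}$, and with the reverse inclusion $\mathfrak{l}=\mathfrak{h}$. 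The only step that requires genuine care is the invocation of the uniqueness clause of Lemma \ref{lemmastructH0invarianteYdeltaisolocal}(2), which is precisely what guarantees that the $L$-invariant extension of $\Exc(x_0)$ produced above really is the prescribed distribution $\Exc$ rather than merely some transverse $L$-invariant line field; the remainder is routine, modulo keeping track of connectedness when passing between $L$, $I^0$ and their Lie algebras.
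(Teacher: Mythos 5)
Your proof is correct and follows essentially the same route as the paper's: exponentiate the hypothesis to get that the identity component of $\Stab_L(x_0)$ preserves $D^c$, spread $D^c$ to an $L$-invariant transverse distribution near $x_0$, identify it with $\Exc$ by the uniqueness clause of Lemma \ref{lemmastructH0invarianteYdeltaisolocal}(2) applied to the $H$-invariant germ, and conclude $\mathfrak{l}\subset\mathfrak{kill}^{loc}_{\Sx}(x_0)=\mathfrak{h}$. Your final step is in fact slightly more explicit than the paper's, which simply asserts the inclusion $\mathfrak{l}\subset\mathfrak{kill}^{loc}_{\Sx}(x_0)$ without spelling out the injectivity of $v\mapsto v^\dag$.
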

\begin{proof}
Let us denote by $L$ the connected subgroup of $\G$ of Lie algebra $\mathfrak{l}$, and by $J^0$ the identity component of 
$J=\Stab_L(x_0)$.
As $\overline{\ad}(\mathfrak{j})$ preserves $D^c$, $\overline{\Ad}(\exp(\mathfrak{j}))$ preserves $D^c$, and
the subgroup of elements $j\in J^0$ such that $\overline{\Ad}(j)$ preserves $D^c$ is thus equal to $J^0$ by connexity.
The construction made in the second assertion of Lemma \ref{lemmastructH0invarianteYdeltaisolocal} is thus valid for $L/J^0$, 
and proves 
the existence of an unique $L$-invariant enhanced Lagrangian contact structure $\Sx'$ extending 
$(\Exalpha(x_0),\Exbeta(x_0),\Exc(x_0))$ in the neighbourhood of $x_0$.
As $\mathfrak{h}\subset\mathfrak{l}$, $H\subset L$, 
and $\Sx'$ is thus $H$-invariant, implying $\Sx'=\Sx$
by the unicity of such a tructure.
Therefore $\mathfrak{l}\subset\mathfrak{kill}^{loc}_{\Sx}(x_0)=\mathfrak{h}$, which concludes the proof.
\end{proof}

\begin{lemma}\label{lemmaactionisotropideiPH}
Let us assume that $\mathfrak{i}$ is one-dimensional, and let $v$ be a non-zero element of $\mathfrak{i}$.
Then the eigenvalues of $\overline{\ad}(v)\in\Lin(\mathfrak{h}/\mathfrak{i})$ with respect to the eigenlines $D^\alpha$ and $D^\beta$ 
are non-zero.
\end{lemma}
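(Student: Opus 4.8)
The plan is to reformulate the eigenvalue condition geometrically and then argue by contradiction. Write $X=\delta^* v$ for the local Killing field of $\Stilde$ attached to $v\in\mathfrak{i}=\delta_*\mathfrak{is}^{loc}_{\Stilde}(x)$. By the equivariance relation \eqref{equationrelationequivariancechampsKilling}, the flow of $X$ fixes $x$ and, transported by $\delta$, its linearisation at $x$ is conjugate to $\exp\!\big(t\,\overline{\ad}(v)\big)$ acting on $\mathfrak{h}/\mathfrak{i}\simeq\g/\pmin$. Since $X$ preserves the three distributions $\Etildealpha,\Etildebeta,\Etildec$, the endomorphism $\overline{\ad}(v)$ preserves $D^\alpha,D^\beta,D^c$ and is diagonal in a basis adapted to this splitting, with eigenvalues $\lambda_\alpha,\lambda_\beta,\lambda_c$; the Jacobi identity in $\mathfrak{h}$ applied to representatives of $D^\alpha$, $D^\beta$, $D^c$, together with the bracket relation $[X_\alpha,X_\beta]\notin\Vect(X_\alpha,X_\beta)$ encoding the contact condition, gives $\lambda_c=\lambda_\alpha+\lambda_\beta$. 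Moreover $(\lambda_\alpha,\lambda_\beta)\neq(0,0)$: after the normalisation of Lemma \ref{lemmareductiondimensionh} one has $v\in\mathfrak{i}\subset\mathfrak{o}$ with $\mathfrak{i}\cap\R e^0=\{0\}$, and $\R e^0$ is precisely the kernel of $\overline{\ad}$ restricted to $\pmin$, so $\overline{\ad}(v)\neq0$ and the three eigenvalues are not all zero. Finally the flip $\kappa$ of \eqref{equationdefinitionflipdiffeo} exchanges $\Etildealpha$ and $\Etildebeta$ and corresponds to $\theta=-\transp{\cdot}$ on $\mathfrak{h}$, hence swaps $\lambda_\alpha$ and $\lambda_\beta$; it is therefore enough to rule out $\lambda_\alpha=0$.

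Assume $\lambda_\alpha=0$. Keeping the normalisation ($x_0=o$, $D^c=\R\bar e_0$, $D^\alpha=\R\bar e_\alpha$, $D^\beta=\R\bar e_\beta$, $\mathfrak{i}=\mathfrak{h}\cap\pmin\subset\mathfrak{o}$), an explicit computation of $\overline{\ad}(v)$ on $\bar e_\alpha,\bar e_\beta,\bar e_0$ via \eqref{actionadjointePmin} shows that $\lambda_\alpha=0$ forces the semisimple part of $v$ to be proportional to $\operatorname{diag}(-2,1,1)$. Since $[\operatorname{diag}(-2,1,1),e^0]\neq0$, one conjugates $\mathfrak{h}$ by an element of $\exp(\R e^0)$ — which fixes $o$ and preserves $D^\alpha,D^\beta,D^c$ — to kill the unipotent part of $v$, and rescales so that $v=\operatorname{diag}(-2,1,1)$. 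Decomposing $\g$ into the $\ad(v)$-eigenspaces of eigenvalues $-3,0,+3$ and using that $\mathfrak{h}$ is $\ad(v)$-stable, that $\mathfrak{h}\cap\pmin=\R v$, and that $\mathfrak{h}+\pmin=\g$, one pins down $\mathfrak{h}=\R v\oplus\R w\oplus\R E_{21}\oplus\R E_{31}$, where $w$ belongs to the centraliser of $v$ and has non-zero $(3,2)$-entry (so that $\bar w$ spans $D^\alpha$); equivalently, $\mathfrak{h}$ is, up to residual conjugacy, either the subalgebra $\mathfrak{h}_2$ (when $w$ is ``elliptic'') or a four-dimensional subalgebra of $\pmin$.

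It remains to contradict this description. The key feature is that $\lambda_\alpha=0$ makes $\overline{\ad}(v)$ act by the single scalar $\lambda_c\neq0$ on the plane $D^\beta\oplus D^c$, so that \emph{every} line of that plane transverse to the contact plane $D^\alpha\oplus D^\beta$ is $\overline{\ad}(\mathfrak{i})$-invariant: the isotropy does not rigidly detect $D^c$. One then produces a subalgebra $\mathfrak{l}\supsetneq\mathfrak{h}$ whose isotropy $\mathfrak{stab}_\mathfrak{l}(x_0)$ still preserves $D^c$ — for instance by adjoining to $\mathfrak{h}$ a suitable Cartan element when $w$ is not elliptic — and invokes the maximality statement of Lemma \ref{lemmehestmaximale} to get $\mathfrak{l}=\mathfrak{h}$, a contradiction; alternatively, one shows directly that the $H$-invariant enhanced Lagrangian contact structure $\Sx$ on the open orbit $Y$ compatible with $\Lx$ and carrying $D^c$ admits, near $x_0$, a local Killing field outside $\mathfrak{h}$, which contradicts $\mathfrak{h}=\mathfrak{kill}^{loc}_{\Sx}(x_0)$ coming from Lemma \ref{lemmastructH0invarianteYdeltaisolocal}. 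Carrying out this enlargement uniformly in the conjugacy type of $w$ — in particular in the elliptic case underlying $\mathfrak{h}_2$, where no subalgebra strictly between $\mathfrak{h}$ and the parabolic $\operatorname{Stab}_\g(\Vect(e_2,e_3))$ has isotropy preserving $D^c$ — is the main obstacle; once it is settled, the flip $\kappa$ disposes of the remaining case $\lambda_\beta=0$.
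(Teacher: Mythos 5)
Your proof never uses the diffeomorphism $f$ — neither the weak contraction of $E^\alpha$ and $E^\beta$ nor the recurrence of its orbits — and this is not an omission that can be repaired: the lemma is precisely the point of the classification where the dynamics enters, and it is \emph{false} as a purely geometric statement about the infinitesimal model. Concretely, take $\mathfrak{h}$ to be a four-dimensional subalgebra of $\pmin$ containing $\heis{3}$ with isotropy $\mathfrak{i}=\mathfrak{i}_Y=\R\,\mathrm{diag}(1,1,-2)$ and transverse line $D^c=D^c_Y(t)$, $t\neq0$, or take $\mathfrak{h}=\mathfrak{h}_2=\R^2\rtimes\similitude{2}$ with the same isotropy: these satisfy all the geometric conditions of Lemmas \ref{lemmastructH0invarianteYdeltaisolocal} and \ref{lemmehestmaximale} (one-dimensional isotropy preserving a transverse line, maximality), yet $\Mat\,\overline{\ad}([1,1,-2])=[0,3,3]$ has a vanishing eigenvalue in the $\alpha$-direction. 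The paper's Lemma \ref{lemmah2etsousalgebreproprepminrespectentpasproprdynamiques} eliminates exactly these cases by \emph{invoking} Lemma \ref{lemmaactionisotropideiPH}; your plan runs this implication backwards, and the ``main obstacle'' you flag at the end (the elliptic case underlying $\mathfrak{h}_2$, where no enlargement $\mathfrak{l}\supsetneq\mathfrak{h}$ with isotropy preserving $D^c$ exists) is not a technical difficulty but the manifestation of the fact that the statement cannot be reached this way. Your preliminary observations ($\lambda_c=\lambda_\alpha+\lambda_\beta$, $(\lambda_\alpha,\lambda_\beta)\neq(0,0)$ via $\mathfrak{i}\cap\R e^0=\{0\}$, the reduction to $\lambda_\alpha$ by the flip) are correct but do not close the gap.

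The actual argument is dynamical. One picks $x\in O$ projecting to a point of $\Rec(f)\cap\Rec(f^{-1})$ and uses the weak contraction to get $\lVert\Diff{\bar{x}}{f^{n_k}}\restreinta_{E^\alpha}\rVert\to0$ along return times $n_k$. Liouville's theorem converts $\gamma_k\tilde{f}^{n_k}$ into elements $g_k\in\G$ with $g_k\cdot x_0\to x_0$ and $\lVert\Diff{x_0}{g_k}\restreinta_{\Exalpha}\rVert\to0$; writing $g_k=h_k i_k$ with $h_k\to e$ in $H$ and $i_k$ in the isotropy group $I$ (shown to be algebraic, hence with finitely many components, so that up to extraction one lands in $I^0=\exp(\R v)$), one obtains $\exp(\lambda_\alpha t_k)\to0$ for some real sequence $(t_k)$, which forces $\lambda_\alpha\neq0$. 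If you want to salvage your write-up, the algebraic reductions should be moved to where the paper uses them (Lemma \ref{lemmah2etsousalgebreproprepminrespectentpasproprdynamiques}), and the proof of the present lemma must be rebuilt around the recurrence-plus-contraction mechanism above.
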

\begin{proof}
We already know that $\overline{\ad}(\mathfrak{i})$ is diagonalizable with eigenlines $D^\alpha$, $D^\beta$, 
and $D^c$ (see Lemma \ref{lemmastructH0invarianteYdeltaisolocal}).
The proof is the same for the eigenvalues of both eigenlines $D^\alpha$ and $D^\beta$, and we only do it for $D^\alpha$.
By density of $\Rec(f)\cap\Rec(f^{-1})$ in $M$ (see the introduction of Section \ref{sectionSlochomsurOmega}), 
there exists $x\in O$ such that $\bar{x}=\pi_M(x)\in\Rec(f)\cap\Rec(f^{-1})$, and
possibly replacing $f$ by $f^{-1}$, 
we have $\underset{n\to+\infty}{\lim}\norme{\Diff{\bar{x}}{f^n}\restreinta_{\Ealpha(\bar{x})}}_M=0$
for a given Riemannian metric that we fix on $M$.
\par By hypothesis on $\bar{x}$, there exists a sequence $(\gamma_k)$ in $\piun{M}$ and a strictly increasing sequence $(n_k)$ of integers
such that $\gamma_k\tilde{f}^{n_k}(x)$ converges to $x$, and we
can moreover assume up to extraction that $x_k\in O$ for any $k$, implying that $\gamma_k \tilde{f}^{n_k}$ 
preserves $O$.
Endowing $\tilde{M}$ with the pullback $\tilde{\mu}_M$ of the Riemannian metric of $M$, we have
$\underset{k\to+\infty}{\lim}\norme{\Diff{x}{(\gamma_k\tilde{f}^{n_k})}\restreinta_{\Etildealpha(x)}}_{\tilde{\mu}_M}=0$
(since $\piun{M}$ acts by isometries).
\par Liouville's theorem \ref{theoremLiouvilleXG} for the homogeneous model space $(\X,\Lx)$
implies the existence of a unique sequence $(g_k)$ in $\G$
satisfying 
\begin{equation}\label{equationdefinitionsuiteholonomiegk}
\delta\circ \gamma_k\tilde{f}^{n_k}=g_k\circ\delta \text{~on a neighbourhood of~} x.
\end{equation}
Denoting $x_0=\delta(x)$,  $g_k\cdot x_0=\delta\circ\gamma_k\tilde{f}^{n_k}(x)\in Y=H\cdot x_0$
converges to $x_0$, and there exists thus a sequence $h_k\in H$ converging to the identity in $\G$ and such that  
$h_k\cdot x_0=g_k\cdot x_0$. 
Since $\delta$ is a local isomorphism
from $\Stilde\restreinta_O$ to $\Sx$ on a neighbourhood of $x$, the equation \eqref{equationdefinitionsuiteholonomiegk} 
defining $g_k$ shows that
$g_k$ preserves $\Sx$ on a neighbourhood of $x_0$.
By $H$-invariance of $\Sx$, $i_k=h_k^{-1}g_k$ also preserves $\Sx$, 
and $i_k$ is thus contained in the closed subgroup 
\[
I\coloneqq \enstq{i\in\Stab_\G(x_0)}{i\text{~preserves~}\Sx\text{~on a neighbourhood of~}x_0}
\]
of $\G$.
The Lie algebra of $I$ is equal to $\mathfrak{i}$ because $\mathfrak{is}^{loc}_{\Sx}(x_0)=\mathfrak{i}$ 
(see Lemma \ref{lemmastructH0invarianteYdeltaisolocal}).

\begin{factnonnumerote}
$I=\enstq{i\in\Stab_\G(x_0)}{\Ad(i)\cdot\mathfrak{h}=\mathfrak{h}\text{~and~}\overline{\Ad}(i)\cdot D^c=D^c}$.
In particular $I$ is algebraic and has a finite number of connected components.
\end{factnonnumerote}
\begin{proof}
For $i\in I$ and $v\in\mathfrak{h}$, the relation $\Diff{x_0}{i}\circ\Diff{e}{\theta_{x_0}}=\Diff{e}{\theta_{x_0}}\circ \Ad(i)$ implies
${i^{-1}}^*v^\dag=(\Ad(i)\cdot v)^\dag$.
Since $i$ is a local automorphism of $\Sx$ and $v^\dag$ a Killing field of $\Sx$, $(\Ad(i)\cdot v)^\dag$ is also a Killing field of $\Sx$,
implying $\Ad(i)\cdot v\in\mathfrak{h}$ since $\mathfrak{kill}^{loc}_{\Sx}(x_0)=\mathfrak{h}^\dag$ 
(see Lemma \ref{lemmastructH0invarianteYdeltaisolocal}).
Moreover, $\Diff{x_0}{i}(\Exc_{x_0})=\Exc_{x_0}$ implies $\overline{\Ad}(i)\cdot D^c=D^c$.
\par Let us conversely assume that $i\in\Stab_\G(x_0)$ satisfies 
$\Ad(i)\cdot \mathfrak{h}=\mathfrak{h}$ and $\overline{\Ad}(i)\cdot D^c=D^c$.
We consider $v\in\mathfrak{h}$ sufficiently close to $0$, such that with
$h=\e^v\in H$ and $y=h\cdot x_0\in Y$, $\Sx$ is defined at $y$.
Since $\overline{\Ad}(i)\cdot D^c=D^c$, $\Diff{x_0}{L_i}(\Exc(x_0))=\Exc(x_0)$, 
and $h'\coloneqq ihi^{-1}=\e^{\Ad(i)\cdot v}\in H$ because $\Ad(i)\cdot \mathfrak{h}=\mathfrak{h}$.
By $H$-invariance of $\Exc$, we obtain
$\Diff{y}{i}(\Exc(y))=\Diff{x_0}{h'}\circ\Diff{x_0}{i}(\Exc(x_0))=\Exc(i\cdot y)$,
proving that $i\in I$.
\end{proof}

We can thus assume up to extraction that $(i_k)$ lies in a given connected component of $I$,
and there exists then $g\in I$ such that $j_k=g i_k$ is contained in the identity component
$I^0$.
We endow $\X$ with a Riemannian metric $\mu_\X$, and denote by $\tilde{\mu}_\X=\delta^*\mu_\X$ its pullback 
on $\tilde{M}$.
Since $(\gamma_k\tilde{f}^{n_k}(x))$ is relatively compact in $\tilde{M}$, 
the metrics $\tilde{\mu}_M$ and $\tilde{\mu}_\X$ are equivalent in restriction to $(\gamma_k\tilde{f}^{n_k}(x))$,
and the limit stated above for $\tilde{\mu}_M$ is thus valid for $\tilde{\mu}_\X$,
implying $\lim\norme{\Diff{x_0}{g_k}\restreinta_{\Exalpha(x_0)}}_{\mu_\X}=0$.
As $j_k=gh_k^{-1}g_k$ with $(gh_k^{-1})$ relatively compact in $\G$, we also have
$\lim \norme{\Diff{x_0}{j_k}\restreinta_{\Exalpha(x_0)}}_{\mu_{\X}}=0$.
\par $I^0$ being connected and one-dimensional, there exists a non-zero $v\in\mathfrak{i}$ and a sequence $t_k\in\R$
such that $i_k=\exp(t_k v)$, implying that
$\Diff{x_0}{j_k}$
is conjugated by the orbital map to $\exp(t_k\overline{\ad}(v))$,
and thus $\lim\norme{\exp(t_k\overline{\ad}(v))\restreinta_{D^\alpha}}=0$.
Denoting by
$\lambda_\alpha$ the eigenvalue of $\overline{\ad}(v)$ with respect to $D^\alpha$, 
$\exp(t_k\overline{\ad}(v))\restreinta_{D^\alpha}=\exp(\lambda_\alpha t_k)\id_{D^\alpha}$
implies then $\lambda_\alpha\neq0$.
\end{proof}

\subsection{End of the classification}

We now put into our analysis the geometrical and dynamical properties of $\mathfrak{h}$ 
proved above.

\begin{lemma}\label{lemmah1respectepashypothesesgeometriques}
$\mathfrak{h}_1=\R^2\rtimes\slR{2}$ does not satisfy the
geometrical conditions of Lemma \ref{lemmastructH0invarianteYdeltaisolocal}.
\end{lemma}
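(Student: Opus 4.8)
The plan is to assume $\mathfrak{h}=\mathfrak{h}_1$ and contradict the first assertion of Lemma \ref{lemmastructH0invarianteYdeltaisolocal}, namely that $\overline{\ad}(\mathfrak{i})$ must preserve the line $D^c$ of $\mathfrak{h}/\mathfrak{i}$ corresponding to the central distribution, which is transverse to the contact plane $D^\alpha\oplus D^\beta$. First I would locate the open orbit and its isotropy. The connected subgroup $H_1<\G$ with Lie algebra $\mathfrak{h}_1=\R^2\rtimes\slR{2}$ consists of the matrices $\left(\begin{smallmatrix} g & v \\ 0 & 1\end{smallmatrix}\right)$ with $g\in\SL{2}$ and $v\in\R^2$, so $H_1<\Stab_\G([e_1,e_2])$ preserves $\Sbetaalpha([e_1,e_2])$ and every open $H_1$-orbit lies in $\Omega_a=\X\setminus\Sbetaalpha([e_1,e_2])$. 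Through the identification \eqref{equationidentifiactionYaXa} of $\Omega_a$ with the space $\Xa$ of pointed affine lines of $\R^2$, the action of $H_1$ becomes the standard action of $\SL{2}\ltimes\R^2$: translations act transitively on base points and $\SL{2}$ acts transitively on the direction of a line through a fixed point, so $H_1$ is transitive on $\Xa$. Hence $Y=\Omega_a$ is the unique open $H_1$-orbit and $\dim\mathfrak{i}=\dim\mathfrak{h}_1-3=2$.

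Next I would compute the infinitesimal data at the base point $x_0=\phi_a^{-1}((0,0)+\R e_1)$, which in $\X$ equals $([e_3],[e_1,e_3])$. A direct computation of $\mathfrak{h}_1\cap\mathfrak{stab}_\g(x_0)$ shows that $\mathfrak{i}=\Vect(E,H)$ with $E=\left(\begin{smallmatrix}0&1&0\\0&0&0\\0&0&0\end{smallmatrix}\right)$ and $H=\left(\begin{smallmatrix}1&0&0\\0&-1&0\\0&0&0\end{smallmatrix}\right)$ — a Borel subalgebra of the $\slR{2}$-factor — and that $(\bar F,\bar P_1,\bar P_2)$ is a basis of $\mathfrak{h}_1/\mathfrak{i}$, where $F=\left(\begin{smallmatrix}0&0&0\\1&0&0\\0&0&0\end{smallmatrix}\right)$, $P_1=\left(\begin{smallmatrix}0&0&1\\0&0&0\\0&0&0\end{smallmatrix}\right)$ and $P_2=\left(\begin{smallmatrix}0&0&0\\0&0&1\\0&0&0\end{smallmatrix}\right)$. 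Reading off the $\pi_\alpha$- and $\pi_\beta$-fibers through $x_0$ (the elements of $\mathfrak{h}_1$ fixing $[e_3]$, resp. fixing $[e_1,e_3]$) gives $D^\alpha=\R\bar F$ and $D^\beta=\R\bar P_1$, so the contact plane is $\Vect(\bar F,\bar P_1)$ and any admissible central line $D^c$ has a nonzero $\bar P_2$-component.

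Finally I would compute $\overline{\ad}(\mathfrak{i})$. The brackets $[H,F]=-2F$, $[H,P_1]=P_1$, $[H,P_2]=-P_2$ give $\overline{\ad}(H)=[-2,1,-1]$ in the basis $(\bar F,\bar P_1,\bar P_2)$, while $[E,F]=H\in\mathfrak{i}$, $[E,P_1]=0$, $[E,P_2]=P_1$ give $\overline{\ad}(E)(\bar F)=\overline{\ad}(E)(\bar P_1)=0$ and $\overline{\ad}(E)(\bar P_2)=\bar P_1$. As $\overline{\ad}(H)$ has three distinct eigenvalues, its only invariant lines are $\R\bar F$, $\R\bar P_1$, $\R\bar P_2$, and the only one transverse to $\Vect(\bar F,\bar P_1)$ is $\R\bar P_2$; but $\overline{\ad}(E)(\bar P_2)=\bar P_1\notin\R\bar P_2$, so $\R\bar P_2$ is not $\overline{\ad}(\mathfrak{i})$-invariant. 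Hence $\overline{\ad}(\mathfrak{i})$ preserves no line of $\mathfrak{h}_1/\mathfrak{i}$ transverse to $D^\alpha\oplus D^\beta$, contradicting the first assertion of Lemma \ref{lemmastructH0invarianteYdeltaisolocal} (this conclusion is moreover invariant under conjugacy in $\G$ and under $\theta=-\transp{\cdot}$, the latter being induced by the flip $\kappa$, which preserves transversality to the contact plane). I expect no serious obstacle: the only delicate points are the explicit identification of $D^\alpha$ and $D^\beta$ inside $\mathfrak{h}_1/\mathfrak{i}$ and the sign bookkeeping in the brackets, after which the eigenvalue argument is immediate.
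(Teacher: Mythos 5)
Your proof is correct and follows essentially the same route as the paper: identify $\Omega_a$ as the unique open $H_1$-orbit, compute the two-dimensional isotropy at the base point $([e_3],[e_1,e_3])$, and check that $\overline{\ad}(\mathfrak{i})$ preserves no line of $\mathfrak{h}_1/\mathfrak{i}$ transverse to $\Vect(\bar F,\bar P_1)$ — the paper in fact only needs your computation $\overline{\ad}(E)(\bar P_2)=\bar P_1$ applied to an arbitrary transverse line $[a,b,1]$, so your extra eigenvalue argument with $\overline{\ad}(H)$ is a harmless redundancy. All the bracket computations and the identification of $D^\alpha=\R\bar F$, $D^\beta=\R\bar P_1$ check out.
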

\begin{proof}
The only open orbit 
of the connected Lie subgroup $H_1$ of $\G$ of Lie algebra $\mathfrak{h}_1$
is the open subset $\Omega_a$
defined in Paragraph \ref{soussoussectionsurfacescarteaffine}.
If $H_1\cdot x_0$ is open for some point $x_0\in \X$,
we can thus assume that $x_0=([e_3],[e_3,e_1])\in\Omega_a$ up to conjugacy in $H_1$,
implying that
$
\mathfrak{i}_1=\Lie(\Stab_{H_1}(x_0))=
\enstq{
\left(
\begin{smallmatrix}
a & b \\
0 & -a 
\end{smallmatrix}
\right)
}{a,b\in\R^2}
$. 
Denoting
$v_\alpha=
\left(\begin{smallmatrix}
0 & 0 \\
1 & 0
\end{smallmatrix}\right)
$
and 
$v_\beta=
\left(\begin{smallmatrix}
1 \\
0
\end{smallmatrix}\right)\in\mathfrak{h}_1$,
we have $\R v_\alpha^\dag(x_0)=\Exalpha(x_0)$ and $\R v_\beta^\dag(x_0)=\Exbeta(x_0)$, and
defining
$v_c=
\left(\begin{smallmatrix}
0 \\
1
\end{smallmatrix}\right)
$
and
$
i=
\left(\begin{smallmatrix}
0 & 1 \\
0 & 0
\end{smallmatrix}\right)\in\mathfrak{i}_1
$,
the matrix of $\overline{\ad}(i)$ in the basis $(\bar{v}_\alpha,\bar{v}_\beta,\bar{v}_c)$ of $\mathfrak{h}_1/\mathfrak{i}_1$ is
\[
\Mat_{(\bar{v}_\alpha,\bar{v}_\beta,\bar{v}_c)}\overline{\ad}(i)=
\left(
\begin{smallmatrix}
0 & 0 & 0 \\
0 & 0 & 1 \\
0 & 0 & 0
\end{smallmatrix}
\right).
\]
Any line of $\mathfrak{h}_1/\mathfrak{i}_1$ that is transverse to $\Vect(\bar{v}_\alpha,\bar{v}_\beta)$ has projective coordinates
$[a,b,1]$ in the basis $(\bar{v}_\alpha,\bar{v}_\beta,\bar{v}_c)$ for some $(a,b)\in\R^2$,
and $\overline{\ad}(i)(D^c)$ has thus coordinates $[0,1,0]$.
This proves that $\overline{\ad}(i)(D^c)\not\subset D^c$, \emph{i.e.} that $\mathfrak{h}_1$ does not satisfy
the geometrical conditions of Lemma \ref{lemmastructH0invarianteYdeltaisolocal}.
\end{proof}

\begin{lemma}\label{lemmah2etsousalgebreproprepminrespectentpasproprdynamiques}
If $\mathfrak{h}$ is a four-dimensional subalgebra of $\mathfrak{h}_\affine=\pmin$, or is equal to 
$\mathfrak{h}_2=\R^2\rtimes\similitude{2}$, 
then $\mathfrak{h}$ does not respect both the geometrical conditions of Lemma \ref{lemmastructH0invarianteYdeltaisolocal} and the
dynamical condition of Lemma \ref{lemmaactionisotropideiPH}.
\end{lemma}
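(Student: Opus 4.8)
The argument splits into two cases according to whether $\mathfrak{h}$ is a four-dimensional subalgebra of $\mathfrak{h}_\affine=\pmin$, or $\mathfrak{h}=\mathfrak{h}_2=\R^2\rtimes\similitude{2}$. In each case the strategy is the same: first check whether the geometrical condition of Lemma \ref{lemmastructH0invarianteYdeltaisolocal} — namely that $\overline{\ad}(\mathfrak{i})$ preserves the line $D^c$ — can be satisfied at all; if it can, then determine the one-dimensional isotropy $\mathfrak{i}$ (or show $\mathfrak{i}$ must be one-dimensional by the dimension count $\dim\mathfrak{h}-\dim\mathfrak{i}=3$ from Proposition \ref{corollairepremiereetapepreuve}), pick a generator $v$ of $\mathfrak{i}$, compute the matrix of $\overline{\ad}(v)$ on $\mathfrak{h}/\mathfrak{i}$ in an adapted basis, and exhibit that the eigenvalue $\lambda_\alpha$ (or $\lambda_\beta$) on the line $D^\alpha$ (resp. $D^\beta$) vanishes, contradicting Lemma \ref{lemmaactionisotropideiPH}. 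The point is that both "exotic" subalgebras are close enough to being nilpotent that the adjoint action of their isotropy is forced to have a zero eigenvalue on one of the contact directions.

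\textbf{The case $\mathfrak{h}\subset\pmin$, $\dim\mathfrak{h}=4$.} Here $\mathfrak{i}=\mathfrak{h}\cap\mathfrak{stab}_\g(x_0)$; up to conjugacy in $\G$ I may take $x_0=o$, so $\mathfrak{stab}_\g(o)=\pmin$ and hence $\mathfrak{i}=\mathfrak{h}$, forcing $\dim\mathfrak{i}=4$, which contradicts $\dim\mathfrak{h}-\dim\mathfrak{i}=3$. So in fact \emph{no} four-dimensional subalgebra of $\pmin$ can occur as an $\mathfrak{h}$ whose open $H$-orbit passes through a point whose stabiliser contains all of $\mathfrak{h}$ — but one must be careful: the open orbit $Y$ need not pass through $o$. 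The right statement is that $\mathfrak{h}+\pmin=\g$ (openness of the orbit of $x_0$, with $x_0$ conjugated to $o$), so $\dim\mathfrak{h}\geq 8-\dim\pmin+\dim(\mathfrak{h}\cap\pmin)=2+\dim\mathfrak{i}$; combined with $\dim\mathfrak{h}=\dim\mathfrak{i}+3$ this gives no contradiction yet. Instead I use that $\mathfrak{h}\subset\pmin$ means the connected group $H$ fixes the flag $o$, so the open $H$-orbit cannot contain $o$; after conjugating so that $x_0=o$ is the basepoint we get $\mathfrak{h}\not\subset\pmin$, a contradiction. So this case is eliminated purely algebraically once one normalises $x_0$ to lie in the open orbit.

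\textbf{The case $\mathfrak{h}=\mathfrak{h}_2$.} The nilradical of $\mathfrak{h}_2$ is the four-dimensional ideal $\mathfrak{n}=\R^2\rtimes\R\cdot\mathrm{id}_{\similitude{2}}$-part... more precisely $\similitude{2}=\R\,\mathrm{id}\oplus\R\left(\begin{smallmatrix}0&1\\-1&0\end{smallmatrix}\right)$, and the rotation part acts on $\R^2$ with purely imaginary eigenvalues. The plan is: identify the open $H_2$-orbit $Y$ and a basepoint $x_0$ in it, compute $\mathfrak{i}_2=\mathfrak{stab}_{\mathfrak{h}_2}(x_0)$ (one-dimensional by the dimension count), pick $0\neq v\in\mathfrak{i}_2$, and compute $\overline{\ad}(v)$ on $\mathfrak{h}_2/\mathfrak{i}_2$. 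Because $\similitude{2}$ acts on $\R^2$ by a rotation-scaling, the eigenlines $D^\alpha,D^\beta$ of $\overline{\ad}(\mathfrak{i}_2)$ will be complex-conjugate, so \emph{over $\R$} the action on $D^\alpha\oplus D^\beta$ is by a rotation-scaling whose "rotation" contribution prevents the two real eigenvalues from being genuine — in fact the only way $\overline{\ad}(v)$ is $\R$-diagonalisable with eigenlines $D^\alpha,D^\beta,D^c$ as required by Lemma \ref{lemmastructH0invarianteYdeltaisolocal} is if the rotation part of $v$ is zero, and then $v$ lies in the center of $\similitude{2}$ composed with a translation, whose action on $D^\alpha$ (or $D^\beta$) turns out to have zero eigenvalue. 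I expect the main obstacle to be bookkeeping the right basepoint $x_0$ and the explicit basis of $\mathfrak{h}_2/\mathfrak{i}_2$ so that the matrix of $\overline{\ad}(v)$ is transparent; once that is set up, the vanishing of $\lambda_\alpha$ (contradicting Lemma \ref{lemmaactionisotropideiPH}) or the failure of $\overline{\ad}(\mathfrak{i}_2)$ to preserve a transverse $D^c$ (contradicting Lemma \ref{lemmastructH0invarianteYdeltaisolocal}) should drop out of a short computation, exactly parallel to the one carried out for $\mathfrak{h}_1$ in Lemma \ref{lemmah1respectepashypothesesgeometriques}. After both cases, combining with Lemma \ref{lemmaclassificationsousalgebresdesl3} leaves only $\mathfrak{h}_\torus$ and $\mathfrak{h}_\affine=\pmin$, which is Proposition \ref{propositionclassificationalgebriquemodelelocalS}.
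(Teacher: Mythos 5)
There is a genuine error in your treatment of the first case, and it is fatal: the case of a four-dimensional subalgebra $\mathfrak{h}\subsetneq\pmin$ is \emph{not} vacuous and cannot be ``eliminated purely algebraically once one normalises $x_0$ to lie in the open orbit''. The hypothesis ``$\mathfrak{h}\subset\pmin$'' is a statement up to conjugacy coming from the algebraic classification, while the basepoint $x_0$ of the open $H$-orbit is a point of $Y_\affine=\Pmin^+\cdot o_\affine$, which does not contain $o$. If you conjugate so that $x_0=o$, you simply move $\mathfrak{h}$ to a conjugate of a subalgebra of $\pmin$ that is no longer literally contained in $\pmin$ --- there is no contradiction with anything, and no case has been excluded. (You even flag this pitfall yourself, and then fall into it.) The paper's actual argument here is substantive: taking $x_0=o_\affine$, one has $\mathfrak{i}\subset\mathfrak{i}_\affine=\mathfrak{stab}_{\pmin}(o_\affine)$; since $\mathfrak{h}$ is a \emph{proper} subalgebra of $\pmin$, the maximality Lemma \ref{lemmehestmaximale} forces $\overline{\ad}(\mathfrak{i}_\affine)(D^c)\not\subset D^c$, so $\mathfrak{i}=\mathfrak{stab}_{\mathfrak{i}_\affine}(D^c)$ is one-dimensional; enumerating the transverse lines $D^c$ whose stabiliser in $\mathfrak{i}_\affine$ is one-dimensional leaves only two families, with stabilisers generated by the diagonal matrices $[1,1,-2]$ and $[-2,1,1]$, and each of these has a zero eigenvalue on one of the two contact directions, contradicting Lemma \ref{lemmaactionisotropideiPH}. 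Your proposal contains none of this, and in particular never uses Lemma \ref{lemmehestmaximale}, which is the key input that pins down $\mathfrak{i}$.

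Your second case ($\mathfrak{h}=\mathfrak{h}_2=\R^2\rtimes\similitude{2}$) has the right conclusion in mind but the reasoning offered is off. The isotropy $\mathfrak{stab}_{\mathfrak{h}_2}(o_\affine)$ contains no rotation at all: preserving the flag $o_\affine=([e_3],[e_3,e_2])$ kills both the translation part and the rotation generator of $\similitude{2}$, leaving exactly the line generated by the diagonal matrix $[1,1,-2]$ (the paper's $\mathfrak{i}_Y$). So the whole discussion of complex-conjugate eigenlines and of the rotation part obstructing real diagonalisability is beside the point; the obstruction is simply that $\overline{\ad}([1,1,-2])$ has eigenvalue $0$ on the $\alpha$-direction (its matrix in the basis $(\bar{X},\bar{Y},\bar{Z})$ is $[0,3,3]$), violating the dynamical condition of Lemma \ref{lemmaactionisotropideiPH}. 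As written, this part is a plan rather than a proof, and the first part is wrong, so the proposal does not establish the lemma.
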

\begin{proof}
We first assume that $\mathfrak{h}$ is a four-dimensional subalgebra of $\pmin$. 
Therefore $H\subset\Pmin$,  
and if $H\cdot x_0$ is open then $x_0\in Y_\affine$ according to Proposition \ref{propositioninformationsYtYa}.
We can thus assume up to conjugacy in $H$ that $x_0=o_\affine=([e_3],[e_3,e_2])\in Y_\affine$, implying that
\[
\mathfrak{i}=\mathfrak{stab}_{\mathfrak{h}}(o_\affine)\subset
\mathfrak{i}_\affine=\mathfrak{stab}_{\pmin}(o_\affine)=\enstq{
\left(
\begin{smallmatrix}
a & 0 & 0 \\
0 & -a-b & 0 \\
0 & 0 & b
\end{smallmatrix}
\right)
}{(a,b)\in\R^2}.
\]
Let $D^c\subset\mathfrak{h}/\mathfrak{i}$ be a line preserved by $\overline{\ad}(\mathfrak{i})$,
and such that $\Diff{e}{\overline{\theta}_{o_\affine}}(D^c)$ is transverse to $(\Exalpha\oplus\Exbeta)(o_\affine)$. 
Since $\mathfrak{h}$ is a proper subalgebra of $\pmin$, Lemma \ref{lemmehestmaximale} implies
that $\overline{\ad}(\mathfrak{i}_\affine)(D^c)\not\subset D^c$, and thus that
$\mathfrak{stab}_{\mathfrak{i}_\affine}(D^c)=
\enstq{v\in\mathfrak{i}_\affine}{\overline{\ad}(v)(D^c)\subset D^c}=
\mathfrak{i}$ is one-dimensional.
Any line $D^c$ of $\pmin/\mathfrak{i}_\affine$ which is transverse to 
the contact plane
has projective coordinates $[x,y,1]$ in the basis $(\bar{X},\bar{Y},\bar{Z})$ of $\pmin/\mathfrak{i}_\affine$,
for some $(x,y)\in\R^2$ 
(see Proposition \ref{propositionmodelelocaldeO}), and
according to \eqref{equationmatriceactionadjointeiaffine}, we have:
\begin{itemize}
\item if $x=y=0$, \emph{i.e.} $D^c=\R \bar{Z}$, then $\mathfrak{stab}_{\mathfrak{i}_\affine}(\R \bar{Z})=\mathfrak{i}_\affine$;
\item if $x=0$ and $y\neq 0$, \emph{i.e.} $D^c=D^c_Y(t)\coloneqq\R (\bar{Z}+t\bar{Y})$ for some $t\in\R$,
then $\mathfrak{stab}_{\mathfrak{i}_\affine}(D^c_Y(t))$ is equal to the line $\mathfrak{i}_Y$
generated by the diagonal matrix
$[1,1,-2]=
\left(
\begin{smallmatrix}
 1 & 0 & 0 \\
 0 & 1 & 0 \\
 0 & 0 & -2
\end{smallmatrix}
\right)
$;
\item if $x\neq0$ and $y=0$, \emph{i.e.} $D^c=D^c_X(t)\coloneqq\R (\bar{Z}+t\bar{X})$ for some $t\in\R$,
then $\mathfrak{stab}_{\mathfrak{i}_\affine}(D^c_X(t))$ is equal to the line $\mathfrak{i}_X$
generated by the diagonal matrix
$[-2,1,1]=
\left(
\begin{smallmatrix}
 -2 & 0 & 0 \\
 0 & 1 & 0 \\
 0 & 0 & 1
\end{smallmatrix}
\right)
$;
\item if $x\neq0$ and $y\neq0$, then $\mathfrak{stab}_{\mathfrak{i}_\affine}(D^c)=\{0\}$.
\end{itemize}
The only transverse lines with a one-dimensional stabilizer being
$D^c_X(t)$ and $D^c_Y(t)$, $\mathfrak{i}$ is equal to $\mathfrak{i}_X$ or $\mathfrak{i}_Y$.
But  
$\Mat_{(\bar{X},\bar{Y},\bar{Z})}\overline{\ad}([1,1,-2])=[0,3,3]$
and  $\Mat_{(\bar{X},\bar{Y},\bar{Z})}\overline{\ad}([-2,1,1])=[-3,0,-3]$ according to \eqref{equationmatriceactionadjointeiaffine}, \emph{i.e.}
the elements of $\mathfrak{i}_X$
and $\mathfrak{i}_Y$ have zero eigenvalue with respect to either the $\alpha$ or the $\beta$-direction, proving
that $\mathfrak{h}$ does not satisfy the dynamical condition of Lemma \ref{lemmaactionisotropideiPH}.
\par In the same way, if $\mathfrak{h}=\mathfrak{h}_2$, then we can assume that $x_0=o_\affine\in \Omega_a$
up to conjugacy in 
$H_2=\R^2\rtimes\Sim{2}$,
implying $\mathfrak{i}_2=\mathfrak{stab}_{\mathfrak{h}_2}(o_\affine)=\mathfrak{i}_Y$ defined above.
We saw that the elements of $\mathfrak{i}_Y$ have zero eigenvalue 
with respect to the $\alpha$-direction, proving that $\mathfrak{h}_2$ does not satisfy the dynamical condition of Lemma 
\ref{lemmaactionisotropideiPH}. 
\end{proof}

Proposition \ref{propositionclassificationalgebriquemodelelocalS} now directly follows from 
Lemmas \ref{lemmaclassificationsousalgebresdesl3}, \ref{lemmah1respectepashypothesesgeometriques} and
\ref{lemmah2etsousalgebreproprepminrespectentpasproprdynamiques}.




%







\section{Global structure}\label{sectionHYstructsurM}

From the local model that we determined for the enhanced Lagrangian contact structure $\Sm$,
we will now deduce a global information.

\subsection{Local homogeneity of the enhanced Lagrangian contact structure}\label{soussectionSlocalementhom}

So far, we only have informations on a dense and open subset $\Omega$ of $M$
(see Propositions \ref{corollairepremiereetapepreuve} and \ref{propositionmodelelocaldeO}), and
the first step to obtain a global information is to prove the following.

\begin{proposition}\label{propositionomega=M}
The open dense subset $\Omega$ equals $M$,
\emph{i.e.} $\Sm$ is locally homogeneous on $M$.
\end{proposition}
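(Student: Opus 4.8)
The plan is to show that the locally homogeneous locus of $\Sm$ is both open and closed in the connected manifold $M$; since it is open by definition and contains the dense set $\Omega$, this yields $\Omega=M$. I would phrase the closedness through the developing map $\delta\colon\tilde{M}\to\X$ of the $(\G,\X)$-structure describing $\Lm$ (Corollary \ref{corollaryLGXstructure}). Recall that $\delta$ is a local diffeomorphism, that $\Etildec=\pi_M^{*}E^c$ is a continuous distribution on \emph{all} of $\tilde{M}$, everywhere transverse to the contact plane, and that on each connected component $O$ of $\tilde{\Omega}$ one has $\delta^{*}\Exc_{Y_O}=\Etildec\restreinta_O$, where $Y_O=\delta(O)$ is an open $H$-orbit of $\X$ which, by Corollary \ref{propositionmodelelocaldeO}, is a $\G$-translate of $Y_\torus$ or of $Y_\affine$ — the two possibilities being preserved by the deck group and by $f$, hence the ``type'' is the same for all components. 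The first step is the reduction: if one knows that $\delta(\tilde{M})\subset\bigcup_O Y_O$, then by continuity of $\Etildec$ and density of $\tilde{\Omega}$, $\Etildec$ is everywhere the $\delta$-pullback of an $H$-invariant distribution compatible with $\Lx$, so $\Stilde$ is locally homogeneous on all of $\tilde{M}$, hence $\Sm$ is locally homogeneous on $M$. So everything reduces to confining $\delta$ to the union of these open orbits. Equivalently, in terms of the total curvature $\Ktot\colon\hat{M}\to W_{\Ktot}$ of the normal generalized Cartan geometry: since $f$ has a dense orbit (contained in the $f$-invariant open dense $\Omega$), $\Ktot$ takes values on $\pi^{-1}(\Omega)$ in a single $\Pmin$-orbit $\mathcal{O}_0$, so by continuity $\Ktot(\hat{M})\subset\overline{\mathcal{O}_0}$, and $\mathcal{O}_0$ is open in $\overline{\mathcal{O}_0}$ because the $\Pmin$-action on the algebraic variety $W_{\Ktot}$ has locally closed orbits; one checks that $\Ktot^{-1}(\mathcal{O}_0)\subset\hat{M}^{int}$ (on a fibre $\Ktot$ restricts to the orbital map of $\Pmin$, which is a submersion onto $\mathcal{O}_0$, forcing the rank of $\Ktot$ to be locally constant there), so by Theorem \ref{theoremintegrabilite} and the classification of Section \ref{sectionclassificationmodeleinfinitesimal} the structure is locally homogeneous on $\pi(\Ktot^{-1}(\mathcal{O}_0))\supset\Omega$; the remaining point is then exactly $\Ktot(\hat{M})=\mathcal{O}_0$, i.e. that $\Ktot$ never hits the lower strata of $\overline{\mathcal{O}_0}$, which corresponds precisely to $\delta$ reaching $\partial Y_O$.

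Next I would set up the contradiction. Suppose some $\tilde{x}\in\tilde{M}\setminus\tilde{\Omega}$ satisfies $\delta(\tilde{x})\in\partial Y_O$ for a component $O$ accumulating at $\tilde{x}$. Since $\delta$ is a local diffeomorphism near $\tilde{x}$ and $\Etildec$ is continuous at $\tilde{x}$, pushing $\Etildec$ forward by $\delta$ on a neighbourhood of $\tilde{x}$ produces a continuous one-dimensional distribution that extends $\Exc_{Y_O}$ across $\partial Y_O$ at $\delta(\tilde{x})$ and remains transverse to $\Exalpha\oplus\Exbeta$ (because $\Etildec$ is). In parallel I would run, at the point $x=\pi_M(\tilde x)$, the dynamical argument of the proof of Proposition \ref{corollairepremiereetapepreuve}, but invoking only that $x$ is \emph{non-wandering} — which holds at every point of $M$ since $NW(f)=M$ — rather than recurrent: non-wanderingness gives $y_k\to x$ (which by density we may take in $\Omega$) and $n_k\to+\infty$ with $f^{n_k}(y_k)\to x$; after replacing $f$ by $f^{-1}$ and extracting, weak contraction of $E^\alpha$ along the $y_k$'s forces $\hat{f}^{n_k}(\hat{y}_k)$ to leave every compact of $\hat{M}$, hence the $p_k\in\Pmin$ with $\hat{f}^{n_k}(\hat{y}_k)\cdot p_k^{-1}\to\hat{x}$ leave every compact of $\Pmin$; then $p_k\cdot\Ktot(\hat{y}_k)\to\Ktot(\hat{x})$ while $\Ktot(\hat{y}_k)\to\Ktot(\hat{x})$ with $\Ktot(\hat{y}_k)\in\mathcal{O}_0$, and local closedness of $\mathcal{O}_0$ yields $\varepsilon_k,\eta_k\to e$ with $p_k\varepsilon_k\eta_k\in\Stab_{\Pmin}(\Ktot(\hat{x}))$. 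Thus $\Stab_{\Pmin}(\Ktot(\hat{x}))$ is a non-compact algebraic subgroup, so it contains a one-parameter subgroup $\exp(tv)$, $v\in\pmin\setminus\{0\}$, with $v\in\Kill^{tot}(\hat{x})$.

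The main obstacle — and the heart of the proof — is to turn this into a genuine contradiction, i.e. to exclude every boundary stratum of $\overline{\mathcal{O}_0}$. This is where I expect the real work: one must show that the homogeneous central distributions $\Exc_\torus$ on $Y_\torus$ and $\Exc_\affine$ on $Y_\affine$ admit \emph{no} continuous extension transverse to the contact plane across the bounding surfaces $\Sbetaalpha([e_1,e_2])$ and $\Salphabeta([e_3])$ (resp.\ $\Salphabeta([e_1])$) of $\X$ — which amounts to tracking the lines $\R\,\Ad(g)H_0$ as $g\to\infty$ in $S_0\cong\SL{2}$ with $g\cdot o_\torus$ approaching $\partial Y_\torus$ (resp.\ the lines $\R\,\Ad(g)Z$ as $g\to\infty$ in $\Heis{3}$ with $g\cdot o_\affine$ approaching $\partial Y_\affine$), and checking that the only possible limits are tangent to $\Exalpha\oplus\Exbeta$ or otherwise incompatible with the geometric condition of Lemma \ref{lemmastructH0invarianteYdeltaisolocal} and the dynamical condition of Lemma \ref{lemmaactionisotropideiPH} — exactly the two conditions that eliminated the exotic cases in Section \ref{sectionclassificationmodeleinfinitesimal}. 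Some bookkeeping is also needed to reconcile the possibly different components $O$ through the holonomy $\rho(\piun{M})$, and to combine the non-extension statement with the $f$-invariance of the (closed, nowhere dense) bad locus $M\setminus\Omega$ so as to get a contradiction at a single point. Once $\delta(\tilde{M})\subset\bigcup_O Y_O$ is established, the reduction of the first paragraph concludes that $\Omega=M$.
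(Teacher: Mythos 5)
Your outline points at the right mechanism — the contradiction does come from the fact that the model central distribution cannot be continuously and transversally extended to the boundary of $Y$ — and your curvature-orbit reduction in the first paragraph is a workable variant of what the paper does (the paper instead fixes a component $O$ of $\tilde{\Omega}$ on which $\rg(\Diff{}{\Ktot})$ is maximal and proves, via the integrability theorem, that $\delta(\partial O)\subset\X\setminus Y$; this is Lemma \ref{lemmaimageborddeO}). But two essential steps are missing, and one of your sub-strategies would not go through as stated.

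First, the heart of the argument is a concrete computation that you only gesture at. The paper's Lemma \ref{lemmadegenerescence} does \emph{not} prove non-extendability of $\Exc$ ``across the bounding surfaces'' in general: it proves that $\Exc(\gamma(t))$ converges to a line inside $\Exalpha\oplus\Exbeta$ when $\gamma$ approaches a boundary point $x$ \emph{along the $\alpha$- or $\beta$-circle of $x$}, and only for $x$ in the stratum $\mathcal{G}\subset\partial Y$ where one of the two circles of $x$ escapes into $Y$. At the deeper strata (the circles $\Cbeta(D_\infty)$, $\Calpha(m_\varepsilon)$ and the chain $\mathcal{C}(m_\torus,D_\infty)$), both circles lie in $\partial Y$ and no such statement is available; a blanket non-extension claim at every boundary point is both unproven and unnecessary. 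Correspondingly, you need a mechanism that produces a boundary point of $O$ which is approached \emph{from inside $O$ along a single $\alpha$- or $\beta$-leaf}, so that its image automatically lands in $\mathcal{G}$ and the degeneration lemma applies. The paper gets this from the accessibility property of the contact distribution (Sussmann, \cite[Theorem 4.1]{sussmann}): any point of $\partial O$ is joined to $O$ by a concatenation of $\alpha$- and $\beta$-arcs, and truncating at the first exit time yields an arc in a single leaf ending on $\partial O$. Without this step you have no point at which to apply the (unproven) degeneration, and with it the contradiction is immediate: $\Etildec$ is continuous and transverse on all of $\tilde{M}$, while its push-forward along the arc converges into the contact plane.

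Second, the dynamical argument of your middle paragraph (re-running the non-compact-stabilizer construction of Proposition \ref{corollairepremiereetapepreuve} at a non-wandering boundary point) does not close the contradiction and is not used in the paper's proof: producing a non-zero $v\in\pmin\cap\Kill^{tot}(\hat{x})$ at a point that is not known to lie in $\hat{M}^{int}$ yields nothing integrable, and no incompatibility with the classification of Section \ref{sectionclassificationmodeleinfinitesimal} follows from it. The proof of Proposition \ref{propositionomega=M} is purely geometric once Lemmas \ref{lemmaimageborddeO} and \ref{lemmadegenerescence} are in place.
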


We will denote in this paragraph by $(\mathcal{C},\varphi)=(\hat{M},\omega,\varphi)$
the normal generalized Cartan geometry of the enhanced Lagrangian contact structure 
$\Stilde=\pi_M^*\Sm$ of $\tilde{M}$, and by $\Ktot\colon\hat{M}\to W_{\Ktot}$ its total curvature
(see Paragraphs \ref{soussoussectiongeomcartangeneralisee} and \ref{soussoussectiontotalcurvature}).
We recall that $\tilde{\Omega}=\pi_M^{-1}(\Omega)\subset\tilde{M}$, and that the projection of the Cartan bundle
is denoted by $\pi\colon\hat{M}\to \tilde{M}$.
\par We also recall that the local homogeneity of $\Stilde\restreinta_{\tilde{\Omega}}$ means that
the connected components of $\tilde{\Omega}$ are exactly its  $\Kill^{loc}$-orbits 
(see Definition \ref{definitionautlocorbitetlocalementhom}).
Since the rank of $\Diff{}{\Ktot}$ is invariant by the right action of $\Pmin$
and by the flow of Killing fields, this shows that
$\rg(\Diff{}{\Ktot})$ is constant over any connected component of $\tilde{\Omega}$.
\par We choose for this whole paragraph a connected component $O$ of $\tilde{\Omega}$ 
(\emph{i.e.} a $\Kill^{loc}$-orbit of $\Stilde$)
such that $\rg(\Diff{\hat{x}}{\Ktot})$ for $\hat{x}\in\pi^{-1}(O)$ is maximal among $\rg(\Diff{\hat{x}}{\Ktot})$
for $\hat{x}\in\pi^{-1}(\tilde{\Omega})$. 
We will denote by $(Y,\Sx)$ 
the local model of $\Stilde\restreinta_{O}$, 
equal to $(Y_\torus,\mathcal{S}_\torus)$ or $(Y_\affine,\mathcal{S}_\affine)$
and such that $\delta\restreinta_O\colon (O,\Stilde\restreinta_O)\to (Y,\Sx)$ is a local isomorphism (see
Corollary \ref{propositionmodelelocaldeO}).
We still denote by $\mathfrak{h}$ the subalgebra of Killing fields of $\Sx$,
respectively equal to $\mathfrak{h}_\torus$ or $\mathfrak{h}_\affine$ (see Proposition \ref{propositionclassificationalgebriquemodelelocalS}),
and by $H$ the corresponding Lie connected subgroup 
\[
H_\torus^0=
\begin{bmatrix}
\GLplus{2} & 0 \\
0 & 1
\end{bmatrix}
\text{~or~} H_\affine^0=\Pmin^+, 
\]
of $\G$ of Lie algebra $\mathfrak{h}$, preserving $\Sx$.

\par We recall that $\delta\colon\tilde{M}\to\X$ denotes the developping map of the $(\G,\X)$-structure of $M$
describing the flat Lagrangian contact structure $\Lm$ (see Proposition \ref{corollaryLGXstructure}).
\begin{lemma}\label{lemmaimageborddeO}
The boundary of $O$ is mapped to $\X\setminus Y$ by the developping map:
$\delta(\partial O)\subset \X\setminus Y$.
\end{lemma}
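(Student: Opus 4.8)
The plan is to argue by contradiction: I will show that if $\delta(x)\in Y$ for some $x\in\partial O$, then $\Stilde$ is locally homogeneous in a neighbourhood of $x$, which is impossible. Indeed, suppose $\Stilde$ were locally homogeneous near $x$. Then the $\Kill^{loc}$-orbit of $x$ for $\Stilde$ would be an open set, hence a connected component of $\tilde\Omega$ (by Definition \ref{definitionautlocorbitetlocalementhom} and the fact that, on $\tilde\Omega$, the connected components are exactly the disjoint open $\Kill^{loc}$-orbits); being open and containing $x\in\overline{O}$, it would meet $O$ and therefore coincide with $O$, so that $x\in O$ — contradicting $x\in\partial O=\overline{O}\setminus O$.

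So assume $x\in\partial O$ with $x_{0}\coloneqq\delta(x)\in Y$. Since $\delta$ is a local diffeomorphism and $Y$ is open in $\X$, I fix a connected open neighbourhood $U$ of $x$ in $\tilde M$ on which $\delta$ restricts to a diffeomorphism onto an open subset $\delta(U)\subset Y$; as $x\in\overline{O}$, the open set $U\cap O$ is non-empty. Recall that $\delta$ is a developing map of the $(\G,\X)$-structure describing $\Lm$, so $\delta^{*}\Lx=\Ltilde$ on all of $\tilde M$ (Corollary \ref{corollaryLGXstructure}), and that $\delta\restreinta_{O}$ is a local isomorphism from $(O,\Stilde\restreinta_{O})$ onto $(Y,\Sx)$ (Corollary \ref{propositionmodelelocaldeO}). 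In particular the two smooth line fields $\delta^{*}\Exc$ and $\Etildec$ on $U$ — both transverse to $\Etildealpha\oplus\Etildebeta$ — coincide on $U\cap O$. The core of the argument is to upgrade this to
\[
\delta^{*}\Exc=\Etildec\quad\text{on all of }U .
\]
Granting this equality, $\delta\restreinta_{U}$ is an isomorphism of enhanced Lagrangian contact structures from $(U,\Stilde\restreinta_{U})$ onto the open subset $(\delta(U),\Sx\restreinta_{\delta(U)})$ of $(Y,\Sx)$; since $\Sx$ is locally homogeneous on $Y$ (it is $H$-homogeneous, see Lemma \ref{lemmastructH0invarianteYdeltaisolocal}), $\Stilde$ is locally homogeneous on $U$, which is the desired contradiction.

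To obtain the displayed equality I would argue that the closed subset $\{p\in U:\delta^{*}\Exc(p)=\Etildec(p)\}$ of $U$, which already contains the non-empty open set $U\cap O$, is in fact dense in $U$; this uses the density of $\tilde\Omega$ in $\tilde M$. Given $p$ in the dense subset $U\cap\tilde\Omega$ of $U$, it lies in some connected component $O'$ of $\tilde\Omega$, and $\delta\restreinta_{O'}$ is a local isomorphism from $(O',\Stilde\restreinta_{O'})$ onto its own local model; since $\delta(p)\in\delta(U)\subset Y$, one wants this model to agree with $(Y,\Sx)$ near $\delta(p)$. Here one invokes that, once the global choice of which of the two transverse distributions plays the role of $E^{\alpha}$ is fixed, every local model is one of $(Y_\torus,\mathcal{S}_\torus)$ or $(Y_\affine,\mathcal{S}_\affine)$ (Corollary \ref{propositionmodelelocaldeO}), and that the uniqueness statement of Proposition \ref{propositioninformationsYtYa} pins down the central distribution of such an invariant structure over its open orbit; then $\delta^{*}\Exc(p)=\Exc(\delta(p))=\Etildec(p)$, and the equality extends to $U$ by continuity.

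The step I expect to be the main obstacle is precisely this density argument: one must exclude that a component $O'$ of $\tilde\Omega$ accumulating at $x$ carries a local model $(Y',\mathcal{S}')$ with $Y'\neq Y$ and $Y\cap Y'\neq\emptyset$, over which the central distributions $\Exc$ and $\Exc'$ could disagree. Ruling this out is where one genuinely needs the full classification of local models together with information on how the dynamics $f$ permutes the components of $\tilde\Omega$ — equivalently, the constancy of the total curvature $\Ktot$ along $\Kill^{loc}$-orbits and the maximality of $\rg\Diff{}{\Ktot}$ on $\pi^{-1}(O)$, which was built into the choice of $O$. The rest of the argument is soft: local diffeomorphism properties of developing maps, continuity of distributions, and the elementary fact that an open $\Kill^{loc}$-orbit is a connected component of $\tilde\Omega$.
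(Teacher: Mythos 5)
Your overall contradiction scheme is the same as the paper's: show that a boundary point $x$ of $O$ with $\delta(x)\in Y$ would have an open $\Kill^{loc}$-orbit, which must then meet and hence equal $O$, contradicting $x\in\partial O$. But the route you take to local homogeneity at $x$ — proving $\delta^*\Exc=\Etildec$ on a neighbourhood $U$ of $x$ by a closedness-plus-density argument — has a genuine gap at exactly the point you flag, and you do not close it. For $p\in U\cap\tilde\Omega$ lying in a component $O'$ of $\tilde\Omega$ other than $O$, Corollary \ref{propositionmodelelocaldeO} only tells you that $\delta\restreinta_{O'}$ is a local isomorphism onto \emph{some} model of the form $g'\cdot(Y_{\varepsilon'},\Sm_{\varepsilon'})$, where the conjugating element $g'\in\G$, the type $\varepsilon'\in\{\torus,\affine\}$, and the choice of which distribution is $\alpha$ all depend on $O'$. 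Two such translated models can overlap in $\X$ while carrying different central distributions on the overlap, so $\Diff{p}{\delta}(\Etildec(p))$ equals the central line of the model of $O'$ at $\delta(p)$, not a priori $\Exc_Y(\delta(p))$. The uniqueness statement of Proposition \ref{propositioninformationsYtYa} is uniqueness among structures invariant under a \emph{fixed} group $H^0_\varepsilon$ on its fixed open orbit; it says nothing about how the models attached to two distinct components compare where their orbits intersect. The tools you gesture at to rule this out (constancy of $\Ktot$ along $\Kill^{loc}$-orbits, maximality of $\rg\Diff{}{\Ktot}$ on $\pi^{-1}(O)$) are used in the paper for a different purpose, and it is not clear how they would yield the needed identification of models of neighbouring components; so the density step remains unproven and with it the whole argument.

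The paper's proof avoids any comparison between components. It flows a point $y\in U\cap O$ to $x$ along a single Killing field $X$ of $O$ (arranged so that $\varphi^t_X(y)\in O$ for $t\in\intervallefo{0}{1}$ and $\varphi^1_X(y)=x$), and works upstairs in the Cartan bundle: $D^1\Ktot$ is constant along the lifted flow, hence $D^1\Ktot(\hat x)=D^1\Ktot(\hat y)$ by continuity. The maximality of $\rg\Diff{}{\Ktot}$ on $\pi^{-1}(O)$, combined with lower semicontinuity of the rank and density of $\pi^{-1}(\tilde\Omega)$, forces $\rg\Diff{}{\Ktot}$ to be locally constant at $\hat x$, so $\hat x\in\hat M^{int}$ by Theorem \ref{theoremintegrabilite}; and the equality $\Diff{\hat x}{\Ktot}\circ\omega_{\hat x}^{-1}=\Diff{\hat y}{\Ktot}\circ\omega_{\hat y}^{-1}$ transports the relation $\omega_{\hat y}^{-1}(\pmin)+\Ker(\Diff{\hat y}{\Ktot})=\Tan{\hat y}{\hat M}$ to $\hat x$, so the Killing generators at $\hat x$ integrate to local Killing fields of $\Stilde$ whose values span $\Tan{x}{\tilde M}$. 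That is the step your proposal is missing: some mechanism that produces enough local Killing fields \emph{at the boundary point itself}, rather than an identification of the transverse distributions coming from different components. If you want to salvage your approach, you would need to prove that the germ of the model $(Y,\Sx)$ at $\delta(p)$ is determined by the germ of $\Stilde$ at $p$ for every $p$ near $x$ — which essentially amounts to redoing the integrability argument.
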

\begin{proof}
Let us assume by contradiction that there exists $x\in \partial O$ such that $x_0=\delta(x)\in Y$.
The pullback $\tilde{\h}\coloneqq\delta^*\h=\enstq{\delta^*v}{v\in\h}$
is a subalgebra of vector fields on $\tilde{M}$, such that 
$\mathfrak{Kill}(O,\Stilde\restreinta_O)=\tilde{\h}\restreinta_O$
according to Lemma \ref{lemmaalgebrelocaleKillingdeO}.
As $x_0\in Y$,
there exists an open and convex neighbourhood $W_0$ of $0$ in $\h$ such that
$V=\exp(W_0)\cdot x_0\subset Y$ is an open neighbourhood of $x_0$.
Denoting $W=\delta^*W_0\subset\tilde{\h}$, $U=\enstq{\varphi^1_X(x)}{X\in W}$ is thus an open neighbourhood of $x$,  
and possibly shrinking $W_0$, 
we can moreover assume that $\delta\restreinta_U$ is a diffeomorphism from $U$ to $V$.
As $x\in\partial O$, there exists $y\in U\cap O$, and $X\in W$ such that $x=\varphi_X^1(y)$, 
implying that $\varphi^t_X(y)\in U$ for any $t\in\intervalleff{0}{1}$,
and thus $\delta(\varphi^t_X(y))\in V\subset Y$.
Denoting $t_0=\inf\enstq{t\in\intervalleff{0}{1}}{\varphi^t_X(y)\in\partial O}$, $t_0>0$ 
because $O$ is open, 
and $\varphi^{t_0}_X(y)\in\partial O$
because $\partial O$ is closed. 
Replacing $x$ by $\varphi^{t_0}_X(y)$ 
and $X$ by $\frac{X}{t_0}\in W$, we finally have $y\in O$, $x=\varphi^1_X(y)\in\partial O$,
and for any $t\in\intervallefo{0}{1}$, $\varphi^t_X(y)\in O$, with $X\restreinta_O\in\Kill(O,\Stilde\restreinta_O)$. 
\par Choosing $\hat{y}\in\pi^{-1}(y)$, the invariance of $D^1\Ktot$ by local automorphisms 
and the fact that $\varphi^t_X$ is a local automorphism of $(\mathcal{C},\varphi)$ on the neighbourhood of $y$ 
for any $t\in\intervallefo{0}{1}$ 
implies $D^1\Ktot(\hat{\varphi}_X^t(\hat{y}))=D^1\Ktot(\hat{y})$ for any $t\in\intervallefo{0}{1}$.
Denoting $\hat{x}=\hat{\varphi}^1_X(\hat{y})$, we obtain
$D^1\Ktot(\hat{x})=D^1\Ktot(\hat{y})$ by continuity, 
\emph{i.e.} $\Ktot(\hat{x})=\Ktot(\hat{y})$ 
and $\Diff{\hat{x}}{\Ktot}\circ\omega_{\hat{x}}^{-1}=\Diff{\hat{y}}{\Ktot}\circ\omega_{\hat{y}}^{-1}$.
\par This implies $\hat{x}\in\hat{M}^{int}$. 
In fact as the rank of $\Diff{}{\Ktot}$ can only increase locally, there is an
open neighbourhood $\mathcal{U}$ of $\hat{x}$ where the rank of $\Diff{}{\Ktot}$ is greater than $\rg(\Diff{\hat{x}}{\Ktot})$.
Let us assume by contradiction that
the open subset of $\mathcal{U}$ where $\rg(\Diff{\hat{x}'}{\Ktot})>\rg(\Diff{\hat{x}}{\Ktot})$ is non-empty.
Then by density of $\pi^{-1}(\tilde{\Omega})$, there exists
$\hat{z}\in\pi^{-1}(\tilde{\Omega})$ such that $\rg(\Diff{\hat{z}}{\Ktot})>\rg(\Diff{\hat{x}}{\Ktot})$.
But $\rg(\Diff{\hat{x}}{\Ktot})=\rg(\Diff{\hat{y}}{\Ktot})$ because $D^1\Ktot(\hat{x})=D^1\Ktot(\hat{y})$, and thus 
$\rg(\Diff{\hat{z}}{\Ktot})>\rg(\Diff{\hat{y}}{\Ktot})$ with $\hat{y}\in\pi^{-1}(O)$,
wich contradicts our hypothesis of maximality of $\rg(\Diff{}{\Ktot})$ on $O$.
Therefore $\rg(\Diff{}{\Ktot})$ is constant on the open neighbourhood $\mathcal{U}$ of $\hat{x}$, 
proving that $\hat{x}\in\hat{M}^{int}$ according to Integrability theorem \ref{theoremintegrabilite}.
\par As the $\Kill^{loc}$-orbit $O$ of $y$ is open, 
$\omega_{\hat{y}}^{-1}(\pmin)+\Ker(\Diff{\hat{y}}{\Ktot})=\Tan{\hat{y}}{\hat{M}}$,
and therefore $\omega_{\hat{x}}^{-1}(\pmin)+\Ker(\Diff{\hat{x}}{\Ktot})=\Tan{\hat{x}}{\hat{M}}$ because 
$\Diff{\hat{x}}{\Ktot}\circ\omega_{\hat{x}}^{-1}=\Diff{\hat{y}}{\Ktot}\circ\omega_{\hat{y}}^{-1}$.
Since $\hat{x}\in\hat{M}^{int}$, for any $v\in\Ker(\Diff{\hat{x}}{\Ktot})$
there is a local Killing field $X$ of $\Stilde$ defined in the neighbourhood of $x$
such that $\hat{X}_{\hat{x}}=v$. 
But $\omega_{\hat{x}}^{-1}(\pmin)+\Ker(\Diff{\hat{x}}{\Ktot})=\Tan{\hat{x}}{\hat{M}}$,
and we thus have $\enstq{X_x}{X\in\mathfrak{kill}^{loc}_{\Stilde}(x)}=\Tan{x}{\tilde{M}}$,
implying that the $\Kill^{loc}$-orbit of $x$ is open.
Since $x\in\partial O$, the $\Kill^{loc}$-orbit of $x$ intersects thus the $\Kill^{loc}$-orbit $O$, \emph{i.e.} $x\in O$, which
contradicts our initial hypothesis.
This contradiction concludes the proof of the lemma.
\end{proof}

Lemma \ref{lemmaimageborddeO} allows us to reduce the study of the central direction $\Etildec$ on the boundary of $O$, to the
study of the central direction $\Exc$ on the boundary of $Y$.
We first do some geometrical remarks about the open subsets $Y_\affine$ and $Y_\torus$ of $\X$, defined in 
Paragraphs \ref{soussoussectionYtorus} and \ref{soussoussectionYaffine}.
\par Let us recall that, denoting $D_\infty=[e_1,e_2]$, $m_\torus=[e_3]$ and $m_\affine=[e_1]$, we have
\[
Y_\torus=\X\setminus (S_{\beta,\alpha}(D_\infty)\cup S_{\alpha,\beta}(m_\torus))
\text{~and~} Y_\affine=\X\setminus (S_{\beta,\alpha}(D_\infty)\cup S_{\beta,\alpha}(m_\affine)).
\]
In particular, for $\varepsilon=\affine$ and $\torus$:
$\X\setminus Y_\varepsilon=\partial Y_\varepsilon=S_{\beta,\alpha}(D_\infty)\cup S_{\alpha,\beta}(m_\varepsilon)$.
\par We define in both cases
\[
\mathcal{G}\coloneqq\enstq{x\in\partial Y}{\Calpha(x)\nsubseteq \partial Y \text{~or~} \Cbeta(x)\nsubseteq \partial Y}.
\]
It is easy to check that for $\varepsilon=\affine$ and $\torus$, we have
\[
\mathcal{G}_\varepsilon=\partial Y_\varepsilon\setminus 
\{\Cbeta(D_\infty)\cup \Calpha(m_\varepsilon)\cup (S_{\beta,\alpha}(D_\infty)\cap S_{\alpha,\beta}(m_\varepsilon))\},
\]
and that for any $x\in\mathcal{G}$, if $\mathcal{C}^\varepsilon(x)\nsubseteq\partial Y$ for $\varepsilon=\alpha$ or $\beta$, 
then $\mathcal{C}^\varepsilon(x)\setminus\{x\}\subset Y$.
\par We have $S_{\beta,\alpha}(D_\infty)\cap S_{\alpha,\beta}(m_\affine)=\Cbeta(D_\infty)\cup \Calpha(m_\affine)$, and
$S_{\beta,\alpha}(D_\infty)\cap S_{\alpha,\beta}(m_\torus)$ is equal to the \emph{chain defined by $(m_\torus,D_\infty)$},
denoted by $\mathcal{C}(m_\torus,D_\infty)$ and defined as follows:
\[
\mathcal{C}(m_\torus,D_\infty)\coloneqq \enstq{(m,[m,m_\torus])}{m\in D_\infty}.
\]
Finally, we will use the following description of the respective orbits of $H$ on $\mathcal{G}$:
\begin{enumerate}
\item the orbits of $H_\torus^0$ on $\mathcal{G}_\torus$ are 
$\mathcal{G}_\torus^1=S_{\alpha,\beta}(m_\torus)\setminus(\Calpha(m_\torus)\cup\mathcal{C}(m_\torus,D_\infty))$
where $\Calpha(x)\setminus\{x\}\subset Y_\torus$,
and $\mathcal{G}_\torus^2=S_{\beta,\alpha}(D_\infty)\setminus(\Cbeta(D_\infty)\cup\mathcal{C}(m_\torus,D_\infty))$
where $\Cbeta(x)\setminus\{x\}\subset Y_\torus$;
\item the orbits of $H_\affine^0$ on $\mathcal{G}_\affine$ are
$\mathcal{G}_\affine^1=S_{\alpha,\beta}(m_\affine)\setminus(\Calpha(m_\affine)\cup \Cbeta(D_\infty))$ 
where $\Calpha(x)\setminus\{x\}\subset Y_\affine$
and
$\mathcal{G}_\affine^2=S_{\beta,\alpha}(D_\infty)\setminus(\Calpha(m_\affine)\cup \Cbeta(D_\infty))$
where $\Cbeta(x)\setminus\{x\}\subset Y_\affine$.
\end{enumerate}

We will now prove that the central direction $\Exc$
degenerates along the $\alpha$ and $\beta$-circles when converging to a point of $\mathcal{G}$.
\begin{lemma}\label{lemmadegenerescence}
Let $\gamma\colon\intervalleff{0}{1}\to \X$ be a smooth path such that $\gamma(\intervalleof{0}{1})\subset Y$,
$x=\gamma(0)\in\mathcal{G}$, and $\gamma(\intervalleff{0}{1})$ is entirely contained in $\Calpha(x)$, 
or entirely contained in $\Cbeta(x)$.
Then $\Exc(\gamma(t))$ converges at $t=0$ to a line contained in $(\Exalpha\oplus\Exbeta)(x)$.
\end{lemma}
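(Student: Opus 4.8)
The plan is to use the symmetries of the two homogeneous models to reduce the statement to a single explicit one‑parameter computation, and then to read off the limit inside the $\SL{2}$‑model of $(Y_\torus,\Sm_\torus)$ and the $\Heis{3}$‑model of $(Y_\affine,\Sm_\affine)$.

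\emph{Step 1: reductions.} By the orbit description preceding the lemma, if $\gamma$ lies in $\Cbeta(x)$ then $x$ belongs to the $H$‑orbit $\mathcal{G}^2$ and $\Cbeta(x)\setminus\{x\}\subset Y$, while if $\gamma$ lies in $\Calpha(x)$ then $x\in\mathcal{G}^1$ and $\Calpha(x)\setminus\{x\}\subset Y$. The flip diffeomorphism $\kappa$ of \eqref{equationdefinitionflipdiffeo} exchanges $\Exalpha$ and $\Exbeta$ and, after composition with a suitable fixed element of $\G$, sends each model $(Y,\Sx)$ to itself; it therefore exchanges $\mathcal{G}^1$ and $\mathcal{G}^2$, and I may assume throughout that $\gamma$ is contained in an $\alpha$‑circle, so $x\in\mathcal{G}^1$ and $\Calpha(x)\setminus\{x\}\subset Y$. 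Since $H$ preserves $Y$ and $\Sx$, permutes the $\alpha$‑circles, and acts transitively on $\mathcal{G}^1$, and since the desired conclusion for $(x,\gamma)$ is equivalent to that for $(h\cdot x,h\circ\gamma)$ with $h\in H$, I may finally fix one explicit representative $x$ of $\mathcal{G}^1$ and set $m=\pi_\alpha(x)$.

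\emph{Step 2: the central direction along $\Calpha(m)$ is algebraic.} An inspection of the two models shows that $\Calpha(m)\cap Y=\Calpha(m)\setminus\{x\}$ and that $\Stab_H(m)$ contains a one‑parameter unipotent subgroup $(u_s)_{s\in\R}$ for which, for some $y_0\in\Calpha(m)\setminus\{x\}$, the map $s\mapsto u_s\cdot y_0$ is a parametrization of $\Calpha(m)\setminus\{x\}$ with $u_s\cdot y_0\to x$ as $s\to\pm\infty$. Because $\Sx$ is $H$‑invariant and $H$ acts transitively on $Y$, the central line at $y_0$ can be written $\Exc(y_0)=\R\,w_0^\dag(y_0)$ for some $w_0\in\mathfrak{h}$, and $H$‑invariance then gives $\Exc(u_s\cdot y_0)=\R\,w_s^\dag(u_s\cdot y_0)$ with $w_s=\Ad(u_s)\,w_0$. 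Since $u_s$ is unipotent, $s\mapsto w_s$ is polynomial, so $s\mapsto\Exc(u_s\cdot y_0)$ is an algebraic curve in $\PFitan{\X}$; it extends continuously over the compact circle $\Calpha(m)$, the two one‑sided limits at $x$ agreeing because $u_s$ is unipotent. In particular $\lim_{t\to0}\Exc(\gamma(t))$ exists and equals the value $L_x$ of this extension at $x$, a line depending only on $x$.

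\emph{Step 3: computing $L_x$.} Writing $u_s=\exp(sN)$ and letting $k=\max\{j : (\ad N)^j w_0\neq0\}$, the line $\R w_s$ converges in $\mathbf{P}(\g)$ to $\R w_\infty$ with $w_\infty=(\ad N)^k w_0$, and since $\tfrac{1}{s^k}w_s\to\tfrac{1}{k!}w_\infty$ while $u_s\cdot y_0\to x$, continuity of $(w,p)\mapsto w^\dag(p)$ yields $L_x=\R\,w_\infty^\dag(x)$ as soon as $w_\infty^\dag(x)\neq0$. The concrete computation — in the $\SL{2}$‑model, with $w_0\in\R H_0$ and the embedding of Paragraph \ref{soussoussectionYtorus} for $(Y_\torus,\Sm_\torus)$, and in the $\Heis{3}$‑model, with $w_0\in\R Z$ and Paragraph \ref{soussoussectionYaffine} for $(Y_\affine,\Sm_\affine)$ — then shows that $w_\infty^\dag(x)$ is a nonzero vector lying in the contact plane $(\Exalpha\oplus\Exbeta)(x)$; equivalently, writing $x=g\cdot o$, that $\Ad(g)^{-1}w_\infty$ has vanishing component on $(\g)_{-2}$ in the grading \eqref{equationgraduationsl3}. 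This last verification, carried out separately in each of the two models, is the only substantive part of the argument: one must identify the dominant term of $\Ad(u_s)w_0$ as $u_s$ leaves every compact subset of $\G$ and check both that it does not lie in the isotropy Lie algebra of $x$ (so that $L_x$ is a genuine line) and that it points along the contact distribution.
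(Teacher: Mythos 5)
Your overall mechanism is the same as the paper's: reduce by the $H$-action to one explicit boundary point per orbit of $H$ on $\mathcal{G}$, sweep out $\mathcal{C}^\varepsilon(x)\setminus\{x\}$ by a one-parameter subgroup of $\Stab_H(\pi_\alpha(x))$ (your $u_s$ is the paper's $h^{t^{-1}}$), transport $\Exc$ by writing $\Exc(u_s\cdot y_0)=\R\,(\Ad(u_s)w_0)^\dag(u_s\cdot y_0)$, and pass to the limit. The flip reduction from four cases to two is a legitimate (if unverified) shortcut.

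The gap is in Step 3, and it is not a formality: for $(Y_\torus,\Sm_\torus)$ your prescription breaks down. Take the representative $x=([1,0,1],[e_1,e_3])\in\mathcal{G}^1_\torus$, $y_0=o_\torus$, $w_0=H_0=\mathrm{diag}(1,-1,0)$ and $N=\left(\begin{smallmatrix}0&1&0\\0&0&0\\0&0&0\end{smallmatrix}\right)$, so that $u_s=\exp(sN)\in\Stab_{H^0_\torus}([1,0,1])$ parametrizes $\Calpha(x)\setminus\{x\}$ as required. Then $\Ad(u_s)H_0=H_0-2sN$, hence $k=1$ and $w_\infty=-2N$; but $\exp(rN)$ fixes $x$ (it fixes both $[1,0,1]$ and the plane $[e_1,e_3]$), so $w_\infty$ lies in the isotropy algebra of $x$ and $w_\infty^\dag(x)=0$. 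The same happens for $\mathcal{G}^2_\torus$ with $N$ replaced by $\transp{N}$. Thus the very verification you defer to (``check that it does not lie in the isotropy Lie algebra of $x$'') fails in half the cases, and your normalization $\frac{1}{s^k}w_s^\dag(u_s\cdot y_0)\to\frac{1}{k!}w_\infty^\dag(x)$ produces $0$, from which the limit line cannot be read off. The actual limit is governed by the constant term: one must prove that $s\,N^\dag(u_s\cdot y_0)\to 0$ (the coefficient grows linearly, but $N^\dag$ vanishes quadratically along the circle at $x$), whence $w_s^\dag(u_s\cdot y_0)\to H_0^\dag(x)\in\Exbeta(x)$; this extra estimate is precisely what the paper's renormalized computation of $\Ad(g_0g^{-t}h^{t^{-1}})\cdot A$ modulo $\pmin$ delivers. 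Your scheme can alternatively be repaired by a better choice of $w_0$: replacing $H_0$ by the central element $Z=\mathrm{diag}(1,1,-2)$ of $\mathfrak{h}_\torus$ (which also generates $\Exc_\torus$, since $Z-3H_0\in\mathfrak{stab}_{\mathfrak{h}_\torus}(o_\torus)$) makes $\Ad(u_s)w_0$ constant, and then $L_x=\R Z^\dag(x)$ follows by plain continuity — this is what happens automatically in your affine case, where $w_0=Z$ is central in $\heis{3}$ and $k=0$. As written, however, the final computation of the proposal does not go through.
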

\begin{proof}
As the action of $H$ on $Y$ preserves $\Exc$, 
it will be sufficient to prove this result for one point of each of the two orbits of $H$ on $\mathcal{G}$ described above,
in each of the two cases $Y_\torus$ or $Y_\affine$. 
We thus have only four cases to handle, and
we saw that in each case, either $\Calpha(x)\setminus\{x\}\subset Y$ and $\Cbeta(x)\subset \partial Y$,
or the contrary. 
We thus have only one possibility to consider for $\gamma$ in each of these four cases, either that  
$\gamma(\intervalleff{0}{1})\subset\Calpha(x)$, or that $\gamma(\intervalleff{0}{1})\subset\Cbeta(x)$.
To clarify our strategy,
let $x$ be a point of $\mathcal{G}_\mu^i$
for $\mu=\torus$ or $\affine$ and $i=1$ or 2, and let us consider 
the following data:
\begin{itemize}
\item a one-parameter subgroup $\{g^t\}_{t\in\R}$ of $\G$
such that, denoting $x(t)=g^t\cdot x$, we have $\enstq{x(t)}{t\in\R}=\mathcal{C}^\varepsilon(x)\setminus\{y\}$, with 
$y\in\mathcal{C}^\varepsilon(x)\cap Y$, and $\varepsilon=\alpha$ or $\beta$ according to the case considered,
\item a one-parameter subgroup $\{h^t\}_{t\in\R}$ of $H$ such that $g^t\cdot x=x(t)=h^{t^{-1}}\cdot y$ for any $t\in\R^*$,
\item $A$ in $\g$ such that $\Diff{e}{\theta_y}(\R A)=\Exc(y)$,
where $\theta_y\colon \G\to \X$ is the orbital map at $y$,
\item and $g_0\in\G$ such that $g_0\cdot x=o$ 
where $o=([e_1],[e_1,e_2])$ is the usual base-point of $\X$.
\end{itemize}
Then for any $t\in\R^*$ we have
$\Diff{x(t)}{(g_0g^{-t})}(\Exc(x(t)))=
\Diff{e}{\theta_o}(\R\Ad(g_0g^{-t}h^{t^{-1}})\cdot A))$.
Denoting by $p\colon\g\to\g/\pmin$ the canonical projection, let us assume that
$p(\R\Ad(g_0g^{-t}h^{t^{-1}})\cdot A)$ converges at $t=0$ to a line contained in 
$\Vect(\bar{e}_\alpha,\bar{e}_\beta)$.
Then $\Diff{e}{\theta_o}(\R\Ad(g_0g^{-t}h^{t^{-1}})\cdot A)\subset\Tan{o}{\X}$ converges to a line 
$L\subset(\Exalpha\oplus\Exbeta)(o)$, and as $g^tg_0^{-1}$
converges to $g_0^{-1}$ at $t=0$, we deduce by continuity that $\Exc(x(t))$ converges at $t=0$ to $\Diff{o}{g_0^{-1}}(L)$,
contained in $(\Exalpha\oplus\Exbeta)(x)$, because $g_0^{-1}$ preserves $\Exalpha\oplus\Exbeta$.
\par In conclusion, we only have to find, in each of the four cases $\mu=\torus$ or $\affine$ and $i=1$ or 2,
a point $x\in\mathcal{G}^i_\mu$, together with $g^t$, $h^t$, $A$, and $g_0$ satisfying the above conditions, and to prove that 
$p(\R\Ad(g_0g^{-t}h^{t^{-1}})\cdot A)$ converges at $t=0$ to a line contained in 
$\Vect(\bar{e}_\alpha,\bar{e}_\beta)$.
\par We begin with $Y_\torus$, for which we choose for both orbits $\mathcal{G}^1_\torus$ and $\mathcal{G}^2_\torus$ 
the point $y\coloneqq o_\torus=([1,0,1],[(1,0,1),e_2])\in Y_\torus$.
Let us recall that in this case, 
$
A=
\left(
\begin{smallmatrix}
1 & 0 & 0 \\
0 & -1 & 0 \\
0 & 0 & 0
\end{smallmatrix}
\right)
$
satisfies $\Exc(o_\torus)=\Diff{e}{\theta_{o_\torus}}(\R A)$ (see Paragraph \ref{soussoussectionYtorus}).
\begin{itemize}
\item For $\mathcal{G}^1_\torus$, choosing $x=([1,0,1],[(1,0,1),e_1])=([1,0,1],[e_1,e_3])$, 
$g_0=
\left(
\begin{smallmatrix}
1 & 0 & 0 \\
1 & 0 & -1 \\
0 & 1 & 0 
\end{smallmatrix}
\right)
$,
and the one-parameter subgroups
$g^t=
\left(
\begin{smallmatrix}
1 & 0 & 0 \\
t & 1 & -t \\
0 & 0 & 1
\end{smallmatrix}
\right)
$ of $\G$ and 
$h^t=
\left(
\begin{smallmatrix}
1 & t & 0 \\
0 & 1 & 0 \\
0 & 0 & 1
\end{smallmatrix}
\right)
$ of $H_\torus^0$ such that $g^t\cdot x=h^{t^{-1}}\cdot o_\torus\in\Calpha(x)$,
we obtain: 
\[
\Ad(g_0g^{-t}h^{t^{-1}})\cdot A
=
\left(
\begin{smallmatrix}
1 & -2 & -2t^{-1} \\
1 & -2 & -2t^{-1} \\
-t & t & 1
\end{smallmatrix}
\right),
\]
and thus $p(\R\Ad(g_0g^{-t}h^{t^{-1}})\cdot A)$ converges at $t=0$ to $\R\bar{e}_\beta$.
\item For $\mathcal{G}_\torus^2$, choosing $x=([e_2],[e_2,(1,0,1)])$,
$
g_0=
\left(
\begin{smallmatrix}
0 & 1 & 0 \\
1 & 0 & 0 \\
1 & 0 & -1 
\end{smallmatrix}
\right)\
$, and the one-parameter subgroups 
$
g^t=
\left(
\begin{smallmatrix}
1 & t & 0 \\
0 & 1 & 0 \\
0 & t & 1
\end{smallmatrix}
\right)
$ of $\G$
and 
$
h^t=
\left(
\begin{smallmatrix}
1 & 0 & 0 \\
t & 1 & 0 \\
0 & 0 & 1
\end{smallmatrix}
\right)
$ of $H_\torus^0$ such that $g^t\cdot x=h^{t^{-1}}\cdot o_\torus\in\Cbeta(x)$, 
we obtain 
\[
\Ad(g_0g^{-t}h^{t^{-1}})\cdot A
=
\left(
\begin{smallmatrix}
1 & 2t^{-1} & 0 \\
0 & -1 & 0\\
t & 1 & 0
\end{smallmatrix}
\right),
\] 
and thus $p(\R\Ad(g_0g^{-t}h^{t^{-1}})\cdot A)$ converges at $t=0$ to $\R\bar{e}_\alpha$.
\end{itemize}

We now consider the case of $Y_\affine$, for which we choose for both orbits $\mathcal{G}^1_\affine$ and $\mathcal{G}^2_\affine$ 
the point $y\coloneqq o_\affine=([e_3],[e_3,e_2])\in Y_\affine$,
and we recall that in this case 
$
A=
\left(
\begin{smallmatrix}
0 & 0 & 1 \\
0 & 0 & 0 \\
0 & 0 & 0
\end{smallmatrix}
\right)
$
satisfies the above condition $\Exc(o_\affine)=\Diff{e}{\theta_{o_\affine}}(\R A)$ (see Paragraph \ref{soussoussectionYaffine}).
\begin{itemize}
\item For $\mathcal{G}^1_\affine$, choosing $x=([e_3],[e_3,e_1])$, 
$g_0=
\left(
\begin{smallmatrix}
0 & 0 & 1 \\
1 & 0 & 0 \\
0 & 1 & 0 
\end{smallmatrix}
\right)
$,
and the one-parameter subgroups
$g^t=
\left(
\begin{smallmatrix}
1 & 0 & 0 \\
t & 1 & 0 \\
0 & 0 & 1
\end{smallmatrix}
\right)
$ of $\G$ and 
$h^t=
\left(
\begin{smallmatrix}
1 & t & 0 \\
0 & 1 & 0 \\
0 & 0 & 1
\end{smallmatrix}
\right)
$ of $H_\affine^0$ such that $g^t\cdot x=h^{t^{-1}}\cdot o_\affine\in\Calpha(x)$, 
we obtain: 
\[
\Ad(g_0g^{-t}h^{t^{-1}})\cdot A
=
\left(
\begin{smallmatrix}
0 & 0 & 0 \\
1 & 0 & 0 \\
-t & 0 & 0
\end{smallmatrix}
\right),
\]
and thus $p(\R\Ad(g_0g^{-t}h^{t^{-1}})\cdot A)$ converges at $t=0$ to $\R\bar{e}_\beta$.
\item For $\mathcal{G}_\affine^2$, choosing $x=([e_2],[e_2,e_3])$,
$
g_0=
\left(
\begin{smallmatrix}
0 & 1 & 0 \\
0 & 0 & 1 \\
1 & 0 & 0 
\end{smallmatrix}
\right)\
$, and the one-parameter subgroups 
$
g^t=
\left(
\begin{smallmatrix}
1 & 0 & 0 \\
0 & 1 & 0 \\
0 & t & 1
\end{smallmatrix}
\right)
$ of $\G$
and 
$
h^t=
\left(
\begin{smallmatrix}
1 & 0 & 0 \\
0 & 1 & t \\
0 & 0 & 1
\end{smallmatrix}
\right)
$ of $H_\affine^0$ such that $g^t\cdot x=h^{t^{-1}}\cdot o_\affine\in\Cbeta(x)$, 
we obtain 
\[
\Ad(g_0g^{-t}h^{t^{-1}})\cdot A
=
\left(
\begin{smallmatrix}
0 & 0 & 0 \\
0 & 0 & 0\\
t & 1 & 0
\end{smallmatrix}
\right),
\] 
and thus $p(\R\Ad(g_0g^{-t}h^{t^{-1}})\cdot A)$ converges at $t=0$ to $\R\bar{e}_\alpha$.
\end{itemize}
According to the discussion above, this concludes the proof of the lemma.
\end{proof}

We are now able to prove the proposition \ref{propositionomega=M}.
\begin{proof}[Proof of the proposition \ref{propositionomega=M}]
Let us assume by contradiction that $\Omega\neq M$.
We choose a connected component $O$ of $\tilde{\Omega}$ such that
the rank of $\Diff{\hat{x}}{\Ktot}$ for $\hat{x}\in\pi^{-1}(O)$ is maximal among the rank of $\Diff{\hat{x}}{\Ktot}$
for $\hat{x}\in\pi^{-1}(\tilde{\Omega})$. 
As $\varnothing\neq O\neq \tilde{M}$ by hypothesis, there exists $x\in\partial O$, and as $\Etildealpha\oplus\Etildebeta$ is contact, 
\cite[Theorem 4.1]{sussmann} implies the 
existence of a piecewise smooth path $\gamma\colon\intervalleff{0}{1}\to\tilde{M}$ constituted of a finite concatenation
of segments of $\alpha$ and $\beta$-leaves, joining $x=\gamma(1)$ to a point $y=\gamma(0)\in O$.
Denoting $t_0=\inf\enstq{t\in\intervalleff{0}{1}}{\gamma(t)\in\partial O}$, $t_0>0$ 
and $\gamma(t_0)\in\partial O$. 
Replacing $x$ by $\gamma(t_0)$, keeping only the last smooth arc of $\gamma$, replacing $y$ by the origin of this arc,
and choosing a parametrization of this arc by $\intervalleff{0}{1}$,
we finally end with a smooth path $\gamma\colon\intervalleff{0}{1}\to\tilde{M}$ such that
$\gamma(\intervallefo{0}{1})\subset O$, $x=\gamma(1)\in\partial O$, and $\gamma(\intervalleff{0}{1})$
is entirely contained in a same $\alpha$ or $\beta$-leaf.
The proof being the same in the two cases, we assume that 
$\gamma(\intervalleff{0}{1})\subset\widetilde{\mathcal{F}}^\alpha(x)$
to fix the ideas.
Denoting $x_0=\delta(x)$, $x_0\in\X\setminus Y$ according to Lemma \ref{lemmaimageborddeO},
and $\delta(\gamma(\intervallefo{0}{1}))\subset Y$ because $\delta(O)\subset Y$ (see Lemma \ref{lemmadeltaOcontenudansY}). 
Finally $\delta(\gamma(\intervallefo{0}{1}))$ is an open interval of $C^\alpha(x_0)$ contained in $Y$, and
$x_0\in\X\setminus Y$, \emph{i.e.} $x_0\in\mathcal{G}$.
Denoting $\gamma_0(t)=\delta(\gamma(t))$,
Lemma \ref{lemmadegenerescence} implies therefore that $\Exc(\gamma_0(t))$ converges to a line 
$D^c_0\subset(\Exalpha\oplus\Exbeta)(x_0)$ at $t=1$. As $\delta\restreinta_O$ is a local isomorphism
between $\Stilde\restreinta_O$ and $\Sx$, we have 
$\Etildec(\gamma(t))=(\Diff{\gamma(t)}{\delta})^{-1}(\Exc(\gamma_0(t)))$ for any $t\in\intervallefo{0}{1}$, 
implying $\Etildec(x)=(\Diff{x}{\delta})^{-1}(D^c_0)$ by continuity.
Since $\delta$ is a local isomorphism between the Lagrangian contact structures $\Ltilde$ and $\Lx$, 
this implies that $\Etildec(x)\subset(\Etildealpha\oplus\Etildebeta)(x)$, which contradicts the definition 
of the transverse distribution $\Etildec$.
This contradiction concludes the proof of the proposition.
\end{proof}

\subsection{Reduction of the holonomy group}

We first describe the global and local automorphisms of 
the models $(Y_\torus,\mathcal{S}_\torus)$ and $(Y_\affine,\mathcal{S}_\affine)$.
\begin{proposition}\label{propositiongroupeautomorphismeetLiouvillequatremodeles}
\begin{enumerate}
\item 
$\Aut(Y_\torus,\Sm_\torus)=H_\torus=
\begin{bmatrix}
\GL{2} & 0 \\
0 & 1
\end{bmatrix}$
and
$\Aut(Y_\affine,\Sm_\affine)=H_\affine=\Pmin$. 
\item Let $(Y,\Sx)$ be one of the two models $(Y_\torus,\mathcal{S}_\torus)$ or $(Y_\affine,\mathcal{S}_\affine)$.
Then any local isomorphism of $\Sx$ between two connected open subsets of $Y$
is the restriction of the action of a global automorphism of $\Aut(Y,\Sx)$.
\end{enumerate}
\end{proposition}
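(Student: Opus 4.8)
The plan is to reduce everything to Liouville's theorem \ref{theoremLiouvilleXG} for $(\X,\Lx)$, together with two elementary normalizer computations. Throughout, let $(Y,\Sx)$ denote one of the two models $(Y_\torus,\Sm_\torus)$, $(Y_\affine,\Sm_\affine)$, and let $\mathfrak{h}\in\{\mathfrak{h}_\torus,\mathfrak{h}_\affine\}$ and $H^0\in\{H^0_\torus,H^0_\affine\}$ be the associated Lie algebra and connected subgroup of $\G$; recall that $Y=H^0\cdot o$ is a single $H^0$-orbit (Proposition \ref{propositioninformationsYtYa}), that $H^0$ preserves $\Sx$, and that $\Sx$ is compatible with $\Lx$, i.e. its $\alpha$- and $\beta$-distributions are $\Exalpha$ and $\Exbeta$. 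The starting observation is that any (local) isomorphism of $\Sx$ is in particular a (local) isomorphism of $\Lx$; so for an isomorphism $f\colon U\to V$ of $\Sx$ between connected open subsets of $Y$ (the case $U=V=Y$ being allowed), Theorem \ref{theoremLiouvilleXG} provides an element $g\in\G$ with $f=L_g\restreinta_U$.

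The next step is to show that $g$ normalizes $H^0$. For $v\in\mathfrak{h}$ the fundamental vector field $v^\dag$ is a global Killing field of $(Y,\Sx)$, since its flow $x\mapsto\e^{tv}\cdot x$ lies in $H^0$; pushing forward by $f=L_g\restreinta_U$ and using $(L_g)_*v^\dag=(\Ad(g)v)^\dag$ shows that $(\Ad(g)v)^\dag$ is a local Killing field of $\Sx$ near every point of $V\subset Y$. Here one uses that $\mathfrak{h}$ is \emph{exactly} the algebra of local Killing fields of $\Sx$ at any point of $Y$: every local Killing field of $\Sx$ is a local Killing field of $\Lx$, hence equal to $w^\dag$ for a (unique) $w\in\g$ (Lemma \ref{lemmadescriptionkillinggeomCartanplatemodel}), and the subalgebra of such $w$ must coincide with $\mathfrak{h}$ by Lemma \ref{lemmahestmaximale} — its isotropy at the base-point preserves the central line — together with the uniqueness in Proposition \ref{propositioninformationsYtYa}; this is precisely the content of $\mathfrak{h}=\mathfrak{kill}^{loc}_{\Sx}(x_0)$ in Lemma \ref{lemmastructH0invarianteYdeltaisolocal}. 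It follows that $\Ad(g)\mathfrak{h}\subset\mathfrak{h}$, hence $\Ad(g)\mathfrak{h}=\mathfrak{h}$ by equality of dimensions, so $g\in N_\G(\mathfrak{h})$.

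From this I would conclude assertion (2) as follows. Since $g$ normalizes $H^0$ and maps the nonempty open set $U\subset Y=H^0\cdot o$ into $Y$, writing a point of $U$ as $h\cdot o$ with $h\in H^0$ gives $g\cdot o=(ghg^{-1})^{-1}\cdot(g\cdot(h\cdot o))\in H^0\cdot Y=Y$, hence $g\cdot Y=H^0\cdot(g\cdot o)=Y$. Moreover $(L_g)_*\Sx$ is again compatible with $\Lx$ (as $g\in\G$ preserves $\Lx$) and is $H^0$-invariant, since for $h\in H^0$ one has $(L_h)_*(L_g)_*\Sx=(L_g)_*(L_{g^{-1}hg})_*\Sx=(L_g)_*\Sx$; therefore $(L_g)_*\Sx=\Sx$ by the uniqueness in Proposition \ref{propositioninformationsYtYa}. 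Thus $L_g\restreinta_Y$ is a global automorphism of $(Y,\Sx)$ restricting to $f$, which proves (2). Taking $U=V=Y$ in the above also yields $\Aut(Y,\Sx)\subset N_\G(\mathfrak{h})$.

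It remains to compute the normalizers. For $\mathfrak{h}_\torus$: this subalgebra preserves the decomposition $\R^3=\Vect(e_1,e_2)\oplus\R e_3$, acts on $\Vect(e_1,e_2)$ as all of $\gl{2}$ (hence irreducibly) and on $\R e_3$ with a distinct weight, so $\Vect(e_1,e_2)$ and $\R e_3$ are its only nonzero proper invariant subspaces; any $g\in N_\G(\mathfrak{h}_\torus)$ permutes them, hence by dimension fixes each, so $g$ is block-diagonal of the form $\mathrm{diag}(B,c)$, i.e. $g\in H_\torus$ — and $H_\torus$ visibly normalizes $\mathfrak{h}_\torus$, so $N_\G(\mathfrak{h}_\torus)=H_\torus$. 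For $\mathfrak{h}_\affine=\pmin$: it is a Borel subalgebra of $\g$ and stabilizes a unique full flag of $\R^3$, namely $o=([e_1],[e_1,e_2])$, so $\Ad(g)\pmin=\pmin$ forces $g\in\Stab_\G(o)=\Pmin$, whence $N_\G(\pmin)=\Pmin$. Combining $\Aut(Y,\Sx)\subset N_\G(\mathfrak{h})$ with the inclusions $H_\torus\subset\Aut(Y_\torus,\Sm_\torus)$ and $\Pmin\subset\Aut(Y_\affine,\Sm_\affine)$ established in Paragraphs \ref{soussoussectionYtorus} and \ref{soussoussectionYaffine} then gives assertion (1). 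The hard part is the rigidity fact invoked in the second step — that $\mathfrak{h}$ is the \emph{full} local Killing algebra of $\Sx$ (Lemmas \ref{lemmastructH0invarianteYdeltaisolocal} and \ref{lemmahestmaximale}); granting it, the rest is Liouville's theorem together with routine orbit and linear-algebra bookkeeping.
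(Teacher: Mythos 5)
Your proof is correct, and its core --- realizing any (local) automorphism of $\Sx$ as a left translation $L_g$ via Liouville's Theorem~\ref{theoremLiouvilleXG}, then deducing $\Ad(g)\cdot\mathfrak{h}=\mathfrak{h}$ by pushing forward the fundamental Killing fields and using that $\mathfrak{h}$ is the \emph{full} local Killing algebra of $\Sx$ --- is exactly the paper's argument for assertion (2). You diverge on two points, both of which make your write-up more self-contained. First, for assertion (1) the paper argues geometrically: by Liouville an automorphism of $(Y,\Sx)$ is induced by an element of $\G$ stabilizing the open set $Y$, and $\Stab_\G(Y_\torus)=H_\torus$, $\Stab_\G(Y_\affine)=H_\affine$; you instead compute the normalizers $\Nor_\G(\mathfrak{h}_\torus)=H_\torus$ and $\Nor_\G(\pmin)=\Pmin$ (both computations are correct: the invariant-subspace argument for $\mathfrak{h}_\torus$ and self-normalization of the Borel for $\pmin$). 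Second, the paper's proof of (2) opens by \emph{asserting} that $\Aut(Y,\Sx)$ equals $\Nor_\G(\mathfrak{h})$ without proof; your verification that any $g\in\Nor_\G(\mathfrak{h})$ carrying a piece of $Y$ into $Y$ must preserve the orbit $Y$ and, by the uniqueness statement of Proposition~\ref{propositioninformationsYtYa}, preserve $\Sx$, supplies precisely the nontrivial half of that assertion. The one ingredient you rightly flag as the real content, $\mathfrak{h}=\mathfrak{kill}^{loc}_{\Sx}(x_0)$, is indeed imported from Lemma~\ref{lemmastructH0invarianteYdeltaisolocal} in both treatments.
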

\begin{proof}
1. The inclusions $H_\torus\subset\Aut(Y_\torus,\Sm_\torus)$ and $H_\affine\subset\Aut(Y_\affine,\Sm_\affine)$
were explained in Paragraphs \ref{soussoussectionYtorus} and \ref{soussoussectionYaffine}.
Since the automorphism groups are contained in the stabilizers of the open subsets,
the equalities follow because 
$H_\torus=\Stab_\G(Y_\torus)$ and $H_\affine=\Stab_\G(Y_\affine)$. \\
2. Let us emphasize that
$\Aut(Y,\Sx)$ is precisely the normalizer of $\mathfrak{h}$ in $\G$.
Let $\varphi$ be a local automorphism of $\Sx$ between two connected open subsets $U$ and $V$ of $Y$.
For any $v\in\mathfrak{h}$, since $v\restreinta_V$ is a Killing field of $\Sx$,
$\varphi^*(v\restreinta_V)$ is a Killing field of $\Sx$, and therefore
$\varphi^*(v\restreinta_V)=w\restreinta_U$ for some $w\in\mathfrak{h}$.
But $\varphi$ is in particular a local automorphism of the Lagrangian contact structure $\Lx$ of $\X$,
and is thus the restriction to an open subset $U\subset Y$ of the left translation by an element $g\in \G$, 
according to Theorem \ref{theoremLiouvilleXG}.
Therefore $w\restreinta_U=\varphi_*(v\restreinta_V)=(\Ad(g)\cdot v)\restreinta_U$, implying
that $\Ad(g)\cdot v=w\in\mathfrak{h}$ since the action of $\G$ is analytic (see Lemma \ref{lemmadescriptionkillinggeomCartanplatemodel}).
Consequently, $g\in\Nor_\G(\mathfrak{h})=\Aut(Y,\Sx)$.
\end{proof}

Let us recall that $\rho\colon\piun{M}\to \G$ denotes the holonomy morphism associated to the developping map
$\delta\colon\tilde{M}\to\X$ of the $(\G,\X)$-structure of $M$ 
(see Corollary \ref{corollaryLGXstructure} and Paragraph \ref{soussoussectionflatlagcontact}).

\begin{proposition}\label{propositionSHYstructuresurMdeuxmodeles}
The holonomy group $\rho(\piun{M})$ is contained in $\Aut(Y,\Sx)$.
Consequently, $M$ has either a $(H_\torus,Y_\torus)$-structure or a $(H_\affine,Y_\affine)$-structure,
and its developping map is a local isomorphism
of enhanced Lagrangian contact structures from $\Stilde$ to $\Sm_\torus$ (respectively $\Sm_\affine$).
\end{proposition}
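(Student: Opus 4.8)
The plan is to combine the global local model obtained in Sections~\ref{sectionlocalmodeleSm}--\ref{sectionHYstructsurM} with the equivariance of the developing map. First I would record that, by Proposition~\ref{propositionomega=M}, $\Omega=M$, hence $\tilde{\Omega}=\tilde{M}$; since $\tilde{M}$ is connected it is a single $\Kill^{loc}$-orbit $O=\tilde{M}$ of $\Stilde$. Corollary~\ref{propositionmodelelocaldeO} applied to this $O$ then says that, after possibly replacing the pair $(\delta,\rho)$ by $(\kappa\circ\delta,\Theta\circ\rho)$ — which interchanges the roles of $\Ealpha$ and $\Ebeta$, describes the same $(\G,\X)$-structure, and leaves the hypotheses of Theorem~\ref{theoremeoptimal} unchanged — the developing map $\delta$ is a local isomorphism of enhanced Lagrangian contact structures from $\Stilde$ to $(Y,\Sx)$, with $(Y,\Sx)$ equal to $(Y_\torus,\Sm_\torus)$ or $(Y_\affine,\Sm_\affine)$. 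Moreover Lemma~\ref{lemmadeltaOcontenudansY}, again with $O=\tilde{M}$, gives $\delta(\tilde{M})\subset Y$.

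Next I would transfer each holonomy element into $\Aut(Y,\Sx)$. Fix $\gamma\in\piun{M}$; as a deck transformation of $\pi_M$ it preserves $\Stilde=\pi_M^*\Sm$. Since $\delta$ is a local diffeomorphism, any point of $\tilde{M}$ has a connected open neighbourhood $U$ on which $\delta\restreinta_U$ is a diffeomorphism onto an open subset of $Y$, and the relation $\delta\circ\gamma=\rho(\gamma)\circ\delta$ then exhibits $\rho(\gamma)$ on $\delta(U)$ as the composition $\delta\circ\gamma\circ(\delta\restreinta_U)^{-1}$ of local isomorphisms of enhanced Lagrangian contact structures ($\Sx\to\Stilde\to\Stilde\to\Sx$). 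Hence $\rho(\gamma)$ restricts to a local automorphism of $\Sx$ between connected open subsets of $Y$. By Theorem~\ref{theoremLiouvilleXG} this restriction is that of the translation by a \emph{unique} $g\in\G$, and the argument in the proof of Proposition~\ref{propositiongroupeautomorphismeetLiouvillequatremodeles}(2) shows that $g\in\Nor_\G(\h)=\Aut(Y,\Sx)$; uniqueness forces $g=\rho(\gamma)$. Since $\gamma$ was arbitrary, $\rho(\piun{M})\subset\Aut(Y,\Sx)$, which equals $H_\torus$ (respectively $H_\affine$) by Proposition~\ref{propositiongroupeautomorphismeetLiouvillequatremodeles}(1).

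To finish I would invoke the standard fact that a $(\G,\X)$-structure whose holonomy takes values in a subgroup $H<\G$ and whose developing image is contained in an $H$-invariant open subset $Y\subset\X$ is refined by an $(H,Y)$-structure, with the same developing map. Applied here, $M$ carries a $(H_\torus,Y_\torus)$- or a $(H_\affine,Y_\affine)$-structure, whose developing map $\delta$ is a local isomorphism from $\Stilde$ to $\Sm_\torus$ or $\Sm_\affine$ by the first paragraph. I do not expect a serious obstacle: the substantive inputs ($\Omega=M$, $\delta(\tilde{M})\subset Y$, the local model $(Y,\Sx)$) have already been secured, and this proposition is their formal consolidation. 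The only points requiring care are bookkeeping: tracking the $\Ealpha\leftrightarrow\Ebeta$ relabeling coming from Corollary~\ref{propositionmodelelocaldeO}, and using the \emph{enhanced} version of Liouville's theorem (Proposition~\ref{propositiongroupeautomorphismeetLiouvillequatremodeles}(2)), so that $\rho(\gamma)$ is pinned inside $\Aut(Y,\Sx)$ rather than merely inside $\Aut(Y,\Lx)$.
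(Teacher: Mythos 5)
Your proposal is correct and follows essentially the same route as the paper: $\Omega=M$ (Proposition \ref{propositionomega=M}) makes $\tilde{M}$ a single $\Kill^{loc}$-orbit, Corollary \ref{propositionmodelelocaldeO} then gives the global local model, and the Liouville theorem for the models (Proposition \ref{propositiongroupeautomorphismeetLiouvillequatremodeles}) pins each holonomy element inside $\Aut(Y,\Sx)=H$. The only cosmetic difference is how the $\Ealpha\leftrightarrow\Ebeta$ relabelling is handled — you recompose $(\delta,\rho)$ with $(\kappa,\Theta)$, while the paper notes a posteriori that the two orderings yield isomorphic enhanced structures on each model — and your second paragraph simply spells out the deck-transformation computation that the paper leaves implicit.
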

\begin{proof}
 According to Proposition \ref{propositionomega=M}, $\Sm$ is locally homogeneous, 
 and we thus deduce from Proposition \ref{propositionmodelelocaldeO} that, up to interversion of the distributions 
 $\Ealpha$ and $\Ebeta$, the developping map $\delta$ of the $(\G,\X)$-structure of $M$ can be chosen to be a local 
 isomorphism from $(\tilde{M},\Stilde)$ to one of the two models $(Y_\torus,\mathcal{S}_\torus)$ 
 or $(Y_\affine,\mathcal{S}_\affine)$.
 According to Liouville's theorem \ref{propositiongroupeautomorphismeetLiouvillequatremodeles} proved 
 for these two models, the holonomy morphism has moreover values in the corresponding automorphism group
 $H_\torus$ (respectively $H_\affine$) described in the same result, and
 $M$ is finally endowed with a $(H_\torus,Y_\torus)$-structure (resp. $(H_\affine,Y_\affine)$-structure).
 Concerning the interversion of $\Ealpha$ and $\Ebeta$, 
 it is easy to construct for both models $(Y_\torus,\mathcal{S}_\torus)$ 
 and $(Y_\affine,\mathcal{S}_\affine)$,
 a diffeomorphism of $Y$ interverting the distributions 
 $\Exalpha$ and $\Exbeta$ and fixing the transverse distribution $\Exc_Y$.
 In other words for these both models, the structures $(\Exalpha,\Exbeta,\Exc_Y)$ and $(\Exbeta,\Exalpha,\Exc_Y)$ are isomorphic,
 so that \emph{a posteriori}, the order of the distributions $\Ealpha$ and $\Ebeta$ 
in the statement of Proposition \ref{propositionSHYstructuresurMdeuxmodeles} 
 does not matter.
 \end{proof}

\section{Completeness of the structure}\label{sectionstructKleinienne}

We will denote by $(\mathcal{H},Y)$ the local model of $\mathcal{S}$,
which is either $(H_\torus,Y_\torus)$ or $(H_\affine,Y_\affine)$, 
and by $\delta\colon\tilde{M}\to Y$ and $\rho\colon\piun{M}\to \mathcal{H}$ the developping map and holonomy morphism of
the $(\mathcal{H},Y)$-structure of $M$.
The goal of this section is to prove that:
\begin{proposition}\label{propositionHYstructurecomplete}
The developping map $\delta$ is a covering map from $\tilde{M}$ to $Y$.
\end{proposition}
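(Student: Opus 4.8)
The plan is to realize $\tilde M$ as a homogeneous space of a Lie group acting by automorphisms of $\Stilde$, and then to recognize $\delta$ as the associated orbit map onto $Y$.

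\emph{Step 1 (setting up a global transitive infinitesimal action on $\tilde M$).} By Proposition \ref{propositionSHYstructuresurMdeuxmodeles}, the developing map $\delta\colon\tilde M\to Y$ is everywhere a local isomorphism of enhanced Lagrangian contact structures from $\Stilde$ onto $\Sx$, and the holonomy $\rho$ takes values in $\mathcal H=\Aut(Y,\Sx)$; here $(\mathcal H,Y)$ is $(H_\torus,Y_\torus)$ or $(H_\affine,Y_\affine)$. Write $H^0$ for the identity component of $\mathcal H$ (that is $H_\torus^0=\GLplus 2$ or $H_\affine^0=\Pmin^+$); by Paragraphs \ref{soussoussectionYtorus}--\ref{soussoussectionYaffine}, $H^0$ acts transitively on $Y$ with connected isotropy $I$, so $Y\simeq H^0/I$. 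For $v$ in $\mathfrak h=\Lie H^0$ (equal to $\mathfrak h_\torus$ or $\mathfrak h_\affine$ by Proposition \ref{propositionclassificationalgebriquemodelelocalS}) the fundamental field $v^\dag$ of the $H^0$-action is a globally defined Killing field of $\Sx$ whose flow is the global action $\exp(tv)$. Since $\delta$ is everywhere a local diffeomorphism, $X_v(x)\coloneqq(\Diff x\delta)^{-1}\bigl(v^\dag(\delta(x))\bigr)$ is a well-defined smooth vector field on $\tilde M$, which is a (global) Killing field of $\Stilde$; moreover $v\mapsto X_v$ is an injective Lie algebra (anti)morphism of $\mathfrak h$ into $\mathfrak{Kill}(\tilde M,\Stilde)$, it is infinitesimally transitive because $H^0$ is transitive on $Y$, and the a priori partial flow of $X_v$ is $\delta$-conjugate to that of $v^\dag$: $\delta(\varphi^t_{X_v}(x))=\exp(tv)\cdot\delta(x)$ on the whole maximal interval.

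\emph{Step 2 (reduction to completeness, and conclusion).} It now suffices to prove that every $X_v$ is a \emph{complete} vector field on $\tilde M$. Granting this, the finite-dimensional Lie algebra $\{X_v\mid v\in\mathfrak h\}$ of complete fields integrates, by Palais's theorem \cite{palais}, to a smooth action of the connected simply connected group $\tilde H$ with Lie algebra $\mathfrak h$; infinitesimal transitivity makes every $\tilde H$-orbit open, hence equal to the connected manifold $\tilde M$, so the action is transitive. Integrating $\id_{\mathfrak h}$ produces a surjective morphism $p\colon\tilde H\to H^0$, and the relation $\delta(\varphi^t_{X_v}(x))=\exp(tv)\cdot\delta(x)$ upgrades by connectedness of $\tilde H$ to the equivariance $\delta(h\cdot x)=p(h)\cdot\delta(x)$. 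Fixing $x_\ast\in\tilde M$ with $y_\ast=\delta(x_\ast)$, this identifies $\tilde M$ with $\tilde H/\tilde I$ and $Y$ with $H^0/I$ (with $\tilde I=\Stab_{\tilde H}(x_\ast)$, $p(\tilde I)\subset I$), and $\delta$ with the natural projection $\tilde H/\tilde I\to H^0/I$; as source and target are three-dimensional, this is a fibre bundle with discrete fibre, i.e. a covering map. (In the $Y_\affine$ case, $Y_\affine$ being simply connected, $\delta$ is then a diffeomorphism.)

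\emph{Step 3 (the crux: completeness of the $X_v$).} This is the genuinely hard point, and it must use the dynamics of $f$: local homogeneity on a compact manifold does not suffice by itself (incomplete radiant affine structures on the torus are locally homogeneous). Suppose $c(t)\coloneqq\varphi^t_{X_v}(x)$ has maximal forward interval $[0,b)$ with $b<\infty$. Then $c(t)$ leaves every compact subset of $\tilde M$ as $t\to b^-$, whereas its development $\delta(c(t))=\exp(tv)\cdot\delta(x)$ converges in $Y$ to $y_\infty\coloneqq\exp(bv)\cdot\delta(x)$. Covering $M$ by finitely many evenly covered open sets on each sheet of which $\delta$ restricts to an embedding into $Y$, the escape yields pairwise distinct deck transformations $\gamma_n\in\piun M$ and points $z_n$ in a fixed compact subset of $Y$ with $\rho(\gamma_n)\cdot z_n\to y_\infty$. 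The plan is to exclude this configuration using the hypotheses on $f$: lift $f$ to $\tilde f$ with $\delta\circ\tilde f=g_f\circ\delta$ for some $g_f\in\mathcal H$ (Liouville, Proposition \ref{propositiongroupeautomorphismeetLiouvillequatremodeles}); then, since $\Omega=M$ (Proposition \ref{propositionomega=M}) the rank of $\Diff{}{\Ktot}$ is constant over the Cartan bundle, so its integrability locus is everything, and the weak contraction of $E^\alpha,E^\beta$ together with the density of the orbit of $f$ and the recurrence of $f^{\pm1}$ — used exactly as in the proofs of Propositions \ref{corollairepremiereetapepreuve} and \ref{propositionomega=M} — show that the total curvature cannot degenerate along the escaping trajectory; equivalently, one extends $c$ past time $b$ by exhibiting, near a limit point of the situation, a local Killing field of $\Stilde$ agreeing there with $X_v$, contradicting maximality. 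I expect this step — ruling out finite-time escape of a Killing flow line, i.e. showing that incompleteness is incompatible with the recurrent automorphism $f$ — to be the main obstacle; once it is settled, Steps 1 and 2 finish the proof.
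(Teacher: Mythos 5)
Your Steps 1 and 2 are sound and essentially standard: pulling back the fundamental fields $v^\dag$ through the local diffeomorphism $\delta$ and observing that completeness of all of them would integrate, via Palais, to a transitive action identifying $\delta$ with $\tilde H/\tilde I\to H^0/I$, hence a covering. The problem is that Step 3, which you yourself identify as the crux, is not a proof, and the mechanism you sketch for it does not work. Since $\Omega=M$ (Proposition \ref{propositionomega=M}), the structure is already locally homogeneous everywhere and the total curvature has locally constant rank on all of $\hat M$; there is no further ``degeneration of $\Ktot$'' to exclude, so that line of argument is vacuous here. Likewise, ``exhibiting a local Killing field agreeing with $X_v$ near a limit point'' cannot close the gap: the escaping trajectory $c(t)$ has no limit point in $\tilde M$, and $X_v$ does not descend to $M$ (the deck group conjugates $v^\dag$ by $\Ad(\rho(\gamma))$), so there is no well-defined object on the compact quotient to extend. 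Local Killing fields exist at every point already; incompleteness is a global phenomenon, as your own example of radiant affine tori shows, and ruling it out genuinely requires a new argument involving $f$.

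The paper's route is different and supplies exactly this missing content. It first reduces (Lemma \ref{lemmerelevementchemins}) to path-lifting in the $\alpha$, $\beta$ and central directions only, not completeness of all of $\mathfrak h$. The $\alpha/\beta$ case is Lemma \ref{lemmeimagefeuillesstables}: one shows $\delta(\tilde{\mathcal F}^\beta(\tilde x))$ is all of $\Cbeta(x)\cap Y$ by assuming it is a proper subinterval $\intervalleoo{x^-}{x^+}$, extracting a recurrent sequence $\gamma_k\tilde f^{n_k}(\tilde x)\to\tilde x_\infty$ with holonomies $g_k$, proving the key Fact \ref{factargumentsubtil} that the endpoints do not collapse onto $x_\infty$ (this uses local injectivity of $\delta$ and is where the properness of the subinterval enters), and then deriving a contradiction between the resulting lower bound on the projective dilation $\lambda_k$ along $\Cbeta(o)\simeq\RP{1}$ and the weak contraction $\norme{\Diff{\bar x}{f^{n_k}}\restreinta_{E^\beta}}\to 0$. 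Surjectivity of $\delta$ then follows by contact accessibility, and the central direction is handled by two separate model-specific tricks: for $Y_\torus$ the central element $Z\in\mathfrak z(\mathfrak h_\torus)$ gives an $H_\torus$-invariant field whose pullback descends to compact $M$ and is therefore complete (this is the only $v$ for which your descent idea works), and for $Y_\affine$ the central flow is realized as a commutator of the already-complete $\alpha$ and $\beta$ flows. None of this is present in your proposal, so as it stands the proof has a genuine gap at its central step.
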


\par It is a known fact that a local diffeomorphism satisfying the path-lifting property is a covering map
(the reader can for example look for a proof in \cite[\S 5.6, Proposition 6 p. 383]{docarmo}).
According to the following statement, it will actually be sufficient
to prove the path-lifting property in the $\alpha$, $\beta$, and central directions
to prove that $\delta$ is a covering map.
\begin{lemma}\label{lemmerelevementchemins}
Let $h\colon N\to B$ be a local diffeomorphism between two smooth three-dimensional manifolds, $B$ being connected.
We assume that there is a smooth splitting $E_1\oplus E_2 \oplus E_3=\Fitan{B}$ of the tangent bundle
of $B$ into three one-dimensional smooth distributions, such that
for any $i\in\{1,2,3\}$, $x\in\Image(h)$, and $\tilde{x}\in h^{-1}(x)$,
any path tangent to $E_i$ and starting from $x$ entirely lifts through $h$ to a path starting from $\tilde{x}$.
Then $h$ is a covering map from $N$ to $B$ (and in particular, $h$ is surjective).
\end{lemma}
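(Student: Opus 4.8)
The plan is to reduce the statement to the standard fact cited just before it — that a local diffeomorphism with the path-lifting property is a covering map — by upgrading path-lifting along the three distinguished one-dimensional distributions $E_1,E_2,E_3$ to path-lifting along an arbitrary continuous path in $B$. First I would fix a path $\gamma\colon\intervalleff{0}{1}\to B$ with $\gamma(0)=x\in\Image(h)$ and a lift $\tilde{x}\in h^{-1}(x)$. The goal is to produce a lift $\tilde{\gamma}$ of $\gamma$ starting at $\tilde{x}$. The key observation is that, since $h$ is a local diffeomorphism, the set $T$ of times $t\in\intervalleff{0}{1}$ such that $\gamma\restreinta_{\intervalleff{0}{t}}$ admits a continuous lift starting at $\tilde{x}$ is open in $\intervalleff{0}{1}$ (once we have a lift up to time $t$, a local inverse of $h$ near $\tilde{\gamma}(t)$ extends it a bit further) and contains $0$; the only thing to check is that $T$ is closed. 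This is where the hypotheses on $E_1,E_2,E_3$ enter.

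The main step is therefore a \emph{local} path-lifting lemma: for every $y\in B$ and every $\tilde{y}\in h^{-1}(y)$, there is a neighbourhood $V$ of $y$ such that every path in $V$ starting at $y$ lifts through $h$ to a path starting at $\tilde{y}$. Granting this, closedness of $T$ follows: if $t_n\uparrow t_\infty\in T$-closure, pick $V$ and a neighbourhood of $\tilde\gamma(t_\infty)$ as in the lemma, observe that $\gamma(t_n)$ and a lift of the previously-lifted portion land in these neighbourhoods for $n$ large (using that $h$ restricted to a small neighbourhood of $\tilde\gamma(t_\infty)$ is a diffeomorphism onto $V$), splice, and conclude $t_\infty\in T$. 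So everything comes down to proving the local lifting lemma from the three distinguished directions.

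To prove the local lifting lemma I would argue as follows. Near $y$, choose coordinates adapted to the splitting: since $E_1,E_2,E_3$ are smooth one-dimensional distributions spanning $\Fitan{B}$, pick local flow-box-type coordinates so that the three flows $\psi^s_1,\psi^s_2,\psi^s_3$ of (local nonvanishing sections of) $E_1,E_2,E_3$ generate, by composition, a neighbourhood of $y$ — concretely, $(s_1,s_2,s_3)\mapsto \psi^{s_1}_1\psi^{s_2}_2\psi^{s_3}_3(y)$ is a local diffeomorphism from a neighbourhood of $0$ in $\R^3$ onto a neighbourhood $V$ of $y$. By the hypothesis, each flow $\psi^s_i$ lifts through $h$: more precisely, pulling back the section of $E_i$ by the local diffeomorphism $h$ gives a vector field on $h^{-1}(V)$ whose flow is a lift of $\psi^s_i$ (here one uses that an integral curve of $E_i$ through $y$ lifts to one through any point of $h^{-1}(y)$, which is exactly what ``$E_i$-directed paths lift'' provides, together with uniqueness of integral curves). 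Composing the three lifted flows in the same order gives a local map $\tilde\Psi$ on a neighbourhood of $\tilde y$ lying over $\Psi=(s_1,s_2,s_3)\mapsto\psi^{s_1}_1\psi^{s_2}_2\psi^{s_3}_3(y)$, i.e.\ $h\circ\tilde\Psi=\Psi$; shrinking so that $h$ is a diffeomorphism onto $V$, the map $\tilde\Psi\circ\Psi^{-1}$ is a section of $h$ over $V$. Then any path in $V$ starting at $y$ is lifted simply by applying this section, which proves the local lemma. (One subtlety: $\tilde\Psi$ is a priori only defined where all three successive lifted flows exist; since the flows are lifts of flows defined on all of $V$ and $h$ is a local diffeomorphism, the lifted flows are defined for the relevant parameter range after shrinking $V$, so this is not a real problem.)

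The main obstacle I expect is precisely this last point — ensuring that the \emph{composed} lifted flows remain defined on a full neighbourhood of $\tilde y$, i.e.\ that lifting the $E_i$-directed paths one after another does not run out of domain before sweeping out a neighbourhood of $\tilde y$. This is handled by a compactness/continuity argument: the three-parameter box over which we compose is relatively compact, $h$ is a local diffeomorphism (hence, over a relatively compact piece of $V$, ``uniformly'' so), and the path-lifting hypothesis guarantees lifts exist for \emph{all} times along $E_i$-directions, so one chooses the box small enough and proceeds by the standard open-closed argument in each of the three successive flow parameters separately. Once the local lemma is in place, the globalization to arbitrary paths is routine connectedness, and the cited result ``local diffeomorphism $+$ path-lifting $\Rightarrow$ covering'' (and in particular surjectivity, using connectedness of $B$) finishes the proof.
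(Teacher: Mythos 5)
Your proposal is correct and takes essentially the same route as the paper's proof: both construct the local parametrization $(s_1,s_2,s_3)\mapsto\psi^{s_1}_1\circ\psi^{s_2}_2\circ\psi^{s_3}_3(y)$, lift it by composing the flows of the pulled-back vector fields (defined for the full parameter box precisely because the hypothesis gives lifting from \emph{every} preimage at each intermediate stage), and use the resulting local section of $h$ to lift arbitrary paths before invoking the standard fact that a local diffeomorphism with the path-lifting property is a covering. The only cosmetic slip is that your closedness step refers to a neighbourhood of $\tilde{\gamma}(t_\infty)$ before that point is known to exist, but the local section you construct already removes the difficulty (the paper itself globalizes with a one-line appeal to compactness).
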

\begin{proof}
Since $h$ is a local diffeomorphism, it suffices to prove that our hypothesis implies the lift of any path.
By compactness, it is sufficient to locally lift the paths in $B$, around any point. 
We choose $x\in B$ and a sufficiently small open neighbourhood $U$ of $x$, such that 
there are three smooth vector fields 
$X$, $Y$ and $Z$ generating $E_1$, $E_2$ and $E_3$ on $U$, and $\varepsilon>0$ such that
$(t,u,v)\in\intervalleoo{-\varepsilon}{\varepsilon}^3\mapsto \phi(t,u,v)\coloneqq \varphi^t_X\circ\varphi^u_Y\circ\varphi^v_Z(x)\in U$ 
is well-defined, and is a diffeomorphism (this exists according to Inverse mapping theorem).
Let us choose $\tilde{x}\in h^{-1}(x)$.
Then, denoting by $\tilde{X}=h^*X$, $\tilde{Y}=h^*Y$ and $\tilde{Z}=h^*Z$ the pullbacks, 
the property of path-lifting in the directions $E_1$, $E_2$ and $E_3$, and from any point, implies that
$\tilde{\phi}(t,u,v)\coloneqq \varphi^t_{\tilde{X}}\circ\varphi^u_{\tilde{Y}}\circ\varphi^v_{\tilde{Z}}(\tilde{x})$
is well-defined on $\intervalleoo{-\varepsilon}{\varepsilon}^3$. 
If $\gamma\colon\intervalleff{0}{1}\to U$ is a continuous path starting from $x$ and contained in $U$, there are 
three continuous maps $t$, $u$ and $v$ from $\intervalleff{0}{1}$ to $\intervalleoo{-\varepsilon}{\varepsilon}$ such that
$\gamma(s)=\phi(t(s),u(s),v(s))$.
Since $h\circ\tilde{\phi}=\phi$ by construction, $\tilde{\gamma}(s)\coloneqq \tilde{\phi}(t(s),u(s),v(s))$ is then a lift of $\gamma$
starting from $\tilde{x}$, which finishes the proof.
\end{proof}

\begin{remark}\label{remarkconditionsuffisanterelevementchemins}
 In our case, proving that the paths in $\delta(\tilde{M})$ in the $\alpha$-direction (respectively $\beta$ or central direction)
 lift to $\tilde{M}$ is equivalent to prove that for any $x\in\delta(\tilde{M})$ and $\tilde{x}\in\delta^{-1}(x)$,
 we have the following equality: 
 \[
 \delta(\tilde{\mathcal{F}}^\alpha(\tilde{x}))=\Calpha(x)\cap \delta(\tilde{M}),
 \]
 (respectively the same equality for $\beta$-leaves and circes, or for central leaves).
\end{remark}

We start by proving that the image of any $\alpha$ (respectively $\beta$) leaf in $\tilde{M}$ miss
exactly one point in the associated $\alpha$-circle (respectively $\beta$-circle).
We recall that $\partial Y=\X\setminus Y$, as explained before Lemma \ref{lemmadegenerescence}.

\begin{lemma}\label{lemmeimagefeuillesstables}
For any $\tilde{x}\in\tilde{M}$, denoting $x=\delta(\tilde{x})$,
there exists 
$x^*\in \Cbeta(x)\cap \partial Y$ such that 
$\delta(\tilde{\mathcal{F}}^\beta(\tilde{x}))=\Cbeta(x)\setminus\{x^*\}=\Cbeta(x)\cap Y$.
The same happens for $\alpha$-leaves and their associated $\alpha$-circles.
\end{lemma}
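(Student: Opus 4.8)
The statement is local along a single $\beta$-leaf (the $\alpha$-case being symmetric), so fix $\tilde{x}\in\tilde{M}$ and set $x=\delta(\tilde{x})$. The developping map $\delta$ is a local isomorphism from $\Stilde$ to the model $\Sx$ on $Y$ (Proposition \ref{propositionSHYstructuresurMdeuxmodeles}); in particular it sends the $\beta$-leaf $\tilde{\mathcal{F}}^\beta(\tilde{x})$ locally diffeomorphically onto pieces of the $\beta$-circle $\Cbeta(x)$, which is a copy of $\RP{1}$. The restriction $\delta\restreinta_{\tilde{\mathcal{F}}^\beta(\tilde{x})}$ is thus a local diffeomorphism from a one-manifold to the circle $\Cbeta(x)$, and I want to show its image is exactly $\Cbeta(x)\cap Y=\Cbeta(x)\setminus\{x^*\}$ for a single point $x^*\in\partial Y$.

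First I would identify the image as an open connected subset of $\Cbeta(x)$ contained in $\Cbeta(x)\cap Y$: openness because $\delta$ is a local diffeomorphism, connectedness because $\tilde{\mathcal{F}}^\beta(\tilde{x})$ is connected, and containment in $Y$ because $\delta(\tilde{M})\subset Y$ by Lemma \ref{lemmadeltaOcontenudansY} (now valid globally since $\Omega=M$ by Proposition \ref{propositionomega=M}). So the image is an open connected arc $J\subset\Cbeta(x)$. The key point is to show $J$ is \emph{all} of $\Cbeta(x)\cap Y$. For this I would first note that $\Cbeta(x)\cap\partial Y$ consists of at most finitely many points: $\partial Y=\X\setminus Y=S_{\beta,\alpha}(D_\infty)\cup S_{\alpha,\beta}(m_\varepsilon)$ is a finite union of specific surfaces, and a $\beta$-circle meets each of $S_{\beta,\alpha}(D_\infty)$ and $S_{\alpha,\beta}(m_\varepsilon)$ either in a single point or is entirely contained in it (using the explicit descriptions before Lemma \ref{lemmadegenerescence}); the leaf $\tilde{\mathcal{F}}^\beta(\tilde{x})$ being nonempty in $\tilde M$ forces $\Cbeta(x)\not\subset\partial Y$, so $\Cbeta(x)\cap\partial Y$ is a finite nonempty set. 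Thus $\Cbeta(x)\cap Y$ is a finite disjoint union of open arcs.

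The heart of the argument — and the step I expect to be the main obstacle — is to promote the \emph{local} surjectivity onto one arc to a \emph{completeness} statement: a path in $\Cbeta(x)\cap Y$ starting from a point of $J$ lifts entirely to $\tilde{\mathcal{F}}^\beta(\tilde{x})$, and moreover the lift cannot escape the leaf. I would argue by a standard "endpoint" argument: parametrize a maximal lift and suppose it fails to lift a $\beta$-arc up to some time $t_0$; then the lifted path, which stays in $\tilde{M}$, either accumulates on a point of $\tilde M$ (in which case one extends the lift past $t_0$ using the local isomorphism property at that point, contradiction) or leaves every compact subset of $\tilde M$. Since $M$ is compact, the latter means the lifted path's projection to $M$ accumulates; passing to a subsequence and using the local model, the image under $\delta$ would then have to converge to a point of $\Cbeta(x)\cap\partial Y$, i.e. $J$ is open and its closure in $\Cbeta(x)\cap Y$ is itself — forcing $J$ to be a whole connected component of $\Cbeta(x)\cap Y$, i.e. a maximal open arc. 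To upgrade "one arc" to "exactly $\Cbeta(x)\setminus\{\text{one point}\}$" I would invoke the fact that $\partial Y\cap\Cbeta(x)$ is actually a \emph{single} point: this is where the concrete geometry of $Y_\torus$ and $Y_\affine$ enters, via the orbit descriptions listed before Lemma \ref{lemmadegenerescence} (a generic $\beta$-circle meets $S_{\beta,\alpha}(D_\infty)$ in one point and is otherwise disjoint from $\partial Y$, or dually meets $S_{\alpha,\beta}(m_\varepsilon)$ in one point; degenerate $\beta$-circles lie in $\partial Y$ and carry no leaf). Combining, $\Cbeta(x)\cap Y$ is a single arc, and $J$, being an open arc with the closure property above, must equal it. Setting $x^*$ to be the unique point of $\Cbeta(x)\cap\partial Y$ completes the proof, and the $\alpha$-case follows by applying the flip diffeomorphism $\kappa$ (or by the verbatim symmetric argument, exchanging the roles of $\alpha$ and $\beta$ and of $m_\varepsilon$ and $D_\infty$).
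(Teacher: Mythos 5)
Your setup is fine (the image of the leaf is an open connected arc $J\subset\Cbeta(x)\cap Y$, and $\Cbeta(x)\cap\partial Y$ is a single point $x^*$, so $\Cbeta(x)\cap Y$ is a single arc), but the heart of your argument contains a genuine gap, and it is precisely the step you flag as ``the main obstacle''. You argue that if a maximal lift of a $\beta$-arc stops at some time $t_0$, the lifted path leaves every compact subset of $\tilde M$, and you then conclude that ``the image under $\delta$ would have to converge to a point of $\Cbeta(x)\cap\partial Y$''. That inference does not hold: the developed image lives in the compact circle $\Cbeta(x)$ and therefore converges to \emph{some} endpoint $x^+$, but nothing forces $x^+$ to lie in $\partial Y$. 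The developing map restricted to a leaf is only a local diffeomorphism, not a proper map onto the closure of its image, so the leaf can perfectly well run off to infinity in $\tilde M$ while its development converges to an interior point of $\Cbeta(x)\cap Y$. Ruling out exactly this scenario --- $\delta(\tilde{\mathcal{F}}^\beta(\tilde x))=\intervalleoo{x^-}{x^+}$ a proper subarc with $x^\pm\in Y$ --- \emph{is} the content of the lemma, and your argument assumes it away. A telling symptom is that your proof never uses the diffeomorphism $f$; the statement is not a purely geometric fact about $(\mathcal{H},Y)$-structures, and without a dynamical input it would be false for a general developing map.

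The paper's proof supplies that missing input. Assuming by contradiction that the image is a proper subarc $\intervalleoo{x^-}{x^+}$, it uses recurrence and compactness to produce $\gamma_k\tilde f^{n_k}(\tilde x)\to\tilde x_\infty$ and the associated holonomies $g_k\in\G$ with $\delta\circ\gamma_k\tilde f^{n_k}=g_k\circ\delta$. A key Fact (using that the limit leaf at $\tilde x_\infty$ develops onto a non-degenerate open arc) shows $g_k(x^\pm)$ stays bounded away from $g_k(x)$; after renormalizing by elements of $\SO{3}$ one sees the action of $g_k$ on $\Cbeta(x)$ is conjugate to a homothety of ratio $\lambda_k$ bounded away from $0$. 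On the other hand the weak contraction of $E^\beta$ by $f$ (or $f^{-1}$) forces $\norme{\Diff{x}{g_k}\restreinta_{\Exbeta(x)}}\to 0$, a contradiction. To repair your proposal you would need to import this dynamical argument (or an equivalent one) at the point where you claim the endpoint of $J$ lies in $\partial Y$.
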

\begin{proof}
We will only write the proof for $\beta$-leaves and $\beta$-circles as in the statement, the case of the $\alpha$-direction
being the same.
Denoting $\bar{x}=\pi_M(\tilde{x})$, and
possibly replacing $f$ by $f^{-1}$, we have
$\underset{n\to+\infty}{\lim}\norme{\Diff{\bar{x}}{f^n\restreinta_{\Ealpha(\bar{x})}}}_M=0$
for some fixed Riemannian metric on $M$. 
\par The description of the open subsets $Y_\torus$ and $Y_\affine$ in 
Paragraphs \ref{soussoussectionYtorus} and \ref{soussoussectionYaffine}
easily shows that
in these both cases, the intersection of any $\beta$-circle (respectively $\alpha$-circle) with $Y$ miss exactly one point of 
the circle.
In other words, the intersection $\Cbeta(x)\cap\partial Y$ is a single point $\{x^*\}$, and as a consequence 
$\delta(\tilde{\mathcal{F}}^\beta(\tilde{x}))\subset \Cbeta(x)\setminus\{x^*\}=\Cbeta(x)\cap Y$.
To finish the proof of the lemma, we only have to prove that $\delta(\tilde{\mathcal{F}}^\beta(\tilde{x}))$
cannot miss more than one point of $\Cbeta(x)$. 
To achieve this, we assume by contradiction that there exists $x^-\neq x^+ \in \Cbeta(x)\setminus\{x,x^*\}$ such that:
\begin{equation}\label{absurdhypothesis2lemmeimagecerclepastropgrande}
\delta(\tilde{\mathcal{F}}^\beta(\tilde{x}))=\intervalleoo{x^-}{x^+}\subsetneq \Cbeta(x)\setminus\{x^*\},
\end{equation}
where $\intervalleoo{x^-}{x^+}$ is the connected component of 
$\Cbeta(x)\setminus\{x^-,x^+\}$ that contains $x$.
\par Since $M$ is compact, there exists a strictly increasing sequence $(n_k)$ of positive integers such that,
denoting $\bar{x}=\pi_M(\tilde{x})$, $\bar{x}_k=f^{n_k}(\bar{x})$ converges to a point $\bar{x}_\infty\in M$,
and as $M=\pi_1(M)\backslash\tilde{M}$, there furthermore exists a sequence $\gamma_k\in\pi_1(M)$ such that
$\tilde{x}_k\coloneqq \gamma_k\cdot\tilde{f}^{n_k}(\tilde{x})$ converges to a lift $\tilde{x}_\infty$ of $\bar{x}_\infty$.
As $\gamma_k\tilde{f}^{n_k}$ is an automorphism of the Lagrangian contact structure $\Ltilde$ and $\delta$
a local isomorphism from $\Ltilde$ to $\Lx$, Theorem \ref{theoremLiouvilleXG} 
implies the existence of a unique sequence $g_k\in\G$ satisfying
\[
\delta(\gamma_k\cdot\tilde{f}^{n_k}(\tilde{x}))=g_k\cdot\delta(\tilde{x}).
\]
We denote
$x_k=\delta(\tilde{x}_k)=g_k(x)$, that converges to $x_\infty\coloneqq\delta(\tilde{x}_\infty)$.
Denoting $x_k^-=g_k(x^-)$ and $x_k^+=g_k(x^+)$,
$x_k$, $x_k^-$ and $x_k^+$ are three distincts points of $\Cbeta(x_k)$ for any $k$.
By compactness of $\X$, we can assume up to extraction that $x_k^-$ and $x_k^+$ respectively converge to points 
$x_\infty^-$ and $x_\infty^+$ of $\Cbeta(x_\infty)$, and the hypothesis \eqref{absurdhypothesis2lemmeimagecerclepastropgrande} 
allows us to obtain the following crucial statement.
\begin{fact}\label{factargumentsubtil}
$x_\infty \neq x_\infty^-$, and $x_\infty \neq x_\infty^+$.
\end{fact}
\begin{proof}
Let us assume by contradiction that $x_\infty^- = x_\infty$.
Considering a neighbourhood $U$ of $\tilde{x}_\infty$ such that $\delta\restreinta_U$ is injective,
we can choose $\tilde{y}_\infty\in(\tilde{\mathcal{F}}^\beta(\tilde{x}_\infty)\cap U)\setminus\{\tilde{x}_\infty\}$.
There exists a sequence $\tilde{y}_k\in\tilde{\mathcal{F}}^\beta(\tilde{x}_k)$ converging to $\tilde{y}_\infty$, and 
possibly changing $\tilde{y}_\infty$, we can moreover assume that $\delta(\tilde{y}_k)\in\intervalleoo{x_k^-}{x_k}$,
implying that $\delta(\tilde{y}_\infty)\in\intervalleff{x_\infty^-}{x_\infty}$ by continuity.
But as $x_\infty^- = x_\infty$, $\intervalleff{x_\infty^-}{x_\infty}=\{x_\infty\}$, 
and therefore $\delta(\tilde{y}_\infty)=x_\infty=\delta(\tilde{x}_\infty)$, 
implying $\tilde{y}_\infty=\tilde{x}_\infty$ by injectivity of $\delta\restreinta_U$, which contradicts our hypothesis on $\tilde{y}_\infty$.
In the same way, we obtain $x_\infty \neq x_\infty^+$.
\end{proof}

\par The subgroup $\SO{3}$ of $\G$ acts transitively on $\X$, and we can thus choose $\phi\in\SO{3}$ and a sequence $(\phi_k)$
in $\SO{3}$,
satisfying $\phi(x)=o$ and $\phi_k(x_k)=o$ for any $k$ (we recall that $o=([e_1],[e_1,e_2])$).
Since $\Stab_{\SO{3}}(\Cbeta(o))=
\left[
\begin{smallmatrix}
\SO{2} & 0 \\
0 & 1
\end{smallmatrix}
\right]
$ acts transitively on $\Cbeta(o)$,
we can moreover assume that $\phi(x^+)=o^+$ and $\phi_k(x^+_k)=o^+$, where $o^+=([e_2],[e_1,e_2])\in\Cbeta(o)$.
For any $k$, $\phi_k\circ g_k \circ \phi^{-1}$ is an element of 
$\Stab_{\G}(o)\cap\Stab_\G(o^+)$, \emph{i.e.} is of the form
$
\left[
\begin{smallmatrix}
1 & 0 & x \\
0 & \lambda_k & y \\
0 & 0 & \mu_k
\end{smallmatrix}
\right]
$.
But 
$
\left[
\begin{smallmatrix}
1 & 0 & *  \\
0 & 1 & * \\
0 & 0 & *
\end{smallmatrix}
\right]
$ acts trivially in restriction to $\Cbeta(o)$, and
$
A_k \coloneqq
\left[
\begin{smallmatrix}
1 & 0 & 0 \\
0 & \lambda_k & 0 \\
0 & 0 & 1
\end{smallmatrix}
\right]
$ satisfies thus:
\[
g_k\restreinta_{\Cbeta(x)}=\phi_k^{-1}\circ A_k \circ \phi \restreinta_{\Cbeta(x)}.
\]
The following commutative diagram summarizes the situation.
\begin{equation}\label{equationdiagrammecommutatif}
\begin{tikzcd}
\Cbeta(o) \arrow[d,"A_k"] & 
\Cbeta(x) \arrow[l,"\phi"] \arrow[d,"g_k"] & 
\widetilde{\mathcal{F}}^\beta(\tilde{x}) \arrow[l,"\delta"] \arrow[d,"\gamma_k{\tilde{f}}^{n_k}"] \arrow[r,"\pi_M"] &
\mathcal{F}^\beta(\bar{x}) \arrow[d,"f^{n_k}"] \\
\Cbeta(o)&                 
\Cbeta(x_k) \arrow[l,"\phi_k"] & 
\widetilde{\mathcal{F}}^\beta(\tilde{x}_k) \arrow[l,"\delta"] \arrow[r,"\pi_M"]                                  
& \mathcal{F}^\beta(\bar{x}_k)
\end{tikzcd}
\end{equation}

\par The action of $A_k\in\G$ on $\Cbeta(o)$ is conjugated to the action of the projective transformations
$
\left[
\begin{smallmatrix}
1 & 0 \\
0 & \lambda_k
\end{smallmatrix}
\right]\in\PGL{2}
$ on $\RP{1}$, \emph{i.e.} to the action of the homotheties of ratio $\lambda_k$ on $\R\cup\{\infty\}$.
By this conjugation, $o$ corresponds to 0, $o^+$ to $\infty$, and 
$o^-\coloneqq\phi(x^-)\in\Cbeta(o)\setminus\{o,o^+\}$ corresponds to a non-zero point of $\R$.
Fact \ref{factargumentsubtil} implies that 
$A_k(o^-)=\phi_k(x^-_k)\in\Cbeta(o)$ stays bounded away from $o$ (since $\phi_k\in\SO{3}$), and therefore that
$\lambda_k$ is bounded away from $0$.
\par On the other hand, endowing $\tilde{M}$ with the pullback of the Riemannian metric of $M$, the diagramm 
\eqref{equationdiagrammecommutatif}
implies $\underset{k\to+\infty}{\lim}\norme{\Diff{\tilde{x}}{(\gamma_k\tilde{f}^{n_k})}\restreinta_{\Etildebeta(\tilde{x})}}_{\tilde{M}}=0$
(since $\piun{M}$ acts by isometries).
Fixing any Riemannian metric on $\X$, as $(\tilde{x}_k)$ is relatively compact we also have
$\lim\norme{\Diff{x}{g_k}\restreinta_{\Exbeta(x)}}_\X=0$, and
since $(\phi_k)$ and $(x_k)$ are relatively compact as well, we finally obtain
$\lim\norme{\Diff{o}{A_k}\restreinta_{\Exbeta(x)}}_\X=0$.
\par This contradicts the fact that $\lambda_k$ is bounded away from 0, and
this contradiction concludes the proof of the lemma.
\end{proof}

Lemma \ref{lemmeimagefeuillesstables} allows us to easily infer the path-lifting property in the $\alpha$ and $\beta$-directions.
\begin{corollary}\label{corollairerelevementcerclesalphaetbeta}
\begin{enumerate}
 \item For any $x\in \delta(\tilde{M})$, $\Calpha(x)\cap \delta(\tilde{M})=\Calpha(x)\cap Y$
and $\Cbeta(x)\cap \delta(\tilde{M})=\Cbeta(x)\cap Y$.
\item The paths in $\delta(\tilde{M})$ in the $\alpha$ and $\beta$-directions lift to $\tilde{M}$ from any point.
\end{enumerate} 
\end{corollary}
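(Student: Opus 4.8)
The plan is to derive both statements directly from Lemma \ref{lemmeimagefeuillesstables}, using only the additional fact that the developping map takes values in $Y$; essentially no new work is required beyond bookkeeping, since the substantive estimate lives in that lemma.

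First I would prove assertion (1). Fix $x\in\delta(\tilde{M})$ and choose a preimage $\tilde{x}\in\delta^{-1}(x)$. Since $\delta$ is the developping map of the $(\mathcal{H},Y)$-structure of $M$ (Proposition \ref{propositionSHYstructuresurMdeuxmodeles}), its image is contained in $Y$, so trivially $\Calpha(x)\cap\delta(\tilde{M})\subseteq\Calpha(x)\cap Y$. For the reverse inclusion, Lemma \ref{lemmeimagefeuillesstables} gives $\delta(\tilde{\mathcal{F}}^\alpha(\tilde{x}))=\Calpha(x)\cap Y$; as the $\alpha$-leaf $\tilde{\mathcal{F}}^\alpha(\tilde{x})$ lies in $\tilde{M}$, this yields $\Calpha(x)\cap Y\subseteq\delta(\tilde{M})$, hence $\Calpha(x)\cap Y\subseteq\Calpha(x)\cap\delta(\tilde{M})$. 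Combining the two inclusions gives $\Calpha(x)\cap\delta(\tilde{M})=\Calpha(x)\cap Y$, and the identical argument with $\beta$-leaves and $\beta$-circles gives the second equality.

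Assertion (2) then follows formally. By Remark \ref{remarkconditionsuffisanterelevementchemins}, lifting $\alpha$-direction paths in $\delta(\tilde{M})$ to $\tilde{M}$ from every point is equivalent to the equality $\delta(\tilde{\mathcal{F}}^\alpha(\tilde{x}))=\Calpha(x)\cap\delta(\tilde{M})$ for all $x\in\delta(\tilde{M})$ and $\tilde{x}\in\delta^{-1}(x)$; Lemma \ref{lemmeimagefeuillesstables} identifies the left-hand side with $\Calpha(x)\cap Y$, which by part (1) equals $\Calpha(x)\cap\delta(\tilde{M})$. The $\beta$-direction is treated verbatim.

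I do not expect any genuine obstacle in this corollary: the delicate dynamical argument is entirely contained in Lemma \ref{lemmeimagefeuillesstables}, and the only facts one must be careful to invoke are that $\delta$ is a local isomorphism of Lagrangian contact structures — so that the $\alpha$- and $\beta$-leaves of $\tilde{M}$ are carried into single $\alpha$- (resp.\ $\beta$-) circles — and that $\delta(\tilde{M})\subseteq Y$, both already established before this point. This corollary is precisely one of the two inputs (the other being the analogous path-lifting statement in the central direction) needed to apply Lemma \ref{lemmerelevementchemins} and thereby obtain Proposition \ref{propositionHYstructurecomplete}.
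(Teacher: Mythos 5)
Your proof is correct and follows essentially the same route as the paper: both parts rest entirely on Lemma \ref{lemmeimagefeuillesstables} combined with $\delta(\tilde M)\subset Y$ and Remark \ref{remarkconditionsuffisanterelevementchemins}. The only cosmetic difference is that the paper obtains the inclusion $\Calpha(x)\cap\delta(\tilde M)\subset\Calpha(x)\cap Y$ via the union $\cup_{\tilde x\in\delta^{-1}(x)}\delta(\tilde{\mathcal{F}}^\alpha(\tilde x))$ rather than directly from $\delta(\tilde M)\subset Y$, which changes nothing of substance.
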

\begin{proof}
We only write the proof of the statements for the $\alpha$-direction, the case of the $\beta$-direction being formally the same. \\
1. For any $\tilde{x}\in\tilde{M}$, denoting $\delta(\tilde{x})=x$, we know that $\partial Y\cap\Calpha(x)$ 
is equal to a single point $\{x^*\}$ that satisfies $\Calpha(x)\setminus\{x^*\}=\Calpha(x)\cap Y$.
Furthermore, $\delta(\tilde{\mathcal{F}}^\alpha(\tilde{x}))=\Calpha(x)\setminus\{x^*\}=\Calpha(x)\cap Y$ according to 
Lemma \ref{lemmeimagefeuillesstables}.
As $\Calpha(x)\cap\delta(\tilde{M})\subset\cup_{\tilde{x}\in\delta^{-1}(x)}\delta(\tilde{\mathcal{F}}^\alpha(\tilde{x}))=
\Calpha(x)\cap Y$,
we finally obtain $\Calpha(x)\cap \delta(\tilde{M})=\Calpha(x)\cap Y$. \\
2. Together with Lemma \ref{lemmeimagefeuillesstables}, 
we finally have $\delta(\tilde{\mathcal{F}}(\tilde{x}))=\Calpha(x)\cap \delta(\tilde{M})$, for any 
$x\in \delta(\tilde{M})$ and $\tilde{x}\in\delta^{-1}(x)$.
According to the remark \ref{remarkconditionsuffisanterelevementchemins}, this proves
that any path starting from $x$ in the $\alpha$-direction lifts to $\tilde{M}$ from $\tilde{x}$.
\end{proof}

The accessibility property of Lagrangian contact structures allows us to deduce that:
\begin{corollary}\label{corollaireimagedelta}
The developping map is surjective: $\delta(\tilde{M})=Y$.
\end{corollary}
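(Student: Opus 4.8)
The plan is to deduce surjectivity of $\delta$ from two facts already established: on one hand, the image $\delta(\tilde{M})$ is \emph{saturated} inside $Y$ along the $\alpha$- and $\beta$-foliations (this is the content of Corollary \ref{corollairerelevementcerclesalphaetbeta}); on the other hand, the contact distribution $\Exalpha\oplus\Exbeta$ makes $Y$ \emph{accessible} by paths tangent to these two foliations. Combining the two gives that $\delta(\tilde{M})$ is open, nonempty, and invariant under ``$\alpha$-$\beta$-moves'', hence equal to the connected set $Y$.

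Concretely, I would first observe that $\delta(\tilde{M})$ is a nonempty open subset of $Y$, since $\delta$ is a local diffeomorphism. Fix $x_0\in\delta(\tilde{M})$. By Corollary \ref{corollairerelevementcerclesalphaetbeta}(1), for every $p\in\delta(\tilde{M})$ one has $\Calpha(p)\cap\delta(\tilde{M})=\Calpha(p)\cap Y$ and $\Cbeta(p)\cap\delta(\tilde{M})=\Cbeta(p)\cap Y$; in other words, whenever a point lies in $\delta(\tilde{M})$, the entire leaf of $\Exalpha$ (resp.\ of $\Exbeta$) through that point inside $Y$ is already contained in $\delta(\tilde{M})$. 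Next, since $\Exalpha\oplus\Exbeta$ is a contact, hence bracket-generating, distribution on the connected manifold $Y$, the orbit theorem of Sussmann \cite[Theorem 4.1]{sussmann} — already invoked in the proof of Proposition \ref{propositionomega=M} — guarantees that any $y\in Y$ can be joined to $x_0$ by a piecewise smooth path whose successive smooth arcs are each tangent to $\Exalpha$ or to $\Exbeta$, and therefore each contained in a single $\alpha$-circle or $\beta$-circle of $Y$. Running along this path and applying the saturation property to each successive arc, starting from $x_0\in\delta(\tilde{M})$, one gets by induction on the number of arcs that every vertex of the path, and in particular $y$, lies in $\delta(\tilde{M})$. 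As $y\in Y$ was arbitrary, $\delta(\tilde{M})=Y$.

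The bulk of the difficulty here has already been absorbed into Lemma \ref{lemmeimagefeuillesstables} and Corollary \ref{corollairerelevementcerclesalphaetbeta}, so no serious obstacle remains; the only point deserving a word of care is the compatibility of the abstract leaves produced by the accessibility argument with the concrete $\alpha$- and $\beta$-circles of $Y$. This is harmless: by the description of $Y_\torus$ and $Y_\affine$ recalled just before Lemma \ref{lemmeimagefeuillesstables}, each set $\Calpha(p)\cap Y$ (resp.\ $\Cbeta(p)\cap Y$) is an $\alpha$-circle (resp.\ $\beta$-circle) minus a single point, hence connected, so a smooth arc tangent to $\Exalpha$ starting at $p$ indeed stays inside $\Calpha(p)\cap Y$, which is exactly what makes the inductive step of the argument legitimate.
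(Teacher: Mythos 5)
Your proof is correct and follows essentially the same route as the paper's: combine the accessibility of $Y$ by $\alpha$- and $\beta$-arcs given by Sussmann's theorem \cite[Theorem 4.1]{sussmann} with the saturation property of Corollary \ref{corollairerelevementcerclesalphaetbeta}(1), and conclude by a finite induction along the chain. The extra remark on the connectedness of $\Calpha(p)\cap Y$ is a reasonable point of care but does not change the argument.
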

\begin{proof}
Let $x$ be a point of the non-empty subset $\delta(\tilde{M})$, and $y$ be any point in $Y$.
Restricting the Lagrangian contact structure $\Lx=(\Exalpha,\Exbeta)$ of $\X$ to the connected open subset $Y$, 
\cite[Theorem 4.1]{sussmann} implies the existence of
a finite number $x=x_1,\dots,x_n=y$ of points of $Y$ such that for any $i=1,\dots,n-1$,
$x_{i+1}\in \Calpha(x_i)\cap Y$ or $x_{i+1}\in \Cbeta(x_i)\cap Y$.
Applying the first statement of Corollary \ref{corollairerelevementcerclesalphaetbeta}, we deduce by a direct 
finite recurrence that for any $i$, $x_i\in \delta(\tilde{M})$, so that $y\in\delta(\tilde{M})$. 
\end{proof}

We finally prove that the central paths also lift, by a specific method for each model.
\begin{lemma}\label{lemmerelevementcheminscentrauxYt}
In the case of $Y_\torus$, any central path starting at any point $x\in Y_\torus$ lifts in $\tilde{M}$
from any point $\tilde{x}\in\delta^{-1}(x)$.
\end{lemma}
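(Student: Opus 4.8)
The strategy is to trivialize the dynamics on $Y_\torus$ by observing that the central distribution $\Exc_\torus$ is everywhere tangent to a concrete foliation of $Y_\torus$ whose leaves are geometrically identifiable, and then to lift paths along this foliation using the already-established path-lifting in the $\alpha$ and $\beta$-directions together with the compactness of $M$. Recall from Paragraph~\ref{soussoussectionYtorus} that $Y_\torus=S_0\cdot o_\torus$ with $S_0\simeq\SL{2}$ acting simply transitively, that $\Exc_\torus(o_\torus)=\R H_0^\dag(o_\torus)$, and that the central distribution is $S_0$-invariant. Hence the central leaves in $Y_\torus$ are exactly the orbits of the one-parameter subgroup $\{\exp(tH_0)\}$ acting on the right on $S_0\simeq Y_\torus$; equivalently, identifying $Y_\torus$ with $\phi_a^{-1}(\{m+L\mid m\neq 0,\ L\neq\R m\})$, the central leaf through a pointed affine line $(m,L)$ is the set of pointed affine lines $(m',L')$ obtained by applying the diagonal $\SL{2}$-flow. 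The first step is therefore to give an explicit, compact description of the central leaf $\mathcal{F}^c(x)$ through a point $x\in Y_\torus$ and to check that each such leaf, when completed inside $\X$, misses only finitely many (in fact the relevant structure will show: boundedly many) points of $\partial Y_\torus$.

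First I would establish the analogue of Lemma~\ref{lemmeimagefeuillesstables} for central leaves: for $\tilde x\in\tilde M$ with $x=\delta(\tilde x)$, the image $\delta(\widetilde{\mathcal{F}}^c(\tilde x))$ is an open sub-arc of the central leaf $\mathcal{F}^c(x)$ of $Y_\torus$, and it is actually all of $\mathcal{F}^c(x)$. The containment $\delta(\widetilde{\mathcal{F}}^c(\tilde x))\subset\mathcal{F}^c(x)$ follows because $\delta$ is a local isomorphism of enhanced Lagrangian contact structures from $\Stilde$ to $\Sm_\torus$ (Proposition~\ref{propositionSHYstructuresurMdeuxmodeles}), so it carries central leaves into central leaves. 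For the reverse inclusion I would run the same recurrence/limit argument as in Lemma~\ref{lemmeimagefeuillesstables}: if $\delta(\widetilde{\mathcal{F}}^c(\tilde x))$ were a proper sub-arc $\intervalleoo{x^-}{x^+}$ of $\mathcal{F}^c(x)$, pick a sequence $n_k\to+\infty$ and $\gamma_k\in\piun{M}$ with $\tilde x_k:=\gamma_k\tilde f^{n_k}(\tilde x)\to\tilde x_\infty$; the endpoints $x^\pm$ are transported by elements $g_k\in\G$ (Liouville, Theorem~\ref{theoremLiouvilleXG}) to points $x_k^\pm$, and the crucial Fact~\ref{factargumentsubtil}-type statement (endpoints stay away from $x_\infty$) holds by the same injectivity argument. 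Conjugating the $g_k$-action on the central leaf into a standard one-parameter group in $\PGL{2}$ and using that $\Diff{}{f^{n_k}}$ contracts one of the invariant directions while $\piun{M}$ acts by isometries yields a contradiction with the endpoints remaining bounded apart. This gives $\delta(\widetilde{\mathcal{F}}^c(\tilde x))=\mathcal{F}^c(x)\cap\delta(\tilde M)$ provided I also know $\mathcal{F}^c(x)\cap\partial Y_\torus$ is a single point; this last point is a purely geometric check on the explicit description of central leaves in $Y_\torus$ given above, analogous to the $\alpha,\beta$-circle computation.

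The main obstacle I anticipate is precisely this geometric analysis of the central leaves of $Y_\torus$ inside $\X$: unlike the $\alpha$- and $\beta$-circles, which are the fibers of the two bundle projections $\pi_\alpha,\pi_\beta$ and hence projective lines meeting $\partial Y$ transparently, the central leaves are orbits of a diagonal $\SL{2}$-action, so I must compute explicitly (say in the affine chart $\phi_a$, or via a one-parameter subgroup $\{g^t\}$ conjugate to $\{\exp(tH_0)\}$) the closure of such an orbit in $\X$ and verify it adds exactly one boundary point lying in $S_{\beta,\alpha}(D_\infty)\cup S_{\alpha,\beta}(m_\torus)$. Once this is in hand, Remark~\ref{remarkconditionsuffisanterelevementchemins} immediately gives the path-lifting property in the central direction: for $x\in Y_\torus=\delta(\tilde M)$ (surjectivity being Corollary~\ref{corollaireimagedelta}) and $\tilde x\in\delta^{-1}(x)$, we have $\delta(\widetilde{\mathcal{F}}^c(\tilde x))=\mathcal{F}^c(x)\cap\delta(\tilde M)$, so every central path from $x$ lifts starting at $\tilde x$, which is the assertion of the lemma. (Lemma~\ref{lemmerelevementchemins} then combines this with Corollary~\ref{corollairerelevementcerclesalphaetbeta} to conclude that $\delta$ is a covering — but that is the content of the following Proposition~\ref{propositionHYstructurecomplete}, not of this lemma.)
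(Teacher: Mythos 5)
Your plan has a genuine gap at its central step. You propose to rerun the argument of Lemma \ref{lemmeimagefeuillesstables} (recurrence, holonomy sequence $g_k$, a Fact \ref{factargumentsubtil}--type statement, then a contradiction) for the central leaves, and you invoke ``that $\Diff{}{f^{n_k}}$ contracts one of the invariant directions'' to conclude. But the contradiction in Lemma \ref{lemmeimagefeuillesstables} is precisely between the endpoints staying apart (which forces the renormalized holonomies $A_k$ to be bounded away from degeneracy) and the fact that $\norme{\Diff{\tilde{x}}{(\gamma_k\tilde{f}^{n_k})}\restreinta_{\Etildebeta}}\to 0$, which comes from the hypothesis that $E^\beta$ is weakly contracted (Definition \ref{definitionweakly}). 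No such hypothesis is made on $E^c$, and none can hold: in the $Y_\torus$ case the limiting dynamics is the time-one map of an Anosov flow, whose central direction is the flow direction, along which $\norme{\Diff{}{f^n}}$ is bounded above and below uniformly in $n$. So the quantity you would need to tend to $0$ does not, and the proposed contradiction evaporates. A secondary, fixable issue: your ``purely geometric check'' that the central leaf adds exactly one boundary point is false. The central leaf through $o_\torus$ is $\exp(\R Z)\cdot o_\torus$ with $Z=\mathrm{diag}(1,1,-2)$, whose closure in $\X$ adds \emph{two} distinct points, one in $S_{\alpha,\beta}(m_\torus)$ and one in $S_{\beta,\alpha}(D_\infty)$; the central leaves are open arcs, not circles minus a point, so the $\SO{3}$-normalization of Lemma \ref{lemmeimagefeuillesstables} would also need to be redone.

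The paper's actual proof avoids dynamics entirely and rests on an algebraic observation you did not use: $Z=\mathrm{diag}(1,1,-2)$ is \emph{central} in $\mathfrak{h}_\torus$, so the Killing field $Z^\dag$ is invariant under all of $H_\torus=\Aut(Y_\torus,\Sm_\torus)$, and it generates $\Exc_\torus$ (one checks $\R\bar{Z}=\R\bar{H}$ in $\mathfrak{h}_\torus/\mathfrak{i}_\torus$). Since the holonomy group lies in $H_\torus$ (Proposition \ref{propositionSHYstructuresurMdeuxmodeles}), the pullback $\delta^*Z^\dag$ is $\piun{M}$-invariant, hence descends to a vector field on the compact manifold $M$ generating $E^c$; this descended field is complete, so $\delta^*Z^\dag$ is complete on $\tilde{M}$, and therefore $\delta(\tilde{\mathcal{F}}^c(\tilde{x}))=\exp(\R Z)\cdot x=\mathcal{F}^c_\torus(x)$ in full, which is exactly the lifting property via Remark \ref{remarkconditionsuffisanterelevementchemins}. (Note this trick is specific to $Y_\torus$; for $Y_\affine$ the center of $\mathfrak{h}_\affine$ is trivial and the paper uses a different commutator identity expressing the central flow through $\alpha$- and $\beta$-flows.) To repair your write-up you would need to replace the contraction argument by this completeness argument, or find another source of completeness for the central direction.
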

\begin{proof}
Let us recall that
$H_\torus=
\left[
\begin{smallmatrix}
\GL{2} & 0 \\
0 & 1
\end{smallmatrix}
\right]=\Aut(Y_\torus,\Sm_\torus)
$ 
and $o_\torus=([1,0,1],[(1,0,1),e_2])\in Y_\torus$.
Since $Z=\left(\begin{smallmatrix}
 1 & 0 & 0 \\
 0 & 1 & 0 \\
 0 & 0 & -2
 \end{smallmatrix}\right)$ is central in $\mathfrak{h}_\torus$, the Killing field 
 $Z^\dag$ of $\Sm_\torus$ associated to $Z$ is $H_\torus$-invariant.
As $\Exc_\torus(o_\torus)=\R Z^\dag(o_\torus)$ (see Paragraph \ref{soussoussectionYtorus}) and $\Exc$ is $H_\torus$-invariant as well, 
$Z^\dag$ actually generates the transverse distribution on $Y_\torus$.
At any point $x\in Y_\torus$, we thus have $\mathcal{F}^c_\torus(x)=\exp(\R Z)\cdot x$.
Now, as the holonomy group $\rho(\pi_1(M))$ is contained in $H_\torus$ according to 
Proposition \ref{propositionSHYstructuresurMdeuxmodeles}, it leaves $Z^\dag$ invariant,
and the pullback $\tilde{X}\coloneqq\delta^*Z^\dag$ is thus preserved by the fundamental group $\pi_1(M)$.
This allows us to push $\tilde{X}$ down on $M$, to a Killing field $X$ generating the central direction $E^c$.
As $M$ is compact, $X$ is a complete vector field, and as $\pi_M\colon\tilde{M}\to M$ is a covering map,
the pullback $\pi_M^*X=\tilde{X}$ is also complete,
implying that for any $\tilde{x}\in\tilde{M}$, the central leaf at $\tilde{x}$ is simply
the integral curve of $\tilde{X}=\delta^*Z^\dag$ at $\tilde{x}$.
For any $x\in Y_\torus$ and $\tilde{x}\in\delta^{-1}(x)$ (which is non-empty because $\delta(\tilde{M})=Y_\torus$ 
according to Corollary \ref{corollaireimagedelta}) we thus have
$\delta(\tilde{\mathcal{F}}^c(\tilde{x}))=\enstq{\delta(\varphi_{\tilde{X}}^t(\tilde{x}))}{t\in\R}
=\exp(\R Z)\cdot x=\mathcal{F}^c_\torus(x)$. This finishes the proof of the lemma according to 
Remark \ref{remarkconditionsuffisanterelevementchemins}.
\end{proof}

\begin{lemma}\label{lemmerelevementcheminscentrauxYa}
In the case of $Y_\affine$, any central path starting at any point $x\in Y_\affine$ lifts in $\tilde{M}$
from any point $\tilde{x}\in\delta^{-1}(x)$.
\end{lemma}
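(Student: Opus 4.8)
The strategy differs from the one used for $Y_\torus$ in Lemma \ref{lemmerelevementcheminscentrauxYt}, because $Z=\left(\begin{smallmatrix}0&0&1\\0&0&0\\0&0&0\end{smallmatrix}\right)$ is \emph{not} central in $\mathfrak{h}_\affine=\pmin$: the central field of $Y_\affine$ is only $H_\affine$-invariant up to a scalar, so it does not descend to a vector field on $M$. Instead I would exploit the Heisenberg commutation relations together with the already-established lifting of $\alpha$- and $\beta$-paths (Corollary \ref{corollairerelevementcerclesalphaetbeta}). First, recall from Paragraph \ref{soussoussectionYaffine} that $\Sm_\affine=(\theta_{o_\affine}\restreinta_{\Heis{3}})_*\Sm_{\Heis{3}}$; pushing the left-invariant fields $\tilde X,\tilde Y,\tilde Z$ of $\Heis{3}$ through the diffeomorphism $\theta_{o_\affine}\colon\Heis{3}\to Y_\affine$ gives vector fields $V_\alpha,V_\beta,V_c$ on $Y_\affine$ spanning respectively $\Exalpha,\Exbeta,\Exc_\affine$, which are complete (diffeomorphic images of left-invariant fields) and satisfy $[V_\alpha,V_\beta]=V_c$, $[V_\alpha,V_c]=[V_\beta,V_c]=0$. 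Moreover, from the explicit description of $Y_\affine$ in Paragraph \ref{soussoussectionYaffine}, the $\alpha$-leaf (resp. $\beta$-leaf) of any point of $Y_\affine$ is the whole $\alpha$-circle (resp. $\beta$-circle) minus the unique point where it exits $Y_\affine$, hence is a \emph{complete} orbit of $V_\alpha$ (resp. of $V_\beta$).

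Next I would pull everything back by the developping map. Set $\tilde V_\alpha=\delta^*V_\alpha$, $\tilde V_\beta=\delta^*V_\beta$, $\tilde V_c=\delta^*V_c$ on $\tilde M$; since $\delta$ is a local isomorphism of enhanced Lagrangian contact structures onto $(Y_\affine,\Sm_\affine)$ (Proposition \ref{propositionSHYstructuresurMdeuxmodeles}), these span $\Etildealpha,\Etildebeta,\Etildec$ and obey the same bracket relations $[\tilde V_\alpha,\tilde V_\beta]=\tilde V_c$, $[\tilde V_\alpha,\tilde V_c]=[\tilde V_\beta,\tilde V_c]=0$. I claim $\tilde V_\alpha$ and $\tilde V_\beta$ are complete. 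Indeed, by Lemma \ref{lemmeimagefeuillesstables} and Corollary \ref{corollairerelevementcerclesalphaetbeta}, $\delta$ restricts to a diffeomorphism from the $\alpha$-leaf $\widetilde{\mathcal{F}}^\alpha(\tilde x)$ of any $\tilde x\in\tilde M$ onto the $\alpha$-leaf of $\delta(\tilde x)$ in $Y_\affine$, which is a complete orbit of $V_\alpha$; if the maximal integral curve of $\tilde V_\alpha$ through $\tilde x$ were defined only on a bounded interval, its $\delta$-image would be a proper subarc of that complete orbit, a contradiction. The same argument applies to $\tilde V_\beta$.

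Finally I would deduce completeness of the central field $\tilde V_c$ from that of $\tilde V_\alpha$ and $\tilde V_\beta$. Since $\tilde V_\alpha$ is complete and $[\tilde V_\alpha,[\tilde V_\alpha,\tilde V_\beta]]=[\tilde V_\alpha,\tilde V_c]=0$, the identity $(\varphi^{-s}_{\tilde V_\alpha})_*\tilde V_\beta=\tilde V_\beta+s[\tilde V_\alpha,\tilde V_\beta]=\tilde V_\beta+s\tilde V_c$ holds globally on $\tilde M$ (the right-hand side is constant in $s$ after one more differentiation, because the bracket vanishes); being the pushforward of the complete field $\tilde V_\beta$ by the global diffeomorphism $\varphi^{-s}_{\tilde V_\alpha}$, the field $\tilde V_\beta+s\tilde V_c$ is complete for every $s\in\R$. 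As $[\tilde V_\beta,\tilde V_\beta+s\tilde V_c]=0$, for any fixed $s\neq0$ the commuting complete fields $\tilde V_\beta$ and $\tilde V_\beta+s\tilde V_c$ generate a global $\R^2$-action (their flows commute), so every element of $\Vect(\tilde V_\beta,\tilde V_c)$ is complete — in particular $\tilde V_c=s^{-1}\big((\tilde V_\beta+s\tilde V_c)-\tilde V_\beta\big)$ (compare Palais, \cite{palais}, Chapter II). Then for any $\tilde x\in\tilde M$ with $x=\delta(\tilde x)$, the curve $t\mapsto\varphi^t_{\tilde V_c}(\tilde x)$ is defined for all $t\in\R$ and projects by $\delta$ onto $t\mapsto\varphi^t_{V_c}(x)$, which traverses the entire central leaf of $x$ (central leaves of $Y_\affine$ being complete orbits of $V_c$). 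Hence $\delta(\widetilde{\mathcal{F}}^c(\tilde x))=\mathcal{F}^c_\affine(x)$, and since $\delta(\tilde M)=Y_\affine$ (Corollary \ref{corollaireimagedelta}), Remark \ref{remarkconditionsuffisanterelevementchemins} gives the path-lifting property in the central direction, as claimed. The only delicate point is the global validity of the flow–commutator identity, which rests precisely on the completeness of $\tilde V_\alpha$ together with the truncation $[\tilde V_\alpha,[\tilde V_\alpha,\tilde V_\beta]]=0$; everything else is routine.
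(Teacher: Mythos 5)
Your proof is correct, and its skeleton coincides with the paper's: pick complete generating fields on $Y_\affine$ satisfying the Heisenberg bracket relations, deduce completeness of the pulled-back $\alpha$- and $\beta$-fields from Corollary \ref{corollairerelevementcerclesalphaetbeta}, and then transfer completeness to the central field using two-step nilpotency. The only genuine difference is how that last transfer is carried out. The paper identifies $Y_\affine$ with $\R^3$ by an explicit chart and verifies the exact commutator identity $\varphi_{X^\beta}^{-t}\circ\varphi_{X^\alpha}^{-t}\circ\varphi_{X^\beta}^{t}\circ\varphi_{X^\alpha}^{t}=\varphi_{X^c}^{t^2}$ (and its negative-time counterpart), which pulls back verbatim and realizes the central flow as a composition of globally defined flows. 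You argue infinitesimally instead: $(\varphi^{-s}_{\tilde V_\alpha})_*\tilde V_\beta=\tilde V_\beta+s\tilde V_c$ because $[\tilde V_\alpha,[\tilde V_\alpha,\tilde V_\beta]]=0$, so $\tilde V_\beta+s\tilde V_c$ is complete as the pushforward of a complete field by a global diffeomorphism, and the standard fact that linear combinations of commuting complete fields are complete then gives completeness of $\tilde V_c$. Both are implementations of the same idea; yours is coordinate-free and would apply to any complete pair spanning a bracket-generating two-step nilpotent configuration, while the paper's is a one-line verification once the global coordinates are written down. The auxiliary facts you invoke (completeness and surjectivity of the orbits of the pushed-forward left-invariant fields on $Y_\affine$, surjectivity of $\delta$ from Corollary \ref{corollaireimagedelta}, and the reduction to images of leaves via Remark \ref{remarkconditionsuffisanterelevementchemins}) are all available at this stage, so I see no gap.
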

\begin{proof}
Let us first emphasize that the argument used in the previous lemma for the case of $Y_\torus$ 
does not work here, because the center of $\mathfrak{h}_\affine$ is trivial.
\par We identify $Y_\affine$ with $\R^3$ through
$(x,y,z)\in\R^3\mapsto([x,y,1],[(x,y,1),(z,1,0)])\in Y_\affine$,
and we consider the following vector fields of $Y_\affine$ in these global coordinates:
\[
X^\alpha(x,y,z)=e_3,X^\beta(x,y,z)=(z,1,0),X^c(x,y,z)=e_1.
\]
These vector fields are complete and
generate the enhanced Lagrangian contact structure $\Sm_\affine=(\Exalpha,\Exbeta,\Exc_\affine)$ on $Y_\affine$
(see Paragraph \ref{soussoussectionYaffine}).
Since the paths tangent to the $\alpha$ and $\beta$-distributions
entirely lift to $\tilde{M}$ according to Corollary \ref{corollairerelevementcerclesalphaetbeta}, we deduce
that the pullbacks $\tilde{X}^\alpha=\delta^*X^\alpha$ and $\tilde{X}^\beta=\delta^*X^\beta$ are complete as well.
We can furthermore realize the flow of the central vector field $X^c$ 
by $\alpha-\beta$ curves through the following equalities:
\[ 
\begin{cases}
 \varphi_{X^\beta}^{-t}\circ\varphi_{X^\alpha}^{-t}\circ\varphi_{X^\beta}^t\circ\varphi_{X^\alpha}^t(x)
=x+t^2e_1=\varphi_{X^c}^{t^2}(x), \\
 \varphi_{X^\beta}^{t}\circ\varphi_{X^\alpha}^{-t}\circ\varphi_{X^\beta}^{-t}\circ\varphi_{X^\alpha}^t(x)
=x-t^2e_1=\varphi_{X^c}^{-t^2}(x).
\end{cases}
\]
The same equalities are thus true for the pullbacks $\tilde{X}^\alpha$, $\tilde{X}^\beta$, and 
$\tilde{X}^c=\delta^*X^c$,
and since the flows of $\tilde{X}^\alpha$ and $\tilde{X}^\beta$ are defined for all times,
these equalities show that $\tilde{X}^c$ is also complete.
The completeness of $\tilde{X}^c$ 
allows us to lift any central path of $Y_\affine$ from any point of $\tilde{M}$, and concludes the proof of the lemma.
\end{proof}

\begin{proof}[End of the proof of Proposition \ref{propositionHYstructurecomplete}]
According to Corollary \ref{corollairerelevementcerclesalphaetbeta} and to Lemmas 
\ref{lemmerelevementcheminscentrauxYt} and \ref{lemmerelevementcheminscentrauxYa}, the local diffeomorphism
$\delta$ satisfies the path-lifting property on $Y$ in the $\alpha$, $\beta$, and central directions,
and is thus a covering map from $\tilde{M}$ to $Y$ according to 
Lemma \ref{lemmerelevementchemins}.
\end{proof}

\section{Conclusion}\label{sectionconclusion}

\subsection{End of the proof of Theorem \ref{theoremeoptimal}}\label{soussectionfinpreuveTheoremeB}

The work that has been done so far tells us that for one of the two models 
$(\mathcal{H},Y,\Sx)=(H_\torus,Y_\torus,\Sm_\torus)$
or $(H_\affine,Y_\affine,\Sm_\affine)$, $M$ is a $(\mathcal{H},Y)$-manifold whose developping map $\delta\colon\tilde{M}\to Y$
is a covering map satisfying $\delta^*\Sx=\Stilde$.
With these informations, we will finish the proof of Theorem \ref{theoremeoptimal}.
We will use the link between the geometrical and algebraic point of views on the models $(Y_\torus,\Sm_\torus)$ and $(Y_\affine,\Sm_\affine)$,
explained in Paragraphs \ref{soussectionexemplealgebriquesgeometrie} and \ref{soussectionouvertshomogenes}.

\subsubsection{Case of $(Y_\affine,\Sm_\affine)$}\label{soussoussectionconclusioncasYa}

We first assume that $(M,\Sm)$ is locally isomorphic to $(Y_\affine,\Sm_\affine)$.
Since $Y_\affine$ is simply connected (because homeomorphic to $\Heis{3}$), the covering map 
$\delta\colon\tilde{M}\to Y_\affine$ is actually a diffeomorphism in this case.
Since the developping map conjugates the action of $\piun{M}$ on $\tilde{M}$
to the action of the holonomy group $\Gamma=\rho(\piun{M})\subset H_\affine$ on $Y_\affine$,
we can assume without lost of generality that $M$ is a compact quotient 
$\Gamma\backslash Y_\affine$, with $\Gamma$ a discrete subgroup of $H_\affine$
acting freely, properly, and cocompactly. Since $f$ is an automorphism of $(M,\Sm)$, we moreover deduce from Proposition
\ref{propositiongroupeautomorphismeetLiouvillequatremodeles} that $f\in\Nor_{H_\affine}(\Gamma)$.
\par We saw in Paragraph \ref{soussoussectionYaffine} that the identification between $\Heis{3}$ and $Y_\affine$
given by the orbital map at $o_\affine$ conjugates the action of $H_\affine$ on $Y_\affine$,
and the action of the semi-direct product $\Heis{3}\rtimes\mathcal{A}$ of affine automorphisms of $\Heis{3}$
preserving its left-invariant structure.
We can thus
assume that $M$ is a quotient $\Gamma\backslash\Heis{3}$, with 
$\Gamma$ a discrete subgroup of $\Heis{3}\rtimes\mathcal{A}$ acting freely, properly, and cocompactly on $\Heis{3}$,
and that $f\in\Nor_{\Heis{3}\rtimes\mathcal{A}}(\Gamma)$.
\par Denoting 
$[x,y,z]=
\left(\begin{smallmatrix}
1 & x & z \\
0 & 1 & y \\
0 & 0 & 1
\end{smallmatrix}\right)
$, the identification $[x,y,z]\in\Heis{3}\mapsto (x,y,z)\in\R^3$ of $\Heis{3}$ with $\R^3$
is equivariant for the following injective morphism from $\Heis{3}\rtimes\mathcal{A}$ to the affine transformations of $\R^3$:
\[
\Theta\colon
([x,y,z],\varphi_{\lambda,\mu})\in\Heis{3}\rtimes\mathcal{A}\mapsto
\begin{pmatrix}
\lambda & 0 & 0 \\
0 & \mu & 0 \\
0 & \mu x & \lambda\mu
\end{pmatrix}
+
\begin{bmatrix}
x \\
y \\
z
\end{bmatrix}\in\Aff{3}.
\] \\
\par $M$ is thus diffeomorphic to the quotient $\Lambda\backslash\R^3$, where $\Lambda\coloneqq\Theta(\Gamma)$ is a discrete subgroup of
affine transformations of $\R^3$ contained in $S\coloneqq\Theta(\Heis{3}\rtimes\mathcal{A})$, 
acting freely, properly and cocompactly on $\R^3$.
Since $S$ is solvable (because $\Heis{3}\rtimes\mathcal{A}\simeq\Pmin$ is), the work of Fried and Goldmann in \cite{friedgoldmann} 
(more precisely Theorem 1.4, Corollary 1.5 and Paragraphs 3 and 4 of this paper)
implies the existence of a so-called \emph{crystallographic hull} $C$ of $\Lambda$.
This group $C$ is a closed subgroup of $S$ 
containing $\Lambda$, and whose identity component $C^0$ satisfies the following assumptions:
$\Lambda\cap C^0$ has finite index in $\Lambda$ and is cocompact in $C^0$,
$C^0$ acts simply transitively on $\R^3$, and $C^0$ is isomorphic to $\R^3$, $\Heis{3}$, or $Sol$.
One can easily check that $S$ does not contain any subgroup isomorphic to $\R^3$, that the subgroups of $S$ isomorphic to $Sol$
do not act simply transitively on $\R^3$,
and that $\Theta(\Heis{3})$ is the only subgroup of $S$ isomorphic to $\Heis{3}$.
Finally, $C^0$ is equal to $\Theta(\Heis{3})$, and therefore, $\Lambda\cap\Theta(\Heis{3})$ has finite index
in $\Lambda$ and is cocompact in $\Theta(\Heis{3})$.
As a consequence, $\Gamma_0\coloneqq\Gamma\cap\Heis{3}$ 
has finite index in $\Gamma$ and is a cocompact lattice of $\Heis{3}$. \\

\par The morphism $p\colon(g,\varphi)\in\Heis{3}\rtimes\mathcal{A}\mapsto \varphi\in\mathcal{A}$ having a kernel equal to $\Heis{3}$,
$\Gamma/\Gamma_0$ is isomorphic to $p(\Gamma)\subset\mathcal{A}$.
But $\mathcal{A}$ is isomorphic to $(\R^*)^2$,
and a finite subgroup of $\mathcal{A}$ is thus contained in the subgroup $\{\varphi_{\pm1,\pm1}\}$ of cardinal 4, implying that
$\Gamma_0$ is a subgroup of $\Gamma$ of index at most 4.
Let us denote $f=(g,\varphi)\in\Nor_{\Heis{3}\rtimes\mathcal{A}}(\Gamma)$.
Then we have 
$g\varphi(\Gamma_0)g^{-1}=\Gamma_0$, and the affine automorphism 
$x\mapsto g\varphi(x)$ induces therefore a diffeomorphism $\check{f}$ of $\check{M}\coloneqq\Gamma_0\backslash\Heis{3}$ through 
$\check{f}(x\Gamma_0 )=g\varphi(x)\Gamma_0$. 
The canonical projection $\check{\pi}\colon\check{M}=\Gamma_0\backslash\Heis{3} \to M=\Gamma\backslash\Heis{3}$
is a covering of finite order equal to the index of $\Gamma_0$ in $\Gamma$,
and we have $\check{\pi}\circ\check{f}=f\circ\check{\pi}$. 
\par We know that $\varphi$ is equal to $\varphi_{\lambda,\mu}$ for some $(\lambda,\mu)\in(\R^*)^2$ (see \eqref{equationdefinitionautomorphismHeis3}), and
it only remains to show that $\abs{\lambda}<1$ and $\abs{\mu}>1$, or the contrary,
to conclude that $\check{f}$ is a partially hyperbolic affine automorphism of $\Heis{3}$.
Let us assume by contradiction that $\abs{\lambda}<1$ and $\abs{\mu}<1$. 
Choosing a left-invariant volume form $\nu$ on $\Heis{3}$, we have $((\Diff{e}{\varphi})^*\nu)_e=\lambda^2\mu^2\nu_e$,
and $\nu$ induces a volume form $\bar{\nu}$ on $\check{M}=\Gamma_0\backslash\Heis{3}$
such that $\check{f}^*\bar{\nu}=\lambda^2\mu^2\bar{\nu}$, because $L_g$ preserves $\nu$.
As $\check{f}$ is a diffeomorphism of the compact manifold $\check{M}$, 
we must have $\int_{\check{M}}\bar{\nu}=\int_{\check{M}}\check{f}^*\bar{\nu}=\lambda^2\mu^2\int_{\check{M}}\bar{\nu}$, which is a 
contradiction because $\int_{\check{M}}\bar{\nu}\neq 0$ and $\lambda^2\mu^2<1$. 
The same argument shows that we cannot have $\abs{\lambda}>1$ and $\abs{\mu}>1$ neither, which 
finishes the proof of Theorem 
\ref{theoremeoptimal} in the case of the local model $(Y_\affine,\Sm_\affine)$. 

\subsubsection{Case of $(Y_\torus,\Sm_\torus)$}
We now assume that $\Sm$ is locally isomorphic to $(Y_\torus,\Sm_\torus)$.
Identifying $Y_\torus$ with $\SL{2}$ as explained in Paragraph \ref{soussoussectionYtorus},
we can lift the developping map $\delta\colon\tilde{M}\to Y_\torus$ to a map $\tilde{\delta}\colon\tilde{M}\to\SLtilde{2}$
through the universal cover morphism $\pi_{\SL{2}}\colon\SLtilde{2}\to\SL{2}$.
As $\delta$ is a covering map according to Proposition \ref{propositionHYstructurecomplete}, 
$\tilde{\delta}$ is a diffeomorphism because $\SLtilde{2}$ is simply connected.
As $M$ is supposed to be orientable, $\piun{M}$ preserves its orientation, implying
that the holonomy group $\rho(\piun{M})$ is contained in the subgroup $H_\torus^+=\GLplus{2}$ of elements of positive determinant.
We saw in Paragraph \ref{soussoussectionYtorus} that the diffeomorphism $\theta_{o_\torus}\circ\iota\colon\SL{2}\to Y_\torus$ 
conjugates the action of $\GLplus{2}$ on $Y_\torus$ and the action of $\SL{2}\times A$ on $\SL{2}$.
As $\pi_{\SL{2}}$ is equivariant for the projection $\SLtilde{2}\times\tilde{A}\to\SL{2}\times A$,
we finally conclude that the diffeomorphism $\tilde{\delta}\colon\tilde{M}\to\SLtilde{2}$ is equivariant for a morphism 
$\tilde{\rho}\colon\piun{M}\to\SLtilde{2}\times\tilde{A}$.
We can thus assume that $M$ is a quotient $\tilde{\Gamma}\backslash\SLtilde{2}$, with $\tilde{\Gamma}$
a discrete subgroup of $\SLtilde{2}\times\tilde{A}$ acting freely, properly, and cocompactly on $\SLtilde{2}$.
Possibly replacing $f$ by $f^2$, we can assume that $f$ preserves the orientation of $M$, 
and Proposition \ref{propositiongroupeautomorphismeetLiouvillequatremodeles} implies then
that $f=L_g\circ R_{a^t}$ with $(g,a^t)\in\Nor_{\SLtilde{2}\times\tilde{A}}(\tilde{\Gamma})$. \\

\par Denoting by $r_1\colon\SLtilde{2}\times\tilde{A}\to\SLtilde{2}$
the projection on the first factor, and $\tilde{\Gamma}_0\coloneqq r_1(\tilde{\Gamma})\subset\SLtilde{2}$,
we now prove the following result.
\begin{fact}\label{factkulkarniraymond}
$\tilde{\Gamma}_0$ is a cocompact lattice of $\SLtilde{2}$, 
and $\tilde{\Gamma}$ is the graph-group $\gr(\tilde{u},\tilde{\Gamma}_0)$ of
a morphism $\tilde{u}\colon\tilde{\Gamma}\to\tilde{A}$.
\end{fact}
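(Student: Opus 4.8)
The plan is to prove the statement in two stages. \textbf{First}, that the first projection $r_1$ is injective on $\tilde{\Gamma}$ — which immediately exhibits $\tilde{\Gamma}$ as the graph of the morphism $\tilde{u}\coloneqq r_2\circ(r_1\restreinta_{\tilde{\Gamma}})^{-1}\colon\tilde{\Gamma}_0\to\tilde{A}$, where $r_2$ is the projection on the second factor. \textbf{Second}, that $\tilde{\Gamma}_0=r_1(\tilde{\Gamma})$ is a discrete, cocompact subgroup of $\SLtilde{2}$. Before either stage I record that $\tilde{\Gamma}$ is discrete in $\SLtilde{2}\times\tilde{A}$: the morphism $\SLtilde{2}\times\tilde{A}\to\Homeo(\SLtilde{2})$, $(g,a)\mapsto(x\mapsto gxa)$, is injective (an element acting trivially has $g=a^{-1}\in\tilde{A}\cap Z(\SLtilde{2})=\{e\}$, since $\tilde{A}$ maps isomorphically onto the $\R$-split torus of $SL_2(\R)$, which meets $\{\pm\mathrm{Id}\}$ only in the identity), and its image acts properly discontinuously on $\SLtilde{2}$ because $M$ is a manifold, so the image — hence $\tilde{\Gamma}$ — is discrete.

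For the first stage, consider $N\coloneqq\tilde{\Gamma}\cap(\{e\}\times\tilde{A})$; since $\tilde{A}$ is abelian this is central in $\tilde{\Gamma}$, and being discrete in $\tilde{A}\simeq\R$ it is trivial or infinite cyclic. Suppose for contradiction that $N$ is generated by some $(e,a^s)$ with $s\neq 0$. This element acts on $\SLtilde{2}$ as the right translation $R_{a^s}$, so $R_{a^s}$ descends to the identity of $M=\tilde{\Gamma}\backslash\SLtilde{2}$; hence the algebraic flow $(R_{a^t})$ of Paragraph \ref{sousoussectionstructSL2} induces a periodic flow on the compact manifold $M$, and $(t,p)\mapsto\norme{\mathrm{D}_p R_{a^t}}$ is bounded on $\R\times M$. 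On the other hand, lift to $\SLtilde{2}$ and work in the left-invariant trivialisation: $\mathrm{D}R_{a^t}$ sends the left-invariant field $\tilde{E}$ — whose line is, by construction (see Paragraph \ref{sousoussectionstructSL2}), the pullback to $\SLtilde{2}$ of the $\alpha$-distribution $E^\alpha$ of $\Sm$ — to the left-invariant field of $\Ad(a^{-t})E=\e^{-2t}E$ (using $[H,E]=2E$). Since the covering $\SLtilde{2}\to M$ is a local isometry for the metric of $M$ pulled back to $\SLtilde{2}$, and $\norme{\tilde{E}}$ is a $\tilde{\Gamma}$-invariant positive continuous function on $\SLtilde{2}$, hence bounded away from $0$ and $\infty$ modulo $\tilde{\Gamma}$, the norm $\norme{\mathrm{D}R_{a^t}\restreinta_{E^\alpha}}$ grows like $\e^{-2t}$ as $t\to-\infty$, a contradiction. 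Therefore $N=\{e\}$, $r_1\restreinta_{\tilde{\Gamma}}$ is injective, and $\tilde{\Gamma}=\gr(\tilde{u},\tilde{\Gamma}_0)$.

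The second stage — discreteness and cocompactness of $\tilde{\Gamma}_0$ in $\SLtilde{2}$ — is the genuine obstacle, and cannot be obtained by soft means: the analogous assertion fails when $\SLtilde{2}$ is replaced by $\R$ (take $\tilde{\Gamma}=\{(n\alpha,n)\mid n\in\Z\}\subset\R\times\R$ with $\alpha\notin\Q$, which acts properly and cocompactly on $\R$ by $(p,n)\cdot x=x+p+n$, while $r_1(\tilde{\Gamma})=\alpha\Z$ is dense), so one must genuinely use the semisimplicity of $\SLtilde{2}$. The plan is therefore to invoke the classification of compact quotients $\tilde{\Gamma}\backslash\SLtilde{2}$ with $\tilde{\Gamma}<\SLtilde{2}\times\tilde{A}$ acting by $(g,a)\cdot x=gxa$, going back to Kulkarni and Raymond \cite{kulkarniraymond} in the closely related anti-de Sitter setting, and carried out in this precise left--right form with target $\tilde{A}$ by Salein \cite{salein}: analysed through the properness criterion for such actions, properness of the $\tilde{\Gamma}$-action forces $\tilde{\Gamma}_0$ to be discrete in $\SLtilde{2}$ and the morphism $\tilde{u}$ to be \emph{admissible} in Salein's sense; admissibility in turn implies that the $\tilde{\Gamma}$-orbit and the $\tilde{\Gamma}_0$-orbit of any point of $\SLtilde{2}$ are at bounded distance, whence cocompactness of $M=\tilde{\Gamma}\backslash\SLtilde{2}$ transfers to cocompactness of $\tilde{\Gamma}_0\backslash\SLtilde{2}$. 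Combining the two stages, $\tilde{\Gamma}_0$ is a cocompact lattice of $\SLtilde{2}$ and $\tilde{\Gamma}=\gr(\tilde{u},\tilde{\Gamma}_0)$ for an admissible morphism $\tilde{u}\colon\tilde{\Gamma}_0\to\tilde{A}$, as asserted.
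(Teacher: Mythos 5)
Your first stage is correct, and it is genuinely different from the paper's route: the paper obtains the injectivity of $r_1\restreinta_{\tilde{\Gamma}}$ only as a by-product of the Kulkarni--Raymond/Tholozan statement, whereas your dynamical argument (an element $(e,a^s)\in\tilde{\Gamma}$ with $s\neq 0$ would make $R_{a^s}$ descend to $\id_M$, so the induced flow on the compact $M$ would be periodic, contradicting the exponential distortion $\Ad(a^{-t})E=\e^{-2t}E$ of the left-invariant field generating $E^\alpha$) is self-contained and correct, as is your preliminary discreteness remark.

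The gap is in your second stage, in two respects. First, the mechanism you propose for transferring cocompactness --- that admissibility makes the $\tilde{\Gamma}$-orbit and the $\tilde{\Gamma}_0$-orbit of a point lie at bounded distance --- is not correct: for a left-invariant metric one has $d(\gamma x\tilde{u}(\gamma),\gamma x)=d(x\tilde{u}(\gamma),x)$, which is unbounded as soon as $\tilde{u}$ is non-trivial (its image is then an unbounded subgroup of $\tilde{A}\simeq\R$); admissibility in Salein's sense is a growth condition on Cartan projections, not a bounded-distance statement, and the cocompactness of $\tilde{\Gamma}_0$ in Kulkarni--Raymond is proved by cohomological-dimension arguments, not by comparing orbits. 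Second, and more seriously, you never address the centre of $\SLtilde{2}$. The Kulkarni--Raymond theorem (in the form the paper cites, Tholozan's Lemma 4.3.1) lives at the level of $\PSL{2}$ and its \emph{finite} covers: it yields that the image $\Gamma_0$ of $\tilde{\Gamma}_0$ in $\PSL{2}$ is a cocompact lattice. That alone does not make $\tilde{\Gamma}_0$ cocompact in the infinite cyclic cover $\SLtilde{2}$: if $\tilde{\Gamma}_0$ met $\Ztilde\simeq\Z$ trivially, then $\tilde{\Gamma}_0\backslash\SLtilde{2}$ would be a $\Z$-cover of the compact $\Gamma_0\backslash\PSL{2}$, hence non-compact. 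The indispensable extra input is Salein's ``finiteness of the level'', namely $\tilde{\Gamma}\cap(\Ztilde\times\{e\})=\langle(z^k,e)\rangle$ with $k\neq 0$; this is both what allows one to descend to a finite cover $\PSLk$ where Kulkarni--Raymond applies, and what makes the fibers of $\tilde{\Gamma}_0\backslash\SLtilde{2}\to\Gamma_0\backslash\PSL{2}$ finite. Your proposal has no substitute for this step. (Once discreteness of $\tilde{\Gamma}_0$ and your stage-one isomorphism $\tilde{\Gamma}\simeq\tilde{\Gamma}_0$ are available, cocompactness could alternatively be extracted from $H_3(\tilde{\Gamma}_0;\Z/2)=H_3(M;\Z/2)\neq 0$, a non-compact $3$-manifold having trivial top homology --- but that is an argument you would have to make; the bounded-distance claim does not supply it.)
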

\begin{proof}
Choosing a generator $z$ of the center $\Ztilde$ of $\SLtilde{2}$, 
the finiteness of the level proved by Salein in \cite[Theorem 3.3.2.3]{salein} implies the existence of a non-zero integer $k\in\N^*$ such that
$\tilde{\Gamma}\cap(\Ztilde\times\{e\})=\langle(z^k,e)\rangle$.
We will denote by $\langle g \rangle$ the group generated by an element $g$, and we
introduce the group $\PSLk\coloneqq\SLtilde{2}/\langle z^k \rangle$ and denote by  
$p_k\colon\SLtilde{2}\to\PSLk$ its universal cover. 
Then, denoting $A_k=p_k(\tilde{A})$
and $\Gamma_k\coloneqq (p_k\times p_k)(\tilde{\Gamma})<\PSLk\times A_k$,
$p_k$ induces a diffeomorphism $\bar{p}_k\colon\tilde{\Gamma}\backslash\SLtilde{2}\to\Gamma_k\backslash\PSLk$
(because $\Ker p_k=\langle z^k \rangle$ and $(z^k,e)\in\tilde{\Gamma}$),
implying in particular that $\Gamma_k$ acts freely, properly, and cocompactly on $\PSLk$.
\par We can now apply the work of Kulkarni-Raymond in \cite{kulkarni} to $\Gamma_k$.
We denote by $\pi\colon\SLtilde{2}\to\PSL{2}$ the universal cover morphism of $\PSL{2}$ (of kernel $\Ztilde$),
and by $\pi_k\colon\PSLk\to\PSL{2}$ the induced $k$-fold covering by $\PSLk$.
Then, with $\Gamma=(\pi\times\pi)(\tilde{\Gamma})$ and $\Gamma_0=r_1(\Gamma)<\PSL{2}$ the projection on the first factor,
the form \cite[Lemma 4.3.1]{tholozan} of Kulkarni-Raymond's results proved by Tholozan implies that
$\Gamma_0$ is a cocompact lattice of $\PSL{2}$, and that $\pi_k\circ r_1\restreinta_{\Gamma_k}$ is injective.
\par The first assertion ensures that $\tilde{\Gamma}_0$ is discrete in $\SLtilde{2}$.
The second one implies that $\Gamma=\gr(u,\Gamma_0)$ is the graph-group of a morphism $u\colon\Gamma_0\to A=\pi(\tilde{A})$.
Since $r_1\restreinta_{\tilde{\Gamma}}$ is also injective, this implies that $\tilde{\Gamma}$ is the graph 
of a morphism $\tilde{u}\colon\tilde{\Gamma}_0\to\tilde{A}$, trivial on $\tilde{\Gamma}_0\cap\Ztilde$.
\par Since $\Ztilde\cap\tilde{\Gamma}_0=\langle z^k \rangle$ is finite,
the projection $\tilde{\Gamma}_0\backslash\SLtilde{2}\to\Gamma_0\backslash\PSL{2}$ has finite fibers,
implying that $\tilde{\Gamma}$ is a cocompact lattice as $\Gamma_0\backslash\PSL{2}$ is compact. 
\end{proof}

The projection $\Gamma_0=\pi(\tilde{\Gamma}_0)$ is a cocompact lattice of $\PSL{2}$ according to the proof of 
Fact \ref{factkulkarniraymond}, 
and $\Gamma_0\backslash\Nor_{\PSL{2}}(\Gamma_0)$ is thus finite.
Therefore, $\tilde{\Gamma}_0\backslash\Nor_{\SLtilde{2}}(\tilde{\Gamma}_0)$ is finite as well since
the projection 
$\tilde{\Gamma}_0\backslash\Nor_{\SLtilde{2}}(\tilde{\Gamma}_0)\to\Gamma_0\backslash\Nor_{\PSL{2}}(\Gamma_0)$
has finite fibers ($\Ztilde\cap\tilde{\Gamma}_0=\langle z^k \rangle$ is finite according to the finiteness of the level).
\par Recall that $f=L_g\circ R_a$, where
$(g,a^t)\in\Nor_{\SLtilde{2}\times\tilde{A}}(\tilde{\Gamma})$.
Therefore $g\in\Nor_{\SLtilde{2}}(\tilde{\Gamma}_0)$, and 
since $\tilde{\Gamma}_0\backslash\Nor_{\SLtilde{2}}(\tilde{\Gamma}_0)$ is finite, 
there exists $n\in\N^*$ such that $\gamma\coloneqq g^n\in\tilde{\Gamma}_0$.
Denoting $a\coloneqq a^n \tilde{u}(\gamma)^{-1}$, we have
$f^n=L_{\gamma}\circ R_{a^n}=R_a\circ(L_{\gamma}\circ R_{\tilde{u}(\gamma)})$.
But $L_{\gamma}\circ R_{\tilde{u}(\gamma)}$ acts trivially on the quotient $\tilde{\Gamma}\backslash\SLtilde{2}$, 
and therefore $f=R_a$ is a non-zero time-map of the algebraic contact-Anosov flow $(R_{a^t})$ on $\tilde{\Gamma}\backslash\SLtilde{2}$. 
\par Let us underline that $(R_{a^t})$ is indeed Anosov,
because the work of Zeghib in
\cite[Prop. 4.2 p.868]{zeghib} proves that $(R_{a^t})$ is 
\emph{quasi-Anosov} with the definition of Ma{\~{n}}é,
and Ma{\~{n}}é proves in \cite[Theorem A]{mane} that three-dimensional quasi-Anosov flows 
are Anosov. \\

\par This concludes the proof of Theorem \ref{theoremeoptimal}
in the case where $\Sm$ is locally isomorphic to $(Y_\torus,\Sm_\torus)$,
and concludes thus its whole proof.

\subsection{Proof of Theorem \ref{theoremeprincipalPHdiffeos}}\label{soussectionpreuvetheoremeA}

Theorem \ref{theoremeoptimal} directly implies 
Theorem \ref{theoremeprincipalPHdiffeos} stated in the introduction thanks to an argument of Brin.
More precisely, we obtain the following refined version of Theorem \ref{theoremeprincipalPHdiffeos},
where no domination is required on the central direction, and where the two remaining directions can \emph{a priori}
be both contracted, or both expanded.

\begin{corollary}
\label{corollairesansdomination}
Let $M$ be a closed, connected and orientable three-dimensional manifold, endowed with a smooth splitting
$\Fitan{M}=E^\alpha\oplus E^\beta\oplus E^c$ such that $E^\alpha \oplus E^\beta$ is a contact distribution.
Let $f$ be a diffeomorphism of $M$ that preserves this splitting, 
and such that
\begin{itemize}
 \item each of the distributions $E^\alpha$ and $E^\beta$ is either uniformly contracted, or uniformly expanded by $f$,
 \item and $NW(f)=M$.
\end{itemize}
Then the conclusions of Theorem \ref{theoremeprincipalPHdiffeos} hold.
In particular, $f$ is a partially
hyperbolic diffeomorphism.
\end{corollary}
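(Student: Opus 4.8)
The plan is to deduce Corollary \ref{corollairesansdomination} from Theorem \ref{theoremeoptimal} by reducing the hypotheses of the former to those of the latter, using an argument of Brin to promote ``non-wandering set equals $M$'' to ``there is a dense orbit'', together with the observation that uniform contraction or expansion of $E^\alpha$ and $E^\beta$ trivially implies that both distributions are weakly contracted in the sense of Definition \ref{definitionweakly}. The only real work is the Brin-type argument, which is essentially a standard accessibility-plus-recurrence argument adapted to our setting where $E^\alpha \oplus E^\beta$ is contact.

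First I would record the easy reduction: if $E^\alpha$ is uniformly contracted by $f$ then $\lim_{n\to+\infty}\norme{\Diff{x}{f^n}\restreinta_{E^\alpha}}=0$ for every $x$, and if it is uniformly expanded the same holds with $n\to-\infty$; either way $E^\alpha$ is weakly contracted, and likewise for $E^\beta$. So the hypotheses of Theorem \ref{theoremeoptimal} are met except possibly for the existence of a dense orbit, which we must extract from $NW(f)=M$. Note that the hypothesis is symmetric in $f$ and $f^{-1}$, and that $E^\alpha\oplus E^\beta$ being a contact distribution, the two one-dimensional foliations $\mathcal{F}^\alpha$ and $\mathcal{F}^\beta$ tangent to $E^\alpha$ and $E^\beta$ satisfy the accessibility property: by \cite[Theorem 4.1]{sussmann} (already invoked in Section \ref{sectionstructKleinienne}), any two points of $M$ can be joined by a path that is a finite concatenation of arcs tangent to $E^\alpha$ or $E^\beta$. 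This is the substitute for the usual ``stable-unstable'' accessibility used in Brin's argument. One then runs the classical scheme: using $NW(f)=M$ (hence the density of recurrent points of both $f$ and $f^{-1}$), one shows that the closure of any $\mathcal{F}^\alpha$-leaf together with its iterates, and of $\mathcal{F}^\beta$-leaves, forces the existence of a point whose forward (or backward) orbit closure is $f$-invariant, accessibility-saturated, and closed, hence all of $M$; a Baire-category argument over a countable basis of open sets then produces a point with dense orbit.

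Concretely, I would argue as follows. Fix a countable basis $\{U_k\}$ of nonempty open subsets of $M$. For each $k$, the set $\mathcal{O}_k = \bigcup_{n\in\Z} f^n(U_k)$ is open, $f$-invariant, and dense: indeed, since $NW(f)=M$, every open set meets infinitely many of its own iterates, and an accessibility argument (flowing along $\alpha$- and $\beta$-arcs, whose holonomy interacts with the dynamics because $E^\alpha,E^\beta$ are uniformly contracted/expanded) shows that the $f$-orbit of any open set accumulates on every point of $M$; the key point is that a small $\alpha$-arc at a recurrent point, pushed forward or backward by $f^n$, becomes uniformly short or spreads along the whole leaf, which combined with recurrence lets one connect $U_k$ to an arbitrary $U_j$. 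Then $\bigcap_k \mathcal{O}_k$ is a dense $G_\delta$ by Baire, and any point in it has an orbit meeting every $U_k$, i.e. a dense orbit. Having produced a dense orbit, Theorem \ref{theoremeoptimal} applies verbatim and yields the two possible conclusions, which are exactly the conclusions of Theorem \ref{theoremeprincipalPHdiffeos}; the final sentence ``in particular $f$ is partially hyperbolic'' is then part of the conclusion of Theorem \ref{theoremeoptimal}.

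The main obstacle is making the accessibility-plus-recurrence argument rigorous without a domination hypothesis on $E^c$: one must be careful that the $\alpha$- and $\beta$-holonomy maps, under iteration by $f^n$, genuinely allow one to transport neighbourhoods across the manifold. The clean way is to invoke Brin's argument in the form already known for such settings (the reference the paper alludes to when it says ``an argument of Brin''), namely that a partially-hyperbolic-type diffeomorphism with the accessibility property and with $NW(f)=M$ is topologically transitive. Since $E^\alpha$ and $E^\beta$ are each uniformly contracted or expanded, the pair $(\mathcal{F}^\alpha,\mathcal{F}^\beta)$ plays the role of the stable/unstable foliations in Brin's theorem even though $E^c$ need not be dominated, because domination of $E^c$ is never used in the transitivity argument — only the uniform behaviour on $E^\alpha\oplus E^\beta$ and accessibility. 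I would therefore phrase the proof so that the transitivity statement is cited as Brin's, reducing the corollary to a one-line application of Theorem \ref{theoremeoptimal}, and relegate the verification that our hypotheses fit Brin's framework (uniform contraction/expansion on a contact pair of one-dimensional foliations) to a short paragraph.
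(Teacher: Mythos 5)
Your proposal is correct and follows essentially the same route as the paper: reduce to Theorem \ref{theoremeoptimal} by observing that uniform contraction or expansion gives weak contraction, and extract a dense orbit from $NW(f)=M$ by citing Brin's transitivity theorem, with Sussmann's accessibility result supplying the required ``topological transitivity'' of the contact pair $(\mathcal{F}^\alpha,\mathcal{F}^\beta)$. The one caveat the paper flags that you slightly misplace (you worry about domination of $E^c$) is that Brin states his theorem for one contracted and one expanded foliation, whereas here both may be contracted or both expanded; the paper simply notes that Brin's proof never uses that asymmetry.
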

\begin{proof}
Since $E^\alpha\oplus E^\beta$ is contact and $M$ connected, 
any two points of $M$ are linked by the concatenation of a finite number of paths, tangent either to $E^\alpha$ or to $E^\beta$
(this is for example a consequence of the work of Sussmann in \cite[Theorem 4.1]{sussmann}).
In other words, the pair $(\mathcal{F}^\alpha,\mathcal{F}^\beta)$ of foliations associated
to $(E^\alpha,E^\beta)$ is topologically transitive in the terminology of Brin in \cite{brin}.
Our hypothesis of uniform contraction or expansion of the distributions $E^\alpha$ and $E^\beta$
directly implies that $\mathcal{F}^\alpha$ and $\mathcal{F}^\beta$
are uniformly contracted or expanded in the terminology of \cite{brin}.
Since $NW(f)=M$ by hypothesis, \cite[Theorem 1.1]{brin} implies that $f$ is topologically transitive.
In fact, although Brin states his result assuming that one of the distributions is contracted, and the other one expanded, 
it is easy to see that his proof does in fact not use this assumption, and that the same proof works if both distributions
are expanded, or both contracted.
\par We are now under the hypotheses of Theorem \ref{theoremeoptimal}, and its conclusions hold.
\end{proof}

\bibliographystyle{alpha}
\bibliography{biblio-diffeosPHcontact}

\begin{thebibliography}{CPRH19}

\bibitem[Bar10]{barbot}
Thierry Barbot.
\newblock Three-dimensional {Anosov} flag manifolds.
\newblock {\em Geometry \& Topology}, 14(1):153--191, 2010.

\bibitem[BFL92]{bfl}
Yves Benoist, Patrick Foulon, and François Labourie.
\newblock Flots d’{Anosov} à distributions stable et instable
  différentiables.
\newblock {\em Journal of the American Mathematical Society}, 5(1):33--74,
  1992.

\bibitem[BFM09]{bader}
Uri Bader, Charles Frances, and Karin Melnick.
\newblock An {Embedding} {Theorem} for {Automorphism} {Groups} of {Cartan}
  {Geometries}.
\newblock {\em Geometric and Functional Analysis}, 19(2):333--355, September
  2009.

\bibitem[Bri75]{brin}
M.~I. Brin.
\newblock Topological transitivity of one class of dynamic systems and flows of
  frames on manifolds of negative curvature.
\newblock {\em Functional Analysis and Its Applications}, 9(1):8--16, January
  1975.

\bibitem[BZ19]{bonattizhang}
Christian Bonatti and Jinhua Zhang.
\newblock Transitive partially hyperbolic diffeomorphisms with one-dimensional
  neutral center.
\newblock {\em arXiv:1904.05295 [math]}, April 2019.

\bibitem[CP15]{crovisierpotrie}
Sylvain Crovisier and Rafael Potrie.
\newblock Introduction to partially hyperbolic dynamics, \emph{Lecture notes}
  for a minicourse at {ICTP}, July 2015.
\newblock Available on the web-pages of the authors

\bibitem[CPRH19]{carrascopujalsrodriguezhertz}
Pablo~D. Carrasco, Enrique Pujals, and Federico Rodriguez-Hertz.
\newblock Classification of partially hyperbolic diffeomorphisms under some
  rigid conditions.
\newblock {\em arXiv:1903.09264 [math]}, March 2019.

\bibitem[{\v C}S09]{capslovak}
Andreas {\v C}ap and Jan Slov\'ak.
\newblock {\em Parabolic geometries {I} Background and general theory}, volume
  154 of {\em Mathematical Surveys and Monographs}.
\newblock American Mathematical Society, Providence, RI, 2009.

\bibitem[DC76]{docarmo}
Manfredo Do~Carmo.
\newblock {\em Differential {Geometry} of {Curves} and {Surfaces}}.
\newblock Prentice Hall, 1976.

\bibitem[DK16]{doubrovkomrakov}
Boris Doubrov and Boris Komrakov.
\newblock The geometry of second-order ordinary differential equations.
\newblock {\em arXiv:1602.00913 [math]}, February 2016.

\bibitem[FG83]{friedgoldmann}
David Fried and William~M Goldman.
\newblock Three-dimensional affine crystallographic groups.
\newblock {\em Advances in Mathematics}, 47(1):1--49, January 1983.

\bibitem[Fra16]{frances}
Charles Frances.
\newblock Variations on {Gromov}'s open-dense orbit theorem.
\newblock {\em Bulletin de la Société mathématique de France}, 146, May
  2016.

\bibitem[GD91]{dambragromov}
Mikhail Gromov and Giuseppina D'Ambra.
\newblock Lectures on transformation groups : geometry and dynamics.
\newblock {\em Surveys in differential geometry}, 1991.

\bibitem[Ghy87]{ghys}
Étienne Ghys.
\newblock Flots d'{Anosov} dont les feuilletages stables sont différentiables.
\newblock {\em Annales Scientifiques de l'École Normale Supérieure.
  Quatrième Série}, 20(2):251--270, 1987.

\bibitem[Gro88]{gromov}
Michael Gromov.
\newblock Rigid transformations groups.
\newblock {\em Géométrie différentielle (Paris, 1986)}, 33:65--139, 1988.

\bibitem[HP18]{hammerlindlpotrie}
Andy Hammerlindl and Rafael Potrie.
\newblock Partial hyperbolicity and classification: a survey.
\newblock {\em Ergodic Theory and Dynamical Systems}, 38(2), April 2018.

\bibitem[Kan88]{kanai}
Masahiko Kanai.
\newblock Geodesic flows of negatively curved manifolds with smooth stable and
  unstable foliations.
\newblock {\em Ergodic Theory and Dynamical Systems}, 8(2):215--239, June 1988.

\bibitem[Kob95]{kobayashi}
Shoshichi Kobayashi.
\newblock {\em Transformation {Groups} in {Differential} {Geometry}}.
\newblock Classics in {Mathematics}. Springer-Verlag, Berlin Heidelberg, 1995.

\bibitem[KR85]{kulkarni}
Ravi~S. Kulkarni and Frank Raymond.
\newblock 3-dimensional {Lorentz} space-forms and {Seifert} fiber spaces.
\newblock {\em Journal of Differential Geometry}, 21(2):231--268, 1985.

\bibitem[KT17]{gapphenomenon}
Boris Kruglikov and Dennis The.
\newblock The gap phenomenon in parabolic geometries.
\newblock {\em Journal für die reine und angewandte Mathematik (Crelles
  Journal)}, 2017(723), January 2017.

\bibitem[Ma{\~{n}}77]{mane}
Ricardo Ma{\~{n}}é.
\newblock Quasi-{Anosov} {Diffeomorphisms} and {Hyperbolic} {Manifolds}.
\newblock {\em Transactions of the American Mathematical Society},
  229:351--370, 1977.

\bibitem[Pal57]{palais}
Richard Palais.
\newblock A global formulation of the {Lie} theory of transformation groups.
\newblock {\em Memoirs of the American Mathematical Society}, January 1957.

\bibitem[Pec16]{pecastaing}
Vincent Pecastaing.
\newblock On two theorems about local automorphisms of geometric structures.
\newblock {\em Annales de l’institut Fourier}, 66(1):175--208, 2016.

\bibitem[Sal99]{salein}
François Salein.
\newblock {\em Variétés anti-de-{Sitter} de dimension 3}.
\newblock Thèse de doctorat, École normale supérieure de Lyon, 1999.

\bibitem[Sam89]{samuel}
Pierre Samuel.
\newblock {\em Géométrie projective}.
\newblock Presses universitaires de {France}. 1989.

\bibitem[{Sha}97]{sharpe}
R.W. {Sharpe}.
\newblock {\em {Differential geometry: Cartan's generalization of Klein's
  Erlangen program. Foreword by S. S. Chern.}}
\newblock Berlin: Springer, 1997.

\bibitem[Sus73]{sussmann}
Héctor~J. Sussmann.
\newblock Orbits of families of vector fields and integrability of
  distributions.
\newblock {\em Transactions of the American Mathematical Society},
  180:171--188, 1973.

\bibitem[Tho14]{tholozan}
Nicolas Tholozan.
\newblock {\em Uniformisation des variétés pseudo-riemanniennes localement
  homogènes}.
\newblock Thèse de doctorat, Université Nice Sophia Antipolis, 2014.

\bibitem[Thu97]{thurston}
William~P. Thurston.
\newblock {\em Three-{Dimensional} {Geometry} and {Topology}, {Volume} 1:
  {Volume} 1}.
\newblock Princeton University Press, 1997.

\bibitem[Tre96]{tresse}
A.~Tresse.
\newblock {\em Détermination des invariants ponctuels de l'équation
  différentielle ordinaire du second ordre $y''= w(x,y,y')$}.
\newblock Preisschriften gekrönt und hrsg. von der {Fürstlich}
  {Jablonowski}schen gesellschaft zu {Leipzig}. {XXXII}. {Nr}. {XIII} der
  mathematische-naturwissenschaftlichen section. S. Hirzel, Leipzig, 1896.

\bibitem[Zeg96]{zeghib}
Abdelghani Zeghib.
\newblock Killing fields in compact {Lorentz} 3-manifolds.
\newblock {\em Journal of Differential Geometry}, 43(4):859--894, 1996.

\end{thebibliography}

\end{document}